\def\input@path{{figures/}}
\title{The Hopf algebra of integer binary relations}
\thanks{VPi~was partially supported by the French ANR grants SC3A~(15\,CE40\,0004\,01) and CAPPS~(17\,CE40\,0018).}
\author{Vincent Pilaud}
\author{Viviane Pons}
\address[VPi]{CNRS \& LIX, \'Ecole Polytechnique, Palaiseau}
\email{vincent.pilaud@lix.polytechnique.fr}
\urladdr{http://www.lix.polytechnique.fr/~pilaud/}
\address[VPo]{LRI, Univ. Paris-Sud - CNRS - Centrale Supelec - Univ. Paris-Saclay}
\email{viviane.pons@lri.fr}
\urladdr{https://www.lri.fr/~pons/}
\newtheorem{theorem}{Theorem}
\newtheorem{proposition}[theorem]{Proposition}
\newtheorem{lemma}[theorem]{Lemma}
\newtheorem{definition}[theorem]{Definition}
\theoremstyle{definition}
\newtheorem{example}[theorem]{Example}
\newtheorem{remark}[theorem]{Remark}
\newcommand{\N}{\mathbb{N}} 
\newcommand{\K}{\mathbf{k}} 
\newcommand{\set}[2]{\left\{ #1 \;\middle|\; #2 \right\}} 
\newcommand{\bigset}[2]{\big\{ #1 \;|\; #2 \big\}} 
\newcommand{\ssm}{\smallsetminus} 
\newcommand{\eqdef}{\mbox{\,\raisebox{0.2ex}{\scriptsize\ensuremath{\mathrm:}}\ensuremath{=}\,}} 
\renewcommand{\implies}{\Rightarrow} 
\newcommandx{\rel}[1][1=R]{\mathbin{\mathrm{#1}}} 
\newcommandx{\notrel}[1][1=R]{\mathbin{\!\raisebox{.02cm}{$\not$}\hspace{.02cm}\mathrm{#1}\hspace*{.01cm}}} 
\newcommandx{\drel}[1][1=R]{\mathbin{\mathbf{#1}}} 
\newcommand{\less}{\vartriangleleft} 
\newcommand{\more}{\vartriangleright} 
\newcommand{\bless}{\blacktriangleleft} 
\newcommand{\bmore}{\blacktriangleright} 
\newcommand{\IRel}{\mathcal{R}} 
\newcommand{\IPos}{\mathcal{P}} 
\newcommandx{\Inc}[1]{#1^{\mathsf{Inc}}} 
\newcommandx{\Dec}[1]{#1^{\mathsf{Dec}}}
\newcommand{\maxle}[1]{#1^{\mathsf{maxle}}} 
\newcommand{\minle}[1]{#1^{\mathsf{minle}}} 
\newcommand{\IWOIPid}[1]{#1^{\IWOIP\mathsf{id}}} 
\newcommand{\DWOIPdd}[1]{#1^{\DWOIP\mathsf{dd}}} 
\newcommand{\WOEPid}[1]{#1^{\WOEP\mathsf{id}}} 
\newcommand{\WOEPdd}[1]{#1^{\WOEP\mathsf{dd}}} 
\newcommand{\WOIPd}[1]{#1^{\WOIP\mathsf{d}}} 
\newcommand{\TOIPd}[1]{#1^{\TOIP\mathsf{d}}} 
\newcommandx{\PIPd}[2][2=\orientation]{#1^{{\PIP{#2}}\mathsf{d}}} 
\newcommand{\meetR}{\wedge_\IRel} 
\newcommand{\joinR}{\vee_\IRel} 
\newcommand{\joinWO}{\vee_\WO} 
\newcommand{\meetWO}{\wedge_\WO} 
\newcommand{\joinTO}{\vee_\TO} 
\newcommand{\meetTO}{\wedge_\TO} 
\newcommand{\joinWOIP}{\vee_\WOIP} 
\newcommand{\meetWOIP}{\wedge_\WOIP} 
\newcommand{\joinTOIP}{\vee_\TOIP} 
\newcommand{\meetTOIP}{\wedge_\TOIP} 
\DeclareMathOperator{\inv}{inv} 
\newcommand{\wole}{\preccurlyeq} 
\newcommand{\bt}{\mathrm{bt}} 
\newcommand{\st}{\mathrm{st}} 
\newcommand{\fS}{\mathfrak{S}} 
\newcommand{\fB}{\mathfrak{B}} 
\newcommand{\WO}{{\fS}} 
\newcommand{\TO}{\fB} 
\newcommandx{\CO}[1][1=\signature]{#1-\mathsf{CO}} 
\newcommand{\WOEP}{\mathsf{WOEP}} 
\newcommand{\WOFP}{\mathsf{WOFP}} 
\newcommand{\WOIP}{\mathsf{WOIP}} 
\newcommand{\IWOIP}{\mathsf{IWOIP}} 
\newcommand{\DWOIP}{\mathsf{DWOIP}} 
\newcommand{\TOEP}{\mathsf{TOEP}} 
\newcommand{\TOFP}{\mathsf{TOFP}} 
\newcommand{\TOIP}{\mathsf{TOIP}} 
\newcommand{\ITOIP}{\mathsf{ITOIP}} 
\newcommand{\DTOIP}{\mathsf{DTOIP}} 
\newcommandx{\COEP}[1][1=\signature]{\mathsf{COEP}(#1)} 
\newcommandx{\COFP}[1][1=\signature]{\mathsf{COFP}(#1)} 
\newcommandx{\COIP}[1][1=\signature]{\mathsf{COIP}(#1)} 
\newcommandx{\PIP}[1][1=\orientation]{\mathsf{PIP}_{#1}} 
\newcommandx{\PEP}[1][1=\orientation]{\mathsf{PEP}_{#1}} 
\newcommandx{\PFP}[1][1=\orientation]{\mathsf{PFP}_{#1}} 
\newcommand{\orientation}{\mathbb{O}} 
\DeclareMathOperator{\conv}{conv} 
\newcommandx{\Asso}[1][1=\signature]{\mathsf{Asso}(#1)} 
\newcommandx{\Perm}[1][1=n]{\mathsf{Perm}(#1)} 
\newcommandx{\Para}[1][1=n]{\mathsf{Para}(#1)} 
\newcommandx{\graphG}[1][1=G]{\mathrm{#1}} 
\newcommandx{\tree}[1][1=T]{\mathrm{#1}} 
\newcommandx{\tuple}[1][1=T]{\mathcal{#1}} 
\newcommandx{\poset}{{\tree[P]}} 
\newcommand{\signature}{\varepsilon} 
\newcommand{\Hvect}{\mathcal{H}} 
\newcommand{\product}{\cdot} 
\newcommand{\coproduct}{\triangle} 
\newcommand{\bcoproduct}{\blacktriangle} 
\newcommand{\rbcoproduct}{\overline{\bcoproduct}} 
\newcommand{\shiftedShuffle}{\,\bar\shuffle\,} 
\newcommand{\convolution}{\star} 
\newcommand{\underprod}[2]{{#1}\backslash{#2}} 
\newcommand{\overprod}[2]{{#1}\slash{#2}} 
\newcommand{\uRS}{\underprod{\rel[R]}{\rel[S]}} 
\newcommand{\subalg}{{\mathsf{sub}}} 
\newcommand{\subalgi}{{\mathsf{subi}}} 
\newcommand{\subalgd}{{\mathsf{subd}}} 
\newcommand{\quotient}{{\mathsf{quo}}} 
\newcommand{\F}{\mathbb{F}} 
\newcommand{\E}{\mathbb{E}} 
\newcommand{\HH}{\mathbb{H}} 
\newcommand{\FRel}{\F\IRel} 
\newcommand{\ERel}{\E\IRel} 
\newcommand{\HRel}{\HH\IRel} 
\newcommand{\FPos}{\F\IPos} 
\newcommand{\EPos}{\E\IPos} 
\newcommand{\HPos}{\HH\IPos} 
\newcommand{\FWOEP}{\F\mathsf{WE}} 
\newcommand{\FWOEPquo}{\F\mathsf{WE}^\quotient} 
\newcommand{\FWOEPsubi}{\F\mathsf{WE}^\subalgi} 
\newcommand{\FWOEPsubd}{\F\mathsf{WE}^\subalgd} 
\newcommand{\FTOEP}{\F\mathsf{TE}} 
\newcommand{\FWOIP}{\F\mathsf{WI}} 
\newcommand{\FWOIPquo}{\F\mathsf{WI}^\quotient} 
\newcommand{\FWOIPsub}{\F\mathsf{WI}^\subalg} 
\newcommand{\FTOIP}{\F\mathsf{TI}} 
\newcommand{\FWOFP}{\F\mathsf{WF}} 
\newcommand{\FWOFPquo}{\F\mathsf{WF}^\quotient} 
\newcommand{\FTOFP}{\F\mathsf{TF}} 
\newcommand{\fref}[1]{Figure~\ref{#1}} 
\newcommand{\ie}{\textit{i.e.}~} 
\newcommand{\eg}{\textit{e.g.}~} 
\definecolor{darkblue}{rgb}{0,0,0.7} 
\definecolor{green}{RGB}{57,181,74} 
\newcommand{\darkblue}{\color{darkblue}} 
\newcommand{\red}{\color{red}} 
\newcommand{\blue}{\color{blue}} 
\newcommand{\defn}[1]{\emph{\darkblue #1}} 
\def \interscale{0.5}
\def\l@section{\@tocline{1}{4pt}{0pc}{}{}}
\let\oldtocpart=\tocpart
\renewcommand{\tocpart}[2]{\bf\large\oldtocpart{#1}{#2}}
\let\oldtocsection=\tocsection
\renewcommand{\tocsection}[2]{\bf\oldtocsection{#1}{#2}}
\begin{document}

\begin{abstract}
We construct a Hopf algebra on integer binary relations that contains under the same roof several well-known Hopf algebras related to the permutahedra and the associahedra: the Malvenuto--Reutenauer algebra on permutations, the Loday--Ronco algebra on planar binary trees, and the Chapoton algebras on ordered partitions and on Schr\"oder trees. We also derive from our construction new Hopf structures on intervals of the weak order on permutations and of the Tamari order on binary trees.
\end{abstract}

\vspace*{-.3cm}

\maketitle

\vspace{-.5cm}

\tableofcontents

\vspace{-.3cm}

An integer binary relation is a binary relation on~$[n] \eqdef \{1, \dots, n\}$ for some~$n \in \N$.
Integer posets are integer binary relations that are moreover posets (\ie reflexive, antisymmetric and transitive).
Many fundamental combinatorial objects (see Table~\ref{table:listObjects} left) can be thought of as specific integer posets.
This observation was used in~\cite{ChatelPilaudPons} to reinterpret classical lattice structures (see Table~\ref{table:listObjects} middle) as specializations (subposets or sublattices) of a lattice structure called the weak order on posets.
This interpretation enables to consider simultaneously all these specific integer posets and motivated the emergence of permutrees~\cite{PilaudPons-permutrees}, which are combinatorial objects interpolating between permutations, binary trees, and Cambrian trees~\cite{ChatelPilaud}.

In this paper, we continue the exploration of the algebraic combinatorics of integer binary relations and integer posets, focussing on Hopf structures.
We construct a Hopf algebra on integer binary relations where
\begin{enumerate}[(i)]
\item the product~$\rel[R] \product \rel[S]$ of two relations~$\rel[R], \rel[S]$ is the sum of all relations that contain~$\rel[R]$ at the beginning and~$\rel[S]$ at the end as induced subrelations,
\item the coproduct~$\coproduct(\rel)$ of a relation~$\rel$ is the sum of the tensor products of the subrelations induced by~$\rel$ over all possible partitions $[n] = A \sqcup B$ that correspond to a total cut of~$\rel$.
\end{enumerate}
We then reinterpret classical Hopf algebras~\cite{MalvenutoReutenauer, LodayRonco, Chapoton} (see Table~\ref{table:listObjects} right) as specializations (quotients or subalgebras) of the integer poset algebra.
Moreover, we obtain Hopf structures on the intervals of the weak order and on the intervals of the Tamari lattice, that remained undiscovered to the best of our knowledge.

\newpage

\begin{table}[t]
    \centerline{
    \begin{tabular}{c|c|c}
    combinatorial object & lattice structure & Hopf algebra \\
    \hline
    permutations & weak order & Malvenuto--Reutenauer algebra~\cite{MalvenutoReutenauer} \\
    binary trees & Tamari lattice & Loday--Ronco algebra~\cite{LodayRonco} \\
    ordered partitions & facial weak order~\cite{KrobLatapyNovelliPhanSchwer, PalaciosRonco, DermenjianHohlwegPilaud} & Chapoton algebra on ordered partitions~\cite{Chapoton} \\
    Schr\"oder trees & facial Tamari order~\cite{PalaciosRonco, DermenjianHohlwegPilaud} & Chapoton algebra on Schr\"oder trees~\cite{Chapoton} \\
    weak order intervals & interval lattice of the weak order & \textsc{new}, see Sections~\ref{subsubsec:intervalsQuotientAlgebra} and \ref{subsubsec:intervalsSubalgebra} \\
    Tamari order intervals & interval lattice of the Tamari lattice~\cite{ChatelPons} & \textsc{new}, see Section~\ref{subsec:subalgebrasTO}
    \end{tabular}
    }
    \caption{Algebraic structures on classical combinatorial objects that can be reinterpreted as integer binary relations. See also \fref{fig:roadMap} for the connections between the Hopf algebras.}
    \label{table:listObjects}
\end{table}

\section{Integer binary relations}
\label{sec:integerBinaryRelations}

Our main object of focus are binary relations on integers.
An \defn{integer (binary) relation} of size~$n$ is a binary relation on~$[n] \eqdef \{1, \dots, n\}$, that is, a subset~$\rel$ of~$[n]^2$.
As usual, we write equivalently~$(u,v) \in \rel$ or~$u \rel v$, and similarly, we write equivalently~$(u,v) \not\in \rel$ or~$u \notrel v$.
Throughout the paper, all relations are implicitly assumed to be reflexive ($x \rel x$ for all~$x \in [n]$), although we often forget to include the diagonal~$\set{(u,u)}{u \in [n]}$ in our descriptions.
We denote by~$\IRel_n$ the set of all (reflexive) binary relations on~$[n]$ and let~$\IRel \eqdef \bigsqcup_{n \ge 0} \IRel_n$.

\subsection{Weak order}

A lattice structure called the \defn{weak order} on integer binary relations has been defined in \cite{ChatelPilaudPons}.
We recall its definition here as we will latter use this order to give a combinatorial description of the product.

Let~$\rel[I]_{n} \eqdef \set{(a,b) \in [n]^{2}}{a \le b}$ and~$\rel[D]_{n} \eqdef \set{(b,a) \in [n]^{2}}{a \le b}$.
Observe that~$\rel[I]_{n} \cup \rel[D]_{n} = [n]^{2}$ while~$\rel[I]_{n} \cap \rel[D]_{n} = \set{(a,a)}{a \in [n]}$.
We say that the relation~$\rel \in \IRel_n$ is \defn{increasing} (resp.~\defn{decreasing}) when~$\rel \subseteq \rel[I]_n$ (resp.~$\rel \subseteq \rel[D]_n$).
The \defn{increasing} and \defn{decreasing subrelations} of an integer relation~$\rel \in \IRel_n$ are the relations defined~by:
\[
\Inc{\rel} \eqdef \rel \cap {}\rel[I]_n{} = \set{(a,b) \in \rel}{a \le b}
\quad\text{and}\quad
\Dec{\rel} \eqdef \rel \cap {}\rel[D]_n{} = \set{(b,a) \in \rel}{a \le b}.
\]
In our pictures, we always represent an integer relation~$\rel \in \IRel_n$ as follows: we write the numbers $1, \dots, n$ from left to right and we draw the increasing relations of~$\rel$ above in blue and the decreasing relations of~$\rel$ below in red.
Although we only consider reflexive relations, we always omit the relations~$(i,i)$ in the pictures (as well as in our explicit examples). See \eg \fref{fig:productInterval}.

\begin{definition}
\label{def:weakOrder}
The \defn{weak order} on~$\IRel_n$ is given by~$\rel \wole \rel[S]$ if~${\Inc{\rel} \supseteq \Inc{\rel[S]}}$ and~${\Dec{\rel} \subseteq \Dec{\rel[S]}}$.
\end{definition}

Note that the weak order is obtained by combining the refinement lattice on increasing subrelations with the coarsening lattice on decreasing subrelations.
It explains the following statement.

\begin{proposition}
\label{prop:reflexiveLattice}
The weak order~$(\IRel_n, \wole)$ is a graded lattice whose meet and join are given by
\[
{\rel} \meetR {\rel[S]} = ( \Inc{\rel} \cup \Inc{\rel[S]} ) \cup ( \Dec{\rel} \cap \Dec{\rel[S]} )
\qquad\text{and}\qquad
{\rel} \joinR {\rel[S]} = ( \Inc{\rel} \cap \Inc{\rel[S]} ) \cup ( \Dec{\rel} \cup \Dec{\rel[S]} ).
\]
\end{proposition}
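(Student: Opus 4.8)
The plan is to show that the assignment $\rel \mapsto (\Inc{\rel}, \Dec{\rel})$ realizes $(\IRel_n, \wole)$ as a product of two Boolean lattices, after which all three assertions (that it is a lattice, the explicit formulas for $\meetR$ and $\joinR$, and gradedness) follow at once. First I would record the decomposition that drives everything. Writing $\Delta \eqdef \set{(a,a)}{a \in [n]}$ for the diagonal, the identities $\rel[I]_n \cup \rel[D]_n = [n]^2$ and $\rel[I]_n \cap \rel[D]_n = \Delta$ give $\rel = \Inc{\rel} \cup \Dec{\rel}$ for every relation, with $\Inc{\rel}$ and $\Dec{\rel}$ overlapping only along $\Delta$. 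Since all relations are reflexive, $\Delta$ lies in both parts for every $\rel$, so the two parts are genuinely independent: the map $\rel \mapsto (\Inc{\rel}, \Dec{\rel})$ is a bijection from $\IRel_n$ onto the product of the increasing relations $\set{I}{\Delta \subseteq I \subseteq \rel[I]_n}$ with the decreasing relations $\set{D}{\Delta \subseteq D \subseteq \rel[D]_n}$, the inverse sending $(I,D)$ to $I \cup D$. A short check using the two displayed identities confirms that $\Inc{(I \cup D)} = I$ and $\Dec{(I \cup D)} = D$.

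Next, Definition~\ref{def:weakOrder} states exactly that $\rel \wole \rel[S]$ if and only if $\Inc{\rel} \supseteq \Inc{\rel[S]}$ and $\Dec{\rel} \subseteq \Dec{\rel[S]}$, so the bijection above is an order isomorphism from $(\IRel_n, \wole)$ onto the product poset in which the increasing factor carries reverse inclusion and the decreasing factor carries inclusion. Each factor is a Boolean lattice (the power set of $\rel[I]_n \ssm \Delta$, resp. $\rel[D]_n \ssm \Delta$, ordered by $\supseteq$, resp. $\subseteq$), hence in particular a lattice, and a product of lattices is a lattice with meet and join taken componentwise. In the increasing factor the $\supseteq$-meet is union and the $\supseteq$-join is intersection, while in the decreasing factor the $\subseteq$-meet is intersection and the $\subseteq$-join is union. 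Transporting these componentwise operations back through $\rel \mapsto \Inc{\rel} \cup \Dec{\rel}$ produces precisely the claimed formulas $\rel \meetR \rel[S] = (\Inc{\rel} \cup \Inc{\rel[S]}) \cup (\Dec{\rel} \cap \Dec{\rel[S]})$ and $\rel \joinR \rel[S] = (\Inc{\rel} \cap \Inc{\rel[S]}) \cup (\Dec{\rel} \cup \Dec{\rel[S]})$.

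Finally, for gradedness I would invoke that each Boolean factor is graded by cardinality, so the product is graded by the sum of the two ranks; concretely, $\rel \mapsto |\Dec{\rel}| - |\Inc{\rel}|$ serves as a rank function up to an additive constant, since every cover relation in the product either adds a single decreasing pair or deletes a single increasing pair and hence raises this quantity by exactly one. There is essentially no hard step here: the whole argument is the single observation that reflexivity decouples the increasing and decreasing parts, collapsing the weak order onto a product of two Boolean lattices. The only point demanding any care is the bookkeeping of the diagonal shared by $\rel[I]_n$ and $\rel[D]_n$, and this is exactly what the standing reflexivity assumption resolves, guaranteeing that the two factors vary independently.
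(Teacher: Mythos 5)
Your proposal is correct and follows exactly the route the paper itself indicates: the paper justifies Proposition~\ref{prop:reflexiveLattice} by the one-line remark that the weak order combines the refinement lattice on increasing subrelations with the coarsening lattice on decreasing subrelations, which is precisely your product-of-two-Boolean-lattices decomposition spelled out in full (including the careful handling of the diagonal and the gradedness, which the paper leaves implicit).
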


\subsection{Hopf algebra}

We consider the vector space~$\K\IRel \eqdef \bigoplus_{n \ge 0} \K\IRel_n$ indexed by all integer binary relations of arbitrary size.
We denote by~$(\FRel_{\rel})_{\rel \in \IRel}$ the standard basis of~$\K\IRel$.
In this section, we define a product and coproduct that endow~$\K\IRel$ with a Hopf algebra structure.

We denote by~$\rel_X \eqdef \set{(i,j) \in [k]^2}{x_i \rel x_k}$ the \defn{restriction} of an integer relation~$\rel \in \IRel_n$ to a subset~$X = \{x_1, \dots, x_k\} \subseteq [n]$. Intuitively, it is just the restriction of the relation $\rel$ to the subset~$X$ which is then \emph{standardized} to obtain a proper integer binary relation.

\subsubsection{Product}

The product that we define on binary integer relation generalizes the shifted shuffle of permutations: for~$\rel[R] \in \IRel_m$ and~$\rel[S] \in \IRel_n$, deleting the first $m$ values (resp.~last $n$ values) in any relation of the shifted shuffle~$\rel[R] \shiftedShuffle \rel[S]$ yields the relation~$\rel[S]$ (resp.~$\rel[R]$).

For~$m,n \in \N$, we denote by~$\overline{[n]}^m \eqdef \{m+1, \dots, m+n\}$ the interval~$[n]$ shifted by~$m$.
For~$m \in \N$ and~$\rel[S] \in \IRel_n$, we denote by~$\overline{\rel[S]}^m \eqdef \set{(m+i,m+j)}{(i,j) \in \rel[S]}$ the shifted relation.
We also simply use~$\overline{[n]}$ and~$\overline{\rel[S]}$ when~$m$ is clear from the context.

For~$\rel[R] \in \IRel_m$ and~$\rel[S] \in \IRel_n$, we define~${\uRS \eqdef {\rel[R]} \cup {\overline{\rel[S]}} \cup ([m] \times \overline{[n]})}$ and ${\overprod{\rel[R]}{\rel[S]} \eqdef {\rel[R]} \cup {\overline{\rel[S]}} \cup (\overline{[n]} \times [m])}$.

\begin{definition}
For two relations~$\rel[R] \in \IRel_m$ and $\rel[S] \in \IRel_n$, define the \defn{shifted shuffle}~$\rel[R] \shiftedShuffle \rel[S]$ as the set of relations~$\rel[R] \sqcup \overline{\rel[S]} \sqcup \rel[I] \sqcup \rel[D]$ for all possible~$\rel[I] \subseteq [m] \times \overline{[n]}$ and~$\rel[D] \subseteq \overline{[n]} \times [m]$.
\end{definition}

\begin{remark}
Note that the shifted shuffle of~$\rel[R] \in \IRel_m$ and~$\rel[S] \in \IRel_n$ has cardinality~$|{\rel[R] \shiftedShuffle \rel[S]}| = 2^{2mn}$.
\end{remark}

\begin{example}
For instance,
\begin{align*}
\FRel_{\scalebox{.5}{\begin{tikzpicture}[baseline, scale=\interscale]
\node(T1) at (0,0) {1};
\node(T2) at (1,0) {2};
\draw[->, line width = 0.5, color=blue] (T1) edge [bend left=70] (T2);
\draw[->,line width = 0.5, color=white, opacity=0] (T1) edge [bend left=70] (T2);
\draw[->,line width = 0.5, color=white, opacity=0] (T2) edge [bend left=70] (T1);
\end{tikzpicture}}} \! \!  \product \FRel_{\scalebox{.5}{\begin{tikzpicture}[baseline, scale=\interscale]
\node(T1) at (0,0) {1};
\end{tikzpicture}}} \! \!  =
\FRel_{\scalebox{.5}{\input{figures/relations/r3_12_13_23}}} \! \!  +
\FRel_{\scalebox{.5}{\input{figures/relations/r3_12_13}}} \! \!  +
\FRel_{\scalebox{.5}{\input{figures/relations/r3_12_13_23_31}}} \! \!  + \dots +
\FRel_{\scalebox{.5}{\input{figures/relations/r3_12_31_32}}} \! \! .
\end{align*}

where the sum ranges over all relations in the interval of \fref{fig:productInterval}.

\begin{figure}[t]
    \includegraphics[scale=.6]{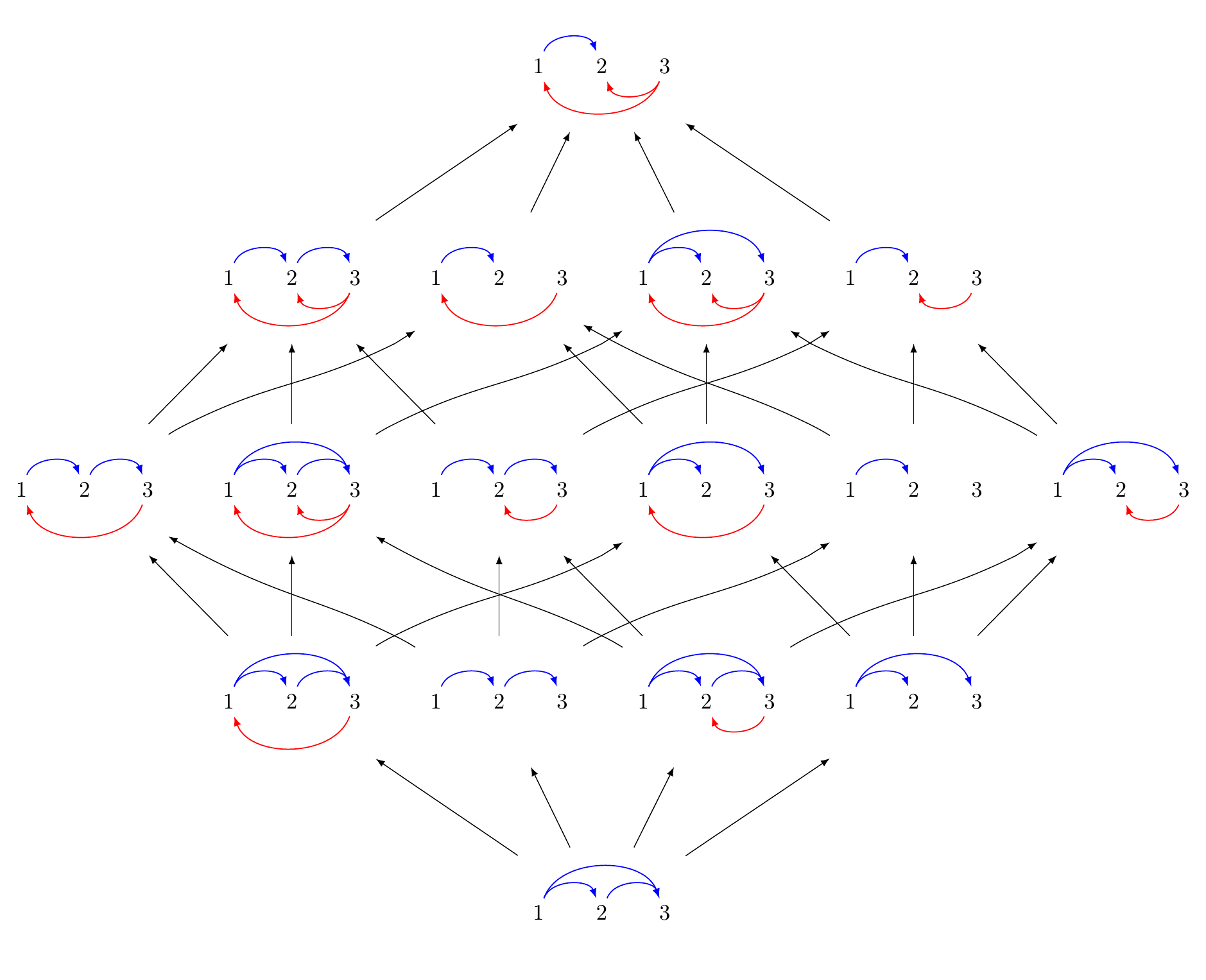}
    \caption{Interval corresponding to a product of relations.}
    \label{fig:productInterval}
\end{figure}
\end{example}

\begin{proposition}
\label{prop:characterizationShuffleProduct}
For~$\rel[R] \in \IRel_m$, $\rel[S] \in \IRel_n$ and~$\rel[T] \in \IRel_{m+n}$, we have
\[
\rel[T] \in \rel[R] \shiftedShuffle \rel[S]
\quad \iff \quad
\rel[T]_{[m]} = \rel[R] \text{ and } \rel[T]_{\overline{[n]}} = \rel[S]
\quad \iff \quad
\uRS \wole \rel[T] \wole \overprod{\rel[R]}{\rel[S]}.
\]
\end{proposition}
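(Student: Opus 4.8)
The plan is to prove the two equivalences separately: the first is a direct unwinding of the definition of the shifted shuffle, and the second is a block-by-block analysis of the weak order inequalities that exploits the special position of the two shifted intervals. I would begin by recording the decomposition that underlies both parts. The square $[m+n]^2$ partitions into the four blocks $[m] \times [m]$, $\overline{[n]} \times \overline{[n]}$, $[m] \times \overline{[n]}$ and $\overline{[n]} \times [m]$, so any $\rel[T] \in \IRel_{m+n}$ is the disjoint union of its four restrictions to these blocks. Since $[m]$ and $\overline{[n]}$ are the initial and final intervals of $[m+n]$, standardization only shifts indices, so $\rel[T]_{[m]}$ is exactly $\rel[T] \cap ([m]\times[m])$ read on $[m]$, and $\rel[T]_{\overline{[n]}}$ is exactly $\rel[T] \cap (\overline{[n]}\times\overline{[n]})$ de-shifted by $m$ onto $[n]$. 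By definition, $\rel[T] \in \rel[R] \shiftedShuffle \rel[S]$ means precisely that its two within-block restrictions equal $\rel[R]$ and $\overline{\rel[S]}$, with the two cross blocks arbitrary, and this is literally the condition $\rel[T]_{[m]} = \rel[R]$ and $\rel[T]_{\overline{[n]}} = \rel[S]$. This settles the first equivalence.

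For the second equivalence I would first record two geometric facts: every pair of $[m] \times \overline{[n]}$ is increasing and every pair of $\overline{[n]} \times [m]$ is decreasing, because the two intervals are separated by the value $m$; and shifting by $m$ commutes with taking increasing and decreasing subrelations. Using these, the increasing part of $\uRS$ equals $\Inc{\rel[R]} \cup \overline{\Inc{\rel[S]}} \cup ([m]\times\overline{[n]})$ and its decreasing part equals $\Dec{\rel[R]} \cup \overline{\Dec{\rel[S]}}$, while symmetrically the increasing part of $\overprod{\rel[R]}{\rel[S]}$ equals $\Inc{\rel[R]} \cup \overline{\Inc{\rel[S]}}$ and its decreasing part equals $\Dec{\rel[R]} \cup \overline{\Dec{\rel[S]}} \cup (\overline{[n]}\times[m])$. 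Unfolding Definition~\ref{def:weakOrder}, the double inequality $\uRS \wole \rel[T] \wole \overprod{\rel[R]}{\rel[S]}$ becomes the two sandwiches $\Inc{\rel[R]} \cup \overline{\Inc{\rel[S]}} \subseteq \Inc{\rel[T]} \subseteq \Inc{\rel[R]} \cup \overline{\Inc{\rel[S]}} \cup ([m]\times\overline{[n]})$ and $\Dec{\rel[R]} \cup \overline{\Dec{\rel[S]}} \subseteq \Dec{\rel[T]} \subseteq \Dec{\rel[R]} \cup \overline{\Dec{\rel[S]}} \cup (\overline{[n]}\times[m])$.

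To conclude, I would intersect each of these four containments with the four blocks. On $[m]\times[m]$ and on $\overline{[n]}\times\overline{[n]}$ the lower and upper bounds coincide, forcing $\Inc{\rel[T]} \cap ([m]\times[m]) = \Inc{\rel[R]}$, $\Inc{\rel[T]} \cap (\overline{[n]}\times\overline{[n]}) = \overline{\Inc{\rel[S]}}$, and likewise for the decreasing parts; on the cross block $[m]\times\overline{[n]}$ the lower bound on $\Inc{\rel[T]}$ is empty while the upper bound is the whole block, so that part of $\rel[T]$ is free, and symmetrically $\Dec{\rel[T]} \cap (\overline{[n]}\times[m])$ is free. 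Reassembling increasing and decreasing parts over the within-blocks then yields exactly $\rel[T] \cap ([m]\times[m]) = \rel[R]$ and $\rel[T] \cap (\overline{[n]}\times\overline{[n]}) = \overline{\rel[S]}$ with the two cross blocks unconstrained, which is the middle condition of the statement.

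The main obstacle is nothing more than careful bookkeeping, but the one point demanding attention is that the weak order \emph{reverses} containment on the increasing part while \emph{preserving} it on the decreasing part; I must therefore check that the bounds $\uRS$ and $\overprod{\rel[R]}{\rel[S]}$ are arranged so that precisely the increasing cross block $[m]\times\overline{[n]}$ and the decreasing cross block $\overline{[n]}\times[m]$ are the free directions, matching the fact that these are exactly the cross pairs left unspecified by the shifted shuffle. Once the four-block decomposition is in place, both equivalences reduce to matching subsets block by block.
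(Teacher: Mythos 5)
Your proposal is correct and follows essentially the same route as the paper: the first equivalence by unwinding the definition, and the second by computing the increasing and decreasing parts of $\uRS$ and $\overprod{\rel[R]}{\rel[S]}$ and comparing with those of $\rel[T]$ (the paper handles the converse by restricting the weak order inequalities to $[m]$ and $\overline{[n]}$, which is exactly your block-intersection step written more compactly). The bookkeeping point you flag about the reversal of containment on increasing parts is handled correctly.
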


\begin{proof}
The first equivalence is immediate.
Assume now that~$\rel[T] = \rel[R] \sqcup \overline{\rel[S]} \sqcup \rel[I] \sqcup \rel[D]$ for some~$\rel[I] \subseteq [m] \times \overline{[n]}$ and~$\rel[D] \subseteq \overline{[n]} \times [m]$. Then~$\Inc{\rel[T]} = \Inc{\rel[R]} \cup \Inc{\overline{\rel[S]}} \cup \rel[I] \subseteq \Inc{\rel[R]} \cup \Inc{\overline{\rel[S]}} \cup ([m] \times \overline{[n]}) = \Inc{(\uRS)}$ and $\Dec{\rel[T]} = \Dec{\rel[R]} \cup \Dec{\overline{\rel[S]}} \cup \rel[D] \supseteq \Dec{\rel[R]} \cup \Dec{\overline{\rel[S]}} = \Dec{(\uRS)}$ so that~$\uRS \wole \rel[T]$. Similarly~$\rel[T] \wole \overprod{\rel[R]}{\rel[S]}$. Conversely, if~$\uRS \wole \rel[T] \wole \overprod{\rel[R]}{\rel[S]}$, then~$\rel[R] = \uRS_{[m]} \wole \rel[T]_{[m]} \wole \overprod{\rel[R]}{\rel[S]}_{[m]} = \rel[R]$, thus~$\rel[T]_{[m]} = \rel[R]$. Similarly~${\rel[T]_{\overline{[n]}} = \rel[S]}$.
\end{proof}

\begin{definition}
\label{def:product}
The \defn{product} of two integer relations~$\rel[R] \in \IRel_m$ and~$\rel[S] \in \IRel_n$ is
\[
\FRel_{\rel[R]} \product \FRel_{\rel[S]} \eqdef \sum_{\rel[T] \in \rel[R] \shiftedShuffle \rel[S]} \FRel_{\rel[T]}.
\]
\end{definition}

\begin{proposition}
\label{prop:algebra}
The product~$\product$ defines an associative graded algebra structure on~$\K\IRel$.
\end{proposition}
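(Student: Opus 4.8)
The plan is to verify the three requirements of a graded algebra in turn — compatibility with the grading, existence of a unit, and associativity — the last being the only substantial point. Gradedness is immediate from the definition: for $\rel[R] \in \IRel_m$ and $\rel[S] \in \IRel_n$ every relation in $\rel[R] \shiftedShuffle \rel[S]$ lies in $\IRel_{m+n}$, so the product sends $\K\IRel_m \otimes \K\IRel_n$ into $\K\IRel_{m+n}$. The unit is $\FRel_{\emptyset}$, the basis element indexed by the unique (empty) relation of size~$0$: since $[0] \times \overline{[n]}$ and $\overline{[n]} \times [0]$ are both empty, the shifted shuffle $\emptyset \shiftedShuffle \rel[R]$ (resp.\ $\rel[R] \shiftedShuffle \emptyset$) consists of the single relation~$\rel[R]$, whence $\FRel_{\emptyset} \product \FRel_{\rel[R]} = \FRel_{\rel[R]} = \FRel_{\rel[R]} \product \FRel_{\emptyset}$.

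For associativity, the key tool is the restriction characterization of the shifted shuffle (first equivalence of Proposition~\ref{prop:characterizationShuffleProduct}), together with the transitivity of restriction: for nested subsets the iterated restriction $(\rel_X)_Y$ coincides with the single restriction of~$\rel$ to the subset of~$X$ selected by the positions in~$Y$. I would fix $\rel[R] \in \IRel_\ell$, $\rel[S] \in \IRel_m$, $\rel[T] \in \IRel_n$ and expand
\[
(\FRel_{\rel[R]} \product \FRel_{\rel[S]}) \product \FRel_{\rel[T]} = \sum_{\rel[U] \in \rel[R] \shiftedShuffle \rel[S]} \; \sum_{\rel[V] \in \rel[U] \shiftedShuffle \rel[T]} \FRel_{\rel[V]}.
\]
The crucial observation is that in this sum the intermediate relation~$\rel[U]$ is forced to equal $\rel[V]_{[\ell+m]}$, so each $\rel[V] \in \IRel_{\ell+m+n}$ occurs with coefficient $0$ or~$1$. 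Unfolding the two membership conditions through the restriction characterization and applying transitivity of restriction (so that $(\rel[V]_{[\ell+m]})_{[\ell]} = \rel[V]_{[\ell]}$ and $(\rel[V]_{[\ell+m]})_{\overline{[m]}^{\ell}} = \rel[V]_{\overline{[m]}^{\ell}}$), the element $\FRel_{\rel[V]}$ has coefficient~$1$ precisely when $\rel[V]_{[\ell]} = \rel[R]$, $\rel[V]_{\overline{[m]}^{\ell}} = \rel[S]$, and $\rel[V]_{\overline{[n]}^{\ell+m}} = \rel[T]$.

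Finally, I would run the symmetric computation on $\FRel_{\rel[R]} \product (\FRel_{\rel[S]} \product \FRel_{\rel[T]})$: here the intermediate relation is forced to be $\rel[V]_{\overline{[m+n]}^{\ell}}$, and after the analogous use of transitivity of restriction the very same three conditions emerge. Hence both triple products equal $\sum_{\rel[V]} \FRel_{\rel[V]}$, the sum ranging over all $\rel[V] \in \IRel_{\ell+m+n}$ whose three consecutive blocks of sizes $\ell, m, n$ restrict to $\rel[R], \rel[S], \rel[T]$ respectively, which proves associativity. The only point requiring care is the bookkeeping that guarantees multiplicity-freeness — that the intermediate relation is uniquely recovered from~$\rel[V]$ — which is exactly what makes the restriction characterization, rather than the explicit union description of the shuffle, the convenient device here.
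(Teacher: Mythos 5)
Your proof is correct and follows essentially the same route as the paper: both establish gradedness directly from the definition and derive associativity from the restriction characterization of the shifted shuffle in Proposition~\ref{prop:characterizationShuffleProduct}, identifying both triple products with the sum over relations whose three consecutive blocks restrict to the given factors. You are somewhat more thorough than the paper, which leaves the unit and the multiplicity-freeness bookkeeping implicit.
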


\begin{proof}
For~$\rel[R] \in \IRel_m$ and~$\rel[S] \in \IRel_n$, all relations in~$\rel[T] \shiftedShuffle \rel[S]$ belong to~$\IRel_{m+n}$ by definition.
Moreover, for~$\rel[R] \in \IRel_m, \rel[S] \in \IRel_n, \rel[T] \in \IRel_o$, Proposition~\ref{prop:characterizationShuffleProduct} ensures that the relations in~$(\rel[R] \shiftedShuffle \rel[S]) \shiftedShuffle \rel[T]$ and in~$\rel[R] \shiftedShuffle (\rel[S] \shiftedShuffle \rel[T])$ are the relations~$\rel[U] \in \IRel_{m+n+o}$ such that~$\rel[U]_{[m]} = \rel[T]$, $\rel[U]_{\overline{[n]}^m} = \rel[S]$ and~$\rel[U]_{\overline{[o]}^{m+n}} = \rel[T]$.
\end{proof}

By Proposition~\ref{prop:characterizationShuffleProduct}, we know that this product can be interpreted as a sum over an interval. We now prove a property slightly more general.

\begin{proposition}
\label{prop:productInterval}
The product of two intervals is an interval: for $\rel \wole \rel[R']$ in~$\IRel_m$ and $\rel[S] \wole \rel[S']$ in~$\IRel_n$,
\begin{equation*}
\Bigg( \sum_{\rel \wole \rel[U] \wole \rel[R']} \FRel_{\rel[U]} \Bigg) \product \Bigg( \sum_{\rel[S] \wole \rel[V] \wole \rel[S']} \FRel_{\rel[V]} \Bigg)
=
\Bigg( \sum_{\uRS \wole \rel[T] \wole \overprod{\rel[R']}{\rel[S']}} \FRel_{\rel[T]} \Bigg).
\end{equation*}
\end{proposition}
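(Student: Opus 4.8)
The plan is to expand the left-hand side, observe that the resulting sum is multiplicity-free, and thereby reduce the claim to a set equality that follows from Proposition~\ref{prop:characterizationShuffleProduct} together with two elementary monotonicity properties of the weak order. First I would expand the product via Definition~\ref{def:product}, writing the left-hand side as $\sum_{\rel[U], \rel[V]} \sum_{\rel[T] \in \rel[U] \shiftedShuffle \rel[V]} \FRel_{\rel[T]}$, where $\rel[U]$ ranges over $\{\rel[U] : \rel \wole \rel[U] \wole \rel[R']\}$ and $\rel[V]$ over $\{\rel[V] : \rel[S] \wole \rel[V] \wole \rel[S']\}$. The crucial point is that the first equivalence of Proposition~\ref{prop:characterizationShuffleProduct} forces $\rel[U] = \rel[T]_{[m]}$ and $\rel[V] = \rel[T]_{\overline{[n]}}$ for any $\rel[T]$ occurring, so each $\rel[T]$ contributes to at most one summand. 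Hence the expansion is multiplicity-free and equals $\sum_{\rel[T] \in X} \FRel_{\rel[T]}$, where $X \eqdef \set{\rel[T] \in \IRel_{m+n}}{\rel[T]_{[m]} \in [\rel,\rel[R']] \text{ and } \rel[T]_{\overline{[n]}} \in [\rel[S],\rel[S']]}$. It then remains to prove the set equality $X = \set{\rel[T]}{\uRS \wole \rel[T] \wole \overprod{\rel[R']}{\rel[S']}}$.

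For this I would record two monotonicity facts. First, restriction to $[m]$ (resp.~to $\overline{[n]}$) is order-preserving: since $\Inc{(\rel[A]_{[m]})} = \Inc{\rel[A]} \cap [m]^2$ and $\Dec{(\rel[A]_{[m]})} = \Dec{\rel[A]} \cap [m]^2$, intersecting the defining inclusions of $\rel[A] \wole \rel[B]$ with $[m]^2$ yields $\rel[A]_{[m]} \wole \rel[B]_{[m]}$ (this is exactly the computation already used at the end of the proof of Proposition~\ref{prop:characterizationShuffleProduct}, and the case of $\overline{[n]}$ is identical after the order-preserving shift). Second, $\underprod{\cdot}{\cdot}$ and $\overprod{\cdot}{\cdot}$ are order-preserving in each argument: from $\Inc{(\uRS)} = \Inc{\rel} \cup \Inc{\overline{\rel[S]}} \cup ([m] \times \overline{[n]})$ and $\Dec{(\uRS)} = \Dec{\rel} \cup \Dec{\overline{\rel[S]}}$, one reads off directly that $\rel \wole \rel[U]$ and $\rel[S] \wole \rel[V]$ imply $\uRS \wole \underprod{\rel[U]}{\rel[V]}$, and symmetrically for $\overprod{\cdot}{\cdot}$.

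Now I would prove the two inclusions. For $X \subseteq \set{\rel[T]}{\uRS \wole \rel[T] \wole \overprod{\rel[R']}{\rel[S']}}$, given $\rel[T] \in X$ I set $\rel[U] = \rel[T]_{[m]}$ and $\rel[V] = \rel[T]_{\overline{[n]}}$; since $\rel[T] \in \rel[U] \shiftedShuffle \rel[V]$, the second equivalence of Proposition~\ref{prop:characterizationShuffleProduct} gives $\underprod{\rel[U]}{\rel[V]} \wole \rel[T] \wole \overprod{\rel[U]}{\rel[V]}$, and the monotonicity of $\underprod{\cdot}{\cdot}$ and $\overprod{\cdot}{\cdot}$ sandwiches this between $\uRS$ and $\overprod{\rel[R']}{\rel[S']}$. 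For the reverse inclusion, starting from $\uRS \wole \rel[T] \wole \overprod{\rel[R']}{\rel[S']}$ I restrict the whole chain to $[m]$ and to $\overline{[n]}$; using the immediate identities $(\uRS)_{[m]} = \rel$, $(\overprod{\rel[R']}{\rel[S']})_{[m]} = \rel[R']$ (and their analogues on $\overline{[n]}$) together with monotonicity of restriction, I obtain $\rel \wole \rel[T]_{[m]} \wole \rel[R']$ and $\rel[S] \wole \rel[T]_{\overline{[n]}} \wole \rel[S']$, that is, $\rel[T] \in X$.

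Each step is routine once the reduction is in place; the only point demanding genuine care is the multiplicity count, namely verifying that the expanded product is multiplicity-free so that it equals an honest sum over an interval rather than a weighted sum. This is precisely where the first equivalence of Proposition~\ref{prop:characterizationShuffleProduct} does the work, since it shows that $\rel[T]$ determines the pair $(\rel[T]_{[m]}, \rel[T]_{\overline{[n]}})$ uniquely.
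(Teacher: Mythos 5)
Your proof is correct and follows essentially the same route as the paper's: expand the product, use the fact that $\rel[T]$ determines $(\rel[T]_{[m]}, \rel[T]_{\overline{[n]}})$ to see the sum is multiplicity-free, and then establish the two inclusions of the resulting set equality via restriction to $[m]$ and $\overline{[n]}$ in one direction and the sandwich $\underprod{\rel[U]}{\rel[V]} \wole \rel[T] \wole \overprod{\rel[U]}{\rel[V]}$ in the other. The only difference is presentational: you state explicitly the two monotonicity facts (restriction and $\underprod{\cdot}{\cdot}$, $\overprod{\cdot}{\cdot}$ are order-preserving) that the paper invokes implicitly with ``this gives directly,'' which is a welcome clarification but not a different argument.
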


\begin{proof}
We have
\begin{align*}
\Bigg( \sum_{\rel \wole \rel[U] \wole \rel[R']} \FRel_{\rel[U]} \Bigg) \product \Bigg( \sum_{\rel[S] \wole \rel[V] \wole \rel[S']} \FRel_{\rel[V]} \Bigg)
=
\sum_{\substack{\rel \wole \rel[U] \wole \rel[R'] \\ \rel[S] \wole \rel[V] \wole \rel[S']}} \sum_{\rel[T] \in \rel[U] \shiftedShuffle \rel[V]}
\FRel_{\rel[T]}.
\end{align*}
First note that all coefficients are equal to 1. Indeed, any relation $\rel[T]$ of the sum belongs to exactly one set $\rel[U] \shiftedShuffle \rel[V]$ as $\rel[U]$ and $\rel[V]$ are uniquely defined by $\rel[U] = \rel[T]_{[m]}$ and $\rel[V] = \rel[T]_{\overline{[n]}}$.
The only thing to prove is then
\[
\set{T \in \IRel}{\uRS \wole T \wole \overprod{\rel[R']}{\rel[S']}} = \bigsqcup_{\substack{\rel \wole \rel[U] \wole \rel[R'] \\ \rel[S] \wole \rel[V] \wole \rel[S']}} \set{T \in \IRel}{\underprod{\rel[U]}{\rel[V]} \wole T \wole \overprod{\rel[U]}{\rel[V]}}
\]
where the union on the right is disjoint. Let us call $A$ the set on the left and $B$ the set on the right.
It is clear that $B \subseteq A$.
Indeed, for $T \in B$, we have $\underprod{\rel[U]}{\rel[V]} \wole T \wole \overprod{\rel[U]}{\rel[V]}$ for some $\rel[R] \wole \rel[U] \wole \rel[R']$ and $\rel[S] \wole \rel[V] \wole \rel[S']$: this gives directly $\uRS \wole \rel[T] \wole \overprod{\rel[R']}{\rel[S']}$.
Conversely, let $\rel[T] \in \IRel$ be such that $\uRS \wole \rel[T] \wole \overprod{\rel[R']}{\rel[S']}$.
This means $\Inc{(\overprod{\rel[R']}{\rel[S']})} \subseteq \Inc{\rel[T]} \subseteq \Inc{(\uRS)}$ and $\Dec{(\overprod{\rel[R']}{\rel[S']})}  \supseteq \Dec{\rel[T]} \supseteq \Dec{(\uRS)}$.
This is still true if the relations are restricted to $[m]$ (resp.~$\overline{[n]}$).
For $\rel[U] \eqdef \rel[T]_{[m]}$ and $\rel[V] \eqdef \rel[T]_{\overline{[n]}}$, we get that $(\uRS)_{[m]} = \rel[R] \wole \rel[U] \wole \rel[R'] = (\overprod{\rel[R']}{\rel[S']})_{[m]}$ and $(\uRS)_{\overline{[n]}} = \rel[S] \wole \rel[V] \wole \rel[S'] = (\overprod{\rel[R']}{\rel[S']})_{\overline{[n]}}$.
Now $\rel[T] = \rel[U] \cup \rel[V] \cup I$ with $I \subseteq ([m] \times \overline{[n]}) \cup (\overline{[n]} \times [m])$ which means $T \in \rel[U] \shiftedShuffle \rel[V]$.
\end{proof}

\subsubsection{Coproduct}

We now define a coproduct on integer relations using total cuts.

\begin{definition}
\label{def:totalCut}
A \defn{total cut}~$(X,Y)$ of a relation~$\rel[T] \in \IRel_p$ is a partition~$[p] = X \sqcup Y$ such that~$x \rel[T] y$ and $y \notrel[T] x$ for all $x \in X$ and~$y \in Y$.
For two relations~$\rel[R] \in \IRel_m$ and $\rel[S] \in \IRel_n$, define the \defn{convolution}~$\rel[R] \convolution \rel[S]$ as the set of relations~$\rel[T] \in \IRel_{m+n}$ which admit a total cut~$(X,Y)$ such that~$\rel[T]_X = \rel[R]$ and~$\rel[T]_Y = \rel[S]$.
\end{definition}

\begin{remark}
Note that the convolution of~$\rel[R] \in \IRel_m$ and~$\rel[S] \in \IRel_n$ has cardinality~$|{\rel[R] \convolution \rel[S]}| = \binom{m+n}{m}$.
\end{remark}

\begin{definition}
\label{def:coproduct}
The \defn{coproduct} of an integer relation~$\rel[T] \in \IRel$ is
\[
\coproduct(\FRel_{\rel[T]}) \eqdef \sum_{\rel[T] \in \rel[R] \convolution \rel[S]} \FRel_{\rel[R]} \otimes \FRel_{\rel[S]}.
\]
\end{definition}

\begin{example}
For instance,
\begin{align*}
\coproduct \Big(
\FRel_{\scalebox{.5}{\input{figures/relations/r3_12_13_32}}} \! \!
\Big)
=
\FRel_{\scalebox{.5}{\input{figures/relations/r3_12_13_32}}} \! \!  \otimes \FRel_\varnothing
+
\FRel_{\scalebox{.5}{}} \! \!
\otimes
\FRel_{\scalebox{.5}{\begin{tikzpicture}[baseline, scale=\interscale]
\node(T1) at (0,0) {1};
\node(T2) at (1,0) {2};
\draw[->, line width = 0.5, color=red] (T2) edge [bend left=70] (T1);
\draw[->,line width = 0.5, color=white, opacity=0] (T1) edge [bend left=70] (T2);
\draw[->,line width = 0.5, color=white, opacity=0] (T2) edge [bend left=70] (T1);
\end{tikzpicture}}} \! \!
+
\FRel_{\scalebox{.5}{}} \! \!
\otimes
\FRel_{\scalebox{.5}{}} \! \!
+
\FRel_\varnothing \otimes  \FRel_{\scalebox{.5}{\input{figures/relations/r3_12_13_32}}}
\end{align*}

where the terms in the coproduct arise from the total cuts $(\{1,2,3\}, \varnothing)$, $(\{1\}, \{2,3\})$, $(\{1,3\},\{2\})$, and $(\varnothing, \{1,2,3\})$.
\end{example}

\begin{proposition}
\label{prop:coalgebra}
The coproduct~$\coproduct$ defines a coassociative graded coalgebra structure on~$\K\IRel$.
\end{proposition}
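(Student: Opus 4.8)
The plan is to verify the three coalgebra axioms---compatibility with the grading, the counit axiom, and coassociativity---with coassociativity being the substantive point. My first step is to rewrite the coproduct as a sum over total cuts rather than over pairs of relations. I would observe that distinct total cuts $(X,Y)$ of a fixed $\rel[T] \in \IRel_p$ always produce distinct pairs $(\rel[T]_X, \rel[T]_Y)$, so that
\[
\coproduct(\FRel_{\rel[T]}) = \sum_{(X,Y)} \FRel_{\rel[T]_X} \otimes \FRel_{\rel[T]_Y},
\]
where the sum ranges over all total cuts of $\rel[T]$. Indeed, if $(X_1,Y_1)$ and $(X_2,Y_2)$ are total cuts, then choosing $a \in X_1 \cap Y_2$ and $b \in X_2 \cap Y_1$ forces both $a \rel[T] b$ (from the first cut) and $a \notrel[T] b$ (from the second); hence the total cuts form a chain for inclusion of their first blocks, and in particular have pairwise distinct block sizes. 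Since a total cut splits $[p]$ into blocks of sizes $m$ and $n = p-m$, this form makes transparent that $\coproduct$ sends $\K\IRel_p$ into $\bigoplus_{m+n=p} \K\IRel_m \otimes \K\IRel_n$, establishing the grading.

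For the counit I would set $\varepsilon(\FRel_{\rel[T]}) = 1$ if $\rel[T]$ is the empty relation of size $0$ and $\varepsilon(\FRel_{\rel[T]}) = 0$ otherwise. The partitions $([p],\varnothing)$ and $(\varnothing,[p])$ are vacuously total cuts of every $\rel[T]$, and they are the only cuts whose first, resp.\ second, block is empty; applying $\varepsilon$ to one tensor factor therefore annihilates every term except the one coming from the relevant trivial cut, giving $(\varepsilon \otimes \mathrm{id}) \circ \coproduct = \mathrm{id} = (\mathrm{id} \otimes \varepsilon) \circ \coproduct$.

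The heart of the argument is coassociativity, for which I would introduce the symmetric notion of an \emph{ordered tripartition}: a partition $[p] = A \sqcup B \sqcup C$ such that $u \rel[T] v$ and $v \notrel[T] u$ whenever $u$ lies in a strictly earlier block than $v$. The key lemma is that this single condition is equivalent both to ``$(A \sqcup B, C)$ is a total cut of $\rel[T]$ and $(A,B)$ is a total cut of $\rel[T]_{A \sqcup B}$'' and to ``$(A, B \sqcup C)$ is a total cut of $\rel[T]$ and $(B,C)$ is a total cut of $\rel[T]_{B \sqcup C}$'', which follows by unwinding the total-cut condition block by block, using that restriction to a subset preserves all relations internal to that subset. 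Combined with the transitivity of restriction, namely $(\rel[T]_X)_{X'} = \rel[T]_{X''}$ where $X''$ is the subset of $[p]$ corresponding to $X' \subseteq X$ after standardization, this shows that both $(\coproduct \otimes \mathrm{id}) \circ \coproduct(\FRel_{\rel[T]})$ and $(\mathrm{id} \otimes \coproduct) \circ \coproduct(\FRel_{\rel[T]})$ expand as
\[
\sum_{(A,B,C)} \FRel_{\rel[T]_A} \otimes \FRel_{\rel[T]_B} \otimes \FRel_{\rel[T]_C}
\]
summed over all ordered tripartitions of $[p]$, so the two coincide.

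I expect the only delicate point to be the bookkeeping in the key lemma: one must check that a total cut of the standardized restriction $\rel[T]_X$ corresponds exactly to a partition of $X$, in the original labels, satisfying the internal cut condition, and that iterating restrictions composes correctly. Once the equivalence of the two nested-cut descriptions with the symmetric tripartition condition is in hand, coassociativity is a direct comparison of the two triple sums, with no multiplicity issues thanks to the distinctness of cuts noted above.
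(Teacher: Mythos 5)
Your proof is correct and follows essentially the same route as the paper's: both arguments establish coassociativity by showing that the two iterated coproducts are each indexed by ordered tripartitions $[p] = A \sqcup B \sqcup C$ satisfying the symmetric cut condition (the paper states this for iterated convolutions, which is the dual formulation). Your additional checks---that distinct total cuts yield distinct pairs, and the counit axiom---are details the paper leaves implicit, but they do not change the approach.
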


\begin{proof}
For~$\rel[R] \in \IRel_m$ and~$\rel[S] \in \IRel_n$, all relations in~$\rel[T] \convolution \rel[S]$ belong to~$\IRel_{m+n}$ by definition.
Moreover, for~$\rel[R] \in \IRel_m, \rel[S] \in \IRel_n, \rel[T] \in \IRel_o$, the relations in~$(\rel[R] \convolution \rel[S]) \convolution \rel[T]$ and in~$\rel[R] \convolution (\rel[S] \convolution \rel[T])$ are precisely the relations~$\rel[U] \in \IRel_{m+n+o}$ such that there is a partition~$[m+n+o] = X \sqcup Y \sqcup Z$ such that
\begin{itemize}
\item $\{(x,y), (x,z), (y,z)\} \subseteq \rel[U]$ and~$\{(y,x), (z,x), (z,y)\} \cap \rel[U] = \varnothing$ for all~$x \in X$, $y \in Y$, $z \in Z$,
\item $\rel[U]_X = \rel[T]$, $\rel[U]_Y = \rel[S]$ and~$\rel[U]_Z = \rel[T]$.
\qedhere
\end{itemize}
\end{proof}

\subsubsection{Hopf algebra}

We now combine the algebra and coalgebra structures on~$\K\IRel$ to a Hopf algebra.
Recall that a \defn{combinatorial Hopf algebra} is a combinatorial vector space endowed with an associative product~$\product$ and a coassociative coproduct~$\coproduct$ which satisfy the compatibility relation
\(
\coproduct(\FRel_{\rel[R]} \product \FRel_{\rel[S]}) = \coproduct (\FRel_{\rel[R]}) \product \coproduct (\FRel_{\rel[S]}),
\)
where the last product is defined componentwise by~${(a \otimes b) \product (c \otimes d) = (a \product c) \otimes (b \product d)}$.

\begin{proposition}
\label{prop:HopfAlgebra}
The product~$\product$ of Definition~\ref{def:product} and the coproduct~$\coproduct$ of Definition~\ref{def:coproduct} endow~$\K\IRel$ with a Hopf algebra structure.
\end{proposition}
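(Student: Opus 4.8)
The plan is to leverage Propositions~\ref{prop:algebra} and~\ref{prop:coalgebra}, which already give the associative graded algebra and the coassociative graded coalgebra on~$\K\IRel$. By the definition of combinatorial Hopf algebra recalled above, it then remains only to verify the compatibility relation~$\coproduct(\FRel_{\rel[R]} \product \FRel_{\rel[S]}) = \coproduct(\FRel_{\rel[R]}) \product \coproduct(\FRel_{\rel[S]})$; I would moreover record that the unit is~$\FRel_\varnothing$, that the counit is the projection~$\K\IRel \to \K\IRel_0 \cong \K$, and that, the bialgebra being connected and graded, its antipode exists and is unique. The whole difficulty thus concentrates in the compatibility relation, which I would prove by exhibiting a bijection between the indexing sets of its two sides.

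I would first expand both sides. On the left,
\[
\coproduct(\FRel_{\rel[R]} \product \FRel_{\rel[S]}) = \sum_{\rel[T] \in \rel[R] \shiftedShuffle \rel[S]} \; \sum_{(X,Y)} \FRel_{\rel[T]_X} \otimes \FRel_{\rel[T]_Y},
\]
the inner sum ranging over all total cuts~$(X,Y)$ of~$\rel[T]$ (these are in bijection with the pairs appearing in~$\coproduct$, since the total cuts of a fixed relation form a chain under inclusion and hence have pairwise distinct sizes). The left index set therefore consists of pairs~$(\rel[T], (X,Y))$ with~$\rel[T] \in \rel[R] \shiftedShuffle \rel[S]$ and~$(X,Y)$ a total cut of~$\rel[T]$. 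On the right, using the componentwise product,
\[
\coproduct(\FRel_{\rel[R]}) \product \coproduct(\FRel_{\rel[S]}) = \sum_{(X_1,Y_1)} \; \sum_{(X_2,Y_2)} \big( \FRel_{\rel[R]_{X_1}} \product \FRel_{\rel[S]_{X_2}} \big) \otimes \big( \FRel_{\rel[R]_{Y_1}} \product \FRel_{\rel[S]_{Y_2}} \big),
\]
where~$(X_1,Y_1)$ ranges over total cuts of~$\rel[R]$ and~$(X_2,Y_2)$ over total cuts of~$\rel[S]$; expanding the two products via Definition~\ref{def:product}, the right index set consists of quadruples~$\big((X_1,Y_1),(X_2,Y_2),\rel[A],\rel[B]\big)$ with~$\rel[A] \in \rel[R]_{X_1} \shiftedShuffle \rel[S]_{X_2}$ and~$\rel[B] \in \rel[R]_{Y_1} \shiftedShuffle \rel[S]_{Y_2}$.

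The bijection I would set up sends~$(\rel[T], (X,Y))$ to~$\big((X_1,Y_1),(X_2,Y_2),\rel[T]_X,\rel[T]_Y\big)$, where~$X_1 = X \cap [m]$, $Y_1 = Y \cap [m]$, and~$X_2, Y_2 \subseteq [n]$ are obtained from~$X \cap \overline{[n]}$ and~$Y \cap \overline{[n]}$ by unshifting. Since~$\rel[T]_{[m]} = \rel[R]$ and~$\rel[T]_{\overline{[n]}} = \rel[S]$ by Proposition~\ref{prop:characterizationShuffleProduct}, the pairs~$(X_1,Y_1)$ and~$(X_2,Y_2)$ are total cuts of~$\rel[R]$ and~$\rel[S]$ respectively, and because every element of~$X_1$ precedes every element of~$\overline{X_2}$, standardizing shows~$\rel[T]_X \in \rel[R]_{X_1} \shiftedShuffle \rel[S]_{X_2}$ and~$\rel[T]_Y \in \rel[R]_{Y_1} \shiftedShuffle \rel[S]_{Y_2}$; so the map lands in the right index set.

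The crux, and the step I expect to be the main obstacle, is the inverse map: reconstructing~$\rel[T]$ and checking that~$(X,Y)$ is genuinely a total cut of it. Here I would set~$X = X_1 \sqcup \overline{X_2}$ and~$Y = Y_1 \sqcup \overline{Y_2}$, fix~$\rel[T]_{[m]} = \rel[R]$ and~$\rel[T]_{\overline{[n]}} = \rel[S]$, and determine the remaining cross pairs~$\{u,v\}$ with~$u \in [m]$ and~$v \in \overline{[n]}$ by a four-case analysis according to the membership of~$u$ and~$v$ in~$X$ or~$Y$: when both lie in~$X$ the two directed pairs are read off from~$\rel[A] = \rel[T]_X$, when both lie in~$Y$ from~$\rel[B] = \rel[T]_Y$, and in the two mixed cases they are forced by the total cut condition (the element of~$X$ must relate to the element of~$Y$ but not conversely). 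The key verifications are that these four cases partition all cross pairs so that~$\rel[T]$ is determined without conflict, that the resulting~$\rel[T]$ restricts to~$\rel[A]$ on~$X$ and to~$\rel[B]$ on~$Y$, and that~$(X,Y)$ satisfies the total cut axioms of Definition~\ref{def:totalCut} in every case (the cross cases by construction, the within-block cases because~$(X_1,Y_1)$ and~$(X_2,Y_2)$ are total cuts of~$\rel[R]$ and~$\rel[S]$). Once this is checked, the two maps are mutually inverse, the two index sets match term by term, and the compatibility relation, hence the Hopf algebra structure, follows.
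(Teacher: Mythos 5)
Your proof is correct and follows essentially the same route as the paper: both sides are expanded and matched term by term via the key observation that the total cuts of an element of~$\rel[R] \shiftedShuffle \rel[S]$ are exactly those of the form~$(X_1 \sqcup \overline{X_2}, Y_1 \sqcup \overline{Y_2})$ assembled from total cuts of~$\rel[R]$ and of~$\rel[S]$, with the cross-relations between the $X$-block and the $Y$-block forced and the remaining cross-relations free. Your explicit inverse map and your remark that the total cuts of a fixed relation form a chain (hence each contributes with multiplicity one to the coproduct) are just more detailed versions of what the paper states tersely.
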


\begin{example}
Before giving the formal proof, let us illustrate on an example the compatibility relation~$\coproduct(\FRel_{\rel[R]} \product \FRel_{\rel[S]}) = \coproduct (\FRel_{\rel[R]}) \product \coproduct (\FRel_{\rel[S]})$.
For that, let~$\rel[R]$ and~$\rel[S]$ both be the unique relation of size~$1$.
On the one hand, we have:
\begin{align*}
\coproduct \big( \FRel_{\scalebox{.5}{}}\!\! \big)
\product
\coproduct \big( \FRel_{\scalebox{.5}{}}\!\! \big)
&=
\big(
\FRel_{\scalebox{.5}{}}\!\! \otimes \FRel_\varnothing +
\FRel_\varnothing \otimes \FRel_{\scalebox{.5}{}}\!\!
\big) \product
\big(
\FRel_{\scalebox{.5}{}}\!\! \otimes \FRel_\varnothing +
\FRel_\varnothing \otimes \FRel_{\scalebox{.5}{}}\!\!
\big) \\
&= \big( \FRel_{\scalebox{.5}{}}\!\!  \product \FRel_{\scalebox{.5}{}}\!\!  \big)
\otimes \FRel_\varnothing
+
2 \big( \FRel_{\scalebox{.5}{}}\!\!  \otimes \FRel_{\scalebox{.5}{}}\!\! \big)
+
\FRel_\varnothing \otimes \big( \FRel_{\scalebox{.5}{}}\!\!  \product \FRel_{\scalebox{.5}{}}\!\!  \big),
\end{align*}
with
\begin{align*}
\FRel_{\scalebox{.5}{}}\!\!  \product \FRel_{\scalebox{.5}{}}\!\! =
\FRel_{\scalebox{.5}{\begin{tikzpicture}[baseline, scale=\interscale]
\node(T1) at (0,0) {1};
\node(T2) at (1,0) {2};
\draw[->,line width = 0.5, color=white, opacity=0] (T1) edge [bend left=70] (T2);
\draw[->,line width = 0.5, color=white, opacity=0] (T2) edge [bend left=70] (T1);
\end{tikzpicture}}}\!\! +
\FRel_{\scalebox{.5}{}}\!\! +
\FRel_{\scalebox{.5}{}}\!\! +
\FRel_{\scalebox{.5}{\input{figures/relations/r2_12_21}}}\!\!.
\end{align*}
On the other hand, we have:
\begin{align*}
\coproduct \big( \FRel_{\scalebox{.5}{}}\!\!  \product \FRel_{\scalebox{.5}{}}\!\! \big)
&=
\coproduct \big( \FRel_{\scalebox{.5}{}}\!\! + \FRel_{\scalebox{.5}{}}\!\! + \FRel_{\scalebox{.5}{}}\!\! + \FRel_{\scalebox{.5}{\input{figures/relations/r2_12_21}}}\!\! \big) \\
&=
\coproduct \big( \FRel_{\scalebox{.5}{}}\!\! \big) +
\coproduct \big( \FRel_{\scalebox{.5}{}}\!\! \big) +
\coproduct \big( \FRel_{\scalebox{.5}{}}\!\! \big) +
\coproduct \big( \FRel_{\scalebox{.5}{\input{figures/relations/r2_12_21}}}\!\! \big) \\
&=
\big( \FRel_{\scalebox{.5}{}}\!\! \otimes \FRel_\varnothing +
\FRel_\varnothing \otimes \FRel_{\scalebox{.5}{}}\!\!  \big) 
+ \big( \FRel_{\scalebox{.5}{}}\!\! \otimes \FRel_\varnothing +
\FRel_{\scalebox{.5}{}}\!\! \otimes \FRel_{\scalebox{.5}{}}\!\!
+ \FRel_\varnothing \otimes \FRel_{\scalebox{.5}{}}\!\!  \big) \\
& \quad + \big( \FRel_{\scalebox{.5}{}}\!\! \otimes \FRel_\varnothing +
\FRel_{\scalebox{.5}{}}\!\! \otimes \FRel_{\scalebox{.5}{}}\!\!
+ \FRel_\varnothing \otimes \FRel_{\scalebox{.5}{}}\!\!  \big)
+ \big( \FRel_{\scalebox{.5}{\input{figures/relations/r2_12_21}}}\!\! \otimes \FRel_\varnothing +
\FRel_\varnothing \otimes \FRel_{\scalebox{.5}{\input{figures/relations/r2_12_21}}}\!\!  \big) \\
&= \big( \FRel_{\scalebox{.5}{}}\!\! +
\FRel_{\scalebox{.5}{}}\!\! +
\FRel_{\scalebox{.5}{}}\!\! +
\FRel_{\scalebox{.5}{\input{figures/relations/r2_12_21}}}\!\! \big) \otimes \FRel_\varnothing + 2  \big( \FRel_{\scalebox{.5}{}}\!\!  \otimes \FRel_{\scalebox{.5}{}}\!\! \big) \\
& \quad + \FRel_\varnothing \otimes \big( \FRel_{\scalebox{.5}{}}\!\! +
\FRel_{\scalebox{.5}{}}\!\! +
\FRel_{\scalebox{.5}{}}\!\! +
\FRel_{\scalebox{.5}{\input{figures/relations/r2_12_21}}}\!\! \big).
\end{align*}
Therefore
\[
\coproduct \big( \FRel_{\scalebox{.5}{}}\!\!  \product \FRel_{\scalebox{.5}{}}\!\! \big)
=
\coproduct \big( \FRel_{\scalebox{.5}{}}\!\! \big)
\product
\coproduct \big( \FRel_{\scalebox{.5}{}}\!\! \big).
\]
\end{example}

\begin{proof}[Proof of Proposition~\ref{prop:HopfAlgebra}]
We have
\[
\coproduct (\FRel_{\rel[R]}) \product \coproduct (\FRel_{\rel[S]}) = \sum \FRel_{\rel[R]_X \sqcup \overline{\rel[S]_U}^{|X|} \sqcup \rel[I] \sqcup \rel[D]} \otimes \FRel_{\rel[R]_Y \sqcup \overline{\rel[S]_V}^{|Y|} \sqcup \rel[I]' \sqcup \rel[D]'} = \coproduct(\FRel_{\rel[R]} \product \FRel_{\rel[S]}),
\]
where the sum ranges over all total cuts~$(X,Y)$ of~$\rel[R]$ and~$(U,V)$ of~$\rel[S]$ and all relations
\[
\rel[I] \subseteq [|X|] \times \overline{[|U|]}^{|X|},
\quad
\rel[D] \subseteq \overline{[|U|]}^{|X|} \times [|X|],
\quad
\rel[I]' \subseteq [|Y|] \times \overline{[|V|]}^{|Y|}
\quad\text{and}\quad
\rel[D]' \subseteq \overline{[|V|]}^{|Y|} \times [|Y|].
\]
The first equality directly follows from the definitions.
For the second equality, observe that for any~$\rel[T] \in \rel[R] \shiftedShuffle \rel[S]$, the total cuts of~$\rel[T]$ are precisely of the form~$(X \sqcup \overline{U}^m, Y \sqcup \overline{V}^m)$ where~$(X,Y)$ is a total cut of~$\rel[R]$ and~$(U,V)$ is a total cut of~$\rel[S]$ such that~$X \times \overline{V}^m$ and $Y \times \overline{U}^m$ are both subsets of~$\rel[T]$ while~$\overline{V}^m \times X$ and~$\overline{U}^m \times Y$ are both subsets of the complement of~$\rel[T]$.
\end{proof}

\subsubsection{Multiplicative bases}

In this section, we describe multiplicative bases of~$\K\IRel$ and study the indecomposable elements of~$\IRel$ for these bases.
For a relation~$\rel \in \IRel$, we define
\[
\ERel^{\rel} = \sum_{\rel[R] \wole \rel[R']} \FRel_{\rel[R']}
\qquad\text{and}\qquad
\HRel^{\rel} = \sum_{\rel[R'] \wole \rel[R]} \FRel_{\rel[R']}.
\]

\begin{example}
For instance,
\quad
$\ERel^{\scalebox{.5}{}} \! \! =
\FRel_{\scalebox{.5}{}} \! \! +
\FRel_{\scalebox{.5}{}}$
\quad and \quad
$\HRel^{\scalebox{.5}{}} \! \! =
\FRel_{\scalebox{.5}{}} \! \! +
\FRel_{\scalebox{.5}{}}$.
\end{example}

\begin{proposition}
\label{prop:Rmultbases}
The sets~$(\ERel^{\rel})_{\rel \in \IRel}$ and~$(\HRel^{\rel})_{\rel \in \IRel}$ form multiplicative bases of~$\K\IRel$ with
\[
\ERel^{\rel[R]} \product \ERel^{\rel[S]} = \ERel^{\uRS}
\qquad\text{and}\qquad
\HRel^{\rel[R]} \product \HRel^{\rel[S]} = \HRel^{\overprod{\rel[R]}{\rel[S]}}.
\]
\end{proposition}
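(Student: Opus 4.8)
The plan is to deduce both statements from Proposition~\ref{prop:productInterval}, which already expresses the product of two interval sums as a single interval sum. I would first settle the basis claim. Since $\IRel_n$ is finite and $(\IRel_n, \wole)$ is a partial order, fixing any linear extension of $\wole$ makes the transition matrix from $(\FRel_{\rel})$ to $(\ERel^{\rel})$ unitriangular: indeed $\ERel^{\rel} = \FRel_{\rel} + \sum_{\rel \wole \rel[R'],\, \rel \neq \rel[R']} \FRel_{\rel[R']}$, and symmetrically for $\HRel^{\rel}$. Hence both families are bases of $\K\IRel$.

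For the multiplicative formulas, the key point is that, by Proposition~\ref{prop:reflexiveLattice}, the weak order $(\IRel_n, \wole)$ has a minimum and a maximum, so that principal ideals and filters are intervals. From Definition~\ref{def:weakOrder}, the minimum is the fully increasing relation $\rel[I]_n$ (its increasing part is all of $\rel[I]_n$ and its decreasing part is only the diagonal) and the maximum is the fully decreasing relation $\rel[D]_n$. Thus, for $\rel[R] \in \IRel_m$,
\[
\ERel^{\rel[R]} = \sum_{\rel[R] \wole \rel[R'] \wole \rel[D]_m} \FRel_{\rel[R']}
\qquad\text{and}\qquad
\HRel^{\rel[R]} = \sum_{\rel[I]_m \wole \rel[R'] \wole \rel[R]} \FRel_{\rel[R']},
\]
and these are genuine interval sums to which Proposition~\ref{prop:productInterval} applies.

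Then I would run the computation. For the $E$-basis, applying Proposition~\ref{prop:productInterval} to the intervals $[\rel[R], \rel[D]_m]$ and $[\rel[S], \rel[D]_n]$ gives $\ERel^{\rel[R]} \product \ERel^{\rel[S]} = \sum_{\uRS \wole \rel[T] \wole \overprod{\rel[D]_m}{\rel[D]_n}} \FRel_{\rel[T]}$, and it remains only to identify the top element: by definition $\overprod{\rel[D]_m}{\rel[D]_n} = \rel[D]_m \cup \overline{\rel[D]_n} \cup (\overline{[n]} \times [m])$, whose three pieces are respectively the decreasing pairs inside $[m]$, inside $\overline{[n]}$, and from $\overline{[n]}$ to $[m]$, together exhausting all decreasing pairs of $[m+n]$, so $\overprod{\rel[D]_m}{\rel[D]_n} = \rel[D]_{m+n}$ and hence $\ERel^{\rel[R]} \product \ERel^{\rel[S]} = \ERel^{\uRS}$. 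The dual computation for the $H$-basis uses the intervals $[\rel[I]_m, \rel[R]]$ and $[\rel[I]_n, \rel[S]]$ together with the symmetric identity $\underprod{\rel[I]_m}{\rel[I]_n} = \rel[I]_m \cup \overline{\rel[I]_n} \cup ([m] \times \overline{[n]}) = \rel[I]_{m+n}$, yielding $\HRel^{\rel[R]} \product \HRel^{\rel[S]} = \HRel^{\overprod{\rel[R]}{\rel[S]}}$.

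The argument is short because the real content --- that the product of two intervals is again an interval --- is already packaged in Proposition~\ref{prop:productInterval}. The only step needing genuine verification is the two extremal identities $\overprod{\rel[D]_m}{\rel[D]_n} = \rel[D]_{m+n}$ and $\underprod{\rel[I]_m}{\rel[I]_n} = \rel[I]_{m+n}$, that is, that the gluing blocks $\overline{[n]} \times [m]$ and $[m] \times \overline{[n]}$ supply exactly the cross pairs needed to complete the decreasing (resp.\ increasing) relation of $[m+n]$; this is where I would be most careful, though it is ultimately routine.
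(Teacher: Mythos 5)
Your proposal is correct and follows essentially the same route as the paper: a unitriangularity argument for the basis claim, and then the product formula as a consequence of Proposition~\ref{prop:productInterval}. The only difference is that you spell out the details the paper leaves implicit --- namely that $\ERel^{\rel[R]}$ and $\HRel^{\rel[R]}$ are the interval sums $[\rel[R],\rel[D]_m]$ and $[\rel[I]_m,\rel[R]]$, and that $\overprod{\rel[D]_m}{\rel[D]_n} = \rel[D]_{m+n}$ and $\underprod{\rel[I]_m}{\rel[I]_n} = \rel[I]_{m+n}$ --- all of which you verify correctly.
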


\begin{proof}
First note that the elements of $(\ERel^{\rel})_{\rel \in \IRel}$ (resp.~$(\HRel^{\rel})_{\rel \in \IRel}$) are linearly independent: each element $\ERel^{\rel}$ contains a leading term $\FRel_{\rel}$ and so the transition matrix is triangular.
The product formula is a direct consequence of Proposition~\ref{prop:productInterval}.
\end{proof}

\begin{example}
For instance,
\begin{align*}
\ERel^{\scalebox{.5}{}} \! \!  \product \ERel^{\scalebox{.5}{}} \! \! & =
\big( \FRel_{\scalebox{.5}{}} \! \!
+ \FRel_{\scalebox{.5}{}} \! \! \big)
\product  \FRel_{\scalebox{.5}{}} \\
& = \FRel_{\scalebox{.5}{}} \! \!  \product \FRel_{\scalebox{.5}{}} \! \!
 + \FRel_{\scalebox{.5}{}} \! \!  \product \FRel_{\scalebox{.5}{}} \\[-.3cm]
& = \FRel_{\scalebox{.5}{\input{figures/relations/r3_13_23}}} \! \!
 + \dots
 + \FRel_{\scalebox{.5}{\input{figures/relations/r3_31_32}}} \! \!
 + \FRel_{\scalebox{.5}{\input{figures/relations/r3_13_23_21}}} \! \!
 + \dots
 + \FRel_{\scalebox{.5}{\input{figures/relations/r3_21_31_32}}} \! \!   = \ERel^{\scalebox{.5}{\input{figures/relations/r3_13_23}}} \! \! .
\end{align*}

\end{example}

Note that even though $\ERel$ and $\HRel$ have very simple definitions for the product, the definition of the coproduct is now more complicated than on $\FRel$.
In particular, we now have some coefficients greater than 1 which appear as in the example below.

\begin{example}
For instance,
\begin{align*}
\coproduct \big( \ERel^{\scalebox{.5}{}} \! \! \big) &=
\coproduct \big( \FRel_{\scalebox{.5}{}} \! \!  
+  \FRel_{\scalebox{.5}{\input{figures/relations/r2_12_21}}} \! \!  
+  \FRel_{\scalebox{.5}{}} \! \!
+  \FRel_{\scalebox{.5}{}} \! \! 
\big) \\
&=  \FRel_{\scalebox{.5}{}} \! \! \otimes \FRel_\varnothing 
+ \FRel_{\scalebox{.5}{}} \! \! \otimes \FRel_{\scalebox{.5}{}} \! \! 
+ \FRel_\varnothing  \otimes \FRel_{\scalebox{.5}{}} \! \!
+ \FRel_{\scalebox{.5}{\input{figures/relations/r2_12_21}}} \! \! \otimes \FRel_\varnothing 
+ \FRel_\varnothing  \otimes \FRel_{\scalebox{.5}{\input{figures/relations/r2_12_21}}} \! \! \\
&~~~~~+ \FRel_{\scalebox{.5}{}} \! \! \otimes \FRel_\varnothing 
+ \FRel_\varnothing  \otimes \FRel_{\scalebox{.5}{}} \! \!
+ \FRel_{\scalebox{.5}{}} \! \! \otimes \FRel_\varnothing 
+ \FRel_{\scalebox{.5}{}} \! \! \otimes \FRel_{\scalebox{.5}{}} \! \! 
+ \FRel_\varnothing  \otimes \FRel_{\scalebox{.5}{}} \! \! \\[-.2cm]
&= \ERel^{\scalebox{.5}{}} \! \! \otimes \ERel^\varnothing
+ 2 \left( \ERel^{\scalebox{.5}{}} \! \! \otimes \ERel^{\scalebox{.5}{}} \! \! \right)
+ \ERel^\varnothing \otimes \ERel^{\scalebox{.5}{}} \! \! .
\end{align*}
\end{example}

\begin{definition}
We say that a relation $\rel[T]$ is \defn{under-indecomposable} (resp.~\defn{over-indecomposable})  if there is no $\rel[R]$ and $\rel[S]$ in $\IRel$ with $|{\rel[R]}| \geq 1$ and $|{\rel[S]}| \geq 1$ such that $\rel[T] = \uRS$ (resp.~$\rel[T] = \overprod{\rel[R]}{\rel[S]}$).
\end{definition}

\begin{proposition}
\label{prop:relfree}
The algebra $\K\IRel$ is freely generated by the elements $\ERel^{\rel[T]}$ where $\rel[T]$ is under-indecomposable (resp.~by the elements $\HRel^{\rel[T]}$ where $\rel[T]$ is over-indecomposable).
\end{proposition}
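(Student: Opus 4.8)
\subsection*{Proof proposal}

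The plan is to read off the statement from the multiplicative structure of Proposition~\ref{prop:Rmultbases}. There the identity $\ERel^{\rel[R]} \product \ERel^{\rel[S]} = \ERel^{\uRS}$ shows that the product of two basis elements is a single basis element with coefficient~$1$; together with associativity of~$\product$ (Proposition~\ref{prop:algebra}) and the fact that $\ERel^{\varnothing}$ is the unit (since $\underprod{\varnothing}{\rel[S]} = \rel[S]$), this says that $(\K\IRel, \product)$ with the basis~$\ERel$ is exactly the monoid algebra of the set~$\IRel$ equipped with the associative operation~$\underprod{}{}$, whose atoms are precisely the under-indecomposable relations of positive size. A monoid algebra is a free associative algebra on the atoms if and only if the monoid is free on them, so it suffices to prove that $(\IRel, \underprod{}{})$ is a free monoid, i.e. that every relation admits a unique factorization into under-indecomposables; equivalently, that the evaluation morphism from the free monoid on the under-indecomposable relations to $(\IRel, \underprod{}{})$ is a bijection.

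The combinatorial key I would isolate first is a description of the non-trivial factorizations $\rel[T] = \uRS$. Writing $p = |\rel[T]|$, a splitting at position~$m$ (that is, $\rel[R] = \rel[T]_{[m]}$ and $\rel[S] = \rel[T]_{\{m+1, \dots, p\}}$) is valid exactly when $([m], \{m+1, \dots, p\})$ is a total cut of~$\rel[T]$ in the sense of Definition~\ref{def:totalCut}. Thus the factorizations of~$\rel[T]$ are governed by the set $C(\rel[T]) \subseteq \{0, 1, \dots, p\}$ of positions~$m$ cutting $\rel[T]$ into an initial and a final segment this way, with $0, p \in C(\rel[T])$ always, and $\rel[T]$ is under-indecomposable precisely when $C(\rel[T]) = \{0, p\}$. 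Existence of a factorization into under-indecomposables is then immediate by induction on~$p$: if $C(\rel[T]) \neq \{0,p\}$, choose $m \in C(\rel[T]) \ssm \{0,p\}$, write $\rel[T] = \rel[T]_{[m]} \underprod{}{} \rel[T]_{\{m+1, \dots, p\}}$, and recurse on the two strictly smaller factors.

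The hard part will be uniqueness, and I would run it through~$C(\rel[T])$. Two elementary facts suffice. First, by associativity of~$\underprod{}{}$, any factorization $\rel[T] = \rel[A_1] \underprod{}{} \cdots \underprod{}{} \rel[A_k]$ has all its partial sums $s_i = |\rel[A_1]| + \dots + |\rel[A_i]|$ in~$C(\rel[T])$, since $\rel[T] = (\rel[A_1] \underprod{}{} \cdots \underprod{}{} \rel[A_i]) \underprod{}{} (\rel[A_{i+1}] \underprod{}{} \cdots \underprod{}{} \rel[A_k])$ exhibits $s_i$ as a cut. Second, a cut of~$\rel[T]$ restricts to a cut of any block containing it: if $c \in C(\rel[T])$ and $a < c < b$, then the split at $c - a$ is a total cut of $\rel[T]_{\{a+1, \dots, b\}}$, which is a one-line check on the pairs $(i,j)$ with $a < i \le c < j \le b$ directly from $c \in C(\rel[T])$. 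Granting these, suppose the $\rel[A_i]$ are under-indecomposable: then no element of $C(\rel[T])$ can lie strictly between consecutive partial sums $s_{i-1}$ and~$s_i$, for by the second fact such an element would restrict to a non-trivial total cut of the block $\rel[T]_{\{s_{i-1}+1, \dots, s_i\}} = \rel[A_i]$, contradicting its indecomposability. Hence $\{s_0, \dots, s_k\} = C(\rel[T])$ is forced from~$\rel[T]$ alone, which determines~$k$ and each factor $\rel[A_i] = \rel[T]_{\{s_{i-1}+1, \dots, s_i\}}$; so the factorization is unique. Packaging this ``the cut positions are exactly the partial sums of every indecomposable factorization'' argument cleanly is the main obstacle.

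Finally, the statement for~$\HRel$ and over-indecomposables follows by the involution $\rel[T] \mapsto \rel[T]^t$ transposing every pair $(i,j) \mapsto (j,i)$ on~$\IRel$: a direct computation gives $(\uRS)^t = \overprod{\rel[R]^t}{\rel[S]^t}$, so this involution is a monoid isomorphism $(\IRel, \underprod{}{}) \to (\IRel, \overprod{}{})$ carrying under-indecomposables to over-indecomposables. Since $(\K\IRel, \product)$ with the basis~$\HRel$ is, by the second identity of Proposition~\ref{prop:Rmultbases}, the monoid algebra of $(\IRel, \overprod{}{})$, its freeness on the over-indecomposable~$\HRel^{\rel[T]}$ is a formal consequence of the case already treated.
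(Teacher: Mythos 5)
Your proof is correct, but it takes a genuinely different route from the paper's. The paper establishes freeness by introducing the coproduct $\bcoproduct(\ERel^{\rel[T]}) = \sum_{\rel[T] = \uRS} \ERel^{\rel[R]} \otimes \ERel^{\rel[S]}$ dual to the product, verifying that $(\K\IRel, \product, \bcoproduct)$ is a unital infinitesimal bialgebra, and invoking the Loday--Ronco rigidity theorem (Theorem~\ref{thm:loday-ronco-cofree}), which says such a graded bialgebra is freely generated by its primitive elements --- here exactly the $\ERel^{\rel[T]}$ with $\rel[T]$ under-indecomposable. You instead observe that $(\K\IRel,\product)$ in the $\ERel$ basis is the monoid algebra of $(\IRel, (\rel[R],\rel[S]) \mapsto \uRS)$ and prove directly that this graded monoid is free on its atoms via unique factorization: factorization positions coincide with primitive cut positions, and a primitive cut restricts to a primitive cut of any block containing it, so the cut set of $\rel[T]$ is forced to equal the set of partial sums of any factorization into indecomposables. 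The combinatorial kernel is the same in both arguments --- the paper's verification of the unital infinitesimal relation~\eqref{eq:unital-inf} amounts precisely to matching the non-trivial primitive cuts of $\uRS$ with those of $\rel[R]$ and of $\rel[S]$ --- but your version is self-contained and elementary, dispenses with the auxiliary coproduct and the external rigidity theorem, and yields the explicit unique factorization as a by-product; the paper's route is shorter given the black box and embeds the algebra in the unital infinitesimal framework, which carries extra structural information (cofreeness of the coalgebra). Your transposition argument $(\uRS)^t = \overprod{\rel[R]^t}{\rel[S]^t}$ for the $\HRel$ half is also valid and fills in what the paper leaves implicit when it announces that it will prove the statement only for $\ERel^{\rel[T]}$.
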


We will prove this proposition only for~$\ERel^{\rel[T]}$.
Besides, we will work solely with the notion of \emph{under-indecomposable} which we will simply call \emph{indecomposable} in the rest of paper when there is no ambiguity.
The proof of Proposition~\ref{prop:relfree} relies on the results of~\cite{LodayRoncoCoFree} on the (co-)freeness of (co-)associative algebras.

\begin{definition}[{\cite[p.\,7]{LodayRoncoCoFree}}]
A \defn{unital infinitesimal bialgebra}~$(\Hvect, \product , \bcoproduct)$ is a vector space~$\Hvect$ equipped with a unital associative product~$\product$ and a counital coassociative coproduct~$\bcoproduct$ which are related by the unital infinitesimal relation:
\begin{equation}
\label{eq:unital-inf}
\tag{$\star$}
\rbcoproduct( x \product y ) = (x \otimes 1) \product \rbcoproduct(y) + \rbcoproduct(x) \product (1 \otimes y) + x \otimes y,
\end{equation}
where the product~$\product$ on~$\Hvect \otimes \Hvect$ and the reduced coproduct~$\rbcoproduct$ are given by
\[
(x \otimes y) \product (x' \otimes y') = (x \product x') \otimes (y \product y'),
\qquad\text{and}\qquad
\rbcoproduct(x) \eqdef \bcoproduct(x) - (x \otimes 1 + 1 \otimes x).
\]
\end{definition}

Note that this is not the classical compatibility relation satisfied by the product~$\product$ and the coproduct~$\coproduct$ of a Hopf algebra.
In particular, $(\IRel, \product, \coproduct)$ is not a unital inifinesimal bialgebra.
Nevertheless we will prove that for another coproduct~$\bcoproduct$, then~$(\IRel, \product, \bcoproduct)$ is a unital inifinesimal bialgebra.
We can then use the main result of~\cite{LodayRoncoCoFree}.

\begin{theorem}[{\cite[p.\,2]{LodayRoncoCoFree}}]
\label{thm:loday-ronco-cofree}
Any graded unital infinitesimal bialgebra is isomorphic to the  non-commutative polynomials algebra equipped with the deconcatenation coproduct.
\end{theorem}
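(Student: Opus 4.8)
The statement is quoted verbatim from \cite{LodayRoncoCoFree}, so the plan is to reconstruct its proof. Write $\Hvect = \K \oplus \bar\Hvect$ with $\bar\Hvect$ the positive-degree part, and let $V \eqdef \mathrm{Prim}(\Hvect) = \bar\Hvect \cap \ker\rbcoproduct$ be the space of primitives. Denote by $T(V)$ the tensor algebra (non-commutative polynomials) on $V$ with concatenation product $\product$ and deconcatenation coproduct $\bcoproduct$, and by $T^c(V)$ the same space viewed as a coalgebra. A first routine check, which I would carry out only to fix conventions, is that $(T(V),\product,\bcoproduct)$ is itself a graded unital infinitesimal bialgebra: expanding $\rbcoproduct$ of a concatenation $v_1\cdots v_p \product w_1\cdots w_q$ as the sum over all internal cuts, the cuts strictly inside the second factor produce $(x \otimes 1)\product\rbcoproduct(y)$, those strictly inside the first produce $\rbcoproduct(x)\product(1 \otimes y)$, and the single cut separating the two factors produces the extra term $x \otimes y$, which is exactly~\eqref{eq:unital-inf}. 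So the target object is a u.i.b., and the goal is to produce a natural isomorphism $\Hvect \cong T(V)$.

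The backbone of the argument is the universal property of the cofree coalgebra. Since $\Hvect$ is graded and connected ($\Hvect_0 = \K$), the reduced coproduct $\rbcoproduct$ is locally conilpotent: on each homogeneous component its iterates $\rbcoproduct^{(k)} \colon \bar\Hvect \to \bar\Hvect^{\otimes(k+1)}$ vanish for $k$ large. Hence the projection $p \colon \bar\Hvect \to V$ onto the primitives lifts to a unique morphism of coalgebras $\Phi \colon \Hvect \to T^c(V)$, determined by $\Phi(1) = 1$ and, on $\bar\Hvect$, by
\[
\Phi \eqdef \sum_{n \ge 1} p^{\otimes n} \circ \rbcoproduct^{(n-1)}.
\]
The plan is then to show that this single map $\Phi$ is simultaneously an algebra isomorphism, which yields the theorem.

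The crux is to prove that $\Phi$ intertwines the product $\product$ of $\Hvect$ with concatenation on $T(V)$. I would do this by induction on the degree, the engine being the infinitesimal relation~\eqref{eq:unital-inf}, which expresses $\rbcoproduct(x \product y)$ (and, applied repeatedly, each $\rbcoproduct^{(n-1)}(x \product y)$) in terms of strictly lower data plus the separating term $x \otimes y$. The base case already displays the mechanism: for $x, y \in V$ one gets $\rbcoproduct(x \product y) = x \otimes y$, which matches the deconcatenation of the word $xy$; the separating term is precisely what forbids divided-power relations and forces concatenation. Granting the algebra-morphism property, surjectivity of $\Phi$ is immediate, since $\Phi|_V = \mathrm{id}_V$ and $V$ generates $T(V)$; injectivity then follows from conilpotency, as $\ker\Phi$ is a graded coideal which, if nonzero, would contain a nonzero primitive, contradicting $\Phi|_V = \mathrm{id}_V$.

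I expect the main obstacle to be exactly this interplay in the crux step. It is \emph{not} enough that $\Hvect$ be a conilpotent coalgebra: divided-power coalgebras show that conilpotent coalgebras need not be cofree, so the infinitesimal relation~\eqref{eq:unital-inf} must be used in an essential way, both to establish cofreeness and to pin the product down to concatenation rather than some deformation of it. Pushing the induction through requires a careful expansion of $\rbcoproduct^{(n-1)}$ on a product via repeated application of~\eqref{eq:unital-inf}, matching the resulting terms cut-by-cut against concatenation in $T(V)$; this bookkeeping is the technical heart of \cite{LodayRoncoCoFree}.
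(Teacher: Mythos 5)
The paper does not actually prove this statement: it is imported verbatim from \cite{LodayRoncoCoFree} and used as a black box, so there is no internal proof to compare against. Judged on its own terms, your reconstruction has the right architecture --- it is essentially the skeleton of the Loday--Ronco argument --- but it contains one genuine gap, located exactly at the step you identify as the crux.

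The gap is the phrase ``the projection $p \colon \bar\Hvect \to V$ onto the primitives.'' A graded connected coalgebra carries no canonical such projection: $p$ depends on a choice of complement of $V$ in each graded piece, and for a generic choice your $\Phi$ is a coalgebra isomorphism but \emph{not} an algebra morphism, so the induction you propose cannot close. Concretely, take $\Hvect = T(V)$ with $V = \K x \oplus \K z$, where $x$ has degree $1$ and $z$ degree $2$. Then $\mathrm{Prim}(\Hvect)_2 = \K z$, and a projection may send the word $xx$ to $\lambda z$ for any $\lambda$; one computes $\Phi(x \product x) = \lambda z + x \otimes x$ while $\Phi(x) \product \Phi(x) = x \otimes x$, so $\Phi$ is multiplicative only if $\lambda = 0$. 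The missing idea --- and the actual technical heart of \cite{LodayRoncoCoFree} --- is the construction of a canonical idempotent
\[
e \eqdef \sum_{n \ge 0} (-1)^{n}\, \mu^{(n)} \circ \rbcoproduct^{(n)} = \mathrm{Id} - \mu \circ \rbcoproduct + \mu^{(2)} \circ \rbcoproduct^{(2)} - \cdots
\]
on $\bar\Hvect$, where $\mu^{(n)} \colon \bar\Hvect^{\otimes (n+1)} \to \bar\Hvect$ is the iterated product (the sum is finite in each degree by conilpotency), together with the verification, via repeated use of the relation~\eqref{eq:unital-inf}, that $e$ is idempotent with image exactly $\mathrm{Prim}(\Hvect)$ and that $e(x \product y) = 0$ for all $x, y \in \bar\Hvect$. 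Taking $p = e$ is what forces $\Phi$ to kill decomposables correctly and makes your inductive step true; with that ingredient supplied, the rest of your outline (universal property of the cofree conilpotent coalgebra, surjectivity from $\Phi|_V = \mathrm{id}$, injectivity because a nonzero graded coideal contains a nonzero primitive) is sound. Your remark that conilpotency alone is insufficient (divided powers) is exactly the right warning, but the remedy is this explicit idempotent, not finer bookkeeping with an unspecified $p$.
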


The isomorphism is explicit.
Each element~$x$ of~$\Hvect$ can be written uniquely as a product $x = x_1 \product x_2 \dots \product x_k$ such that the elements $x_i$ are \defn{primitive}, \ie $\rbcoproduct(x_i) = 0$.
In other words, the algebra~$(\Hvect, \product)$ is freely generated by the primitive elements for the coproduct~$\bcoproduct$.
In our case, we will exhibit a coproduct~$\bcoproduct$ such that~$(\IRel, \product, \bcoproduct)$ satisfies \eqref{eq:unital-inf} and, as a corollary of~\cite{LodayRoncoCoFree}, we get that~$(\IRel, \product)$ is freely generated by the primitive elements of~$\bcoproduct$.

\begin{definition}
A \defn{primitive cut} is a total cut of the form~$([i],[p] \ssm [i])$ for some~$0 \le i \le p$.
\end{definition}

For example, the relation $\raisebox{.05cm}{\scalebox{.5}{\input{figures/relations/r3_13_23}}}$ admits a primitive cut at~$2$.
Every relation~$\rel[T] \in \IRel_p$ admits at least two primitive cuts~$(\varnothing, [p])$ and~$([p], \varnothing)$ which we call the \defn{trivial primitive cuts}.
Moreover, $\rel[T] = \uRS$ if and only if $T$ admits a primitive cut at~$|{\rel[R]}|$.
In particular, if~$T$ is indecomposable, then~$T$ does not admit any non-trivial primitive cut.
We define a coproduct~$\bcoproduct$ on the basis~$\ERel$ by
\[
\bcoproduct(\ERel^{\rel[T]}) \eqdef \sum_{T = \uRS} \ERel^{\rel[R]} \otimes \ERel^{\rel[S]}.
\]
By definition, this is the dual of the product~$\product$ on~$\ERel$.
This is also a sum over all primitive cuts of the relation~$\rel[T]$ and by extension, $\rbcoproduct(\ERel^{\rel[T]})$ is a sum over all non-trivial primitive cuts of~$\rel[T]$.

\begin{example}
For instance
\[
\rbcoproduct \Big( \ERel^{\scalebox{.5}{\input{figures/relations/r3_total}}} \! \! \Big) =
\ERel^{\scalebox{.5}{}} \! \! \otimes \ERel^{\scalebox{.5}{}}  \! \!
+  \ERel^{\scalebox{.5}{}}  \! \!  \otimes \ERel^{\scalebox{.5}{}} \! \!
\qquad\text{and}\qquad
\rbcoproduct \Big( \ERel^{\scalebox{.5}{\input{figures/relations/r3_12_13_32}}} \! \! \Big) =
\ERel^{\scalebox{.5}{}} \! \! \otimes \ERel^{\scalebox{.5}{}} \! \! .
\]

\end{example}

We have that~$\ERel^{\rel[T]}$ is primitive for~$\rbcoproduct$ (\ie $\rbcoproduct(\ERel^{\rel[T]}) = 0$) if and only if~$\rel[T]$ is indecomposable.
Now, Proposition~\ref{prop:Rmultbases} is a direct consequence of the following statement together with Theorem~\ref{thm:loday-ronco-cofree}.

\begin{proposition}
$(\IRel, \product, \bcoproduct)$ is a unital infinitesimal bialgebra.
\end{proposition}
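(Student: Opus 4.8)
The plan is to verify the bialgebra axioms for $(\IRel, \product, \bcoproduct)$ on the basis $(\ERel^{\rel[T]})$, on which both operations are simplest. The product is multiplicative, $\ERel^{\rel[R]} \product \ERel^{\rel[S]} = \ERel^{\uRS}$ (Proposition~\ref{prop:Rmultbases}), with unit $\ERel^{\varnothing} = \FRel_{\varnothing}$. Since $\bcoproduct$ is defined as the dual of $\product$ on this basis, its counit is dual to the unit and its coassociativity is dual to the associativity of $\product$ (Proposition~\ref{prop:algebra}); as each graded piece $\IRel_n$ is finite, both axioms come for free. Hence the only thing to establish is the unital infinitesimal relation~\eqref{eq:unital-inf}, which I would check directly by matching, term by term, the primitive cuts appearing on each side.

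Fix $\rel[R] \in \IRel_m$ and $\rel[S] \in \IRel_n$ and set $x = \ERel^{\rel[R]}$, $y = \ERel^{\rel[S]}$, so that $x \product y = \ERel^{\uRS}$ with $\uRS \in \IRel_{m+n}$. The core combinatorial step is to classify the primitive cuts of $\uRS$. Using the explicit description $\uRS = \rel[R] \cup \overline{\rel[S]} \cup ([m] \times \overline{[n]})$, I would show that a cut at position $i$ is total precisely when either $i \le m$ and $([i], [m] \ssm [i])$ is a primitive cut of $\rel[R]$, or $i \ge m$ and $([i-m], [n] \ssm [i-m])$ is a primitive cut of $\rel[S]$; the two ranges overlap exactly at $i = m$, which is always a primitive cut. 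The point is that every pair in $[m] \times \overline{[n]}$ lies in $\uRS$ while no pair in $\overline{[n]} \times [m]$ does, so the cut condition between the two blocks holds automatically and the only genuine constraint is confined to whichever block $i$ falls in.

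Next I would read off the two tensor factors produced by each cut. A cut at $i \le m$, writing $\rel[R] = \underprod{\rel[R_1]}{\rel[R_2]}$ with $|{\rel[R_1]}| = i$, yields after restriction and standardization $\ERel^{\rel[R_1]} \otimes \ERel^{\underprod{\rel[R_2]}{\rel[S]}}$, because the right block inherits all the forward relations of $[m] \times \overline{[n]}$; symmetrically a cut at $i \ge m$, writing $\rel[S] = \underprod{\rel[S_1]}{\rel[S_2]}$, yields $\ERel^{\underprod{\rel[R]}{\rel[S_1]}} \otimes \ERel^{\rel[S_2]}$. Comparing with $\ERel^{\rel[R]} \product \ERel^{\rel[S]} = \ERel^{\uRS}$ and the componentwise product on $\IRel \otimes \IRel$, the cuts with $0 < i < m$ reassemble exactly into $\rbcoproduct(x) \product (1 \otimes y)$, the cuts with $m < i < m+n$ into $(x \otimes 1) \product \rbcoproduct(y)$, and the single distinguished cut at $i = m$ into $x \otimes y$, giving precisely~\eqref{eq:unital-inf}.

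The main obstacle is the boundary bookkeeping at $i = m$: I must ensure this shared cut is counted once and attributed to the term $x \otimes y$, while the trivial cuts $([m], \varnothing)$ of $\rel[R]$ and $(\varnothing, [n])$ of $\rel[S]$ are excluded from $\rbcoproduct(x)$ and $\rbcoproduct(y)$, so that nothing is double-counted and no nontrivial cut of $\uRS$ is missed. The only other care needed is to confirm that the restriction of $\uRS$ to each block standardizes to the claimed relation, i.e. that the inter-block relations are exactly of the $\underprod{\,\cdot\,}{\,\cdot\,}$ type rather than $\overprod{\,\cdot\,}{\,\cdot\,}$; both facts are immediate from the explicit description of $\uRS$.
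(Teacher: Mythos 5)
Your proposal is correct and follows essentially the same route as the paper: both arguments reduce the unital infinitesimal relation to a classification of the non-trivial primitive cuts of $\uRS$, showing that cuts strictly left of $m$ come from primitive cuts of $\rel[R]$ and contribute $\ERel^{\rel[R_1]} \otimes \ERel^{\underprod{\rel[R_2]}{\rel[S]}}$, cuts strictly right of $m$ come from primitive cuts of $\rel[S]$, and the cut at $m$ gives the extra term $\ERel^{\rel[R]} \otimes \ERel^{\rel[S]}$. Your additional remarks on the counit and coassociativity being dual to the unit and associativity are harmless extras that the paper leaves implicit.
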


\begin{proof}
Let $\rel[R] \in \IRel_m$ and $\rel[S] \in \IRel_n$ with $p = m + n$.
We have on the one hand
\[
A \eqdef \rbcoproduct \big( \ERel^{\rel[R]} \product \ERel^{\rel[S]} \big) = \rbcoproduct \big( \ERel^{\uRS} \big) = \sum_{\underprod{\rel[R']}{\rel[S']} = \uRS} \ERel^{\rel[R']} \otimes \ERel^{\rel[S']},
\]
and on the other hand
\begin{align*}
B & \eqdef \Big( \ERel^{\rel[R]} \otimes \ERel^{\emptyset} \Big) \product \rbcoproduct \big( \ERel^{\rel[S]} \big) + \rbcoproduct \big( \ERel^{\rel[R]} \big) \product \Big( \ERel^{\emptyset} \otimes \ERel^{\rel[S]} \Big) + \ERel^{\rel[R]} \otimes \ERel^{\rel[S]} \\
& = \sum_{\rel[S] = \underprod{\rel[S_1]}{\rel[S_2]}} \ERel^{\underprod{\rel[R]}{\rel[S_1]}} \otimes \ERel^{\rel[S_2]}
+ \sum_{\rel[R] = \underprod{\rel[R_1]}{\rel[R_2]}} \ERel^{\rel[R_1]} \otimes \ERel^{\underprod{\rel[R_2]}{\rel[S]}}
+ \ERel^{\rel[R]} \otimes \ERel^{\rel[S]}.
\end{align*}
We want to prove that $A = B$.
The sum $A$ is over all non-trivial primitive cuts of $\uRS$.
The relation $\uRS$ admits a primitive cut at $m$ by definition which means that the term  $\ERel^{\rel[R]} \otimes \ERel^{\rel[S]}$ appears in the sum.
Now, let $0 < k < m$.
\begin{itemize}
\item If $\uRS$ admits a primitive cut at $k'$, this means in particular that $\rel[R]$ admits a primitive cut at $k$.
We have $\rel[R] = \underprod{\rel[R_1]}{\rel[R_2]}$ for some $\rel[R_1] \in \IRel_k$ and $\rel[R_2] \in \IRel_{m - k}$.
It is easy to check that $\uRS$ restricted to $\lbrace k+1, \dots, p \rbrace$ is indeed equal to $\underprod{\rel[R_2]}{\rel[S]}$.
\item Reciprocally, if $\rel[R]$ admits a primitive cut at $k$, \ie $\rel[R] = \underprod{\rel[R_1]}{\rel[R_2]}$ with $\rel[R_1] \in \IRel_k$, we have all $(i,j) \in \uRS$ and $(j,i) \notin \uRS$ for $i \leq k$ and $k < j \leq m$ by definition of the primitive cut and also for $i \leq k$ and $m < j \leq p$ by definition of $\uRS$.
This means that $\uRS$ admits a primitive cut $\underprod{\rel[R']}{\rel[S']} = \uRS$ at $k$.
The relation $\rel[R']$ is the restriction of $\uRS$ to $[k]$ and it is then equal to $\rel[R]_1$.
The relations $\rel[S']$ is the restriction of $\uRS$ to $\lbrace k+1, \dots, p \rbrace$ and is equal to $\underprod{\rel[R_2]}{\rel[S]}$.
\end{itemize}
We can use a similar argument for $k > m$ and we then obtain that the primitive cuts of $\uRS$ exactly correspond to the primitive cuts of $\rel[S]$ which proves the result.
\end{proof}

As an algebra, $\K\IRel$ is then generated by indecomposable relations.
It is well known that there is a direct relation between the Hilbert series of an algebra and the generating series of its indecomposable elements.
Namely, if
\[
F(x) \eqdef \sum_{n \geq 0} R_n x^n = 1 + x + 4x^2 + 64 x^3 +4098 x^4 + \dots
\]
is the Hilbert series of~$\K\IRel$, where~$R_n = 2^{n(n-1)}$ is the number of (reflexive) integer binary relations, then it is related to the the generating series~$I(x)$ of indecomposable relations by
\[
\frac{1}{1 - I(x)} = R(x).
\]
In particular, the number of indecomposable relations~$I_n$ can be computed by an inclusion--exclusion formula
\[
I_n = \sum_{n_1 + \dots + n_k = n} (-1)^{k+1} R_{n_1} \dots R_{n_k}
\]
which gives the coefficients of Table~\ref{table:nb-indecomp-rel}.
There does not seem to be another, more direct, combinatorial enumeration.
\begin{table}[t]
    \begin{tabular}{r|lllll}
        $n$ & 1 & 2 & 3 & 4 & 5 \\ \hline
        $R_n$ & 1 & 4 & 64 & 4096 & 1048576 \\ \hline
        $I_n$ & 1 & 3 & 57 & 3963 & 1040097
    \end{tabular}
    \caption{Number of binary relations and indecomposable binary relations on~$[n]$.}
    \label{table:nb-indecomp-rel}
\end{table}
Nevertheless, indecomposable relations do have an interesting structural property when looking at the weak order lattice.

\begin{proposition}
The set of indecomposable relations of size $n$ forms an upper-ideal of the weak order lattice on $\IRel_n$ (\ie if~$\rel[R]$ is indecomposable, then any~$\rel[S]$ with~$\rel[R] \wole \rel[S]$ is also indecomposable).
\end{proposition}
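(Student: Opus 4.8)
The plan is to establish the contrapositive in a strong form: I will show that \emph{every} non-trivial primitive cut of $\rel[S]$ is also a primitive cut of $\rel[R]$ whenever $\rel[R] \wole \rel[S]$. In words, moving up in the weak order can only destroy primitive cuts, never create them; since a relation is indecomposable precisely when it admits no non-trivial primitive cut, this immediately yields that the indecomposables are upward closed.

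First I would rephrase the primitive-cut condition through the increasing and decreasing subrelations. A pair $([i], [p] \ssm [i])$ with $0 < i < p$ is a primitive cut of $\rel[T] \in \IRel_p$ exactly when $x \rel[T] y$ and $y \notrel[T] x$ for all $x \le i < y$. As every such pair satisfies $x < y$, the relation $(x,y)$ is increasing and $(y,x)$ is decreasing, so this is equivalent to
\[
\set{(x,y)}{x \le i < y} \subseteq \Inc{\rel[T]}
\qquad\text{and}\qquad
\set{(y,x)}{x \le i < y} \cap \Dec{\rel[T]} = \varnothing.
\]

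Next, suppose $\rel[R] \wole \rel[S]$, so that $\Inc{\rel[S]} \subseteq \Inc{\rel[R]}$ and $\Dec{\rel[R]} \subseteq \Dec{\rel[S]}$ by Definition~\ref{def:weakOrder}, and let $i$ be a primitive cut of $\rel[S]$. The first condition applied to $\rel[S]$ gives $\set{(x,y)}{x \le i < y} \subseteq \Inc{\rel[S]} \subseteq \Inc{\rel[R]}$, and the second gives $\set{(y,x)}{x \le i < y} \cap \Dec{\rel[R]} \subseteq \set{(y,x)}{x \le i < y} \cap \Dec{\rel[S]} = \varnothing$. Both conditions thus hold for $\rel[R]$, so $i$ is a primitive cut of $\rel[R]$ as well. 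Consequently, if $\rel[R]$ is indecomposable and $\rel[R] \wole \rel[S]$, then $\rel[S]$ cannot have a non-trivial primitive cut, for such a cut would also be one of $\rel[R]$; hence $\rel[S]$ is indecomposable.

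I do not expect any genuine obstacle here: the entire argument reduces to the two set-inclusions above, which both go in the favorable direction. The only point requiring a little care is to match each half of the weak order inequality with the corresponding half of the primitive-cut condition, namely that the increasing relations across the cut must all be \emph{present} (controlled by $\Inc{\cdot}$, which can only grow as one moves down) while the decreasing relations across the cut must all be \emph{absent} (controlled by $\Dec{\cdot}$, which can only grow as one moves up), so that both requirements are inherited by the smaller relation $\rel[R]$.
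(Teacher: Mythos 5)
Your proof is correct and follows essentially the same route as the paper's: both show that any primitive cut of $\rel[S]$ survives as a primitive cut of $\rel[R]$ whenever $\rel[R] \wole \rel[S]$, by matching the inclusion $\Inc{\rel[R]} \supseteq \Inc{\rel[S]}$ with the presence of the increasing relations across the cut and the inclusion $\Dec{\rel[R]} \subseteq \Dec{\rel[S]}$ with the absence of the decreasing ones. Your reformulation of the primitive-cut condition in terms of $\Inc{\cdot}$ and $\Dec{\cdot}$ just makes explicit what the paper does pointwise.
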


\begin{proof}
Consider two binary relations~$\rel[R], \rel[S] \in \IRel_n$ such that~$\rel[R] \wole \rel[S]$ and $\rel[S]$ admits a primitive cut at some $k$.
For all~$i \leq k < j$, we have~$i \rel[S] j$ and~$j \notrel[S] i$ since~$k$ is a primitive cut of~$\rel[S]$.
Since~$\rel[R] \wole \rel[S]$, we have~${\Inc{\rel} \supseteq \Inc{\rel[S]}}$ and~${\Dec{\rel} \subseteq \Dec{\rel[S]}}$, and thus~$i \rel[R] j$ and~$j \notrel [S] i$  for all $i \leq k < j$.
This implies that $\rel[R]$ also admits a primitive cut at $k$.
\end{proof}

Note however that the ideal of indecomposable relations might have multiple minimal elements.
For example for~$n=2$, there are~$3$ indecomposable relations (over $4$ relations in total) and $2$ minimal elements: $\raisebox{.05cm}{\scalebox{.5}{}}$ and $\raisebox{.05cm}{\scalebox{.5}{\input{figures/relations/r2_12_21}}}$.

\section{Integer posets}
\label{sec:integerPosets}

We now focus on \defn{integer posets}, \ie integer relations that are reflexive ($x \rel x$), transitive ($x \rel y \rel z \implies x \rel z$) and antisymmetric ($x \rel y \implies y \notrel x$).
Let~$\IPos_n$ be the set of all posets on~$[n]$ and let~$\IPos \eqdef \bigsqcup_{n \ge 0} \IRel_n$.

As we will only work with posets in the rest of the paper, we generally prefer to use notations like~$\less, \bless, \dashv$ which speak for themselves, rather than our previous notations~$\rel[R], \rel[S]$ for arbitrary binary relations.
It also allows us to write~$a \more b$ for~$b \less a$, in particular when~$a < b$.

We still denote by~$\wole$ the weak order given in Definition~\ref{def:weakOrder}.
The following statement is the keystone of~\cite{ChatelPilaudPons}.

\begin{theorem}[{\cite[Thm.~1]{ChatelPilaudPons}}]
The weak order on the integer posets of~$\IPos_n$ is a lattice.
\end{theorem}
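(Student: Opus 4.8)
The plan is to bootstrap from the fact that $(\IRel_n, \wole)$ is already a lattice (Proposition~\ref{prop:reflexiveLattice}), and to track which of the two extra axioms defining a poset---antisymmetry and transitivity---survive the relational meet~$\meetR$ and join~$\joinR$. A first, easy observation is that antisymmetry is automatic: if $\less$ and $\bless$ are posets, then $\less \meetR \bless$ and $\less \joinR \bless$ are still antisymmetric. Indeed, for $a < b$ the join retains the increasing pair $(a,b)$ only if it belongs to both $\less$ and $\bless$, while it retains the decreasing pair $(b,a)$ as soon as it belongs to one of them; antisymmetry of each of $\less, \bless$ makes these two events incompatible, and the argument for $\meetR$ is dual. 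Hence the sole obstruction to $\less \meetR \bless$ and $\less \joinR \bless$ being posets is a possible failure of transitivity.

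To repair transitivity in a controlled way, I would introduce two projections $\projDown, \projUp \colon \IRel_n \to \IPos_n$, where $\projDown(\rel[R])$ is the greatest poset with $\projDown(\rel[R]) \wole \rel[R]$ and $\projUp(\rel[R])$ is the least poset with $\rel[R] \wole \projUp(\rel[R])$. Granting that these maps are well defined, order preserving, and restrict to the identity on $\IPos_n$, the meet and join of two posets $\less, \bless$ in $\IPos_n$ are then $\projDown(\less \meetR \bless)$ and $\projUp(\less \joinR \bless)$. The universal property is formal: any poset below both $\less$ and $\bless$ is $\wole \less \meetR \bless$ because $\meetR$ is the relational meet, hence it is $\wole \projDown(\less \meetR \bless)$ since the latter is the greatest poset below $\less \meetR \bless$; and $\projDown(\less \meetR \bless)$ is itself a poset below both $\less$ and $\bless$. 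The join is symmetric.

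The crux is the well-definedness of $\projDown$ and $\projUp$, \ie the existence of a greatest poset below, and a least poset above, an arbitrary relation. This does \emph{not} follow from any lattice-closure of $\IPos_n$: the posets lying below a fixed relation need not be stable under $\joinR$ (their relational join can fail transitivity), so the extremal poset cannot be obtained by simply combining all candidates. Instead I expect to construct $\projUp(\rel[R])$ explicitly and prove its minimality by a local analysis of triples $a,b,c$: since moving $\wole$-upward forces $\Inc{\cdot}$ to shrink and $\Dec{\cdot}$ to grow, a violation of transitivity coming from an increasing chain $a \less b \less c$ whose closing edge $(a,c)$ is absent must be cured by \emph{deleting} an increasing edge, whereas a violation among decreasing edges is cured by \emph{adding} a decreasing edge, and the genuinely delicate cases are the mixed chains. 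Showing that these repairs can be carried out coherently---so that a unique least transitive antisymmetric relation above $\rel[R]$ emerges, and dually for $\projDown$---is where essentially all the work lies, and it is precisely here that the decomposition of the weak order into the refinement lattice on increasing parts and the coarsening lattice on decreasing parts (noted after Definition~\ref{def:weakOrder}) has to be exploited.
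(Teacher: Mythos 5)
A preliminary remark: the paper does not prove this statement at all --- it is imported from~\cite{ChatelPilaudPons} --- so there is no internal proof to compare against; your argument has to stand on its own. Your reduction is sound as far as it goes: ${\less} \meetR {\bless}$ and ${\less} \joinR {\bless}$ are indeed antisymmetric when $\less, \bless$ are posets, and \emph{if} a greatest poset weakly below ${\less} \meetR {\bless}$ and a least poset weakly above ${\less} \joinR {\bless}$ exist, the universal-property argument you give correctly identifies them as the meet and join in $\IPos_n$. The gap is that the maps $\projDown, \projUp \colon \IRel_n \to \IPos_n$ you want to invoke do not exist. Take $n=3$ and the antisymmetric relation $\rel[R] = \{(2,1),(3,2)\}$. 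A poset ${\less} \wole \rel[R]$ must satisfy $\Dec{\less} \subseteq \{(2,1),(3,2)\}$, and it cannot contain both $(2,1)$ and $(3,2)$, since transitivity would then force $(3,1)$, which is forbidden. Hence the chains $\{(2,1)\}$ and $\{(3,2)\}$ are two \emph{incomparable maximal} elements of the set of posets weakly below $\rel[R]$, and $\projDown(\rel[R])$ is undefined; dually, $\projUp$ is undefined on $\{(1,2),(2,3)\}$. So the strategy of building globally defined projections and then restricting them cannot be carried out.

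You do only need the extremal poset to exist for relations of the special form ${\less} \meetR {\bless}$ (resp.\ ${\less} \joinR {\bless}$) with $\less, \bless$ posets --- and indeed my counterexamples are not of that form, precisely because such a meet inherits partial transitivity from its two arguments. But that restricted existence statement \emph{is} the theorem: the universal-property step is a trivial repackaging, and the part you explicitly defer (``where essentially all the work lies'') is the entire content. Your local repair heuristic also runs head-on into the non-uniqueness exhibited above (which of the two edges of a bad chain do you delete?), and resolving it requires actually using that the input is a meet or join of two genuine posets. This is what \cite{ChatelPilaudPons} do: roughly, they produce the meet by an explicit two-step formula --- the transitive closure of $\Inc{\less} \cup \Inc{\bless}$ adjoined to $\Dec{\less} \cap \Dec{\bless}$, followed by a targeted deletion of the decreasing relations that obstruct transitivity --- and then verify by hand that the result is a poset and a greatest lower bound. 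Your write-up stops exactly where that construction and verification would have to begin, so as it stands the proposal is a correct reduction plus an unproved (and, in the generality in which you state it, false) existence claim.
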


We now define a Hopf algebra on posets.
We consider the vector space~$\K\IPos \eqdef \bigoplus_{n \ge 0} \K\IPos_n$ indexed by all integer posets of arbitrary size.
We denote by~$(\FPos_{\less})_{{\less} \in \IPos}$ the standard basis of~$\K\IPos$.

\begin{proposition}
\label{prop:algebraPosets}
For any~$\rel[R], \rel[S] \in \IRel$,
\begin{enumerate}[(i)]
\item if the shifted shuffle~$\rel[R] \shiftedShuffle \rel[S]$ contains at least a poset, then~$\rel[R]$ and~$\rel[S]$ are both posets,
\item if~$\rel[R]$ and~$\rel[S]$ are both posets, then all relations in the convolution~$\rel[R] \convolution \rel[S]$ are posets.
\end{enumerate}
Therefore, the vector subspace of~$\K\IRel$ generated by integer relations which are not posets is a Hopf ideal of~$(\K\IRel, \product, \coproduct)$.
The quotient of the integer relation algebra~$(\K\IRel, \product, \coproduct)$ by this ideal is thus a Hopf algebra~$(\K\IPos, \product, \coproduct)$ on integer posets.
\end{proposition}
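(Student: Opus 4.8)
The plan is to prove the two combinatorial claims (i) and (ii) separately, and then assemble them into the Hopf-ideal statement using the standard fact that the product and coproduct are compatible with passing to the quotient by a subspace that behaves correctly. Throughout I will use the characterizations already established: Proposition~\ref{prop:characterizationShuffleProduct} for membership in the shifted shuffle, and Definition~\ref{def:totalCut} for the convolution.

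\textbf{Claim (i).} Suppose $\rel[R] \shiftedShuffle \rel[S]$ contains a poset $\rel[T]$. By Proposition~\ref{prop:characterizationShuffleProduct}, we have $\rel[T]_{[m]} = \rel[R]$ and $\rel[T]_{\overline{[n]}} = \rel[S]$; that is, $\rel[R]$ and $\rel[S]$ are restrictions of $\rel[T]$ to the subsets $[m]$ and $\overline{[n]}$ respectively. The key observation is that each of the defining properties of a poset (reflexivity, antisymmetry, transitivity) is inherited by restriction to a subset: if $x \rel[T] y \rel[T] z$ with $x,y,z$ all in $[m]$, then $x \rel[T] z$ by transitivity of $\rel[T]$, and this relation is recorded in $\rel[T]_{[m]} = \rel[R]$; similarly antisymmetry on the subset follows from antisymmetry on the whole. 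So first I would state and verify this ``restrictions of posets are posets'' lemma (it is immediate from the standardization in the definition of $\rel_X$), and conclude that $\rel[R] = \rel[T]_{[m]}$ and $\rel[S] = \rel[T]_{\overline{[n]}}$ are both posets.

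\textbf{Claim (ii).} Suppose now $\rel[R] \in \IPos_m$ and $\rel[S] \in \IPos_n$ and let $\rel[T] \in \rel[R] \convolution \rel[S]$, so $\rel[T]$ admits a total cut $(X,Y)$ with $\rel[T]_X = \rel[R]$ and $\rel[T]_Y = \rel[S]$. I must check $\rel[T]$ is reflexive, antisymmetric and transitive. Reflexivity is inherited from $\rel[R]$ and $\rel[S]$ on $X$ and $Y$. For antisymmetry and transitivity I split into cases according to which of $X$ and $Y$ the relevant elements lie in. The cases internal to $X$ (resp.\ $Y$) follow because $\rel[T]_X = \rel[R]$ (resp.\ $\rel[T]_Y = \rel[S]$) is a poset. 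The mixed cases are handled by the total-cut property: for $x \in X$ and $y \in Y$ we have $x \rel[T] y$ and $y \notrel[T] x$, which gives antisymmetry across the cut and forces the only transitivity chains crossing the cut to go from $X$ to $Y$, never back. Concretely, for transitivity one checks each of the possible placements of $a,b,c$ among $X$ and $Y$ in the chain $a \rel[T] b \rel[T] c$; the only delicate configurations are those mixing the two blocks, and in every such configuration the total-cut condition supplies the required relation $a \rel[T] c$ while blocking any violation of antisymmetry.

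\textbf{Assembling the Hopf ideal.} Let $\mathcal{N}$ be the subspace of $\K\IRel$ spanned by the relations that are not posets. Claim (i) says that if $\rel[R]$ or $\rel[S]$ is not a poset, then every term $\FRel_{\rel[T]}$ of $\FRel_{\rel[R]} \product \FRel_{\rel[S]}$ has $\rel[T]$ a non-poset, so $\mathcal{N}$ is a two-sided ideal for $\product$. Claim (ii) says (contrapositively) that if $\rel[T]$ is not a poset, then in $\coproduct(\FRel_{\rel[T]}) = \sum \FRel_{\rel[R]} \otimes \FRel_{\rel[S]}$ at least one of $\rel[R],\rel[S]$ is a non-poset in each term, so $\coproduct(\mathcal{N}) \subseteq \mathcal{N} \otimes \K\IRel + \K\IRel \otimes \mathcal{N}$; hence $\mathcal{N}$ is a coideal. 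Therefore $\mathcal{N}$ is a Hopf ideal, and the quotient $\K\IRel / \mathcal{N} \cong \K\IPos$ inherits a Hopf algebra structure, with product and coproduct induced by those of $\K\IRel$. \textbf{The main obstacle} I expect is purely bookkeeping in Claim~(ii): cleanly organizing the transitivity argument over all placements of three elements relative to the cut $(X,Y)$ without redundancy, and making sure the total-cut hypothesis (rather than mere posethood of the blocks) is invoked exactly where a chain crosses the cut.
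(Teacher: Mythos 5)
Your proposal is correct and follows essentially the same route as the paper: part (i) via the observation that restrictions of posets are posets, part (ii) via a case analysis on the placement of the elements relative to the total cut $(X,Y)$, and the Hopf-ideal conclusion by the standard contrapositive argument. The case analysis you sketch for transitivity closes exactly as you anticipate, since $a \rel[T] b \rel[T] c$ with the three elements not all in one block forces $a \in X$ and $c \in Y$, whence $a \rel[T] c$ from $X \times Y \subseteq \rel[T]$.
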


\begin{proof}
For~(i), let~$\rel[R] \in \IRel_m$ and~$\rel[S] \in \IRel_n$ be such that the shifted shuffle~$\rel[R] \shiftedShuffle \rel[S]$ contains a poset~$\rel[T]$.
Then~$\rel[R] = \rel[T]_{[m]}$ and~$\rel[S] = \rel[T]_{\overline{[n]}}$ are antisymmetric and transitive since~$\rel[T]$ is.

For~(ii), consider two posets~$\rel[R], \rel[S] \in \IPos$ and let~$\rel[T] \in \rel[R] \convolution \rel[S]$.
Let~$(X,Y)$ be the total cut of~$\rel[T]$ such that~$\rel[T]_X = \rel[R]$ and~$\rel[T]_Y = \rel[S]$.
We prove that~$\rel[T]$ is a poset:
\begin{description}
\item[Antisymmetry]
Let~$u, v \in \N$.
If~$u$ and~$v$ both belong to~$X$ (resp.~to~$Y$), then~${u \rel[T] v \implies v \notrel[T] u}$ since~$\rel[T]_X = \rel[R]$ (resp.~$\rel[T]_Y = \rel[S]$) is antisymmetric.
Otherwise, $u \rel[T] v$ if and only if~$u \in X$ and~$v \in Y$, while~$v \rel[T] u$ if and only if~$v \in X$ and~$u \in Y$.
Thus~$\rel[T]$ is antisymmetric.

\item[Transitivity]
Let~$u, v, w \in \N$ such that~$u \rel[T] v \rel[T] w$.
If~$u$ and~$w$ both belong to~$X$ (resp.~to~$Y$), then so does~$v$ and~$u \rel[T] w$ since~$\rel[T]_X = \rel[R]$ (resp.~$\rel[T]_Y = \rel[S]$) is transitive.
Otherwise, $u \in X$ and~$w \in Y$ (since~$\rel[T] \cap (Y \times X) = \varnothing$ and~$u \rel[T] v \rel[T] w$), thus~$u \rel[T] w$ (since~$X \times Y \subseteq \rel[T]$).
Thus~$\rel[T]$ is transitive.
\qedhere
\end{description}
\end{proof}

\begin{remark}
\label{rem:convolutionPosets}
Although not needed for the Hopf algebra quotient, observe that the convolution satisfies a property similar to Proposition~\ref{prop:algebraPosets}\,(i):
if~$\rel[R], \rel[S] \in \IRel$ are such that the convolution~$\rel[R] \convolution \rel[S]$ contains at least a poset, then~$\rel[R]$ and~$\rel[S]$ are both posets.
\end{remark}

For any poset~$\less$, we denote by~$\FPos_{\less}$ the image of~$\FRel_{\less}$ through the trivial projection~$\K\IRel \to \K\IPos$.

\begin{example}
In practice, for two posets~${\less}, {\bless} \in \IPos$, we compute the product~$\FPos_{\less} \product \FPos_{\bless}$ in~$\K\IPos$ by deleting all non-poset summands in the product~$\FRel_{\less} \product \FRel_{\bless}$ in~$\K\IRel$:
\begin{align*}
\FPos_{\scalebox{.5}{}} \! \! \product \FPos_{\scalebox{.5}{}} \! \!
&= \FPos_{\scalebox{.5}{\input{figures/relations/r3_12_13_23}}} \! \! +
\FPos_{\scalebox{.5}{\input{figures/relations/r3_12_13}}} \! \! +
\FPos_{\scalebox{.5}{\begin{tikzpicture}[baseline, scale=\interscale]
\node(T1) at (0,0) {1};
\node(T2) at (1,0) {2};
\node(T3) at (2,0) {3};
\draw[->, line width = 0.5, color=blue] (T1) edge [bend left=70] (T2);
\draw[->,line width = 0.5, color=white, opacity=0] (T1) edge [bend left=70] (T3);
\draw[->,line width = 0.5, color=white, opacity=0] (T3) edge [bend left=70] (T1);
\end{tikzpicture}}} \! \! +
\FPos_{\scalebox{.5}{\input{figures/relations/r3_12_13_32}}} \! \! +
\FPos_{\scalebox{.5}{\input{figures/relations/r3_12_32}}} \! \! +
\FPos_{\scalebox{.5}{\input{figures/relations/r3_12_31_32}}}.
\end{align*}

The coproduct is even simpler: all relations that appear in the coproduct~$\coproduct(\FRel_{\less})$ of a poset~$\less$ are automatically posets by Remark~\ref{rem:convolutionPosets}:
\begin{align*}
\coproduct \Big(
\FPos_{\scalebox{.5}{\input{figures/relations/r3_12_13_32}}}  \! \!
\Big)
=
\FPos_{\scalebox{.5}{\input{figures/relations/r3_12_13_32}}} \! \!  \otimes \FPos_\varnothing
+
\FPos_{\scalebox{.5}{}} \! \!
\otimes
\FPos_{\scalebox{.5}{}} \! \!
+
\FPos_{\scalebox{.5}{}}  \! \!
\otimes
\FPos_{\scalebox{.5}{}} \! \!
+
\FPos_\varnothing \otimes  \FPos_{\scalebox{.5}{\input{figures/relations/r3_12_13_32}}}.
\end{align*}

\end{example}

\begin{proposition}
\label{prop:productPoset}
For~${\less} \in \IPos_m$ and~${\bless} \in \IPos_n$, the product~$\FPos_{\less} \product \FPos_{\bless}$ is the sum of~$\FPos_{\dashv}$, where~$\dashv$ runs over the interval between $\underprod{\less}{\bless}$ and $\overprod{\less}{\bless}$ in the weak order on~$\IPos_{m+n}$.
\end{proposition}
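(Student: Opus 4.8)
The plan is to push the interval description of the relation product through the Hopf quotient $\K\IRel \to \K\IPos$ of Proposition~\ref{prop:algebraPosets}. By construction of that quotient, I would compute $\FPos_{\less} \product \FPos_{\bless}$ by forming the product $\FRel_{\less} \product \FRel_{\bless}$ in $\K\IRel$ and then discarding every summand $\FRel_{\dashv}$ whose relation $\dashv$ is not a poset. Proposition~\ref{prop:characterizationShuffleProduct} identifies the summands of $\FRel_{\less} \product \FRel_{\bless}$ as exactly the relations $\dashv$ satisfying $\underprod{\less}{\bless} \wole \dashv \wole \overprod{\less}{\bless}$ in~$\IRel_{m+n}$. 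Combining these two facts gives immediately that $\FPos_{\less} \product \FPos_{\bless} = \sum \FPos_{\dashv}$, where $\dashv$ ranges over the \emph{posets} in $\IPos_{m+n}$ with $\underprod{\less}{\bless} \wole \dashv \wole \overprod{\less}{\bless}$.

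It then remains to recognize this indexing set as the interval $[\underprod{\less}{\bless}, \overprod{\less}{\bless}]$ computed inside the weak order on $\IPos_{m+n}$. Here I would use that $\wole$ on $\IPos_{m+n}$ is simply the order inherited from $\IRel_{m+n}$ (Definition~\ref{def:weakOrder}): for any two posets $P, Q$, the $\IPos$-interval $[P,Q]$ is by definition the set of posets $\dashv$ with $P \wole \dashv \wole Q$, that is, the trace on $\IPos_{m+n}$ of the ambient $\IRel$-interval $[P,Q]$. Applied with $P = \underprod{\less}{\bless}$ and $Q = \overprod{\less}{\bless}$, this is precisely the set found in the previous paragraph, provided these endpoints are themselves posets so that the $\IPos$-interval is well defined with the claimed endpoints.

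The only point genuinely requiring verification is therefore that $\underprod{\less}{\bless}$ and $\overprod{\less}{\bless}$ lie in $\IPos_{m+n}$, and for this I would reuse Proposition~\ref{prop:algebraPosets}\,(ii) rather than check antisymmetry and transitivity by hand. Indeed, $([m], \overline{[n]})$ is a total cut of $\underprod{\less}{\bless}$ with $(\underprod{\less}{\bless})_{[m]} = \less$ and $(\underprod{\less}{\bless})_{\overline{[n]}} = \bless$, so $\underprod{\less}{\bless} \in \less \convolution \bless$; symmetrically $(\overline{[n]}, [m])$ is a total cut of $\overprod{\less}{\bless}$ exhibiting $\overprod{\less}{\bless} \in \bless \convolution \less$. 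Since $\less$ and $\bless$ are posets, Proposition~\ref{prop:algebraPosets}\,(ii) guarantees that every relation in these two convolutions is a poset, so both endpoints are posets. (Equivalently, one checks directly that $\underprod{\less}{\bless}$ and $\overprod{\less}{\bless}$ are the two ordinal sums of $\less$ and $\bless$.) I expect no real obstacle: the substantive input is the already-established interval description of the relation product, together with the elementary fact that an interval in an induced subposet is the trace of the ambient interval; the sole subtlety is to confirm that the two endpoints survive the passage to $\IPos_{m+n}$.
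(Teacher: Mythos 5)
Your proposal is correct and follows essentially the same route as the paper: the paper's proof likewise combines Proposition~\ref{prop:characterizationShuffleProduct} with the fact that $\underprod{\less}{\bless}$ and $\overprod{\less}{\bless}$ are posets, so that the poset summands of $\FRel_{\less}\product\FRel_{\bless}$ form the interval $[\underprod{\less}{\bless},\overprod{\less}{\bless}]$ in the induced weak order on $\IPos_{m+n}$. The only (harmless) difference is that you justify the endpoints being posets by exhibiting them as elements of the convolutions $\less\convolution\bless$ and $\bless\convolution\less$ and invoking Proposition~\ref{prop:algebraPosets}\,(ii), whereas the paper simply states this fact (the two ordinal sums) without proof.
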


\begin{proof}
It is a direct consequence of Proposition~\ref{prop:characterizationShuffleProduct} and the fact that for any two posets~${\less}, {\bless} \in \IPos$, the relations~$\underprod{\less}{\bless}$ and~$\overprod{\less}{\bless}$ are both posets.
\end{proof}

\begin{example}
For instance, the product~$\FPos_{\scalebox{.5}{}} \! \! \product \FPos_{\scalebox{.5}{}}$ corresponds to the interval of \fref{fig:productIntervalPosets} from~$\FPos_{\scalebox{.5}{\input{figures/relations/r3_12_13_23}}}$ to~$\FPos_{\scalebox{.5}{\input{figures/relations/r3_12_31_32}}}$ in the weak order on~$\IPos_3$.

\begin{figure}[t]
    \includegraphics[scale=.6]{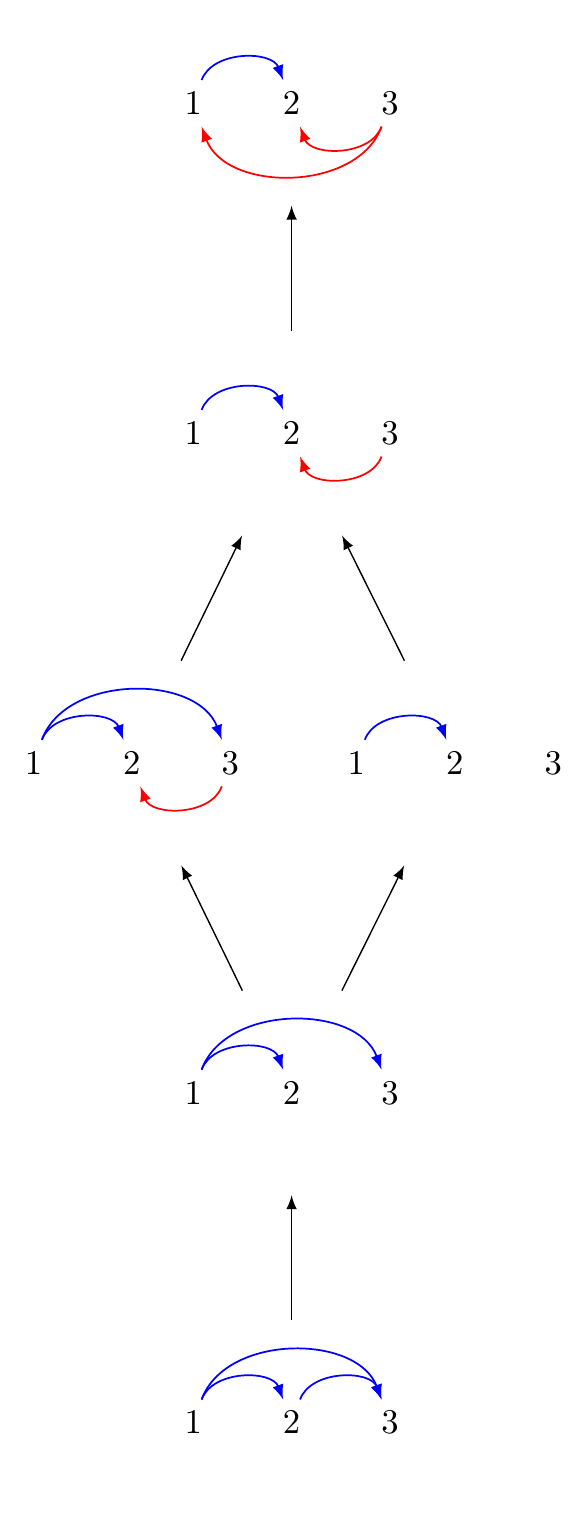}
    \caption{Interval corresponding to a product of posets. Its is obtained from \fref{fig:productInterval} by deleting all binary relations which are not posets.}
    \label{fig:productIntervalPosets}
\end{figure}
\end{example}

In a similar way, we directly obtain multiplicative bases~$\EPos$ and~$\HPos$ by taking the quotient of the bases~$\ERel$ and~$\HRel$ respectively:
\[
\EPos^{\less} = \sum_{\less \wole \bless} \FPos_{\bless}
\qquad\text{and}\qquad
\HPos^{\less} = \sum_{\bless \wole \less} \FPos_{\bless}.
\]
Note that if~$\rel$ is a relation that is not a poset, then the quotient~$\EPos^{\rel}$ of the element~$\ERel^{\rel}$ is not equal to~$0$: the leading term~$\FRel^{\rel}$ is sent to~$0$ and so~$\EPos^{\rel}$ can be expressed as a sum of elements~$\EPos^{\rel[R']}$ where~$\rel[R']$ is a poset with~$\rel[R'] \prec \rel[R]$.

\begin{proposition}
The sets~$(\EPos^{\less})_{\less \in \IPos}$ and~$(\HPos^{\less})_{\less \in \IPos}$ form multiplicative bases of~$\K\IPos$ with
\[
\EPos^{\less} \product \EPos^{\bless} = \EPos^{\underprod{\less}{\bless}}
\qquad\text{and}\qquad
\HPos^{\less} \product \HPos^{\bless} = \HPos^{\overprod{\less}{\bless}}.
\]
Besides, as an algebra, $\K\IPos$ is freely generated by the elements $(\EPos^{\less})$ where $\less$ is under-indecomposable and, equivalently, by the elements $(\HPos^{\less})$ where $\less$ is over-indecomposable.
\end{proposition}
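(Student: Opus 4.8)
The plan is to establish the three claims---basis, product formula, and freeness---by transporting the corresponding results for $\K\IRel$ (Propositions~\ref{prop:Rmultbases} and~\ref{prop:relfree}) through the quotient morphism $\phi \colon \K\IRel \to \K\IPos$ of Proposition~\ref{prop:algebraPosets}, using repeatedly that restrictions of posets are posets and that $\underprod{\less}{\bless}$ and $\overprod{\less}{\bless}$ are posets whenever $\less$ and $\bless$ are.

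First I would settle the basis claim. Since $\EPos^{\less} = \sum_{\less \wole \bless} \FPos_{\bless}$ has leading term $\FPos_{\less}$ while every other summand $\FPos_{\bless}$ satisfies $\less \prec \bless$, the transition matrix from $(\FPos_{\less})$ to $(\EPos^{\less})$ is unitriangular for any linear extension of the weak order, hence invertible; thus $(\EPos^{\less})_{\less \in \IPos}$ is a basis, and symmetrically so is $(\HPos^{\less})_{\less \in \IPos}$. For the product formula I would use that $\phi$ is an algebra morphism with $\phi(\ERel^{\less}) = \EPos^{\less}$ for every poset $\less$ (the non-poset summands of $\ERel^{\less}$ are killed, the poset ones reassemble into $\EPos^{\less}$). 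Applying $\phi$ to the identity $\ERel^{\less} \product \ERel^{\bless} = \ERel^{\underprod{\less}{\bless}}$ of Proposition~\ref{prop:Rmultbases}, and invoking that $\underprod{\less}{\bless}$ is a poset so that $\phi(\ERel^{\underprod{\less}{\bless}}) = \EPos^{\underprod{\less}{\bless}}$, yields $\EPos^{\less} \product \EPos^{\bless} = \EPos^{\underprod{\less}{\bless}}$; the formula for $\HPos$ follows the same way from the $\HRel$ half of Proposition~\ref{prop:Rmultbases}.

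It remains to prove freeness, which is the delicate point, since a quotient of a free algebra need not be free. The saving feature is that the entire unital infinitesimal bialgebra structure underlying Proposition~\ref{prop:relfree} restricts to posets. Concretely, I would define a coproduct on $\K\IPos$ by $\bcoproduct(\EPos^{\dashv}) = \sum_{\dashv = \underprod{\less}{\bless}} \EPos^{\less} \otimes \EPos^{\bless}$, the sum ranging over the primitive cuts of $\dashv$; this lands in $\K\IPos \otimes \K\IPos$ precisely because each factor $\less = \dashv_{[i]}$ and $\bless = \dashv_{[p] \ssm [i]}$ of a poset $\dashv$ is again a poset. I would then check that $(\K\IPos, \product, \bcoproduct)$ satisfies~\eqref{eq:unital-inf} by repeating verbatim the computation of Proposition~\ref{prop:relfree}: its combinatorial core---the description of the non-trivial primitive cuts of $\underprod{\less}{\bless}$ in terms of the cut at $|{\less}|$ together with those of $\bless$---is insensitive to the poset condition. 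An element $\EPos^{\dashv}$ is then primitive for $\rbcoproduct$ exactly when $\dashv$ admits no non-trivial primitive cut, i.e.\ when $\dashv$ is under-indecomposable, so Theorem~\ref{thm:loday-ronco-cofree} identifies $(\K\IPos, \product)$ with the free algebra on the under-indecomposable elements $\EPos^{\dashv}$.

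The main obstacle is exactly this freeness step: one must define $\bcoproduct$ intrinsically on the poset basis rather than attempt to descend the relation coproduct (which interacts badly with the quotient, being expressed in the $\ERel$-basis while the Hopf ideal is spanned in the $\FRel$-basis), and one must verify that the infinitesimal identity genuinely survives the restriction to posets. Once the unital-infinitesimal-bialgebra check is carried out within posets, the $\HPos$ statement follows by the mirror argument, defining $\bcoproduct'(\HPos^{\dashv}) = \sum_{\dashv = \overprod{\less}{\bless}} \HPos^{\less} \otimes \HPos^{\bless}$ and replacing $\underprod{}{}$ by $\overprod{}{}$ throughout, so that the over-indecomposable posets furnish the second free generating family.
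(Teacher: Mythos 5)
Your proof is correct and follows essentially the same route as the paper: the product formula is obtained by pushing the identity $\ERel^{\less} \product \ERel^{\bless} = \ERel^{\underprod{\less}{\bless}}$ through the quotient, using that $\underprod{\less}{\bless}$ and $\overprod{\less}{\bless}$ are posets, and the freeness is obtained by rerunning the unital infinitesimal bialgebra argument of Proposition~\ref{prop:relfree} entirely within posets (which works since restrictions of posets to the segments of a primitive cut are again posets). The paper's own proof is just a two-line sketch saying exactly this; your write-up supplies the details it leaves implicit, including the correct observation that one should define $\bcoproduct$ directly on the $\EPos$-basis rather than try to descend it through the quotient.
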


\begin{proof}
This derives directly from Proposition~\ref{prop:Rmultbases} and the fact that if $\less$ and $\bless$ are posets, then $\underprod{\less}{\bless}$ and $\overprod{\less}{\bless}$ are also posets.
To prove that the algebra is freely generated by the indecomposable elements, one can follow the proof of Proposition~\ref{prop:relfree} as everything still holds when restricting to posets.
\end{proof}

In the next two sections of the paper, we will use the Hopf algebra on integer posets constructed in this section to reinterpret classical Hopf algebras on permutations~\cite{MalvenutoReutenauer} (see Sections~\ref{subsec:MalvenutoReutenauer}, \ref{subsubsec:permutationsQuotientAlgebra} and~\ref{subsubsec:permutationsSubalgebra}), ordered partitions~\cite{Chapoton} (see Section~\ref{subsubsec:orderedPartitionsQuotientAlgebra}), binary trees~\cite{LodayRonco} (see Sections~\ref{subsec:LodayRonco} and~\ref{subsec:subalgebrasTO}) and Schr\"oder trees~\cite{Chapoton} (see Section~\ref{subsec:subalgebrasTO}).
Moreover, we obtain Hopf structures on the intervals of the weak order (see Sections~\ref{subsubsec:intervalsQuotientAlgebra} and~\ref{subsubsec:intervalsSubalgebra}) and on the intervals of the Tamari lattice (see Section~\ref{subsec:subalgebrasTO}), that remained undiscovered to the best of our knowledge.
All these algebras and their connections are summarized in Table~\ref{table:listObjects} and \fref{fig:roadMap}.

\begin{figure}[h]
	\centerline{\input{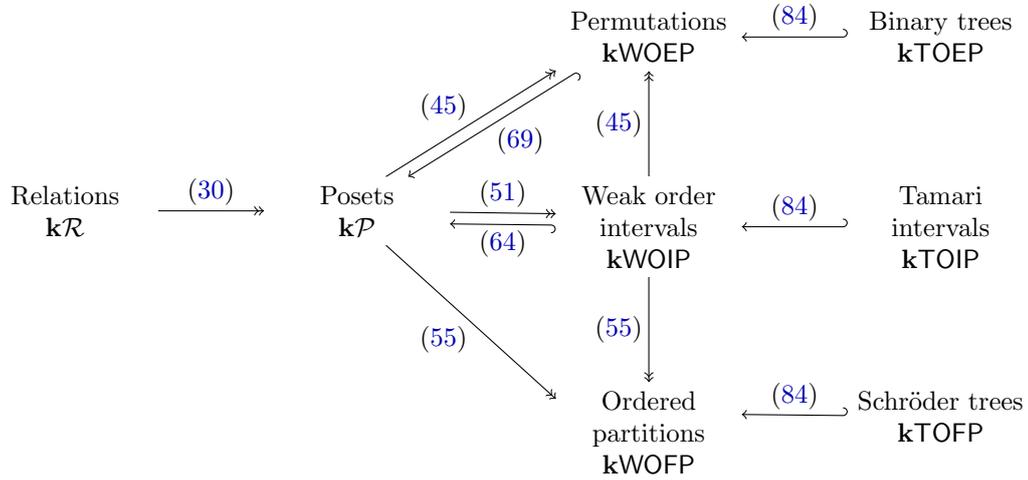}}
	\caption{A roadmap through the different Hopf algebras studied in this paper. An arrow~$\twoheadrightarrow$ indicates a quotient Hopf algebra, while an arrow~$\hookrightarrow$ indicates a Hopf subalgebra. The label on each arrow refers to the corresponding proposition.}
	\label{fig:roadMap}
\end{figure}

\section{Permutations, weak order intervals, and ordered partitions}
\label{sec:WO}

We now consider our first three families of specific integer posets.
These families respectively correspond to the elements ($\WOEP$), the intervals ($\WOIP$) and the faces ($\WOFP$) in the classical weak order on permutations.
We construct Hopf algebras on~$\WOEP$, $\WOIP$ and~$\WOFP$ as quotients (Section~\ref{subsec:quotientAlgebrasWO}) or subalgebras (Section~\ref{subsec:subalgebrasWO}) of the integer poset Hopf algebra~$(\IPos, \product, \coproduct)$.
For the constructions as quotients, the important point is that all these families of posets are defined by local conditions on their relations, and that a contradiction to these conditions cannot be destroyed by the product or the coproduct.
For the constructions as subalgebras, we use surjections from posets to $\WOEP$ or~$\WOIP$ whose fibers are stable by product and coproduct.
Using quotients or subalgebras, we construct Hopf algebras on $\WOEP$ (resp.~$\WOFP$) isomorphic to the Malvenuto--Reutenauer Hopf algebra on permutations~\cite{MalvenutoReutenauer} (resp.~to the Chapoton Hopf algebra on surjections~\cite{Chapoton}), and we obtain a Hopf algebra on intervals of the weak order that was not constructed earlier to the best of our knowledge.

\subsection{Permutations and the Malvenuto--Reutenauer algebra}
\label{subsec:MalvenutoReutenauer}

Recall that the classical \defn{weak order} on the permutations of~$\fS_n$ is defined by $\sigma \wole \tau$ if and only if $\inv(\sigma) \subseteq \inv(\tau)$, where $\inv(\sigma) \eqdef \set{(a,b) \in [n]^2}{a \le b \text{ and } \sigma^{-1}(a) \le \sigma^{-1}(b)}$ denotes the \defn{inversion set} of~$\sigma$.
This order is a lattice with minimal element~$[1,2,\dots,n]$ and maximal element~$[n,\dots,2,1]$.

For two permutations~${\sigma \in \fS_m}$ and~${\tau \in \fS_n}$, the \defn{shifted shuffle}~$\sigma \shiftedShuffle \tau$ (resp.~the \defn{convolution}~$\sigma \convolution \tau$) is the set of permutations of~$\fS_{m+n}$ whose first~$m$ values (resp.~positions) are in the same relative order as~$\sigma$ and whose last~$n$ values (resp.~positions) are in the same relative order as~$\tau$.
For example,
\begin{align*}
{\red 12} \shiftedShuffle {\blue 231} & = \{ {\red 12}{\blue 453}, {\red 1}{\blue 4}{\red 2}{\blue 53}, {\red 1}{\blue 45}{\red 2}{\blue 3}, {\red 1}{\blue 453}{\red 2}, {\blue 4}{\red 12}{\blue 53}, {\blue 4}{\red 1}{\blue 5}{\red 2}{\blue 3}, {\blue 4}{\red 1}{\blue 53}{\red 2}, {\blue 45}{\red 12}{\blue 3}, {\blue 45}{\red 1}{\blue 3}{\red 2}, {\blue 453}{\red 12} \}, \\
\text{and}\quad
{\red 12} \convolution {\blue 231} & = \{ {\red 12}{\blue 453}, {\red 13}{\blue 452}, {\red 14}{\blue 352}, {\red 15}{\blue 342}, {\red 23}{\blue 451}, {\red 24}{\blue 351}, {\red 25}{\blue 341}, {\red 34}{\blue 251}, {\red 35}{\blue 241}, {\red 45}{\blue 231} \}.
\end{align*}

\noindent
Recall that the Malvenuto--Reutenauer Hopf algebra~\cite{MalvenutoReutenauer} is the Hopf algebra on permutations with product~$\product$ and coproduct~$\coproduct$ defined by
\[
\F_\sigma \product \F_\tau \eqdef \sum_{\rho \in \sigma \shiftedShuffle \tau} \F_\rho
\qquad\text{and}\qquad
\coproduct(\F_\rho) \eqdef \sum_{\rho \in \sigma \convolution \tau} \F_\sigma \otimes \F_\tau.
\]

\begin{example}
\label{exm:algebraFQSym}
For example, we have
\[
\F_{12} \product \F_1 = \F_{123} + \F_{132} + \F_{312}
\quad\text{and}\quad
\coproduct (\F_{132}) = \F_{132} \otimes \F_{\varnothing} + \F_{1} \otimes \F_{21} + \F_{12} \otimes \F_{1} + \F_{\varnothing} \otimes \F_{132}.
\]
\end{example}

\begin{remark}
\label{rem:productIntervalsMR}
For~$\sigma \in \fS_m$ and~$\tau \in \fS_n$, we have~$\sigma \shiftedShuffle \tau = [\underprod{\sigma}{\tau}, \overprod{\sigma}{\tau}]$.
More generally, for any permutations~${\sigma \wole \sigma' \in \fS_m}$ and~${\tau \wole \tau' \in \fS_n}$, we have
\[
\bigg( \sum_{\sigma \wole \lambda \wole \sigma'} \F_\lambda \bigg) \product \bigg( \sum_{\tau \wole \mu \wole \tau'} \F_\mu \bigg) = \sum_{\underprod{\sigma}{\tau} \wole \nu \wole \overprod{\sigma'}{\tau'}} \F_\nu.
\]
In other words, weak order intervals are stable by the product~$\product$ on~$\fS$. Note that there are not stable by the coproduct~$\coproduct$.
\end{remark}

\subsection{Weak order element, interval and face posets}
\label{subsec:WOEIFP}

We now briefly recall how the elements, the intervals and the faces of the classical weak order can be interpreted as specific interval posets as developed in~\cite{ChatelPilaudPons}.

\subsubsection{Elements}

We interpret each permutation~$\sigma \in \fS_n$ as the \defn{weak order element poset}~$\less_\sigma$ defined by~$u \less_\sigma v$ if~${\sigma^{-1}(u) \le \sigma^{-1}(v)}$.
In other words, the poset~$\less_\sigma$ is the chain~$\sigma(1) \less_\sigma \dots \less_\sigma \sigma(n)$.
See \fref{fig:woep} for an example with~$\sigma = 2751346$.
\begin{figure}[ht]
    \vspace{-.3cm}
    \input{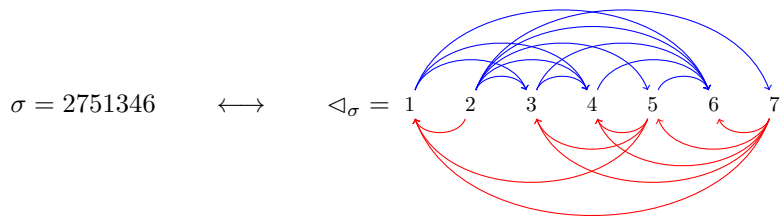}
    \vspace{-.4cm}
    \caption{A Weak Order Element Poset ($\WOEP$).}
    \label{fig:woep}
    \vspace{-.4cm}
\end{figure}

We define
\[
\WOEP_n \eqdef \set{{\less_\sigma}}{\sigma \in \fS_n}
\qquad\text{and}\qquad
\WOEP \eqdef \bigsqcup_{n \in \N} \WOEP_n.
\]
These posets are clearly characterized as follows, which enables to recover the weak order on permutations.

\begin{lemma}
\label{lem:characterizationWOEP}
A poset~${\less}$ is in~$\WOEP_n$ if and only if $a \less b$ or~$a \more b$ for all~$a,b \in [n]$.
\end{lemma}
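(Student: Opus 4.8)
The plan is to prove the two directions of the biconditional separately, showing first that every element of $\WOEP_n$ satisfies the total-comparability condition and then that this condition forces a poset to arise from a permutation.

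For the forward direction, I would start from the definition: a poset ${\less}$ in $\WOEP_n$ is by construction ${\less_\sigma}$ for some permutation $\sigma \in \fS_n$, and we are told it is the total chain $\sigma(1) \less_\sigma \dots \less_\sigma \sigma(n)$. Given any two distinct elements $a, b \in [n]$, their preimages $\sigma^{-1}(a)$ and $\sigma^{-1}(b)$ are distinct integers in $[n]$, so one is strictly smaller than the other; by the defining condition $u \less_\sigma v \iff \sigma^{-1}(u) \le \sigma^{-1}(v)$, this immediately gives $a \less b$ or $b \less a$, the latter being $a \more b$. The reflexive case $a = b$ is covered by reflexivity. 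This direction is essentially unwinding the definition.

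For the converse, suppose ${\less}$ is a poset on $[n]$ in which every pair $a, b$ satisfies $a \less b$ or $a \more b$; I want to produce a permutation $\sigma$ with ${\less} = {\less_\sigma}$. The key observation is that a poset in which every two elements are comparable is a \emph{total order}, and a total order on the finite set $[n]$ is precisely a linear arrangement $x_1 \less x_2 \less \dots \less x_n$ of the elements of $[n]$. I would define $\sigma \in \fS_n$ by $\sigma(i) \eqdef x_i$, equivalently $\sigma^{-1}(x_i) = i$, so that $\sigma^{-1}$ records the position of each element in this chain. Then for any $u, v$ we have $u \less v \iff$ the position of $u$ precedes that of $v$ $\iff \sigma^{-1}(u) \le \sigma^{-1}(v)$, which is exactly the defining relation of ${\less_\sigma}$; hence ${\less} = {\less_\sigma} \in \WOEP_n$.

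The only genuine content, and thus the step I expect to require the most care, is the claim that the local pairwise comparability condition upgrades to a global linear order — that is, that antisymmetry and transitivity (which hold since ${\less}$ is a poset) together with total comparability suffice to list the elements as a single chain. This is the standard fact that a total preorder which is also antisymmetric is a linear order, and extracting the sequence $x_1, \dots, x_n$ amounts to observing that a finite totally-ordered set has a well-defined minimum, whose removal leaves a smaller totally-ordered set, allowing one to read off the chain by induction on $n$. Everything else is a direct translation between the chain and the permutation, so no further obstacle arises.
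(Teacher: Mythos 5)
Your proof is correct; the paper itself offers no proof of this lemma, introducing it with ``These posets are clearly characterized as follows,'' and your argument is exactly the routine verification being left implicit: unwinding the definition of $\less_\sigma$ for the forward direction, and observing that a totally comparable finite poset is a chain whose linear listing yields the required permutation for the converse. No gaps.
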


\begin{proposition}[{\cite[Prop.~23 \& 24]{ChatelPilaudPons}}]
\label{prop:weakOrderWOEP}
The map~$\sigma \mapsto {\less}_\sigma$ is a lattice isomorphism from the weak order on permutations of~$\fS_n$ to the sublattice of the weak order on~$\IPos_n$ induced by $\WOEP_n$.
\end{proposition}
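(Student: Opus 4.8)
The plan is to establish a bijection between the weak order on permutations of $\fS_n$ and the induced weak order on $\WOEP_n$, then show it is order-preserving in both directions. First I would verify that the map $\sigma \mapsto {\less}_\sigma$ is a bijection onto $\WOEP_n$. Surjectivity is immediate from the definition of $\WOEP_n$ as the image of this map, and injectivity follows because the poset~$\less_\sigma$ is the total chain $\sigma(1) \less_\sigma \dots \less_\sigma \sigma(n)$, which recovers the one-line notation of~$\sigma$ uniquely.

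Next I would translate the weak order condition on both sides into a common language, namely comparing the increasing subrelations. For a permutation~$\sigma$, unravelling the definition of~$\less_\sigma$ shows that its increasing subrelation $\Inc{({\less}_\sigma)}$ consists of the pairs~$(a,b)$ with~$a \le b$ and~$\sigma^{-1}(a) \le \sigma^{-1}(b)$, which is precisely the inversion set~$\inv(\sigma)$. Since each~$\less_\sigma$ is a total order, it satisfies $a \less_\sigma b$ or $a \more_\sigma b$ for all~$a,b$ (Lemma~\ref{lem:characterizationWOEP}), so its decreasing subrelation is the complement of the increasing one within~$[n]^2$ up to the diagonal; thus $\Dec{({\less}_\sigma)}$ is determined by $\Inc{({\less}_\sigma)}$. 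The key point is that because every element of $\WOEP_n$ is a total order, the decreasing part carries no independent information, so the weak order comparison $\less_\sigma \wole \less_\tau$ (which by Definition~\ref{def:weakOrder} asks for $\Inc{({\less}_\sigma)} \supseteq \Inc{({\less}_\tau)}$ and $\Dec{({\less}_\sigma)} \subseteq \Dec{({\less}_\tau)}$) reduces to the single condition $\inv(\sigma) \supseteq \inv(\tau)$.

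I would then reconcile the orientation of the two orders. The classical weak order on~$\fS_n$ is $\sigma \wole \tau \iff \inv(\sigma) \subseteq \inv(\tau)$, whereas the computation above gives $\less_\sigma \wole \less_\tau \iff \inv(\sigma) \supseteq \inv(\tau)$. The resolution is in the convention for reading off the inversion set through the poset: one checks that the natural pairing between permutation and poset identifies $\inv(\sigma)$ on the permutation side with the \emph{decreasing} subrelation (rather than the increasing one) of $\less_\sigma$, so the containments line up correctly; this is a bookkeeping verification using the explicit definition $\inv(\sigma) = \set{(a,b)}{a \le b,\ \sigma^{-1}(a) \le \sigma^{-1}(b)}$ against $u \less_\sigma v \iff \sigma^{-1}(u) \le \sigma^{-1}(v)$. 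Once the orientation is pinned down, the map is a poset isomorphism between the two ordered sets.

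Finally, being an order isomorphism between two lattices, the map is automatically a lattice isomorphism: it transports meets to meets and joins to joins. The main obstacle I anticipate is purely the orientation bookkeeping in the previous paragraph, since a sign error in matching inversion sets to increasing versus decreasing subrelations would reverse the order; everything else is a routine unfolding of definitions. In fact, since this statement is quoted as \cite[Prop.~23 \& 24]{ChatelPilaudPons}, I would at most sketch this argument and cite the original for the detailed verification.
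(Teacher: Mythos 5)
The paper offers no proof of this statement at all---it is imported wholesale from \cite[Prop.~23 \& 24]{ChatelPilaudPons}---so there is no internal argument to compare against and your sketch must be judged on its own merits. The bijection and order-isomorphism part is essentially right: $\less_\sigma$ is the chain $\sigma(1) \less_\sigma \cdots \less_\sigma \sigma(n)$, so $\sigma$ is recoverable; and since every element of $\WOEP_n$ is a total order, its decreasing subrelation is the complement (up to transposition and the diagonal) of its increasing one, so the two containments of Definition~\ref{def:weakOrder} collapse to a single one. Your instinct on the orientation is also correct: the displayed definition of $\inv(\sigma)$ in Section~\ref{subsec:MalvenutoReutenauer} (with $\sigma^{-1}(a) \le \sigma^{-1}(b)$) cannot be taken literally, as it would make the identity permutation maximal, and the fix is exactly the one you describe---the genuine inversion set $\set{(a,b)}{a < b \text{ and } \sigma^{-1}(a) > \sigma^{-1}(b)}$ matches $\Dec{({\less_\sigma})}$, after which $\sigma \wole \tau \iff {\less_\sigma} \wole {\less_\tau}$ falls out.

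The gap is in your last step. The proposition does not merely assert that the subposet of $(\IPos_n, \wole)$ induced by $\WOEP_n$ is a lattice isomorphic to the weak order on $\fS_n$; it asserts that $\WOEP_n$ induces a \emph{sublattice}, \ie that the meet and join of two total orders, computed in the lattice $(\IPos_n, \wole)$, are again total orders. Your closing remark that an order isomorphism between two lattices is automatically a lattice isomorphism only shows that the induced subposet is a lattice whose own meets and joins are transported from $\fS_n$; it says nothing about whether these agree with the ambient meets and joins of $\IPos_n$. The distinction is not pedantic in this paper: Propositions~\ref{prop:weakOrderWOIP} and~\ref{prop:weakOrderWOFP} record precisely that $\WOIP_n$ and $\WOFP_n$ are lattices under $\wole$ but \emph{not} sublattices of the weak order on $\IPos_n$. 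Closing the gap requires the explicit meet and join of the integer-poset lattice from \cite{ChatelPilaudPons}---which are not the naive formulas of Proposition~\ref{prop:reflexiveLattice}; for instance ${\less_{231}} \meetR {\less_{312}}$ computed in $\IRel_3$ contains the $3$-cycle on $\{1,2,3\}$ and is not even a poset---together with a verification that these operations preserve totality. That verification is the content of the second of the two cited propositions and is absent from your sketch.
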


\subsubsection{Intervals}
\label{subsubsec:WOIP}

We now present a similar interpretation of the intervals of the weak order.
For~${\sigma \wole \sigma' \in \fS_n}$, we consider the weak order interval~$[\sigma, \sigma'] \eqdef \set{\tau \in \fS_n}{\sigma \wole \tau \wole \sigma'}$.
The permutations of the interval~$[\sigma, \sigma']$ are precisely the linear extensions of the \defn{weak order interval poset} ${\less_{[\sigma,\sigma']} \eqdef \bigcap_{\sigma \wole \tau \wole \sigma'} {\less_\tau} = {\less_{\sigma}} \cap {\less_{\sigma'}} = {\Inc{\less_{\sigma'}}} \cup {\Dec{\less_\sigma}}}$, see the example on~\fref{fig:woip}.
\begin{figure}[ht]
    \vspace{-.7cm}
    \input{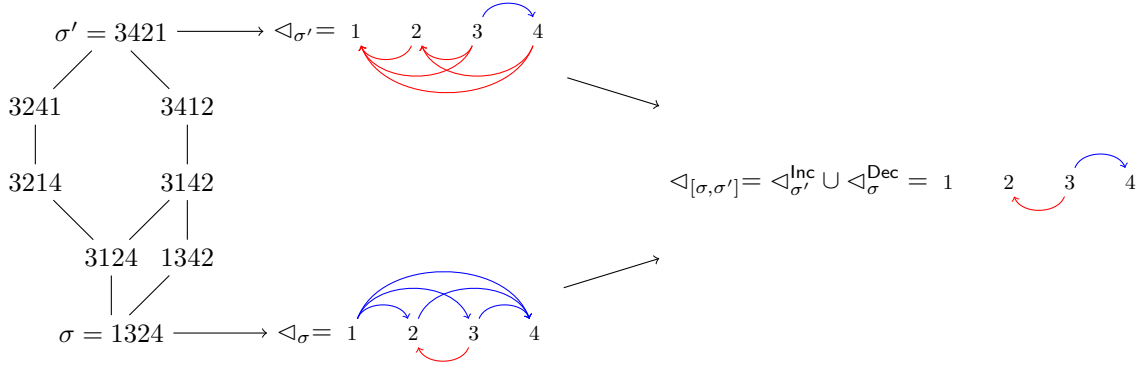}
    \vspace{-.7cm}
    \caption{A Weak Order Interval Poset ($\WOIP$).}
    \label{fig:woip}
    \vspace{-.5cm}
\end{figure}

We define
\[
\WOIP_n \eqdef \set{{\less_{[\sigma,\sigma']}}}{\sigma \wole \sigma' \in \fS_n}
\qquad\text{and}\qquad
\WOIP \eqdef \bigsqcup_{n \in \N} \WOIP_n.
\]

Weak order interval posets are precisely the integer posets which admit both a minimal and a maximal linear extension.
They were characterized by A.~Bjorner and M.~Wachs~\cite{BjornerWachs} as follows.

\begin{proposition}[{\cite[Thm.~6.8]{BjornerWachs}}]
\label{prop:characterizationWOIP}
A poset~${\less}$ is in~$\WOIP_n$ if and only if ${a \less c \Rightarrow (a \less b \text{ or } b \less c)}$ and ${a \more c \Rightarrow (a \more b \text{ or } b \more c)}$ for all~$1 \le a < b < c \le n$.
\end{proposition}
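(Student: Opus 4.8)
The plan is to reduce the statement to the classical characterization of inversion sets of permutations, combined with the formula $\less_{[\sigma,\sigma']} = \Inc{\less_{\sigma'}} \cup \Dec{\less_\sigma}$ recalled above. I would first recall that a subset $I$ of $\set{(a,c)}{1 \le a < c \le n}$ is the inversion set of a (necessarily unique) permutation of $\fS_n$ if and only if, for all $1 \le a < b < c \le n$,
\[
\text{(i) } (a,b), (b,c) \in I \implies (a,c) \in I
\qquad\text{and}\qquad
\text{(ii) } (a,c) \in I \implies (a,b) \in I \text{ or } (b,c) \in I;
\]
equivalently, both $I$ and its complement are transitive. Throughout I identify the decreasing subrelation of a poset $\less$ with the set of pairs $D \eqdef \set{(a,c)}{a < c \text{ and } c \less a}$, and I write $N \eqdef \set{(a,c)}{a < c \text{ and } a \not\less c}$ for the complement of its increasing subrelation among increasing pairs.

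For the direct implication, I would suppose $\less = \less_{[\sigma,\sigma']}$, so that $\Inc{\less} = \Inc{\less_{\sigma'}}$ and $\Dec{\less} = \Dec{\less_\sigma}$; thus $D = \inv(\sigma)$ and $N = \inv(\sigma')$ are inversion sets and satisfy (i) and (ii). The decreasing betweenness condition $a \more c \implies a \more b \text{ or } b \more c$ is exactly (ii) for $D$, and the increasing betweenness condition $a \less c \implies a \less b \text{ or } b \less c$ is, read through complements, exactly (i) for $N$, so both hold.

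For the converse, I would start from a poset $\less$ satisfying both betweenness conditions and check that $D$ and $N$ are inversion sets. The four required implications split cleanly: condition (i) for $D$ and condition (ii) for $N$ follow from transitivity of $\less$ (from $c \less b \less a$ deduce $c \less a$, and from $a \less b \less c$ deduce $a \less c$), while condition (ii) for $D$ is the decreasing betweenness hypothesis and condition (i) for $N$ is the increasing betweenness hypothesis. This yields unique $\sigma, \sigma' \in \fS_n$ with $\Dec{\less_\sigma} = \Dec{\less}$ and $\Inc{\less_{\sigma'}} = \Inc{\less}$. Antisymmetry then gives $D \subseteq N$ (since $c \less a$ forbids $a \less c$), hence $\inv(\sigma) \subseteq \inv(\sigma')$ and $\sigma \wole \sigma'$; intersecting increasing and decreasing parts, $\less_\sigma \cap \less_{\sigma'} = \Inc{\less_{\sigma'}} \cup \Dec{\less_\sigma} = \less$, so $\less = \less_{[\sigma,\sigma']} \in \WOIP_n$.

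The main obstacle is purely the bookkeeping of this dictionary: one must keep straight which of the four betweenness/transitivity conditions corresponds to $D$ versus $N$ and to (i) versus (ii), and must notice that antisymmetry is exactly the ingredient forcing $\sigma \wole \sigma'$ (equivalently, that the interval $[\sigma,\sigma']$ is nonempty). Once this is set up, no further computation is needed; alternatively, the inversion-set characterization itself can be bypassed by constructing $\sigma$ and $\sigma'$ as the minimal and maximal linear extensions of $\less$ directly from the chain structure of the weak order element posets $\less_\sigma$ and $\less_{\sigma'}$.
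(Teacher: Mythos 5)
Your argument is correct. Note that the paper itself offers no proof of this statement: it is quoted directly from Bj\"orner--Wachs \cite[Thm.~6.8]{BjornerWachs}, so there is nothing internal to compare against, and what you have written is a legitimate self-contained derivation. Your reduction is the natural one: encoding $\Dec{\less}$ and the complement of $\Inc{\less}$ as the two sets $D$ and $N$ of increasing pairs, the four conditions ``$D$ and $N$ are each closed under composition and under splitting'' decouple exactly as you say --- two of them are transitivity of $\less$, the other two are the stated betweenness conditions --- and antisymmetry is precisely $D \subseteq N$, i.e.\ $\inv(\sigma) \subseteq \inv(\sigma')$, i.e.\ $\sigma \wole \sigma'$, which is what makes the interval $[\sigma,\sigma']$ nonempty and the formula ${\less_{[\sigma,\sigma']}} = {\Inc{\less_{\sigma'}}} \cup {\Dec{\less_\sigma}}$ applicable. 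The one ingredient you import without proof is the classical fact that a set of increasing pairs is an inversion set if and only if it and its complement are both transitive; that is standard and at the same level of granted background as the paper's own citation, though if you wanted the argument fully self-contained you would prove it (e.g.\ by exhibiting the permutation as the unique linear extension of the total order determined by $I$). Be aware also that the paper's displayed definition of $\inv(\sigma)$ contains a sign slip ($\sigma^{-1}(a) \le \sigma^{-1}(b)$ where it should be $\ge$); you correctly work with the standard convention, which is the one consistent with the rest of the paper.
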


This condition clearly contains two separate conditions on the increasing subrelation and on the decreasing subrelation of~$\less$, and it will be convenient to split these conditions.
We thus consider the set~$\IWOIP_n$ (resp.~$\DWOIP_n$) of posets of~$\IPos_n$ which admit a weak order maximal (resp.~minimal) linear extension.
These posets are characterized as follows.

\begin{proposition}[{\cite[Prop.~32]{ChatelPilaudPons}}]
\label{prop:characterizationIWOIPDWOIP}
For a poset~${\less} \in \IPos_n$,
\begin{itemize}
\item ${\less} \in \IWOIP_n\;\; \iff \forall \; 1 \le a < b < c \le n, \;\; a \less c \implies a \less b \text{ or } b \less c,$
\item ${\less} \in \DWOIP_n \iff \forall \; 1 \le a < b < c \le n, \;\; a \more c \implies a \more b \text{ or } b \more c.$
\end{itemize}
Moreover,
\begin{itemize}
\item if~${\less} \in \IWOIP_n$, its maximal linear extension is ${\maxle{\less}} \eqdef {\less} \cup \set{(b,a)}{a < b \text{ incomparable in } {\less}}$,
\item if~${\less} \in \DWOIP_n$, its minimal linear extension is~${\minle{\less}} \eqdef {\less} \cup \set{(a,b)}{a < b \text{ incomparable in } {\less}}$.
\end{itemize}
\end{proposition}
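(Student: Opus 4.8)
The plan is to prove the $\IWOIP_n$ statement in full; the $\DWOIP_n$ statement then follows by the order‑reversing relabelling $i \mapsto n+1-i$ of $[n]$, which exchanges increasing and decreasing relations, reverses the weak order~$\wole$, and hence turns maximal linear extensions into minimal ones and $\maxle{\less}$ into $\minle{\less}$. Recall that a linear extension of a poset ${\less} \in \IPos_n$ is a total order (a chain) $M$ with ${\less} \subseteq M$, and that on chains the weak order restricts to the weak order on the corresponding permutations, namely $M \wole M'$ iff $\Inc{M} \supseteq \Inc{M'}$ and $\Dec{M} \subseteq \Dec{M'}$. The whole argument revolves around the explicit candidate relation $\maxle{\less} \eqdef {\less} \cup \set{(b,a)}{a<b \text{ incomparable in } {\less}}$, and I would establish two facts: \textbf{(A)} $\maxle{\less}$ is a total order if and only if the local condition holds, and \textbf{(B)} whenever $\maxle{\less}$ is a total order, it is the weak order maximum among all linear extensions of~$\less$.

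For \textbf{(A)}, observe that $\maxle{\less}$ is reflexive, complete (every pair $a<b$ is comparable) and antisymmetric, so it is a tournament; hence it is a total order iff it is transitive iff it has no directed $3$‑cycle. I would check the two cyclic orientations of a triple $a<b<c$. Reading the edges off the definition — for $a<b$ the edge points $a \to b$ exactly when $a \less b$, and $b \to a$ otherwise — the cycle $a \to b \to c \to a$ translates into $a \less b$, $b \less c$ and $a \not\less c$, which is impossible because $\less$ is transitive. The reverse cycle $a \to c \to b \to a$ translates into $a \less c$, $a \not\less b$ and $b \not\less c$, which is \emph{exactly} the negation of $a \less c \implies (a \less b \text{ or } b \less c)$. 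Thus $\maxle{\less}$ fails to be a total order precisely when the condition is violated. This is the conceptual heart of the proof, and using transitivity of $\less$ to discard one of the two cycle types is what makes a single condition on the increasing part suffice.

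For \textbf{(B)}, note that any linear extension $M$ of $\less$ cannot reverse a forced increasing pair, so $\Dec{M} \subseteq \set{(b,a)}{a<b,\; a \not\less b} = \Dec{\maxle{\less}}$, while dually $\Inc{M} \supseteq \Inc{\maxle{\less}}$; hence $M \wole \maxle{\less}$, so $\maxle{\less}$ is the maximum as soon as it is itself a linear extension. Combining \textbf{(A)} and \textbf{(B)} yields the backward implication together with the formula: if the condition holds then $\maxle{\less}$ is a total order extending $\less$ and it is the maximal linear extension, so ${\less} \in \IWOIP_n$ with maximal linear extension $\maxle{\less}$. For the forward implication, suppose ${\less} \in \IWOIP_n$ with maximal linear extension $M$; I would show $M = \maxle{\less}$, which by \textbf{(A)} forces the condition. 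The key input is that for every incomparable pair $\{a,b\}$ with $a<b$ there is \emph{some} linear extension placing $b$ before $a$ (adding $(b,a)$ keeps $\less$ acyclic precisely because $a \not\less b$); since $M$ dominates every linear extension in the weak order, each such pair must already be decreasing in $M$, so $\maxle{\less} \subseteq M$. As both are tournaments on $[n]$ with $\binom{n}{2}$ edges, $M = \maxle{\less}$.

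I expect the forward implication to be the main obstacle, as it is the only step needing a genuine existence argument rather than a direct verification: one must produce, for each incomparable pair, a linear extension witnessing that the pair \emph{can} be made decreasing, and then invoke the domination property of the maximum to pin down $M$ exactly. Everything else reduces either to the tournament computation of \textbf{(A)} or to the monotonicity bookkeeping of \textbf{(B)}.
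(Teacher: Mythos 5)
Your proof is correct, but there is nothing in this paper to compare it against: the statement is imported verbatim from \cite[Prop.~32]{ChatelPilaudPons} and no proof is given here. Your argument is a sound, self-contained derivation. The tournament analysis in \textbf{(A)} correctly identifies the cycle $a \to c \to b \to a$ as the unique obstruction to transitivity of~$\maxle{\less}$ (the orientation $a \to b \to c \to a$ being excluded by transitivity of~$\less$), and this is exactly why a single condition on triples suffices. The domination step \textbf{(B)} is right because $\Inc{(\maxle{\less})} = \Inc{\less}$ while $\Dec{(\maxle{\less})} = \set{(b,a)}{a<b,\ a \not\less b}$ is an upper bound for $\Dec{M}$ over all linear extensions~$M$. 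The forward implication correctly supplies, for each incomparable pair, a linear extension reversing it (the transitive closure of ${\less} \cup \{(b,a)\}$ is a poset precisely because $a \not\less b$), so that the weak order maximum~$M$ must contain~$\maxle{\less}$ and hence equal it, both being tournaments. The one reading you should make explicit is that ``maximal linear extension'' means the weak order \emph{maximum} of the set of linear extensions (a mere maximal element always exists), which is indeed the interpretation your domination argument uses; with that understood, the reduction of the $\DWOIP$ case via $i \mapsto n+1-i$ is also standard and valid.
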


See~\fref{fig:iwoip_dwoip} for an example.

\begin{figure}[ht]
\begin{tabular}{c|c|c|c|c}
$\less$ & $\in \IWOIP$ & $\in \DWOIP$ & $\maxle{\less}$ & $\minle{\less}$ \\
\hline
$\raisebox{-1cm}{\scalebox{.8}{\input{figures/relations/poset6}}}$ & yes & no & $\raisebox{-1cm}{\scalebox{.8}{\input{figures/relations/poset1432}}}$ &  \\[-.15cm]
\hline
$\raisebox{-1cm}{\scalebox{.8}{\input{figures/relations/poset5}}}$ & no & yes & & $\raisebox{-1cm}{\scalebox{.8}{\input{figures/relations/poset1342}}}$ \\[-.15cm]
\hline
$\raisebox{-1cm}{\scalebox{.8}{\input{figures/relations/woip_1423_1432}}}$ & yes & yes & $\raisebox{-1cm}{\scalebox{.8}{\input{figures/relations/poset1423}}}$ & $\raisebox{-1cm}{\scalebox{.8}{\input{figures/relations/poset1432}}}$
\end{tabular}
\caption{Examples of $\IWOIP$ and $\DWOIP$ with their maximum (resp. minimum) linear extensions.}
\label{fig:iwoip_dwoip}
\end{figure}

Finally, the weak order on~$\WOIP_n$ corresponds to the Cartesian product lattice on the intervals, described in the following statement.

\begin{proposition}[{\cite[Prop.~27 \& Coro.~28]{ChatelPilaudPons}}]
\label{prop:weakOrderWOIP}
~
\begin{enumerate}[(i)]
\item If~$\sigma \wole \sigma'$ and~$\tau \wole \tau'$ in~$\fS_n$, then \mbox{${\less_{[\sigma,\sigma']}} \wole {\less_{[\tau,\tau']}} \! \iff \! \sigma \wole \tau$ and~$\sigma' \wole \tau'$}.
\item The weak order on~$\WOIP_n$ is a lattice with meet~${\less_{[\sigma,\sigma']}} \meetWOIP {\less_{[\tau,\tau']}} = {\less_{[\sigma \meetWO \tau, \, \sigma' \meetWO \tau']}}$ and join~${\less_{[\sigma,\sigma']}} \joinWOIP {\less_{[\tau,\tau']}} = {\less_{[\sigma \joinWO \tau, \, \sigma' \joinWO \tau']}}$.
However, the weak order on~$\WOIP_n$ is not a sublattice of the weak order on~$\IPos_n$.
\end{enumerate}
\end{proposition}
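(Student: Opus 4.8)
The plan is to reduce everything to the increasing and decreasing subrelations via the identity ${\less_{[\sigma,\sigma']}} = {\Inc{\less_{\sigma'}}} \cup {\Dec{\less_\sigma}}$ recalled above, and then to transport the componentwise weak order on pairs of permutations through the correspondence $(\sigma,\sigma') \mapsto {\less_{[\sigma,\sigma']}}$. For part~(i), I would first unfold Definition~\ref{def:weakOrder}: the relation ${\less_{[\sigma,\sigma']}} \wole {\less_{[\tau,\tau']}}$ means ${\Inc{\less_{[\sigma,\sigma']}}} \supseteq {\Inc{\less_{[\tau,\tau']}}}$ and ${\Dec{\less_{[\sigma,\sigma']}}} \subseteq {\Dec{\less_{[\tau,\tau']}}}$, which by the above identity is exactly ${\Inc{\less_{\sigma'}}} \supseteq {\Inc{\less_{\tau'}}}$ and ${\Dec{\less_\sigma}} \subseteq {\Dec{\less_\tau}}$. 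The key observation is that, by Lemma~\ref{lem:characterizationWOEP}, a $\WOEP$ poset is a total order, so for each pair $a<b$ exactly one of $(a,b) \in {\Inc{\less_\sigma}}$ or $(b,a) \in {\Dec{\less_\sigma}}$ holds; thus ${\Inc{\less_\sigma}}$ and ${\Dec{\less_\sigma}}$ determine one another and ${\Inc{\less_{\sigma'}}} \supseteq {\Inc{\less_{\tau'}}} \iff {\Dec{\less_{\sigma'}}} \subseteq {\Dec{\less_{\tau'}}}$. Combining this with Proposition~\ref{prop:weakOrderWOEP} (which gives $\sigma' \wole \tau' \iff {\less_{\sigma'}} \wole {\less_{\tau'}}$), each of the two conditions simplifies to a single one, namely ${\Inc{\less_{\sigma'}}} \supseteq {\Inc{\less_{\tau'}}} \iff \sigma' \wole \tau'$ and ${\Dec{\less_\sigma}} \subseteq {\Dec{\less_\tau}} \iff \sigma \wole \tau$, which yields~(i).

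For the lattice part of~(ii), I would observe that~(i) says precisely that $(\sigma,\sigma') \mapsto {\less_{[\sigma,\sigma']}}$ is an order isomorphism from the set $\set{(\sigma,\sigma')}{\sigma \wole \sigma'}$, ordered componentwise inside $\fS_n \times \fS_n$, onto $(\WOIP_n, \wole)$; it is a bijection because ${\less_{[\sigma,\sigma']}}$ recovers ${\Inc{\less_{\sigma'}}}$ and ${\Dec{\less_\sigma}}$, hence $\sigma'$ and $\sigma$. It therefore suffices to check that this set of pairs is a sublattice of the product lattice $\fS_n \times \fS_n$: if $\sigma \wole \sigma'$ and $\tau \wole \tau'$, then $\sigma \meetWO \tau$ lies below both $\sigma'$ and $\tau'$, hence below their meet, so $\sigma \meetWO \tau \wole \sigma' \meetWO \tau'$, and dually $\sigma \joinWO \tau \wole \sigma' \joinWO \tau'$. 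Transporting the componentwise meet and join through the isomorphism gives exactly the claimed formulas for $\meetWOIP$ and $\joinWOIP$.

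To show that $(\WOIP_n, \wole)$ is \emph{not} a sublattice of $(\IPos_n, \wole)$, I would exhibit one explicit discrepancy, already visible for $n=3$. Take $P = {\less_{[123,132]}}$ (the poset $1 \less 2$, $1 \less 3$) and $Q = {\less_{[123,213]}}$ (the poset $1 \less 3$, $2 \less 3$). Any upper bound $U$ of $P$ and $Q$ in the weak order on $\IPos_3$ satisfies ${\Inc U} \subseteq {\Inc P} \cap {\Inc Q} = \{(1,3)\}$ and ${\Dec U} \supseteq \varnothing$, so the least upper bound in $\IPos_3$ is the poset carrying the single relation $1 \less 3$ (with $2$ isolated). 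This poset is not in $\WOIP_3$, since it violates the Bjorner--Wachs condition of Proposition~\ref{prop:characterizationWOIP}: $1 \less 3$ holds while neither $1 \less 2$ nor $2 \less 3$ holds. By contrast, $P \joinWOIP Q = {\less_{[123 \joinWO 123,\; 132 \joinWO 213]}} = {\less_{[123,321]}}$ is the antichain on $\{1,2,3\}$, which lies strictly above the $\IPos_3$-join. Hence the two joins differ and $\WOIP_3$ is not a sublattice.

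I expect the only genuinely delicate point to be this last claim. Parts~(i) and the sublattice-of-a-product computation in~(ii) are essentially bookkeeping with increasing and decreasing subrelations, whereas the non-sublattice statement requires computing the honest least upper bound among \emph{all} integer posets and noticing that it escapes $\WOIP_n$; it is precisely this escape, rather than any failure of the $\meetWOIP$/$\joinWOIP$ formulas, that must be pinned down by an explicit example.
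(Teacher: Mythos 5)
Your argument is correct. Note that the paper itself does not prove this statement: it is imported verbatim from \cite{ChatelPilaudPons} (Prop.~27 \& Coro.~28), so there is no in-paper proof to compare against. Your route --- identifying $\Inc{\less_{[\sigma,\sigma']}}$ with $\Inc{\less_{\sigma'}}$ and $\Dec{\less_{[\sigma,\sigma']}}$ with $\Dec{\less_{\sigma}}$, using the complementarity of increasing and decreasing parts of a total order to reduce each containment to a single weak-order comparison, transporting the componentwise lattice structure on pairs $\sigma \wole \sigma'$ through the resulting order isomorphism, and exhibiting the explicit counterexample $\less_{[123,132]} \joinR \less_{[123,213]} = \{(1,3)\} \notin \WOIP_3$ versus $\less_{[123,132]} \joinWOIP \less_{[123,213]} = \less_{[123,321]}$ --- is the standard one and checks out in every detail, including the verification that $\{(1,3)\}$ violates the B{\"o}rner--Wachs condition of Proposition~\ref{prop:characterizationWOIP} at $a=1$, $b=2$, $c=3$.
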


\subsubsection{Faces}

Recall that the permutahedron~$\Perm \eqdef \conv\set{(\sigma_1, \dots, \sigma_n)}{\sigma \in \fS_n}$ has vertices in bijections with permutations of~$\fS_n$ and faces in bijections with ordered partitions of~$[n]$.
In this paper, we see an ordered partition~$\pi$ of~$[n]$ as a \defn{weak order face poset}~$\less_\pi$ defined by~$u \less_\pi v$ if~$\pi^{-1}(u) < \pi^{-1}(v)$ (meaning the block of~$u$ is before the block of~$v$ in~$\pi$).
See~\fref{fig:wofp} for an example with~$\pi = 125|37|46$.

\begin{figure}[!h]
    \vspace{-.45cm}
    \input{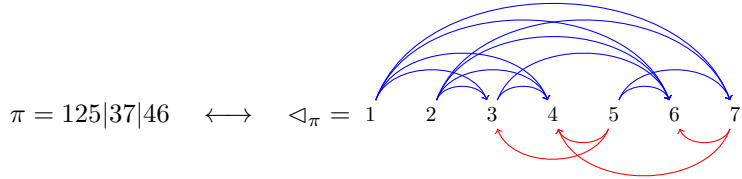}
    \vspace{-1.3cm}
    \caption{A Weak Order Face Poset ($\WOFP$).}
    \label{fig:wofp}
    \vspace{-.4cm}
\end{figure}

We define
\[
\WOFP_n \eqdef \set{{\less_\pi}}{\pi \text{ ordered partition of } [n]}
\qquad\text{and}\qquad
\WOFP \eqdef \bigsqcup_{n \in \N} \WOFP_n.
\]
Again, the posets of~$\WOFP_n$ admit a simple local characterization and the weak order on~$\WOFP_n$ corresponds to a relevant lattice on faces of the permutahedron previously considered in~\cite{KrobLatapyNovelliPhanSchwer, PalaciosRonco, DermenjianHohlwegPilaud}.

\begin{proposition}[{\cite[Prop.~30]{ChatelPilaudPons}}]
\label{prop:characterizationWOFP}
A poset~${\less}$ is in~$\WOFP_n$ if and only if~${\less} \in \WOIP_n$ (Proposition~\ref{prop:characterizationWOIP}) and ${(a \less b \iff b \more c)}$ and~${(a \more b \iff b \less c)}$ for all~$ a < b < c$ with $a \not\less c$ and~$a \not\more c$.
\end{proposition}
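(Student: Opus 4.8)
The plan is to prove the two implications separately, organizing everything around the observation that the posets in $\WOFP_n$ are exactly those for which \emph{incomparability} is an equivalence relation, where we call $a,b$ incomparable when neither $a \less b$ nor $a \more b$ holds. Indeed, an ordered partition $\pi = B_1 | \dots | B_k$ is recovered from $\less_\pi$ by taking the $B_i$ to be the incomparability classes and ordering them by $\less$. For the forward implication I would argue by direct verification. If $\less = \less_\pi$, then two elements are incomparable exactly when they lie in the same block, so for $a < b < c$ with $a \not\less c$ and $a \not\more c$ the elements $a$ and $c$ share a block; hence each of $a \less b$, $c \less b$, $b \less a$, $b \less c$ depends only on the block of $a$ (= block of $c$) versus the block of $b$, which immediately gives $a \less b \iff b \more c$ and $a \more b \iff b \less c$. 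The $\WOIP$ membership is equally direct: given $a < b < c$ with $a \less_\pi c$, the block of $b$ is comparable to the blocks of $a$ and $c$ in the total order of blocks, so either $a \less_\pi b$ or $b \less_\pi c$, and the decreasing condition is symmetric.

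For the converse, assume $\less \in \WOIP_n$ satisfies the extra condition; the heart of the argument is to show that incomparability is transitive. So suppose three distinct elements are pairwise chained by incomparability, and write $p < q < r$ for these three elements in integer order, with two of the three pairs known to be incomparable and the third to be established. I would split into three cases according to which element is shared by the two known incomparable pairs. If the shared element is the integer-middle one $q$ (so the pairs $\{p,q\}$ and $\{q,r\}$ are incomparable), then $p \less r$ would contradict the increasing condition $p \less r \implies (p \less q \text{ or } q \less r)$ of $\WOIP$, and $p \more r$ would contradict the decreasing one, so $p$ and $r$ are incomparable. If instead the shared element is $p$ or $r$, then the \emph{outer} pair $\{p,r\}$ is one of the two given incomparable pairs, so $a = p$, $b = q$, $c = r$ meets the hypotheses of the extra condition; feeding in that the remaining given pair is incomparable, the biconditionals $p \less q \iff q \more r$ and $p \more q \iff q \less r$ force the third pair to be incomparable as well.

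Finally, once incomparability is an equivalence relation, I would let $B_1, \dots, B_k$ be its classes and check the two facts needed to build the ordered partition. First, the relation of a class to an outside element is uniform: if $u$ and $u'$ are incomparable and $u \less v$, then $u \less v \less u'$ would force $u \less u'$, while $u'$ incomparable to $v$ would force $u$ incomparable to $v$ by transitivity of incomparability, both absurd, so $u' \less v$. Hence declaring $B_i \prec B_j$ when some (equivalently every) element of $B_i$ is $\less$ some element of $B_j$ is well defined, and it is a total order on the classes because two elements in distinct classes are comparable, inheriting antisymmetry and transitivity from $\less$. Taking $\pi$ to be the resulting ordered partition yields $\less = \less_\pi \in \WOFP_n$. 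I expect the transitivity of incomparability to be the main obstacle, and in particular the key insight is that it is precisely the shared-endpoint cases, and not the shared-middle case, that require the extra biconditional condition, while the shared-middle case is exactly what forces $\WOIP$ to appear in the hypothesis.
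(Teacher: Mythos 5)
Your proof is correct and complete. Note that the paper itself offers no proof of this statement to compare against: it is quoted verbatim from \cite[Prop.~30]{ChatelPilaudPons}. Your argument --- reducing everything to the transitivity of incomparability, with the shared-middle case $p \sim q \sim r$ handled by the $\WOIP_n$ condition and the shared-endpoint cases handled by the extra biconditionals, then assembling the incomparability classes into a totally ordered partition via the uniformity lemma --- is sound and self-contained; in particular the observation that $\WOFP_n$ consists exactly of the posets in $\WOIP_n$ whose incomparability relation is transitive is precisely the right structural reformulation, and each case of your analysis checks out (including the use of antisymmetry and transitivity of $\less$ in the uniformity step and the well-definedness of the block order).
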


\begin{proposition}[{\cite[Sect.~2.1.3]{ChatelPilaudPons}}]
\label{prop:weakOrderWOFP}
~
\begin{enumerate}[(i)]
\item For any ordered partitions~$\pi, \pi'$ of~$[n]$, we have~${\less_\pi} \wole {\less_{\pi'}} \iff \pi \wole \pi'$ in the facial weak order of~\cite{KrobLatapyNovelliPhanSchwer, PalaciosRonco, DermenjianHohlwegPilaud}.
\item The weak order on~$\WOFP_n$ is a lattice but not a sublattice of the weak order on~$\IPos_n$, nor on~$\WOIP_n$.
\end{enumerate}
\end{proposition}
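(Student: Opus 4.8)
The plan is to deduce the whole statement from the already-established structure on weak order interval posets, after realizing each face poset as an interval poset.

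First I would make explicit the minimal and maximal linear extensions of~$\less_\pi$. For an ordered partition~$\pi$, two integers~$a < b$ are incomparable in~$\less_\pi$ exactly when they belong to the same block of~$\pi$. Hence Proposition~\ref{prop:characterizationIWOIPDWOIP} tells us that~$\minle{(\less_\pi)}$ is the permutation~$\min(\pi)$ obtained by listing the blocks of~$\pi$ in order and sorting each block increasingly, while~$\maxle{(\less_\pi)}$ is the permutation~$\max(\pi)$ obtained by sorting each block decreasingly. A direct computation gives~$\less_{\min(\pi)} \cap \less_{\max(\pi)} = \less_\pi$ (the only pairs added in the two extensions are the within-block increasing and decreasing pairs, and these cancel in the intersection), so~$\less_\pi = \less_{[\min(\pi), \max(\pi)]}$ and~$\WOFP_n \subseteq \WOIP_n$, consistently with Proposition~\ref{prop:characterizationWOFP}.

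For part~(i), I would then apply Proposition~\ref{prop:weakOrderWOIP}\,(i) directly (its hypotheses~$\min(\pi) \wole \max(\pi)$ and~$\min(\pi') \wole \max(\pi')$ holding automatically): $\less_\pi \wole \less_{\pi'}$ if and only if~$\min(\pi) \wole \min(\pi')$ and~$\max(\pi) \wole \max(\pi')$ in the weak order on~$\fS_n$. It remains to recognize that this is precisely the min/max description of the facial weak order established in~\cite{DermenjianHohlwegPilaud} (equivalent to the original definitions of~\cite{KrobLatapyNovelliPhanSchwer, PalaciosRonco}). Since~$\pi \mapsto \less_\pi$ is a bijection onto~$\WOFP_n$, this yields the announced order isomorphism. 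For the positive half of part~(ii), the cleanest route is to transport the lattice property through this isomorphism: the facial weak order on ordered partitions of~$[n]$ is a lattice~\cite{KrobLatapyNovelliPhanSchwer, PalaciosRonco, DermenjianHohlwegPilaud}, hence so is the weak order on~$\WOFP_n$.

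For the two negative statements I would produce explicit counterexamples. To see that~$\WOFP_n$ is not a sublattice of~$\WOIP_n$, I would use Proposition~\ref{prop:weakOrderWOIP}\,(ii): the meet of two faces in~$\WOIP_n$ is~$\less_{[\min(\pi) \meetWO \min(\pi'), \, \max(\pi) \meetWO \max(\pi')]}$, so it suffices to find~$\pi, \pi'$ for which this interval poset fails the local condition of Proposition~\ref{prop:characterizationWOFP} and is therefore not a face. To see that~$\WOFP_n$ is not a sublattice of~$\IPos_n$, I would similarly exhibit two faces whose meet (or join) computed in the poset lattice~$\IPos_n$ violates Proposition~\ref{prop:characterizationWOFP}; here the point is that the~$\IPos_n$-meet, obtained from the relation-meet of Proposition~\ref{prop:reflexiveLattice} after restoring transitivity, introduces comparabilities incompatible with the face condition. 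The main obstacle is thus not conceptual but the bookkeeping of these small examples: one must compute the meet in the ambient lattice (where the relation-level meet of two posets need not even be a poset, forcing a transitive correction) and then verify the finitely many instances of the WOFP condition. I would locate the smallest such examples, likely on~$[3]$ or~$[4]$, and check that the local condition fails.
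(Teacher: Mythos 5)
The paper does not actually prove this proposition: it is imported verbatim from~\cite[Sect.~2.1.3]{ChatelPilaudPons}, so there is no internal argument to compare yours against. Taken on its own terms, your treatment of part~(i) and of the positive half of~(ii) is sound. Writing~${\less_\pi} = {\less_{[\min(\pi),\max(\pi)]}}$ (after observing that~$\min(\pi) \wole \max(\pi)$, since every inversion of~$\min(\pi)$ is an inversion of~$\max(\pi)$) and invoking Proposition~\ref{prop:weakOrderWOIP}\,(i) reduces everything to the characterization~$F \le G \iff (m_F \wole m_G$ and $M_F \wole M_G)$ of the facial weak order, which is indeed one of the equivalent descriptions established in~\cite{DermenjianHohlwegPilaud}; the lattice property then transports along the bijection~$\pi \mapsto {\less_\pi}$. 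This is a legitimate and economical route given what the paper already records about~$\WOIP$.

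The gap is in the two negative assertions of part~(ii): you describe how you would hunt for counterexamples but do not exhibit any, and for the ambient lattice~$\IPos_n$ you also misdescribe the operation --- Proposition~\ref{prop:reflexiveLattice} only gives the meet and join in~$\IRel_n$, and the corresponding operations in~$\IPos_n$ are not simply ``the relation-level ones with transitivity restored'' (restoring transitivity of the increasing part can break antisymmetry against the decreasing part, which is why~\cite{ChatelPilaudPons} needs a further deletion step). Both assertions are nevertheless true, and a single example on~$[3]$ settles both at once. Take~$\pi = 12|3$ and~$\pi' = 13|2$, so that~${\less_\pi} = \{(1,3),(2,3)\}$ and~${\less_{\pi'}} = \{(1,2),(3,2)\}$. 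Their join in~$\IRel_3$ is~$(\Inc{\less_\pi} \cap \Inc{\less_{\pi'}}) \cup (\Dec{\less_\pi} \cup \Dec{\less_{\pi'}}) = \{(3,2)\}$, which is already a poset and hence is also their join in~$\IPos_3$; and their join in~$\WOIP_3$ is~${\less_{[123 \joinWO 132,\; 213 \joinWO 312]}} = {\less_{[132,\,321]}} = \{(3,2)\}$ as well. But the poset~$\{(3,2)\}$ is not a weak order face poset: for~$a=1 < b=2 < c=3$ we have~$1 \not\less 3$ and~$1 \not\more 3$, yet~$2 \more 3$ holds while~$1 \less 2$ fails, violating the condition of Proposition~\ref{prop:characterizationWOFP}. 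With such an example written down, your argument is complete.
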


\subsection{Quotient algebras}
\label{subsec:quotientAlgebrasWO}

We now construct Hopf algebras on~$\WOEP$, $\WOIP$ and~$\WOFP$ as quotient algebras of the integer poset algebra.
For all these constructions, the crucial point is that all these families of posets are characterized by local conditions on their relations, and that a contradiction to these conditions cannot be destroyed by the product or the coproduct on posets.

\subsubsection{Elements}
\label{subsubsec:permutationsQuotientAlgebra}

We first interpret the Malvenuto--Reutenauer Hopf algebra as a quotient of the integer poset Hopf algebra~$(\K\IPos, \product, \coproduct)$.
For that, we use the characterization of Lemma~\ref{lem:characterizationWOEP}.

\begin{proposition}
\label{prop:algebraWOEP}
For any~${\less}, {\bless} \in \IPos$,
\begin{enumerate}[(i)]
\item if the shifted shuffle~${\less} \shiftedShuffle {\bless}$ contains a total order, then~${\less}$ and~${\bless}$ are total orders,
\item if~${\less}$ and~${\bless}$ are both total orders, then all relations in the convolution~${\less} \convolution {\bless}$ are total orders.
\end{enumerate}
Therefore, the vector subspace of~$\K\IPos$ generated by integer posets that are not total is a Hopf ideal of~$(\K\IPos, \product, \coproduct)$.
The quotient of the integer poset Hopf algebra~$(\K\IPos, \product, \coproduct)$ by this ideal is thus a Hopf algebra~$(\K\WOEP^\quotient, \product, \coproduct)$ on total orders.
\end{proposition}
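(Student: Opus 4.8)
The plan is to mirror the proof of Proposition~\ref{prop:algebraPosets}, now tracking the property \emph{being a total order} in place of \emph{being a poset}; by Lemma~\ref{lem:characterizationWOEP} this property simply means that every pair of distinct elements is comparable. Statements (i) and (ii) will then supply exactly the ideal and the coideal conditions for the span of the non-total posets.

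To establish (i), I would appeal to Proposition~\ref{prop:characterizationShuffleProduct}: if some total order $\dashv$ lies in ${\less} \shiftedShuffle {\bless}$, then ${\less} = \dashv_{[m]}$ and ${\bless} = \dashv_{\overline{[n]}}$ are restrictions of $\dashv$. Since comparability of a pair is inherited by any subset containing it, a restriction of a total order is again a total order, so both ${\less}$ and ${\bless}$ are total orders.

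For (ii), I would argue straight from Definition~\ref{def:totalCut}. By Proposition~\ref{prop:algebraPosets}\,(ii) every $\dashv \in {\less} \convolution {\bless}$ is already a poset, so it suffices to check comparability. Fix the witnessing total cut $(X,Y)$, so $\dashv_X = {\less}$, $\dashv_Y = {\bless}$, and $x \dashv y$ for all $x \in X$ and $y \in Y$. Given distinct $u,v$, I run the obvious case analysis: if $u,v \in X$ (resp.~$u,v \in Y$) they are comparable because $\dashv_X$ (resp.~$\dashv_Y$) is total, while if they fall in different blocks the defining inequality of the total cut makes them comparable. Hence $\dashv$ is a total order.

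Finally, let $V \subseteq \K\IPos$ be the linear span of the $\FPos_{\less}$ with $\less$ not total. Reading (i) contrapositively, if either factor is non-total then no summand of the shifted shuffle is total, so every term of $\FPos_{\less} \product \FPos_{\bless}$ lies in $V$; thus $V$ is a two-sided ideal for $\product$. Reading (ii) contrapositively, if $\dashv$ is non-total then in each term $\FPos_{\less} \otimes \FPos_{\bless}$ of $\coproduct(\FPos_{\dashv})$ at least one factor is non-total, so $\coproduct(V) \subseteq V \otimes \K\IPos + \K\IPos \otimes V$ and $V$ is a coideal. Together with vanishing of the counit on $V$, this makes $V$ a Hopf ideal, and the quotient $\K\IPos / V$, whose basis is indexed by total orders, inherits the Hopf structure; this is the announced algebra $(\K\WOEP^\quotient, \product, \coproduct)$. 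I expect the combinatorics to be entirely elementary, the only point demanding care being to keep the directions of the implications in (i) and (ii) aligned with the ideal and coideal conditions respectively.
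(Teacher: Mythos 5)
Your proposal is correct and follows essentially the same route as the paper: part (i) via restriction of the total order to $[m]$ and $\overline{[n]}$, and part (ii) via the same case analysis on whether $u,v$ lie in the same block of the total cut or in different blocks, where the cut condition forces comparability. Your explicit contrapositive reading of (i) and (ii) to verify the ideal and coideal conditions is a correct elaboration of what the paper leaves implicit in its ``Therefore'' sentence.
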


\begin{proof}
For~(i), let~${\less} \in \IPos_m$ and~${\bless} \in \IPos_n$ be such that the shifted shuffle~${\less} \shiftedShuffle {\bless}$ contains a total order~$\dashv$.
Then~${\less} = {\dashv}_{[m]}$ and~${\bless} = {\dashv}_{\overline{[n]}}$ are total orders.

For~(ii), consider two total orders~${\less}, {\bless} \in \WOEP$ and let~${\dashv} \in {\less} \convolution {\bless}$.
Let~$(X,Y)$ be the total cut of~$\dashv$ such that~${\dashv}_X = {\less}$ and~${\dashv}_Y = {\bless}$.
Let~$u,v \in \N$.
If~$u$ and~$v$ both belong to~$X$ (resp.~to~$Y$), then~$u \dashv v$ or~$v \dashv u$ since~${\dashv}_X = {\less}$ (resp.~${\dashv}_Y = {\bless}$) is total.
Otherwise, $u \dashv v$ if and only if~$u \in X$ and~$v \in Y$, while~$v \dashv u$ if and only if~$v \in X$ and~$u \in Y$.
Thus~$\dashv$ is total.
\end{proof}

\begin{remark}
\label{rem:convolutionWOEP}
Although not needed for the Hopf algebra quotient, observe that the convolution satisfies a property similar to Proposition~\ref{prop:algebraWOEP}\,(i):
if~${\less}, {\bless} \in \IPos$ are such that the convolution~${\less} \convolution {\bless}$ contains at least a total order, then~${\less}$ and~${\bless}$ are both total orders.
\end{remark}

For any weak order element poset~$\less$, we denote by~$\FWOEPquo_{\less}$ the image of~$\FRel_{\less}$ through the trivial projection~$\K\IRel \to \K\IPos$.

\begin{example}
\label{exm:algebraWOEP}
In practice, for any two total orders~${\less}, {\bless} \in \WOEP$, we compute the product $\FWOEPquo_{\less} \product \FWOEPquo_{\bless}$ in~$\K\WOEP^\quotient$ by deleting all summands not in~$\WOEP$ in the product~$\FRel_{\less} \product \FRel_{\bless}$ in~$\K\IRel$:
\begin{align*}
\FWOEPquo_{\scalebox{.5}{}} \! \! \product \FWOEPquo_{\scalebox{.5}{}} \! \!
&= \FWOEPquo_{\scalebox{.5}{\input{figures/relations/r3_12_13_23}}} \! \! +
\FWOEPquo_{\scalebox{.5}{\input{figures/relations/r3_12_13_32}}} \! \! +
\FWOEPquo_{\scalebox{.5}{\input{figures/relations/r3_12_31_32}}}.
\end{align*}

The coproduct is even simpler: all relations that appear in the coproduct~$\coproduct(\FRel_{\less})$ of a total order~${\less} \in \WOEP$ are automatically in $\WOEP$ by Remark~\ref{rem:convolutionWOEP}:
\begin{align*}
\coproduct \Big(
\FWOEPquo_{\scalebox{.5}{\input{figures/relations/r3_12_13_32}}}  \! \!
\Big)
=
\FWOEPquo_{\scalebox{.5}{\input{figures/relations/r3_12_13_32}}} \! \!
\otimes
\FWOEPquo_\varnothing
+
\FWOEPquo_{\scalebox{.5}{}}
\otimes
\FWOEPquo_{\scalebox{.5}{}} \! \!
+
\FWOEPquo_{\scalebox{.5}{}}  \! \!
\otimes
\FWOEPquo_{\scalebox{.5}{}}
+
\FWOEPquo_\varnothing
\otimes
\FWOEPquo_{\scalebox{.5}{\input{figures/relations/r3_12_13_32}}}.
\end{align*}

\end{example}

\begin{proposition}
\label{prop:isomorphismMalvenutoReutenauer}
The map~$\F_\sigma \mapsto \FWOEPquo_{\less_\sigma}$ defines a Hopf algebra isomorphism from the Malvenuto--Reutenauer Hopf algebra on permutations~\cite{MalvenutoReutenauer} to the quotient Hopf algebra~$(\K\WOEP^\quotient, \product, \coproduct)$.
\end{proposition}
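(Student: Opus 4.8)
The plan is to show that the linear map $\Phi \colon \F_\sigma \mapsto \FWOEPquo_{\less_\sigma}$ is a bialgebra isomorphism; since both source and target are Hopf algebras, compatibility with the antipode is then automatic. By Proposition~\ref{prop:weakOrderWOEP}, the assignment $\sigma \mapsto {\less_\sigma}$ is a bijection from $\fS_n$ onto $\WOEP_n$ for every $n$, so $\Phi$ is a graded vector space isomorphism. It therefore remains to check that $\Phi$ intertwines the products and the coproducts.

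For the product, take $\sigma \in \fS_m$ and $\tau \in \fS_n$. The key observation is that the endpoints of the shifted shuffle interval on permutations translate directly into the poset operations $\underprod{}{}$ and $\overprod{}{}$: inspecting one-line notation, one checks that $\less_{\underprod{\sigma}{\tau}} = \underprod{\less_\sigma}{\less_\tau}$ and $\less_{\overprod{\sigma}{\tau}} = \overprod{\less_\sigma}{\less_\tau}$, both sides being the chain obtained by placing $\sigma$ before (resp.~after) the shifted copy of $\tau$. By Proposition~\ref{prop:productPoset}, the product $\FWOEPquo_{\less_\sigma} \product \FWOEPquo_{\less_\tau}$ is the sum of $\FWOEPquo_{\dashv}$ over all total orders $\dashv$ lying in the weak order interval $[\underprod{\less_\sigma}{\less_\tau}, \overprod{\less_\sigma}{\less_\tau}]$ of $\IPos_{m+n}$. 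Since the two endpoints are total orders and $\WOEP_{m+n}$ is a sublattice of $\IPos_{m+n}$ (Proposition~\ref{prop:weakOrderWOEP}), these total orders are exactly the images under $\sigma \mapsto {\less_\sigma}$ of the permutations of the weak order interval $[\underprod{\sigma}{\tau}, \overprod{\sigma}{\tau}]$, which by Remark~\ref{rem:productIntervalsMR} is precisely $\sigma \shiftedShuffle \tau$. Hence $\Phi(\F_\sigma \product \F_\tau) = \Phi(\F_\sigma) \product \Phi(\F_\tau)$.

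For the coproduct, I would analyse the total cuts of the chain $\less_\rho$. Since $\less_\rho$ is the total order $\rho(1) \less_\rho \dots \less_\rho \rho(m+n)$, a partition $[m+n] = X \sqcup Y$ is a total cut (Definition~\ref{def:totalCut}) if and only if $X$ is a prefix $\{\rho(1), \dots, \rho(k)\}$ of this chain and $Y$ the complementary suffix, for some $0 \le k \le m+n$. For such a cut, the standardized restrictions $({\less_\rho})_X$ and $({\less_\rho})_Y$ are the chains $\less_\sigma$ and $\less_\tau$, where $\sigma$ and $\tau$ are the standardizations of the first $k$ and last $m+n-k$ values of $\rho$; equivalently, $(\sigma,\tau)$ ranges exactly over the pairs with $\rho \in \sigma \convolution \tau$. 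This matches the convolution coproduct on permutations, giving $\coproduct(\Phi(\F_\rho)) = (\Phi \otimes \Phi)(\coproduct(\F_\rho))$.

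The only genuinely delicate points are the two interval identities $\less_{\underprod{\sigma}{\tau}} = \underprod{\less_\sigma}{\less_\tau}$ and $\less_{\overprod{\sigma}{\tau}} = \overprod{\less_\sigma}{\less_\tau}$, together with the use of the sublattice property to identify the total orders of the $\IPos$-interval with the full permutation interval; once these are in place, the coproduct computation is a straightforward unwinding of the prefix/suffix description of the total cuts of a chain.
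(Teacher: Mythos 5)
Your proof is correct, but your treatment of the product takes a genuinely different route from the paper's. The paper proves the single set identity $({\less}_\sigma \shiftedShuffle {\less}_\tau) \cap \WOEP = \set{{\less}_\rho}{\rho \in \sigma \shiftedShuffle \tau}$ by a direct element-wise unwinding: any total order in the poset shifted shuffle contains ${\less_\sigma}$ and $\overline{\less_\tau}$, hence is $\less_\rho$ for some $\rho \in \sigma \shiftedShuffle \tau$, and conversely $\less_\rho$ visibly decomposes as an element of ${\less_\sigma} \shiftedShuffle {\less_\tau}$. You instead route through the interval descriptions on both sides: Proposition~\ref{prop:productPoset} on the poset side, Remark~\ref{rem:productIntervalsMR} on the permutation side, glued by the identities $\less_{\underprod{\sigma}{\tau}} = \underprod{\less_\sigma}{\less_\tau}$ and $\less_{\overprod{\sigma}{\tau}} = \overprod{\less_\sigma}{\less_\tau}$ and the order isomorphism of Proposition~\ref{prop:weakOrderWOEP}. (Strictly speaking you only need that $\sigma \mapsto \less_\sigma$ is an order embedding, not the full sublattice property, but Proposition~\ref{prop:weakOrderWOEP} supplies it.) Your approach buys a cleaner conceptual picture --- it is essentially the argument the paper itself uses later for Proposition~\ref{prop:isomorphisMalvenutoReutenauer}, which records exactly the two endpoint identities you flag as the delicate points --- at the cost of invoking more machinery; the paper's direct verification is more self-contained. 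Your coproduct argument (total cuts of a chain are prefix/suffix cuts, whose standardized restrictions match the convolution) is precisely the "similar and left to the reader" half of the paper's proof, spelled out correctly.
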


\begin{proof}
We just need to show that for any two permutations~$\sigma \in \fS_m$ and~$\tau \in \fS_n$, we have
\[
({\less}_\sigma \shiftedShuffle {\less}_\tau) \cap \WOEP = \set{{\less}_\rho}{\rho \in \sigma \shiftedShuffle \tau}
\quad\text{and}\quad
{\less}_\sigma \convolution {\less}_\tau = \set{{\less}_\rho}{\rho \in \sigma \convolution \tau}.
\]
For the shuffle product, consider first an element of~$({\less}_\sigma \shiftedShuffle {\less}_\tau) \cap \WOEP$.
By definition, it is of the form~$\less_\rho$ for some~$\rho \in \fS_{m+n}$.
Moreover, ${\less_\sigma} \subset {\less_\rho}$ so that the first~$m$ values of~$\rho$ are in the order of~$\sigma$ and~$\overline{{\less_\tau}} \subset {\less_\rho}$ so that the last~$n$ values of~$\rho$ are in the order of~$\tau$.
Thus~$\rho \in \sigma \shiftedShuffle \tau$.
Conversely, for~$\rho \in \sigma \shiftedShuffle \tau$, we have
\[
{\less_{\rho}} = {\less_\sigma} \cup \overline{\less_\tau} \cup \set{(i,j)}{i \in [m], j \in [n], \rho^{-1}(i) < \rho^{-1}(j)} \cup \set{(j,i)}{i \in [m], j \in [n], \rho^{-1}(i) > \rho^{-1}(j)}.
\]
Therefore, ${\less_\rho} \in {\less_\sigma} \shiftedShuffle {\less_\tau}$.
The proof for the convolution is similar and left to the reader.
\end{proof}

\begin{example}
Compare Examples~\ref{exm:algebraFQSym} and~\ref{exm:algebraWOEP}.
\end{example}

In particular, Proposition~\ref{rem:productIntervalsMR} can be restated on $\WOEP$ as follow.

\begin{proposition}
\label{prop:isomorphisMalvenutoReutenauer}
For any~${\less} \in \WOEP_m$ and~${\bless} \in \WOEP_n$, the product~$\FWOEPquo_{\less} \product \FWOEPquo_{\bless}$ is the sum of~$\FWOEPquo_{\dashv}$, where~$\dashv$ runs over the interval between $\underprod{\less}{\bless}$ and $\overprod{\less}{\bless}$ in the weak order on~${\WOEP_{m+n}}$.
\end{proposition}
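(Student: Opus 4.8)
The plan is to read off the statement from the poset product already computed in Proposition~\ref{prop:productPoset}, combined with the fact that the Hopf algebra on~$\WOEP$ is a \emph{quotient} of the poset Hopf algebra. By construction (Proposition~\ref{prop:algebraWOEP} and Example~\ref{exm:algebraWOEP}), the product $\FWOEPquo_{\less} \product \FWOEPquo_{\bless}$ in~$\K\WOEP^\quotient$ is obtained from the product $\FPos_{\less} \product \FPos_{\bless}$ in~$\K\IPos$ by discarding every summand~$\FPos_{\dashv}$ for which~$\dashv$ is not a total order. Since Proposition~\ref{prop:productPoset} identifies $\FPos_{\less} \product \FPos_{\bless}$ with the sum of~$\FPos_{\dashv}$ over the weak order interval between $\underprod{\less}{\bless}$ and $\overprod{\less}{\bless}$ in~$\IPos_{m+n}$, the quotient product equals the sum of~$\FWOEPquo_{\dashv}$ over those~$\dashv$ in this interval that moreover belong to~$\WOEP_{m+n}$. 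The whole task thus reduces to identifying that set of total orders with the interval between $\underprod{\less}{\bless}$ and $\overprod{\less}{\bless}$ taken in the weak order on~$\WOEP_{m+n}$.

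First I would check that both endpoints are themselves total orders, so that the interval in~$\WOEP_{m+n}$ is well defined. When~$\less$ and~$\bless$ are total, $\underprod{\less}{\bless} = {\less} \cup \overline{\bless} \cup ([m] \times \overline{[n]})$ makes every pair comparable: two elements lying on the same side are comparable because~$\less$ (resp.~$\bless$) is total, and a pair straddling the two sides is related through~$[m] \times \overline{[n]}$. They are posets by Proposition~\ref{prop:productPoset}, hence total orders, and symmetrically for~$\overprod{\less}{\bless}$; so both endpoints lie in~$\WOEP_{m+n}$.

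The final step uses that, by Proposition~\ref{prop:weakOrderWOEP}, the weak order on~$\WOEP_{m+n}$ is the order induced from the weak order on~$\IPos_{m+n}$ (indeed $\WOEP_{m+n}$ is even a sublattice). For an induced subposet, the interval between two of its elements is by definition the intersection of the ambient interval with the subposet; hence the interval between $\underprod{\less}{\bless}$ and $\overprod{\less}{\bless}$ in~$\WOEP_{m+n}$ equals $\WOEP_{m+n} \cap [\underprod{\less}{\bless}, \overprod{\less}{\bless}]$, which is exactly the set of total orders surviving in the quotient product. This closes the argument.

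I expect no serious obstacle: the proposition is essentially a bookkeeping consequence of Propositions~\ref{prop:productPoset} and~\ref{prop:weakOrderWOEP}. The only point needing a moment's care is the identification of the $\WOEP$-interval with the intersection $\WOEP_{m+n} \cap [\underprod{\less}{\bless}, \overprod{\less}{\bless}]$, i.e.\ making sure no total order inside the ambient interval is lost when passing to the induced order; this is guaranteed precisely because $\WOEP_{m+n}$ carries the induced order. As an alternative route, one could instead transport Remark~\ref{rem:productIntervalsMR} (in the degenerate case $\sigma' = \sigma$ and~$\tau' = \tau$) through the Hopf isomorphism of Proposition~\ref{prop:isomorphismMalvenutoReutenauer}, after verifying that $\less_{\underprod{\sigma}{\tau}} = \underprod{\less_\sigma}{\less_\tau}$ and $\less_{\overprod{\sigma}{\tau}} = \overprod{\less_\sigma}{\less_\tau}$.
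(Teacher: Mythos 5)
Your argument is correct and follows essentially the same route as the paper: both reduce the claim to the interval description of the shifted shuffle (Proposition~\ref{prop:characterizationShuffleProduct}, which you access via Proposition~\ref{prop:productPoset}) together with the observation that $\underprod{\less}{\bless}$ and $\overprod{\less}{\bless}$ are themselves total orders, so that intersecting the ambient interval with $\WOEP_{m+n}$ yields the interval in the induced weak order on $\WOEP_{m+n}$. The paper checks the endpoints are total via the identities $\underprod{\less_\sigma}{\less_\tau} = \less_{\underprod{\sigma}{\tau}}$ and $\overprod{\less_\sigma}{\less_\tau} = \less_{\overprod{\sigma}{\tau}}$ rather than by your direct comparability check, but this is an immaterial difference.
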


\begin{proof}
It is a direct consequence of Proposition~\ref{prop:characterizationShuffleProduct} and the fact that for any two~${{\less}, {\bless} \in \WOEP}$, the relations~$\underprod{\less}{\bless}$ and~$\overprod{\less}{\bless}$ are both total orders.
Indeed, for any two permutations~$\sigma$ and~$\tau$, we have~$\underprod{\less_\sigma}{\less_\tau} = \less_{\underprod{\sigma}{\tau}}$ and~$\overprod{\less_\sigma}{\less_\tau} = \less_{\overprod{\sigma}{\tau}}$.
\end{proof}

\subsubsection{Intervals}
\label{subsubsec:intervalsQuotientAlgebra}

We now present a Hopf algebra structure on intervals of the weak order.
Before we start, let us make some observations:
\begin{itemize}
\item By Remark~\ref{rem:productIntervalsMR}, the product~$\product$ of the Malvenuto--Reutenauer algebra provides a natural algebra structure on weak order intervals. However, this does not define a Hopf algebra on weak order intervals as intervals are not stable by the coproduct~$\coproduct$ on~$\fS$. Indeed, note that~$\coproduct(\F_{1423} + \F_{4123})$ contains~$\F_1 \otimes \F_{123}$ and~$\F_1 \otimes \F_{312}$ but not~$\F_1 \otimes \F_{132}$ while~$132$ belongs to the weak order interval~$[123, 312]$.
\item To the best of our knowledge, the Hopf algebra presented below did not appear earlier in the literature. In fact, we are not aware of any Hopf algebra on weak order intervals.
\item The construction below relies on the local conditions characterizing~$\WOIP$ in Proposition~\ref{prop:characterizationWOIP}. As shown in Proposition~\ref{prop:characterizationIWOIPDWOIP}, these conditions can be split to characterize separately~$\IWOIP$ and~$\DWOIP$. Therefore, we obtain similar Hopf algebra structures on~$\IWOIP$ and~$\DWOIP$, although we do not explicitly state all results for~$\IWOIP$ and~$\DWOIP$.
\end{itemize}

\begin{proposition}
\label{prop:algebraWOIP}
For any~${\less}, {\bless} \in \IPos$,
\begin{enumerate}[(i)]
\item if~$({\less} \shiftedShuffle {\bless}) \cap \WOIP \ne \varnothing$, then~${\less} \in \WOIP$ and~${\bless} \in \WOIP$,
\item if~${\less} \in \WOIP$ and~${\bless} \in \WOIP$, then~$({\less} \convolution {\bless}) \subseteq \WOIP$.
\end{enumerate}
Therefore, the vector subspace of~$\K\IPos$ generated by~$\IPos \ssm \WOIP$ is a Hopf ideal of~$(\K\IPos, \product, \coproduct)$.
The quotient of the integer poset algebra~$(\K\IPos, \product, \coproduct)$ by this ideal is thus a Hopf algebra~$(\K\WOIP^\quotient, \product, \coproduct)$ on weak order intervals.
A similar statement holds for~$\IWOIP$ and~$\DWOIP$.
\end{proposition}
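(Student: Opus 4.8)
The plan is to reduce everything to the local triple characterization of $\WOIP$ given in Proposition~\ref{prop:characterizationWOIP}, namely that ${\less}$ lies in $\WOIP_n$ exactly when, for all $1 \le a < b < c \le n$, one has $a \less c \implies (a \less b \text{ or } b \less c)$ and $a \more c \implies (a \more b \text{ or } b \more c)$. Both (i) and (ii) follow from the interplay between these triple conditions and the operations of restriction and convolution, and the Hopf ideal claim is then a formal consequence.

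For part (i), I would invoke Proposition~\ref{prop:characterizationShuffleProduct}: a poset $\dashv \in {\less} \shiftedShuffle {\bless}$ satisfies $\dashv_{[m]} = {\less}$ and $\dashv_{\overline{[n]}} = {\bless}$. The essential point is that the triple conditions are inherited by restrictions to subsets, since standardization preserves the relative order of any triple and the membership of each pair in the relation. Concretely, any triple $a < b < c$ in $[m]$ (resp.\ in $\overline{[n]}$) is again a triple in $[m+n]$, so if $\dashv$ is $\WOIP$ then the conditions for ${\less}$ (resp.\ ${\bless}$) are mere specializations of the conditions for $\dashv$. Hence ${\less}$ and ${\bless}$ are both $\WOIP$.

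For part (ii), I would fix $\dashv \in {\less} \convolution {\bless}$ with total cut $(X,Y)$, so that $X \times Y \subseteq {\dashv}$, $(Y \times X) \cap {\dashv} = \varnothing$, $\dashv_X = {\less}$ and $\dashv_Y = {\bless}$, and then verify the triple conditions for $\dashv$ directly. This is a finite case analysis: given a triple $a < b < c$, one distributes $a,b,c$ among the blocks $X$ and $Y$. In the homogeneous cases (all three in $X$, or all three in $Y$) the condition is exactly the $\WOIP$ condition for ${\less}$ or for ${\bless}$; in every mixed case at least one of the required comparabilities is automatically supplied by the cross-relations $X \times Y \subseteq {\dashv}$ forced by the total cut. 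The main subtlety --- and the step I expect to require the most care --- is that $X$ and $Y$ need not be contiguous as integer intervals, so a triple can be genuinely interleaved between the two blocks; one must check that in each interleaving pattern the total-cut relations (and their absence on $Y \times X$) deliver the needed conclusion, both for the increasing hypothesis $a \dashv c$ and, symmetrically, for the decreasing hypothesis $c \dashv a$.

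Once (i) and (ii) are in hand, the Hopf ideal statement is formal, and I would argue by contraposition. Let $V$ be the subspace spanned by $\set{\FPos_{\less}}{{\less} \in \IPos \ssm \WOIP}$. Property (i) shows $V$ is a two-sided ideal: if one of ${\less}, {\bless}$ fails to be $\WOIP$, then no poset occurring in ${\less} \shiftedShuffle {\bless}$ is $\WOIP$, so the entire product lies in $V$. Property (ii) shows $V$ is a coideal: if ${\less}$ is not $\WOIP$, then in each total cut at least one of the two restrictions must fail to be $\WOIP$ (otherwise (ii) would force ${\less} \in \WOIP$), so $\coproduct(\FPos_{\less}) \in V \otimes \K\IPos + \K\IPos \otimes V$. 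Since non-$\WOIP$ posets have size at least $3$, they lie in the kernel of the counit, so $V$ is a Hopf ideal and the quotient $(\K\WOIP^\quotient, \product, \coproduct)$ is a Hopf algebra. Finally, because Proposition~\ref{prop:characterizationIWOIPDWOIP} splits the $\WOIP$ characterization into an independent increasing condition (defining $\IWOIP$) and decreasing condition (defining $\DWOIP$), the same case analysis applies verbatim to each half, yielding the analogous Hopf ideals and quotient Hopf algebras on $\IWOIP$ and $\DWOIP$.
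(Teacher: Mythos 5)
Your proposal is correct and follows essentially the same route as the paper: part (i) by observing that the local triple characterization is inherited under restriction to $[m]$ and $\overline{[n]}$ via Proposition~\ref{prop:characterizationShuffleProduct}, and part (ii) by the case analysis on how $a,b,c$ distribute over the total cut $(X,Y)$, with the cross-relations $X \times Y \subseteq {\dashv}$ and $(Y\times X)\cap{\dashv}=\varnothing$ supplying the mixed cases. The only cosmetic difference is that the paper runs the argument for $\IWOIP$ and deduces $\WOIP = \IWOIP \cap \DWOIP$ by symmetry, whereas you treat $\WOIP$ directly and split into $\IWOIP$/$\DWOIP$ at the end; your explicit derivation of the Hopf ideal property from (i) and (ii), including the counit condition, is also sound.
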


\begin{proof}
We make the proof for~$\IWOIP$, the proof for~$\DWOIP$ is symmetric, and the result follows for~$\WOIP = \IWOIP \cap \DWOIP$.
For~(i), let~${\less} \in \IPos_m$ and~${\bless} \in \IPos_n$ be such that the shifted shuffle~${\less} \shiftedShuffle {\bless}$ contains a poset~${\dashv} \in \IWOIP_{m+n}$.
Let~$1 \le a < b < c \le m$ be such that~$a \less c$.
Then~$a \dashv c$ (since~${\dashv}_{[m]} = {\less}$), which ensures that~$a \dashv b$ or~$b \dashv c$ (since~${\dashv} \in \IWOIP_{m+n}$), and we obtain that~$a \less b$ or~$b \less c$ (again since~${\dashv}_{[m]} = {\less}$).
We conclude that~${\less} \in \IWOIP_m$ by the characterization of Proposition~\ref{prop:characterizationIWOIPDWOIP}, and we prove similarly that~${\bless} \in \IWOIP_n$.

For~(ii), consider two weak order interval posets~${\less}, {\bless} \in \IWOIP$ and let~${\dashv} \in {\less} \convolution {\bless}$.
Let~$(X,Y)$ be the total cut of~$\dashv$ such that~${\dashv}_X = {\less}$ and~${\dashv}_Y = {\bless}$.
Consider~$a < b < c$ such that~$a \dashv c$.
We distinguish three situations according to the repartition of~$\{a,b,c\}$ in the partition~$X \sqcup Y$:
\begin{itemize}
\item Assume first that~$\{a, b, c\}  \subseteq X$. Since~${\dashv}_X = {\less}$ is in~$\IWOIP$, we obtain that~$a \dashv b$ or~$b \dashv c$. The argument is identical if~$\{a, b, c\}  \subseteq Y$.
\item Assume now that~$\{a, c\} \subseteq X$ and~$b \in Y$. Then~$a \dashv b$ since~$X \times Y \subseteq {\dashv}$. The argument is identical if~$\{a,c\} \in Y$ and~$b \in X$.
\item Finally, assume that~$\{a,c\} \not\subseteq X$ and~$\{a,c\} \not\subseteq Y$. Then we have~$a \in X$ and~$c \in Y$ (since~$a \dashv c$ and~$(Y \times X) \cap {\dashv} = \varnothing$). Since~$X \times Y \subseteq {\dashv}$, we obtain that~$a \dashv b$ if~$b \in Y$, while~$b \dashv c$ if~$b \in X$.
\end{itemize}
We therefore obtain that~$a \dashv c \implies (a \dashv b \text{ or } b \dashv c)$ and we conclude that~${\dashv} \in \IWOIP$ by the characterization of Proposition~\ref{prop:characterizationIWOIPDWOIP}.
\end{proof}

\begin{remark}
\label{rem:convolutionWOIP}
Although not needed for the Hopf algebra quotient, observe that the convolution satisfies a property similar to Proposition~\ref{prop:algebraWOIP}\,(i):
if~${\less}, {\bless} \in \IPos$ are such that the convolution~${\less} \convolution {\bless}$ contains at least one element in $\WOIP$, then~${\less}$ and~${\bless}$ are both in $\WOIP$.
\end{remark}

For any weak order interval poset~$\less$, we denote by~$\FWOIPquo_{\less}$ the image of~$\FRel_{\less}$ through the trivial projection~$\K\IRel \to \K\IPos$.

\begin{example}
In practice, for any two weak order interval posets~${\less}, {\bless} \in \WOIP$, we compute the product $\FWOIPquo_{\less} \product \FWOIPquo_{\bless}$ in~$\K\WOIP^\quotient$ by deleting all summands not in~$\WOIP$ in the product~${\FRel_{\less} \product \FRel_{\bless}}$ in~$\K\IRel$:
\begin{align*}
\FWOIPquo_{\scalebox{.5}{}} \! \! \product \FWOIPquo_{\scalebox{.5}{}} \! \!
&= \FWOIPquo_{\scalebox{.5}{\input{figures/relations/r3_13_23}}} \! \! +
\FWOIPquo_{\scalebox{.5}{\begin{tikzpicture}[baseline, scale=\interscale]
\node(T1) at (0,0) {1};
\node(T2) at (1,0) {2};
\node(T3) at (2,0) {3};
\draw[->, line width = 0.5, color=blue] (T2) edge [bend left=70] (T3);
\draw[->,line width = 0.5, color=white, opacity=0] (T1) edge [bend left=70] (T3);
\draw[->,line width = 0.5, color=white, opacity=0] (T3) edge [bend left=70] (T1);
\end{tikzpicture}}} \! \! +
\FWOIPquo_{\scalebox{.5}{\begin{tikzpicture}[baseline, scale=\interscale]
\node(T1) at (0,0) {1};
\node(T2) at (1,0) {2};
\node(T3) at (2,0) {3};
\draw[->,line width = 0.5, color=white, opacity=0] (T1) edge [bend left=70] (T3);
\draw[->,line width = 0.5, color=white, opacity=0] (T3) edge [bend left=70] (T1);
\end{tikzpicture}}} \! \! +
\FWOIPquo_{\scalebox{.5}{\begin{tikzpicture}[baseline, scale=\interscale]
\node(T1) at (0,0) {1};
\node(T2) at (1,0) {2};
\node(T3) at (2,0) {3};
\draw[->, line width = 0.5, color=red] (T3) edge [bend left=70] (T2);
\draw[->,line width = 0.5, color=white, opacity=0] (T1) edge [bend left=70] (T3);
\draw[->,line width = 0.5, color=white, opacity=0] (T3) edge [bend left=70] (T1);
\end{tikzpicture}}} \! \! +
\FWOIPquo_{\scalebox{.5}{\input{figures/relations/r3_31_32}}} \! \! .
\end{align*}

The coproduct is even simpler: all relations that appear in the coproduct~$\coproduct(\FRel_{\less})$ of an element in  $\WOIP$ are automatically in $\WOIP$ by Remark~\ref{rem:convolutionWOIP}:
\begin{align*}
\coproduct \Big(
\FWOIPquo_{\scalebox{.5}{\input{figures/relations/r3_12_32}}}  \! \!
\Big)
=
\FWOIPquo_{\scalebox{.5}{\input{figures/relations/r3_12_32}}} \! \!  \otimes \FWOIPquo_\varnothing
+
\FWOIPquo_{\scalebox{.5}{}}  \! \!
\otimes
\FWOIPquo_{\scalebox{.5}{}} \! \!
+
\FWOIPquo_\varnothing \otimes  \FWOIPquo_{\scalebox{.5}{\input{figures/relations/r3_12_32}}}.
\end{align*}

\end{example}

\begin{proposition}
For any~${\less} \in \WOIP_m$ and~${\bless} \in \WOIP_n$, the product~$\FWOIPquo_{\less} \product \FWOIPquo_{\bless}$ is the sum of~$\FWOIPquo_{\dashv}$, where~$\dashv$ runs over the interval between $\underprod{\less}{\bless}$ and $\overprod{\less}{\bless}$ in the weak order on~${\WOIP_{m+n}}$.
\end{proposition}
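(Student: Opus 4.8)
The plan is to follow the same pattern as the proofs of Propositions~\ref{prop:productPoset} and~\ref{prop:isomorphisMalvenutoReutenauer}. Recall that the product $\FWOIPquo_{\less} \product \FWOIPquo_{\bless}$ in the quotient $\K\WOIP^\quotient$ is computed by taking the product $\FPos_{\less} \product \FPos_{\bless}$ in $\K\IPos$ and deleting every summand that is not in $\WOIP$, which is exactly the effect of the quotient by the Hopf ideal generated by $\IPos \ssm \WOIP$ (Proposition~\ref{prop:algebraWOIP}). By Proposition~\ref{prop:productPoset}, the product in $\K\IPos$ is the sum of $\FPos_\dashv$ over all posets $\dashv$ in the weak order interval $[\underprod{\less}{\bless}, \overprod{\less}{\bless}]$ of $\IPos_{m+n}$. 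Hence, after deletion,
\[
\FWOIPquo_{\less} \product \FWOIPquo_{\bless} = \sum_{\substack{\underprod{\less}{\bless} \,\wole\, \dashv \,\wole\, \overprod{\less}{\bless} \\ {\dashv} \in \WOIP}} \FWOIPquo_\dashv,
\]
so it remains only to identify this index set with the weak order interval between $\underprod{\less}{\bless}$ and $\overprod{\less}{\bless}$ inside $\WOIP_{m+n}$.

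The key step is to show that both endpoints $\underprod{\less}{\bless}$ and $\overprod{\less}{\bless}$ already lie in $\WOIP_{m+n}$. First I would record their increasing and decreasing subrelations: since every pair of $[m] \times \overline{[n]}$ is increasing and every pair of $\overline{[n]} \times [m]$ is decreasing, one has $\Inc{(\underprod{\less}{\bless})} = \Inc{\less} \cup \Inc{\overline{\bless}} \cup ([m] \times \overline{[n]})$ and $\Dec{(\underprod{\less}{\bless})} = \Dec{\less} \cup \Dec{\overline{\bless}}$, and symmetrically for $\overprod{\less}{\bless}$. Using the split characterization of Proposition~\ref{prop:characterizationIWOIPDWOIP}, I would then check the $\IWOIP$ and $\DWOIP$ conditions for $\underprod{\less}{\bless}$ by a case analysis on the position of a triple $a < b < c$ relative to the cut at $m$: when all three indices lie in $[m]$, or all three in $\overline{[n]}$, the condition is inherited from $\less$, resp.~$\bless \in \WOIP$, while when the triple straddles the cut the required relation is supplied directly by the block $[m] \times \overline{[n]} \subseteq \underprod{\less}{\bless}$ (and, for the decreasing condition, by the fact that this block contributes no decreasing pairs). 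The argument for $\overprod{\less}{\bless}$ is symmetric, exchanging the roles of the increasing and decreasing parts.

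Finally, since the weak order on $\WOIP_{m+n}$ is by definition the restriction of the weak order $\wole$ on $\IPos_{m+n}$ to the subset $\WOIP_{m+n}$, an order interval of $\WOIP_{m+n}$ whose endpoints lie in $\WOIP$ coincides with the corresponding interval of $\IPos_{m+n}$ intersected with $\WOIP$. Combined with the previous step, this identifies the index set of the displayed sum with the interval between $\underprod{\less}{\bless}$ and $\overprod{\less}{\bless}$ in the weak order on $\WOIP_{m+n}$, which is the claim.

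I expect the case analysis establishing $\underprod{\less}{\bless}, \overprod{\less}{\bless} \in \WOIP$ to be the only real work; it is routine but is the crux. The conceptual point to keep in mind is that $\WOIP_{m+n}$ is \emph{not} a sublattice of $\IPos_{m+n}$ (Proposition~\ref{prop:weakOrderWOIP}), so one cannot appeal to any lattice-theoretic description of the interval. This is harmless here, however, because an order interval depends only on the underlying partial order, so once the endpoints are shown to lie in $\WOIP$ the identification of the two intervals is immediate.
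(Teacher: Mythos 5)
Your proposal is correct and follows the same overall reduction as the paper: express the quotient product as the $\IPos_{m+n}$-interval of Proposition~\ref{prop:productPoset} (equivalently, Proposition~\ref{prop:characterizationShuffleProduct}) intersected with $\WOIP$, then observe that this intersection is the induced interval of $\WOIP_{m+n}$ once the two endpoints are known to lie in $\WOIP$. The only divergence is in how that endpoint claim is justified. The paper identifies the endpoints explicitly as weak order interval posets of concrete permutation intervals, namely $\underprod{\less_{[\sigma,\sigma']}}{\less_{[\tau,\tau']}} = \less_{[\underprod{\sigma}{\tau}, \underprod{\sigma'}{\tau'}]}$ and its mirror, which exhibits them as elements of $\WOIP$ by definition. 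You instead verify the local characterization of Proposition~\ref{prop:characterizationIWOIPDWOIP} by a case analysis on where a triple $a<b<c$ sits relative to the cut at $m$; your analysis is sound --- the straddling cases of the increasing condition for $\underprod{\less}{\bless}$ are supplied by the block $[m]\times\overline{[n]}$, and the decreasing condition is vacuous there because that block contributes no decreasing pairs, with the symmetric situation for $\overprod{\less}{\bless}$. Your route is self-contained and does not require identifying which permutation interval each endpoint encodes; the paper's route is shorter and yields that extra combinatorial information for free. Your closing remark that the failure of $\WOIP_{m+n}$ to be a sublattice of $\IPos_{m+n}$ is harmless --- because an order interval depends only on the induced order, not on meets and joins --- is correct and worth making explicit.
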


\begin{proof}
It is a direct consequence of Proposition~\ref{prop:characterizationShuffleProduct} and the fact that for any two weak order interval posets~${{\less}, {\bless} \in \WOIP}$, the relations~$\underprod{\less}{\bless}$ and~$\overprod{\less}{\bless}$ are both weak order interval posets.
Indeed, for any~$\sigma \wole \sigma'$ and~$\tau \wole \tau'$, we have~$\underprod{\less_{[\sigma,\sigma']}}{\less_{[\tau,\tau']}} = \less_{[\underprod{\sigma}{\tau}, \underprod{\sigma'}{\tau'}]}$ and $\overprod{\less_{[\sigma,\sigma']}}{\less_{[\tau,\tau']}} = \less_{[\overprod{\sigma}{\tau}, \overprod{\sigma'}{\tau'}]}$.
\end{proof}

\subsubsection{Faces}
\label{subsubsec:orderedPartitionsQuotientAlgebra}

We now construct a Hopf algebra on faces of the permutahedra as a Hopf subalgebra of the poset Hopf algebra.
We will see in Proposition~\ref{prop:isomorphismChapoton} that the resulting Hopf algebra was already considered by F.~Chapoton in~\cite{Chapoton}.

\begin{proposition}
\label{prop:algebraWOFP}
For any~${\less}, {\bless} \in \IPos$,
\begin{enumerate}[(i)]
\item if~$({\less} \shiftedShuffle {\bless}) \cap \WOFP \ne \varnothing$, then~${\less} \in \WOFP$ and~${\bless} \in \WOFP$,
\item if~${\less} \in \WOFP$ and~${\bless} \in \WOFP$, then~$({\less} \convolution {\bless}) \subseteq \WOFP$.
\end{enumerate}
Therefore, the vector subspace of~$\K\IPos$ generated by~$\IPos \ssm \WOFP$ is a Hopf ideal of~$(\K\IPos, \product, \coproduct)$.
The quotient of the poset Hopf algebra~$(\K\IPos, \product, \coproduct)$ by this ideal is thus a Hopf algebra~$(\K\WOFP^\quotient, \product, \coproduct)$ on faces of the permutahedron.
\end{proposition}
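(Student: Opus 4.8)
The plan is to mirror the proof of Proposition~\ref{prop:algebraWOIP}, exploiting the fact that $\WOFP \subseteq \WOIP$ (immediate from Proposition~\ref{prop:characterizationWOFP}) together with the additional \emph{local} conditions characterizing $\WOFP$ inside $\WOIP$. Since the $\WOIP$ part of both (i) and (ii) is already established in Proposition~\ref{prop:algebraWOIP}, the only new work is to track the extra biconditions ``$a \less b \iff b \more c$'' and ``$a \more b \iff b \less c$'' for triples $a < b < c$ with $a, c$ incomparable, through the shifted shuffle (for (i)) and through the convolution (for (ii)).

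For part (i), I would suppose $\dashv \in ({\less} \shiftedShuffle {\bless}) \cap \WOFP$ with $\dashv \in \IPos_{m+n}$. By Proposition~\ref{prop:algebraWOIP}\,(i) we already have $\less \in \WOIP_m$ and $\bless \in \WOIP_n$, so it remains to verify the extra conditions. The point is that $\less = \dashv_{[m]}$ and $\bless = \dashv_{\overline{[n]}}$ are induced subrelations, hence the comparability, incomparability, and direction of every pair among $[m]$ (resp.\ $\overline{[n]}$) agree in $\less$ (resp.\ $\bless$) and in $\dashv$. Thus any triple $a < b < c$ in $[m]$ with $a, c$ incomparable in $\less$ is also incomparable in $\dashv$, and the $\WOFP$ condition for $\dashv$ restricts verbatim to give the condition for $\less$. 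The symmetric argument over $\overline{[n]}$ handles $\bless$, so both lie in $\WOFP$.

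For part (ii), I would take $\less \in \WOFP_m$, $\bless \in \WOFP_n$, and $\dashv \in {\less} \convolution {\bless}$, with $(X,Y)$ the total cut such that $\dashv_X = {\less}$ and $\dashv_Y = {\bless}$. By Proposition~\ref{prop:algebraWOIP}\,(ii) we already know $\dashv \in \WOIP$, so I only need the two biconditions. Fix $a < b < c$ with $a, c$ incomparable in $\dashv$. The key observation is that $a$ and $c$ must lie on the same side of the cut: since $X \times Y \subseteq {\dashv}$ and $(Y \times X) \cap {\dashv} = \varnothing$, placing $a$ and $c$ in different blocks would force one of $a \dashv c$ or $c \dashv a$. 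Assume $a, c \in X$ (the case $a, c \in Y$ is symmetric), so $a, c$ are incomparable in $\dashv_X = {\less} \in \WOFP$. If $b \in X$ as well, the $\WOFP$ condition on $\less$ yields both biconditions directly. If instead $b \in Y$, then $X \times Y \subseteq {\dashv}$ gives $a \dashv b$ and $c \dashv b$, so both ``$a \less b$'' and ``$b \more c$'' hold while both ``$a \more b$'' and ``$b \less c$'' fail by antisymmetry; hence both biconditions hold trivially. This shows $\dashv \in \WOFP$.

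The only genuinely new point, and the one I expect to be the mild obstacle, is this cross case $\{a,c\} \subseteq X$, $b \in Y$ (and its mirror): one must check that the $\WOFP$ biconditions survive when $b$ sits across the cut from $a$ and $c$. The resolution is that in this situation $b$ is comparable to both $a$ and $c$ in the \emph{same} direction, so each biconditional equates two true (or two false) statements. Once (i) and (ii) are in place, the Hopf ideal and quotient conclusions follow by the same formal argument as in Propositions~\ref{prop:algebraPosets}, \ref{prop:algebraWOEP}, and~\ref{prop:algebraWOIP}.
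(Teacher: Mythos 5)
Your proposal is correct and follows essentially the same route as the paper: both reduce the $\WOIP$ part of (i) and (ii) to Proposition~\ref{prop:algebraWOIP} and then check that the extra biconditions of Proposition~\ref{prop:characterizationWOFP} pass through restriction and through the total cut. In fact your treatment of part (ii) is slightly more explicit than the paper's, which only notes that $\{a,c\} \subseteq X$ or $\{a,c\} \subseteq Y$ and leaves the cross case $b$ on the opposite side of the cut implicit, whereas you verify directly that both biconditionals then equate two true (or two false) statements.
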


\begin{proof}
For~(i), let~${\less} \in \IPos_m$ and~${\bless} \in \IPos_n$ be such that the shifted shuffle~${\less} \shiftedShuffle {\bless}$ contains a weak order face poset~${\dashv} \in \WOFP_{m+n}$.
Since~$\WOFP \subset \WOIP$, Proposition~\ref{prop:algebraWOIP}\,(i) ensures that~${\less} \in \WOIP_m$ and~${\bless} \in \WOIP_n$.
Consider now~$1 \le a < b < c \le m$ such that~$a \not\less c$ and~$a \not\more c$.
Then~$a \not\dashv c$ and~$a \not\vdash c$ (since~${\dashv}_{[m]} = {\less}$), which ensures that~$a \dashv b \iff b \vdash c$ and~$a \vdash b \iff b \dashv c$ (since~${\dashv} \in \WOFP_{m+n}$), and we obtain that~$a \less b \iff b \more c$ and~$a \more b \iff b \less c$ (again since~${\dashv}_{[m]} = {\less}$).
We conclude that~${\less} \in \WOFP_m$ by the characterization of Proposition~\ref{prop:characterizationWOFP}, and we prove similarly that~${\bless} \in \WOFP_n$.

For~(ii), consider two weak order face posets~${\less}, {\bless} \in \WOFP$ and let~${\dashv} \in {\less} \convolution {\bless}$.
Since we have~$\WOFP \subset \WOIP$, Proposiiton~\ref{prop:algebraWOIP}\,(ii) ensures that~${\dashv} \in \WOIP$.
Let~$(X,Y)$ be the total cut of~$\dashv$ such that~${\dashv}_X = {\less}$ and~${\dashv}_Y = {\bless}$.
Consider~$a < b < c$ such that~$a \not\dashv c$ and~$a \not\vdash c$.
Since~$X \times Y \subseteq {\dashv}$, we obtain that~$\{a,c\} \subseteq X$ or~$\{a,c\} \subseteq Y$.
Since~${\dashv}_X = {\less}$ and~${\dashv}_Y = {\bless}$ are in~$\WOFP$, we obtain that~$a \dashv b \iff b \vdash c$ and~$a \vdash b \iff b \dashv c$.
We conclude that~${\dashv} \in \WOFP$ by the characterization of Proposition~\ref{prop:characterizationWOFP}.
\end{proof}

\begin{remark}
\label{rem:convolutionWOFP}
Although not needed for the Hopf algebra quotient, observe that the convolution satisfies a property similar to Proposition~\ref{prop:algebraWOFP}\,(i):
if~${\less}, {\bless} \in \IPos$ are such that the convolution~${\less} \convolution {\bless}$ contains at least one element in $\WOFP$, then~${\less}$ and~${\bless}$ are both in $\WOFP$.
\end{remark}

For any weak order face poset~$\less$, we denote by~$\FWOFPquo_{\less}$ the image of~$\FRel_{\less}$ through the trivial projection~$\K\IRel \to \K\IPos$.

\begin{example}
\label{exm:algebraWOFP}
In practice, for two weak order face posets~${\less}, {\bless} \in \WOFP$, we compute the product~$\FWOFPquo_{\less} \product \FWOFPquo_{\bless}$ in~$\K\WOFP^\quotient$ by deleting all summands not in~$\WOFP$ in the product~$\FRel_{\less} \product \FRel_{\bless}$ in~$\K\IRel$:
\begin{align*}
\FWOFPquo_{\scalebox{.5}{}} \! \! \product \FWOFPquo_{\scalebox{.5}{}} \! \! =
\FWOIPquo_{\scalebox{.5}{\input{figures/relations/r3_13_23}}} \! \! +
\FWOFPquo_{\scalebox{.5}{}} \! \! +
\FWOFPquo_{\scalebox{.5}{\input{figures/relations/r3_31_32}}} \! \! .
\end{align*}

The coproduct is even simpler: all relations that appear in the coproduct~$\coproduct(\FRel_{\less})$ of an element in  $\WOFP$ are automatically in $\WOFP$ by Remark~\ref{rem:convolutionWOFP}:
\begin{align*}
\coproduct \Big(
\FWOFPquo_{\scalebox{.5}{\input{figures/relations/r3_12_32}}}  \! \!
\Big)
=
\FWOFPquo_{\scalebox{.5}{\input{figures/relations/r3_12_32}}} \! \!  \otimes \FWOFPquo_\varnothing
+
\FWOFPquo_{\scalebox{.5}{}}  \! \!
\otimes
\FWOFPquo_{\scalebox{.5}{}} \! \!
+
\FWOFPquo_\varnothing \otimes  \FWOFPquo_{\scalebox{.5}{\input{figures/relations/r3_12_32}}}.
\end{align*}

\end{example}

It turns out that the resulting algebra was studied in~\cite{Chapoton}.
Consider an ordered partition~$\pi$ of~$[m]$ into~$k$ parts, and an ordered partition~$\rho$ of~$[n]$ into~$\ell$ parts.
As for permutations (see Section~\ref{subsec:MalvenutoReutenauer}), the \defn{shifted shuffle}~$\pi \shiftedShuffle \rho$ (resp.~the \defn{convolution}~$\pi \convolution \rho$) is the set of ordered partitions whose first~$k$ values (resp.~first $m$ positions) are in the same relative order as~$\pi$ and whose last~$\ell$ values (resp.~last~$n$ positions) are in the same relative order as~$\rho$.
Here, relative order means in an earlier block, in the same block, or in a later block.
Note that the shifted shuffle may merge blocks of~$\pi$ with blocks of~$\rho$: all ordered partitions in~$\pi \shiftedShuffle \rho$ have at least~$\min(k,\ell)$ blocks and at most~$k+\ell$ blocks.
In contrast, the convolution just adds up the numbers of blocks.
For example,
\begin{align*}
{\red 1} | {\red 2} \shiftedShuffle {\blue 2} | {\blue 31} = \{ &
    {\red 1} | {\red 2} | {\blue 4} | {\blue 53}, \;
    {\red 1} | {\red 2}{\blue 4} | {\blue 53}, \;
    {\red 1} | {\blue 4} | {\red 2} | {\blue 53}, \;
    {\red 1} | {\blue 4} | {\red 2}{\blue 53}, \;
    {\red 1} | {\blue 4} | {\blue 53} | {\red 2}, \;
    {\red 1}{\blue 4} | {\red 2} | {\blue 53}, \;
    {\red 1}{\blue 4} | {\red 2}{\blue 53}, \\[-.1cm]
&   {\red 1}{\blue 4} | {\blue 53} | {\red 2}, \;
    {\blue 4} | {\red 1} | {\red 2} | {\blue 53}, \;
    {\blue 4} | {\red 1} | {\red 2}{\blue 53}, \;
    {\blue 4} | {\red 1} | {\blue 53} | {\red 2}, \;
    {\blue 4} | {\red 1}{\blue 53} | {\red 2}, \;
    {\blue 4} | {\blue 53} | {\red 1} | {\red 2} \}, \\[.2cm]
{\red 1} | {\red 2} \convolution {\blue 2} | {\blue 31} = \{ &
    {\red 1} | {\red 2} | {\blue 4} | {\blue 53}, \;
    {\red 1} | {\red 3} | {\blue 4} | {\blue 52}, \;
    {\red 1} | {\red 4} | {\blue 3} | {\blue 52}, \;
    {\red 1} | {\red 5} | {\blue 3} | {\blue 42}, \;
    {\red 2} | {\red 3} | {\blue 4} | {\blue 51}, \\[-.1cm]
&   {\red 2} | {\red 4} | {\blue 3} | {\blue 51}, \;
    {\red 2} | {\red 5} | {\blue 3} | {\blue 41}, \;
    {\red 3} | {\red 4} | {\blue 2} | {\blue 51}, \;
    {\red 3} | {\red 5} | {\blue 2} | {\blue 41}, \;
    {\red 4} | {\red 5} | {\blue 2} | {\blue 31} \}.
\end{align*}
\noindent
The Chapoton Hopf algebra~\cite{Chapoton} is the Hopf algebra on ordered partitions with product~$\product$ and coproduct~$\coproduct$ defined by
\[
\F_\sigma \product \F_\tau \eqdef \sum_{\rho \in \sigma \shiftedShuffle \tau} \F_\rho
\qquad\text{and}\qquad
\coproduct(\F_\rho) \eqdef \sum_{\rho \in \sigma \convolution \tau} \F_\sigma \otimes \F_\tau.
\]
We refer to~\cite{Chapoton, ChatelPilaud} for more details and just provide an example of product and coproduct in this Hopf algebra.

\begin{example}
\label{exm:algebraChapotonOrderedPartitions}
For example, we have
\[
\F_{12} \product \F_1 = \F_{12|3} + \F_{123} + \F_{3|12}
\qquad\text{and}\qquad
\coproduct (\F_{13|2}) = \F_{13|2} \otimes \F_{\varnothing} + \F_{12} \otimes \F_{1} + \F_{\varnothing} \otimes \F_{13|2}.
\]
\end{example}

\begin{proposition}
\label{prop:isomorphismChapoton}
The map~$\F_\pi \mapsto \FWOFPquo_{\less_\pi}$ defines a Hopf algebra isomorphism from the Chapoton Hopf algebra on ordered partitions~\cite{Chapoton} to~$(\K\WOFP^\quotient, \product, \coproduct)$.
\end{proposition}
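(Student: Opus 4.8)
The plan is to mimic the proof of Proposition~\ref{prop:isomorphismMalvenutoReutenauer}. First I would record that, by the characterization of Proposition~\ref{prop:characterizationWOFP} and the interpretation of faces as posets, the map~$\pi \mapsto {\less_\pi}$ is a bijection from the ordered partitions of~$[n]$ to~$\WOFP_n$; it thus extends to a $\K$-linear isomorphism between the Chapoton algebra and~$\K\WOFP^\quotient$, and only the compatibility with~$\product$ and~$\coproduct$ remains to be checked. Exactly as in the permutation case, this compatibility reduces to the two set equalities
\[
({\less_\pi} \shiftedShuffle {\less_\rho}) \cap \WOFP = \set{{\less_\omega}}{\omega \in \pi \shiftedShuffle \rho}
\quad\text{and}\quad
{\less_\pi} \convolution {\less_\rho} = \set{{\less_\omega}}{\omega \in \pi \convolution \rho}
\]
for an ordered partition~$\pi$ of~$[m]$ and an ordered partition~$\rho$ of~$[n]$.

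For the product equality, the new feature compared with permutations is that a block of~$\pi$ may merge with a block of~$\rho$, which in the poset picture amounts to leaving an element of~$[m]$ and an element of~$\overline{[n]}$ incomparable in the shuffled poset. For the inclusion~$\supseteq$, I would take~$\omega \in \pi \shiftedShuffle \rho$ and observe that the elements of~$[m]$ (resp.~$\overline{[n]}$) occur in~$\omega$ in the relative block order of~$\pi$ (resp.~$\rho$), so that~$(\less_\omega)_{[m]} = {\less_\pi}$ and~$(\less_\omega)_{\overline{[n]}} = {\less_\rho}$; hence~${\less_\omega} \in {\less_\pi} \shiftedShuffle {\less_\rho}$ by Proposition~\ref{prop:characterizationShuffleProduct} and~${\less_\omega} \in \WOFP$ by construction. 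Conversely, any~${\dashv} \in ({\less_\pi} \shiftedShuffle {\less_\rho}) \cap \WOFP$ is a weak order face poset, hence of the form~${\less_\omega}$ for a unique ordered partition~$\omega$; the restrictions~${\dashv}_{[m]} = {\less_\pi}$ and~${\dashv}_{\overline{[n]}} = {\less_\rho}$ then assert precisely that the elements of~$[m]$ sit in~$\omega$ in the relative block order of~$\pi$ and those of~$\overline{[n]}$ in the relative block order of~$\rho$, that is, $\omega \in \pi \shiftedShuffle \rho$.

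For the coproduct equality I would invoke Remark~\ref{rem:convolutionWOFP} and Proposition~\ref{prop:algebraWOFP}\,(ii), which together guarantee that~${\less_\pi} \convolution {\less_\rho} \subseteq \WOFP$ and that each of its elements is some~${\less_\omega}$. The point is then that a total cut~$(X,Y)$ of~${\less_\omega}$, being a partition with~$X \times Y \subseteq {\less_\omega}$ and~$(Y \times X) \cap {\less_\omega} = \varnothing$, is exactly the splitting of the block sequence of~$\omega$ into a prefix~$X$ and a suffix~$Y$; requiring moreover~$(\less_\omega)_X = {\less_\pi}$ and~$(\less_\omega)_Y = {\less_\rho}$ forces~$X$ to have size~$m$ and to standardize to~$\pi$ while~$Y$ standardizes to~$\rho$, which is the same as~$\omega \in \pi \convolution \rho$. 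The main obstacle is the bookkeeping in the product step: one must check carefully that the relative block order used to define~$\pi \shiftedShuffle \rho$ (earlier block, same block, or later block) matches termwise the restricted poset relation together with its incomparabilities, and that the face conditions of Proposition~\ref{prop:characterizationWOFP} force the free cross relations surviving in~$\WOFP$ to be constant on blocks, so that the shuffles landing in~$\WOFP$ are exactly the block interleavings with merging. Once this dictionary is in place, both inclusions are routine.
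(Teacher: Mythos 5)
Your proposal is correct and follows exactly the route the paper intends: the paper's own proof of Proposition~\ref{prop:isomorphismChapoton} simply says it is analogous to that of Proposition~\ref{prop:isomorphismMalvenutoReutenauer}, and your reduction to the two set equalities for the shifted shuffle and the convolution, together with the block-order dictionary (earlier/same/later block corresponding to ${\less_\omega}$, ${\more_\omega}$, or incomparability) is precisely that analogy spelled out.
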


\begin{proof}
The proof is similar to that of Proposition~\ref{prop:isomorphismMalvenutoReutenauer} and left to the reader.
\end{proof}

\begin{example}
Compare Examples~\ref{exm:algebraChapotonOrderedPartitions} and~\ref{exm:algebraWOFP}.
\end{example}

\begin{proposition}
For any~${\less} \in \WOFP_m$ and~${\bless} \in \WOFP_n$, the product~$\FWOFPquo_{\less} \product \FWOFPquo_{\bless}$ is the sum of~$\FWOFPquo_{\dashv}$, where~$\dashv$ runs over the interval between $\underprod{\less}{\bless}$ and $\overprod{\less}{\bless}$ in the weak order on~${\WOFP_{m+n}}$.
\end{proposition}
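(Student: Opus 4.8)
The plan is to mirror the proofs of the analogous statements for $\WOEP$ and for $\WOIP$, reducing everything to Proposition~\ref{prop:productPoset} together with the quotient construction of Proposition~\ref{prop:algebraWOFP}. First I would recall that, by Proposition~\ref{prop:productPoset}, the product $\FPos_{\less} \product \FPos_{\bless}$ in~$\K\IPos$ is the sum of~$\FPos_{\dashv}$ over all~$\dashv$ in the weak order interval between~$\underprod{\less}{\bless}$ and~$\overprod{\less}{\bless}$ in~$\IPos_{m+n}$. Since~$\K\WOFP^\quotient$ is the quotient of~$\K\IPos$ obtained by killing every poset outside~$\WOFP$, the product $\FWOFPquo_{\less} \product \FWOFPquo_{\bless}$ is exactly the sum of~$\FWOFPquo_{\dashv}$ over those~$\dashv \in \WOFP_{m+n}$ with $\underprod{\less}{\bless} \wole \dashv \wole \overprod{\less}{\bless}$.

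The second step is to show that both endpoints~$\underprod{\less}{\bless}$ and~$\overprod{\less}{\bless}$ themselves lie in~$\WOFP$. For this I would exhibit them as convolution elements: $\underprod{\less}{\bless}$ carries the total cut~$([m], \overline{[n]})$ with restrictions~$\less$ and~$\bless$, hence $\underprod{\less}{\bless} \in {\less} \convolution {\bless}$; symmetrically~$\overprod{\less}{\bless}$ carries the total cut~$(\overline{[n]}, [m])$, hence $\overprod{\less}{\bless} \in {\bless} \convolution {\less}$. Proposition~\ref{prop:algebraWOFP}\,(ii) then guarantees that both convolutions stay inside~$\WOFP$, so both endpoints are genuine elements of~$\WOFP_{m+n}$. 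Alternatively, one can verify the characterization of Proposition~\ref{prop:characterizationWOFP} directly on the explicit relations, or record the concrete identities $\underprod{\less_\pi}{\less_\rho} = \less_{\underprod{\pi}{\rho}}$ and $\overprod{\less_\pi}{\less_\rho} = \less_{\overprod{\pi}{\rho}}$ on ordered partitions, exactly paralleling the permutation formulas used in the $\WOEP$ proof.

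Finally, I would note that the displayed summation range is precisely an interval in the weak order on~$\WOFP_{m+n}$: by Proposition~\ref{prop:weakOrderWOFP}\,(i) the weak order on~$\WOFP_{m+n}$ is the order induced from~$\IPos_{m+n}$, so the set $\set{\dashv \in \WOFP_{m+n}}{\underprod{\less}{\bless} \wole \dashv \wole \overprod{\less}{\bless}}$ is exactly the interval between the two (now $\WOFP$) endpoints in~$\WOFP_{m+n}$. This yields the claim.

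The one point that needs care --- and what I regard as the main obstacle --- is that, by Proposition~\ref{prop:weakOrderWOFP}\,(ii), the weak order on~$\WOFP_{m+n}$ is \emph{not} a sublattice of the weak order on~$\IPos_{m+n}$. Thus the word ``interval'' must be read as an interval of the induced suborder, and not as anything built from $\WOFP$-meets or $\WOFP$-joins. Because the order relation is simply induced, an interval of the suborder is merely the intersection of the corresponding $\IPos_{m+n}$-interval with~$\WOFP$, so no lattice-theoretic subtlety intervenes once the endpoints are known to belong to~$\WOFP$; establishing exactly that membership, via the convolution, is therefore the crux of the argument.
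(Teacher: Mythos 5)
Your proof is correct and follows essentially the same route as the paper: reduce to the interval description of the shifted shuffle (Proposition~\ref{prop:characterizationShuffleProduct}, via Proposition~\ref{prop:productPoset}) and check that the two endpoints $\underprod{\less}{\bless}$ and $\overprod{\less}{\bless}$ lie in $\WOFP$. The paper merely asserts this last fact, whereas you derive it cleanly from Proposition~\ref{prop:algebraWOFP}\,(ii) by observing that $\underprod{\less}{\bless} \in {\less} \convolution {\bless}$ and $\overprod{\less}{\bless} \in {\bless} \convolution {\less}$ --- a welcome addition, as is your remark that ``interval'' must be read in the induced suborder since $\WOFP_{m+n}$ is not a sublattice of the weak order on~$\IPos_{m+n}$.
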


\begin{proof}
It is a direct consequence of Proposition~\ref{prop:characterizationShuffleProduct} and the fact that for any two weak order face posets~${{\less}, {\bless} \in \WOFP}$, the relations~$\underprod{\less}{\bless}$ and~$\overprod{\less}{\bless}$ are both weak order face posets.
\end{proof}

\subsection{Subalgebras}
\label{subsec:subalgebrasWO}

We now construct Hopf algebras on~$\WOEP$ and~$\WOIP$ as subalgebras of the integer poset algebra.
For this, we use surjections from all posets to $\WOEP$ or~$\WOIP$ whose fibers are stable by product and coproduct.
We consider the \defn{$\IWOIP$ increasing deletion}, the \defn{$\DWOIP$ decreasing deletion}, and the \defn{$\WOIP$ deletion} defined in~\cite[Sect.~2.1.4]{ChatelPilaudPons} by
\begin{align*}
{\IWOIPid{\less}} & = {\less} \ssm \bigset{(a,c)}{\exists \; a < b_1 < \dots < b_k < c, \; a \not\less b_1 \not\less \dots \not\less b_k \not\less c}, \\
{\DWOIPdd{\less}} & = {\less} \ssm \bigset{(c,a)}{\exists \; a < b_1 < \dots < b_k < c, \; a \not\more b_1 \not\more \dots \not\more b_k \not\more c}, \\
{\WOIPd{\less}} & = {\IWOIPid{\big( \DWOIPdd{\less} \big)}} = {\DWOIPdd{ \big( \IWOIPid{\less} \big)}}.
\end{align*}
These operations are illustrated on \fref{fig:IWOIPid/DWOIPdd}.

\begin{figure}[h]
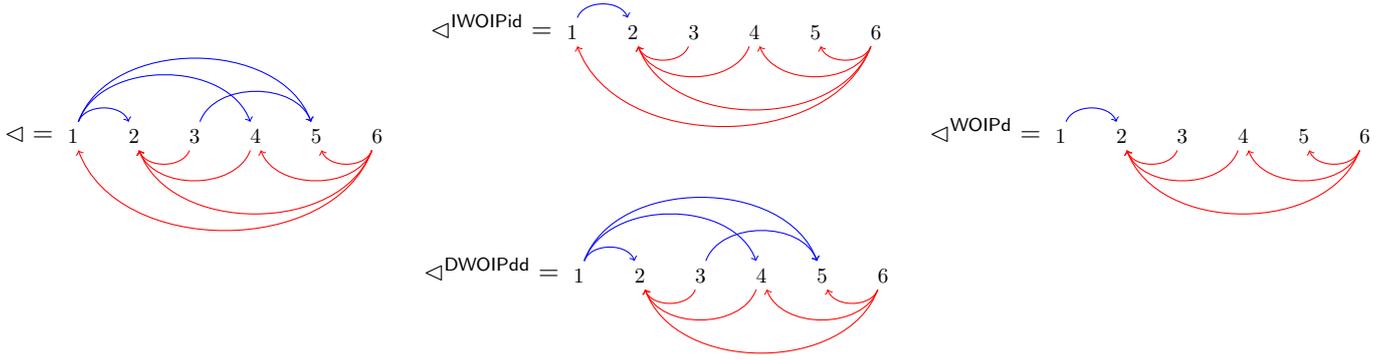

    \vspace{-1cm}
    \centerline{
	\begin{tabular}{c@{\quad}c@{\quad}c}
		\multirow{ 2}{*}{${\less} = \raisebox{-1.55cm}{\scalebox{0.8}{\input{relations/posetExm}}}$} &
		${\IWOIPid{\less}} = \raisebox{-1.55cm}{\scalebox{0.8}{\input{relations/posetIWOIP}}}$ &
		\multirow{ 2}{*}{${\WOIPd{\less}} = \raisebox{-1.55cm}{\scalebox{0.8}{\input{relations/posetWOIP}}}$} \\
		& ${\DWOIPdd{\less}} = \raisebox{-1.55cm}{\scalebox{0.8}{\input{relations/posetDWOIP}}}$
	\end{tabular}
}
    \vspace{-.6cm}
    \caption{The $\IWOIP$ increasing deletion and the $\DWOIP$ decreasing deletion.}
    \label{fig:IWOIPid/DWOIPdd}
\end{figure}

It is proved in~\cite[Sect.~2.1.4]{ChatelPilaudPons} that~${\IWOIPid{\less}} \in \IWOIP$ and~${\DWOIPdd{\less}} \in \DWOIP$ for any~${{\less} \in \IPos}$.
In fact, ${\less} \mapsto {\IWOIPid{\less}}$ (resp.~${\less} \mapsto {\DWOIPdd{\less}}$) is a projection from~$\IPos$ to~$\IWOIP$ (resp.~$\DWOIP$).
Therefore, ${\WOIPd{\less}} \in \WOIP$ for any~${{\less} \in \IPos}$ and~${\less} \mapsto {\WOIPd{\less}}$ is a projection from~$\IPos$ to~$\WOIP$.

\subsubsection{Intervals}
\label{subsubsec:intervalsSubalgebra}

Here, we use the fibers of the projections~${\less} \mapsto {\IWOIPid{\less}}$, ${\less} \mapsto {\DWOIPdd{\less}}$ and ${\less} \mapsto {\WOIPd{\less}}$ to construct Hopf subalgebras of the poset Hopf algebra.
For this, we need the compatibility of these projections with the shuffle and the convolution on posets.

\begin{lemma}
\label{lem:WOIPdShuffle}
For any~${\less} \in \IPos_p$ and any~$1 \le q \le r \le p$, we have
\[
\WOIPd{\big( {\less}_{[q,r]} \big)} = \big( {\WOIPd{\less}} \big)_{[q,r]}.
\]
Therefore, for any~${\bless} \in \WOIP_m$ and~${\bless'} \in \WOIP_n$,
\[
\bigset{{\less} \in \IPos_{m+n}}{{\WOIPd{(\less_{[m]})}} = {\bless} \text{ and } {\WOIPd{(\less_{\overline{[n]}})}} = {\bless'}} = \bigsqcup_{\substack{{\prec} \in {\bless} \shiftedShuffle {\bless'} \\ \quad \cap \WOIP}} \set{{\less} \in \IPos_{m+n}}{\WOIPd{\less} = {\prec}}.
\]
A similar statement holds for~${\less} \mapsto {\IWOIPid{\less}}$ and for~${\less} \mapsto {\DWOIPdd{\less}}$.
\end{lemma}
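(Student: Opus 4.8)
The plan is to prove the identity first for each of the two one-sided deletions $\less \mapsto \IWOIPid{\less}$ and $\less \mapsto \DWOIPdd{\less}$, and then to recover the statement for the composite $\less \mapsto \WOIPd{\less} = \IWOIPid{(\DWOIPdd{\less})}$. Concretely, once we know that
\[
\IWOIPid{(\less_{[q,r]})} = (\IWOIPid{\less})_{[q,r]}
\qquad\text{and}\qquad
\DWOIPdd{(\less_{[q,r]})} = (\DWOIPdd{\less})_{[q,r]}
\]
for every $\less \in \IPos_p$ and every interval $[q,r] \subseteq [p]$, we apply the second identity to $\less$ and then the first identity to the poset $\DWOIPdd{\less} \in \DWOIP_p$ to obtain
\[
\WOIPd{(\less_{[q,r]})}
= \IWOIPid{\big( (\DWOIPdd{\less})_{[q,r]} \big)}
= \big( \IWOIPid{(\DWOIPdd{\less})} \big)_{[q,r]}
= (\WOIPd{\less})_{[q,r]}.
\]
The two one-sided claims are symmetric under exchanging the increasing and decreasing subrelations (that is, $\less$ and $\more$), so it suffices to treat the case of $\IWOIPid{\less}$.

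The key point, and the only place where the hypothesis that $[q,r]$ is an \emph{interval} is used, is that restriction to $[q,r]$ preserves betweenness: if $a, c \in [q,r]$ with $a < c$, then every index $b$ with $a < b < c$ again lies in $[q,r]$. I would first record that restriction to $[q,r]$ followed by standardization is an order-preserving bijection onto $[r-q+1]$, so it preserves which pairs are increasing, which pairs are related by $\less$, and the betweenness of triples; in particular $(a,c) \in \less_{[q,r]}$ and $(a,c) \in \less$ express the same condition for $a,c \in [q,r]$. The heart of the argument is then the claim that such an increasing pair $(a,c)$ is deleted in $\less$ if and only if it is deleted in $\less_{[q,r]}$: any witnessing chain $a < b_1 < \dots < b_k < c$ with $a \not\less b_1 \not\less \dots \not\less b_k \not\less c$ has all of its intermediate indices strictly between $a$ and $c$, hence inside $[q,r]$, so it witnesses deletion in $\less_{[q,r]}$ as well, and conversely any witnessing chain in $\less_{[q,r]}$ transports back to one in $\less$. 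Since the increasing deletion only ever removes increasing relations and leaves the decreasing and reflexive pairs untouched, on which restriction commutes trivially, comparing membership pair by pair yields $\IWOIPid{(\less_{[q,r]})} = (\IWOIPid{\less})_{[q,r]}$. I expect this betweenness observation to be the crux; everything else is bookkeeping through the standardization.

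Finally, I would deduce the fiber identity. Fix $\bless \in \WOIP_m$ and $\bless' \in \WOIP_n$. Given $\less \in \IPos_{m+n}$, applying the commutation just established to the intervals $[m]$ and $\overline{[n]}$ gives $\WOIPd{(\less_{[m]})} = (\WOIPd{\less})_{[m]}$ and $\WOIPd{(\less_{\overline{[n]}})} = (\WOIPd{\less})_{\overline{[n]}}$. Setting $\prec \eqdef \WOIPd{\less} \in \WOIP_{m+n}$, the two defining conditions of the left-hand set become $\prec_{[m]} = \bless$ and $\prec_{\overline{[n]}} = \bless'$, which by Proposition~\ref{prop:characterizationShuffleProduct} is exactly $\prec \in \bless \shiftedShuffle \bless'$; combined with $\prec \in \WOIP$ this exhibits $\less$ inside the fiber $\set{\less \in \IPos_{m+n}}{\WOIPd{\less} = \prec}$ for some $\prec \in (\bless \shiftedShuffle \bless') \cap \WOIP$. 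Conversely, if $\WOIPd{\less} = \prec$ for such a $\prec$, the same commutation together with Proposition~\ref{prop:characterizationShuffleProduct} gives $\WOIPd{(\less_{[m]})} = \prec_{[m]} = \bless$ and $\WOIPd{(\less_{\overline{[n]}})} = \bless'$, so $\less$ lies in the left-hand set. This proves the set equality, and the union on the right is disjoint because $\WOIPd{\less}$ is determined by $\less$. The analogous statements for the one-sided deletions $\IWOIPid{\less}$ and $\DWOIPdd{\less}$ follow verbatim from their respective one-sided commutations.
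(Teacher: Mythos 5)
Your proposal is correct and follows essentially the same route as the paper: reduce to the one-sided deletions and compose, observe that the deletion condition for a pair $(a,c)$ only involves indices strictly between $a$ and $c$ (hence inside the interval $[q,r]$), and derive the fiber decomposition from Proposition~\ref{prop:characterizationShuffleProduct}. The only difference is that you spell out the composition $\WOIPd{\less} = \IWOIPid{(\DWOIPdd{\less})}$ and the disjointness of the union explicitly, where the paper leaves these as immediate.
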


\begin{proof}
We make the proof for~$\IWOIP$, the proof for~$\DWOIP$ is symmetric, and the result follows for~$\WOIP = \IWOIP \cap \DWOIP$.
The first statement immediately follows from the fact that the condition to delete~$(a,c)$ in~$\IWOIPid{\less}$ only involves relations of~$\less$ in the interval~$[a,c]$.
By Proposition~\ref{prop:characterizationShuffleProduct}, $({\bless} \shiftedShuffle {\bless'}) \cap \IWOIP$ is the set of~${\prec} \in \IWOIP_{m+n}$ such that~${\prec_{[m]}} = {\bless}$ and~${\prec_{\overline{[n]}}} = {\bless'}$, which shows the second statement.
\end{proof}

\begin{lemma}
\label{lem:WOIPdConvolution}
For any~${\less} \in \IPos_p$, the total cuts of~$\less$ are precisely the total cuts of~$\WOIPd{\less}$. Moreover, if~$(X,Y)$ is a total cut of~$\less$, then
\[
(\WOIPd{\less})_X = \WOIPd{(\less_X)}
\qquad\text{and}\qquad
(\WOIPd{\less})_Y = \WOIPd{(\less_Y)}.
\]
Therefore, for any~${\bless} \in \WOIP_p$ with a total cut~$(X,Y)$,
\[
\set{{\less} \in \IPos_p}{{\WOIPd{\less}} = {\bless}} = \set{{\less} \in \IPos_p}{\begin{array}{ll} \WOIPd{({\less}_X)} = {\bless_X} & {\less} \cap (Y \times X) = \varnothing \\ \WOIPd{({\less}_Y)} = {\bless_Y} & \text{and } (X \times Y) \subseteq {\less}\end{array}}.
\]
A similar statement holds for~${\less} \mapsto {\IWOIPid{\less}}$ and for~${\less} \mapsto {\DWOIPdd{\less}}$.
\end{lemma}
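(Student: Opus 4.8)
The plan is to prove the statement first for the two one-sided deletions ${\less} \mapsto {\IWOIPid{\less}}$ and ${\less} \mapsto {\DWOIPdd{\less}}$, and then to deduce it for ${\WOIPd{\less}} = {\IWOIPid{(\DWOIPdd{\less})}}$ by composition. Throughout I would use the elementary observation that, for a total cut $(X,Y)$ of a poset $\less$, one has $x \less y$ and $y \not\less x$ for all $x \in X$ and $y \in Y$, so that $X$ lies entirely below $Y$. The whole argument amounts to controlling how the chains $a < b_1 < \dots < b_k < c$ that witness a deletion interact with the partition $X \sqcup Y$.

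First I would show that the total cuts of $\less$ and of $\WOIPd{\less}$ coincide, treating the increasing and decreasing deletions separately. As every deletion only removes edges, $\WOIPd{\less} \subseteq \less$, so the condition $y \not\less x$ required for a cut is inherited from $\less$ to $\WOIPd{\less}$; conversely, if $(X,Y)$ is a cut of $\WOIPd{\less}$, then $x \less y$ holds (the cut edge lies in $\WOIPd{\less} \subseteq \less$) and antisymmetry of $\less$ forces $y \not\less x$. Hence the only real point is that the cut edges $X \times Y$ survive each deletion. This is where the chains enter: in a deletion chain for the increasing deletion the constraints $b_i \not\less b_{i+1}$ forbid the configuration $b_i \in X$, $b_{i+1} \in Y$ (which would force $b_i \less b_{i+1}$), so, read along the integer order, the memberships of the $b_i$ follow the pattern ``some elements of $Y$, then some elements of $X$''. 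A cut edge $(x,y)$ with $x \in X$, $y \in Y$ and $x < y$ would demand a chain starting in $X$ and ending in $Y$, which this pattern excludes; thus no witnessing chain exists and the edge is preserved. The decreasing deletion is symmetric, the forbidden transition becoming $b_i \in Y$, $b_{i+1} \in X$.

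The same chain analysis gives the commutation with restriction. For an increasing edge $(a,c)$ with $a, c \in X$ (the case $a, c \in Y$ being identical), any chain witnessing its deletion in $\less$ starts and ends in $X$, so by the above pattern it lies entirely in $X$; conversely a chain inside $X$ is a chain in $\less$. As the standardization ${\less} \mapsto {\less_X}$ is an order-isomorphism on $X$, these chains correspond exactly to the chains witnessing deletions in $\less_X$, so the corresponding edge is deleted in $\IWOIPid{\less}$ if and only if it is deleted in $\IWOIPid{(\less_X)}$; decreasing edges, untouched by the increasing deletion, agree trivially. This yields $(\IWOIPid{\less})_X = \IWOIPid{(\less_X)}$, and symmetrically for the decreasing deletion and for the restriction to $Y$. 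Composing, and using that $(X,Y)$ is still a total cut of $\DWOIPdd{\less}$ by the previous paragraph, I get $(\WOIPd{\less})_X = \IWOIPid{\big( (\DWOIPdd{\less})_X \big)} = \IWOIPid{\big( \DWOIPdd{(\less_X)} \big)} = \WOIPd{(\less_X)}$, and likewise for $Y$.

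The fiber description then follows formally. A poset $\prec$ on $[p]$ with total cut $(X,Y)$ is completely determined by its restrictions $\prec_X$ and $\prec_Y$ together with the cut conditions $X \times Y \subseteq {\prec}$ and $(Y \times X) \cap {\prec} = \varnothing$, since these fix all cross pairs. For $\bless \in \WOIP_p$ with total cut $(X,Y)$: if $\WOIPd{\less} = \bless$, then $(X,Y)$ is a cut of $\less$ by the first step and the two restriction identities give the four conditions on the right; conversely, $X \times Y \subseteq {\less}$ and $(Y \times X) \cap {\less} = \varnothing$ say precisely that $(X,Y)$ is a cut of $\less$, and then the restriction identities together with the determination principle force $\WOIPd{\less} = \bless$. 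The analogous characterizations for the increasing and decreasing deletions are exactly the intermediate statements proved above. I expect the main obstacle to be the chain analysis --- checking rigorously that a deletion-witnessing chain cannot cross the cut in the forbidden direction; once the membership pattern of such chains is established, both the survival of the cut edges and the commutation with restriction follow uniformly, and the remainder is bookkeeping.
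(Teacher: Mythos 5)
Your proposal is correct and follows essentially the same route as the paper: reduce to the one-sided deletions $\IWOIPid{}$ and $\DWOIPdd{}$, observe that a deletion-witnessing chain cannot cross the cut in the forbidden direction (your $Y^*X^*$ pattern is exactly the paper's ``take the maximal $\ell$ with $b_\ell \in X$'' argument), deduce both the survival of the cut edges and the commutation with restriction, and treat the fiber description as bookkeeping. Your handling of the converse inclusion of cuts via antisymmetry of $\less$ and your explicit ``determination principle'' for the final reformulation are slightly more detailed than the paper's ``similar''/``reformulation'' remarks, but the substance is identical.
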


\begin{proof}
We make the proof for~$\IWOIP$, the proof for~$\DWOIP$ is symmetric, and the result follows for~$\WOIP = \IWOIP \cap \DWOIP$.
Consider a partition~$[p] = X \sqcup Y$.
Assume that~$(X,Y)$ is a total cut of~$\less$ and consider~$x \in X$ and~$y \in Y$.
If~$y < x$, then~$x \IWOIPid{\less} y$ since~$(x,y) \in \Dec{\less} = \Dec{(\IWOIPid{\less})}$.
If~$x < y$, then~$x \IWOIPid{\less} y$ since~$x \less y$ and for any~$x = b_0 < b_1 < \dots < b_k < b_{k+1} = y$, we have~$b_\ell \less b_{\ell+1}$, where~$\ell$ is maximal such that~$b_\ell \in X$.
Finally,  $y \not\IWOIPid{\less} x$ since~$y \not\less x$ and~${\IWOIPid{\less}} \subseteq {\less}$.
We conclude that~$(X,Y)$ is a total cut of~$\IWOIPid{\less}$.
The reverse inclusion is similar.

We now consider a total cut~$(X,Y)$ of~$\less$ and prove that~$(\IWOIPid{\less})_X = \IWOIPid{(\less_X)}$ (the other equality is similar).
Observe first that~$\Dec{((\IWOIPid{\less})_X)} = \Dec{(\IWOIPid{(\less_X)})} = {\Dec{\less_X}}$ so we focus on increasing relations.
Let~$x_1 < \dots < x_m$ be the elements of~$X$, and consider~$1 \le i < j \le m$.
If~$(i,j) \in (\IWOIPid{\less})_X$, then~$x_i \less x_j$ and there is no $x_i < b_1 < \dots < b_k < x_j$ such that~$x_i \not\less b_1 \not\less \dots \not\less b_k \not\less x_j$.
In particular, there is no such~$b_1, \dots, b_k$ in~$X$ and we obtain that~$(i,j) \in \IWOIPid{(\less_X)}$.
Conversely, assume that~$(i,j) \in \IWOIPid{(\less_X)}$ and consider~$x_i = b_0 < b_1 < \dots < b_k < b_{k+1} = x_j$.
We distinguish two cases:
\begin{itemize}
\item If~$b_1 = x_{i_1}, \dots, b_k = x_{i_k}$ all belong to~$X$. Since~$(i,j) \in \IWOIPid{(\less_X)}$, there is~$\ell$ such that~$i_\ell \less_X i_{\ell+1}$. This implies that~$b_\ell \less b_{\ell+1}$.
\item Otherwise, consider the last~$\ell$ such that~$b_\ell \in X$. Then~$b_{\ell+1} \in Y$ and we have~$b_\ell \less b_{\ell+1}$.
\end{itemize}
In both cases, we have obtained that there is $\ell$ such that~$b_\ell \less b_{\ell+1}$.
We therefore obtain that~$(x_i, x_j) \in \IWOIPid{\less}$, so that~$(i,j) \in \IWOIPid{(\less_X)}$.

Finally, the last statement is just a reformulation of the former.
\end{proof}

For~${\bless} \in \WOIP$, consider the element
\[
\FWOIPsub_\bless \eqdef \sum \FPos_\less.
\]
where the sum runs over all ${\less} \in \IPos$ such that~$\WOIPd{\less} = {\bless}$.
We denote by~$\K\WOIP^\subalg$ the linear subspace of~$\K\IPos$ spanned by the elements~$\FWOIPsub_\bless$ for~${\bless} \in \WOIP$.
We define similarly subspaces spanned by the fibers of the~$\IWOIP$ and the~$\DWOIP$ deletions.

\begin{proposition}
\label{prop:WOIPsubHopfAlgebra}
The subspace~$\K\WOIP^\subalg$ is stable by the product~$\product$ and the coproduct~$\coproduct$ and thus defines a Hopf subalgebra of~$(\K\IPos, \product, \coproduct)$.
A similar statement holds for~$\IWOIP$ and~$\DWOIP$.
\end{proposition}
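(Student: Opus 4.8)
The plan is to check stability separately under the product $\product$ and the coproduct $\coproduct$, in each case reducing the computation to the two compatibility lemmas (Lemma~\ref{lem:WOIPdShuffle} and Lemma~\ref{lem:WOIPdConvolution}) just established. Because those lemmas hold verbatim for all three deletions and $\WOIP = \IWOIP \cap \DWOIP$, it suffices to argue for the $\WOIP$ deletion; the proofs for $\IWOIP$ and $\DWOIP$ are identical after substituting the relevant deletion.

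For the product, take $\bless \in \WOIP_m$ and $\bless' \in \WOIP_n$ and expand $\FWOIPsub_\bless \product \FWOIPsub_{\bless'} = \sum_{\WOIPd{\less} = \bless,\; \WOIPd{\less'} = \bless'} \; \sum_{\dashv \in \less \shiftedShuffle \less'} \FPos_\dashv$. By Proposition~\ref{prop:characterizationShuffleProduct}, a relation $\dashv$ appearing in the double sum forces the pair $(\less, \less') = (\dashv_{[m]}, \dashv_{\overline{[n]}})$, so every $\dashv$ occurs with coefficient one and the product equals the sum of $\FPos_\dashv$ over all $\dashv \in \IPos_{m+n}$ with $\WOIPd{(\dashv_{[m]})} = \bless$ and $\WOIPd{(\dashv_{\overline{[n]}})} = \bless'$. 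The second display of Lemma~\ref{lem:WOIPdShuffle} partitions exactly this index set into the fibers $\{\dashv : \WOIPd{\dashv} = \prec\}$ over $\prec \in (\bless \shiftedShuffle \bless') \cap \WOIP$, and regrouping gives $\FWOIPsub_\bless \product \FWOIPsub_{\bless'} = \sum_{\prec \in (\bless \shiftedShuffle \bless') \cap \WOIP} \FWOIPsub_\prec \in \K\WOIP^\subalg$.

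For the coproduct, expand $\coproduct(\FWOIPsub_\bless) = \sum_{\WOIPd{\less} = \bless} \coproduct(\FPos_\less)$, where $\coproduct(\FPos_\less)$ is the sum of $\FPos_{\less_X} \otimes \FPos_{\less_Y}$ over the total cuts $(X,Y)$ of $\less$. The first assertion of Lemma~\ref{lem:WOIPdConvolution} identifies the total cuts of $\less$ with those of $\bless$, so I may interchange the summations and fix a total cut $(X,Y)$ of $\bless$. For that cut I claim $\less \mapsto (\less_X, \less_Y)$ is a bijection from the fiber $\{\less : \WOIPd{\less} = \bless\}$ onto the set of pairs of posets whose images under $\WOIPd{\cdot}$ are $\bless_X$ and $\bless_Y$. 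Injectivity (and well-definedness of the inverse) holds because a total cut forces $X \times Y \subseteq {\less}$ and $(Y \times X) \cap {\less} = \varnothing$, so the across-the-cut relations are determined and $\less$ is recovered from its two restrictions; surjectivity combines the gluing property that a total cut of two posets is a poset (Proposition~\ref{prop:algebraPosets}\,(ii)) with the fiber reformulation closing Lemma~\ref{lem:WOIPdConvolution}, which certifies the reconstructed relation lies again in the fiber of $\bless$. Hence the inner sum factors as $\FWOIPsub_{\bless_X} \otimes \FWOIPsub_{\bless_Y}$, and $\coproduct(\FWOIPsub_\bless) = \sum_{(X,Y)} \FWOIPsub_{\bless_X} \otimes \FWOIPsub_{\bless_Y}$ over the total cuts of $\bless$ lies in $\K\WOIP^\subalg \otimes \K\WOIP^\subalg$.

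The substance of the argument is already carried by the two lemmas, so the remaining work is essentially bookkeeping; the one place demanding genuine care is the bijection in the coproduct step. There I must verify both that a fiber element is fully determined by its pair of restrictions once the cut is fixed, and, more delicately, that every admissible pair reassembles into a poset that still lies in the fiber of $\bless$, with nothing lost to a poset obstruction or to a failure of the $\WOIPd{\less} = \bless$ condition. This is exactly where the total-cut gluing property and the final reformulation of Lemma~\ref{lem:WOIPdConvolution} are indispensable.
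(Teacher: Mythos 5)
Your proposal is correct and follows essentially the same route as the paper: reduce to the two compatibility lemmas, regroup the product sum via the fiber decomposition of Lemma~\ref{lem:WOIPdShuffle}, and factor the coproduct sum via the cut-preservation and fiber reformulation of Lemma~\ref{lem:WOIPdConvolution}. Your extra care about the bijection $\less \mapsto (\less_X, \less_Y)$ in the coproduct step is a welcome elaboration of a point the paper's computation passes over silently, but it is not a different argument.
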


\begin{proof}
The proof is a computation relying on Lemmas~\ref{lem:WOIPdShuffle} and~\ref{lem:WOIPdConvolution}. We therefore make the proof here for~$\WOIP$, the same proof works verbatim for~$\IWOIP$ or~$\DWOIP$.
We first show the stability by product. For~${\bless} \in \WOIP_m$ and~${\bless'} \in \WOIP_n$, we have by Lemma~\ref{lem:WOIPdShuffle}
\begin{align*}
\FWOIPsub_{\bless} \product \FWOIPsub_{\bless'}
& = \bigg( \sum_{\WOIPd{\dashv} = \bless} \!\!\! \FPos_\dashv \bigg) \product \bigg( \sum_{\WOIPd{\dashv'} = \bless'} \!\!\! \FPos_{\dashv'} \bigg)
= \!\!\! \sum_{\substack{\WOIPd{\dashv} = \bless \\ \WOIPd{\dashv'} = \bless'}} \!\!\! \FPos_\dashv \product \FPos_{\dashv'} \\
& = \!\!\! \sum_{\substack{\WOIPd{\dashv} = \bless \\ \WOIPd{\dashv'} = \bless'}} \sum_{\substack{{\less} \in \IPos_{m+n} \\ {\less}_{[m]} = {\dashv} \\ \less_{\overline{[n]}} = {\dashv'}}} \!\!\! \FPos_\less
= \!\!\! \sum_{\substack{{\less} \in \IPos_{m+n} \\ \WOIPd{(\less_{[m]})} = {\bless} \\ \WOIPd{(\less_{\overline{[n]}})} = {\bless'}}} \!\!\! \FPos_\less
= \!\!\! \sum_{\substack{{\prec} \in {\bless} \shiftedShuffle {\bless'} \\ \quad \cap \WOIP}} \sum_{\WOIPd{\less} = {\prec}} \!\!\! \FPos_\less \\
& = \!\!\! \sum_{\substack{{\prec} \in {\bless} \shiftedShuffle {\bless'} \\ \quad \cap \WOIP}} \!\!\! \FWOIPsub_{\prec}.
\end{align*}
We now show the stability by coproduct.
For~${\bless} \in \WOIP_p$, we have by Lemma~\ref{lem:WOIPdConvolution}
\begin{align*}
\coproduct(\FWOIPsub_\bless)
& = \coproduct \bigg( \sum_{\WOIPd{\less} = \bless} \FPos_\less \bigg)
= \!\!\! \sum_{\WOIPd{\less} = \bless} \!\!\! \coproduct(\FPos_\less)
= \!\!\! \sum_{\WOIPd{\less} = \bless} \;\; \sum_{\substack{(X,Y) \text{ total} \\ \text{cut of } \less}} \!\!\! \FPos_{\less_X} \otimes \FPos_{\less_Y} \\
& = \!\!\! \sum_{\substack{(X,Y) \text{ total} \\ \text{cut of } \bless}} \;\; \sum_{\WOIPd{\less} = \bless} \!\!\! \FPos_{\less_X} \otimes \FPos_{\less_Y}
= \!\!\! \sum_{\substack{(X,Y) \text{ total} \\ \text{cut of } \bless}} \bigg( \sum_{\WOIPd{\dashv} = {\bless}_X} \!\!\! \FPos_{\dashv} \bigg) \otimes \bigg( \sum_{\WOIPd{\dashv'} = {\bless}_Y} \!\!\! \FPos_{\dashv'} \bigg) \\
& = \!\!\! \sum_{\substack{(X,Y) \text{ total} \\ \text{cut of } \bless}} \!\!\! \FWOIPsub_{\bless_X} \otimes \FWOIPsub_{\bless_Y}.
\qedhere
\end{align*}
\end{proof}

\begin{proposition}
The map~$\FWOIPquo_\bless \mapsto \FWOIPsub_\bless$ defines a Hopf algebra isomorphism from the quotient Hopf algebra~$\K\WOIP^\quotient$ to the Hopf subalgebra~$\K\WOIP^\subalg$.
A similar statement holds for~$\IWOIP$ and~$\DWOIP$.
\end{proposition}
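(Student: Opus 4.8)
The plan is to observe that both Hopf algebras carry, in their distinguished bases $(\FWOIPquo_{\bless})_{\bless \in \WOIP}$ and $(\FWOIPsub_{\bless})_{\bless \in \WOIP}$, exactly the same structure constants for both the product and the coproduct. Since the two families are bases indexed by the same set $\WOIP$, the assignment $\FWOIPquo_{\bless} \mapsto \FWOIPsub_{\bless}$ is a well-defined graded linear bijection, and matching the structure constants will show at once that it is a graded bialgebra morphism. As both $\K\WOIP^\quotient$ and $\K\WOIP^\subalg$ are graded connected, the antipode is determined by the bialgebra structure, so such a morphism is automatically a Hopf algebra isomorphism. Thus it suffices to align the product and the coproduct in the two bases.

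First I would record the product. On the quotient side, $\FWOIPquo_{\less} \product \FWOIPquo_{\bless}$ is the image of $\FPos_{\less} \product \FPos_{\bless}$ under the projection $\K\IPos \to \K\WOIP^\quotient$; since this product is supported on $(\less \shiftedShuffle \bless) \cap \IPos$ and Proposition~\ref{prop:algebraWOIP} kills the non-$\WOIP$ summands, one is left with
\[
\FWOIPquo_{\less} \product \FWOIPquo_{\bless} = \sum_{\substack{{\prec} \in {\less} \shiftedShuffle {\bless} \\ \quad \cap \WOIP}} \FWOIPquo_{\prec}.
\]
On the subalgebra side, the chain of equalities carried out in the proof of Proposition~\ref{prop:WOIPsubHopfAlgebra} produces precisely the same index set,
\[
\FWOIPsub_{\less} \product \FWOIPsub_{\bless} = \sum_{\substack{{\prec} \in {\less} \shiftedShuffle {\bless} \\ \quad \cap \WOIP}} \FWOIPsub_{\prec},
\]
so the map respects products.

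Next I would record the coproduct. The coproduct on $\K\IPos$ sends $\FPos_{\bless}$ to the sum over the total cuts $(X,Y)$ of $\bless$ of $\FPos_{\bless_X} \otimes \FPos_{\bless_Y}$, and Remark~\ref{rem:convolutionWOIP} guarantees that $\bless_X, \bless_Y \in \WOIP$ whenever $\bless \in \WOIP$, so no summand is destroyed in the quotient:
\[
\coproduct(\FWOIPquo_{\bless}) = \sum_{\substack{(X,Y) \text{ total} \\ \text{cut of } \bless}} \FWOIPquo_{\bless_X} \otimes \FWOIPquo_{\bless_Y}.
\]
The proof of Proposition~\ref{prop:WOIPsubHopfAlgebra} shows the subalgebra coproduct has the very same shape,
\[
\coproduct(\FWOIPsub_{\bless}) = \sum_{\substack{(X,Y) \text{ total} \\ \text{cut of } \bless}} \FWOIPsub_{\bless_X} \otimes \FWOIPsub_{\bless_Y}.
\]
Hence the map respects coproducts as well and is the desired Hopf algebra isomorphism. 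The same computation applies verbatim to $\IWOIP$ and to $\DWOIP$, yielding the analogous statements.

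The genuine content of this argument has already been established in the preceding lemmas and propositions, so I do not expect a serious obstacle; the only points needing care are the identifications of the two summation ranges. For the product one must check that the interval from $\underprod{\less}{\bless}$ to $\overprod{\less}{\bless}$ in the weak order on $\WOIP_{m+n}$ coincides with $(\less \shiftedShuffle \bless) \cap \WOIP$, which is immediate from Proposition~\ref{prop:characterizationShuffleProduct} together with the fact that $\underprod{\less}{\bless}$ and $\overprod{\less}{\bless}$ lie in $\WOIP$. For the coproduct one simply uses that the factors appearing in the coproduct are exactly the restrictions $\bless_X, \bless_Y$ along total cuts. Once these ranges are aligned, the equality of structure constants, and hence the isomorphism, is immediate.
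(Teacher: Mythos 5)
Your proposal is correct and follows essentially the same route as the paper: both argue that the product and coproduct have identical structure constants on the two bases $(\FWOIPquo_\bless)$ and $(\FWOIPsub_\bless)$, indexed by the same set, so the basis-matching map is a Hopf isomorphism. You simply spell out in more detail where the two formulas come from (the quotient computation via Proposition~\ref{prop:algebraWOIP} and the subalgebra computation in Proposition~\ref{prop:WOIPsubHopfAlgebra}), which the paper treats as immediate.
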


\begin{proof}
This is immediate since the formulas for the product and coproduct on the bases~$(\FWOIPquo_\bless)$ and~$(\FWOIPsub_\bless)$ coincide:
\[
\FWOIP_{\bless} \product \FWOIP_{\bless'} = \sum_{\substack{{\prec} \in {\bless} \shiftedShuffle {\bless'} \\ \quad \cap \WOIP}} \FWOIP_{\prec}
\qquad\text{and}\qquad
\coproduct(\FWOIP_\bless) = \sum_{\substack{(X,Y) \text{ total} \\ \text{cut of } \bless}} \FWOIP_{\bless_X} \otimes \FWOIP_{\bless_Y}.
\qedhere
\]
\end{proof}

\subsubsection{Elements}
\label{subsubsec:permutationsSubalgebra}

\enlargethispage{-.4cm}
We now construct a Hopf algebra on~$\WOEP$ as a subalgebra of the integer poset algebra.
To this end, consider the two maps from~$\IPos$ to~$\WOEP$ defined by
\[
\WOEPid{\less} \eqdef \maxle{(\IWOIPid{\less})}
\qquad\text{and}\qquad
\WOEPdd{\less} \eqdef \minle{(\DWOIPdd{\less})}.
\]
We need to describe the fibers of these maps.

\begin{lemma}
\label{lem:fibersWOEPd}
For~${\less} \in \IPos_n$ and~${\bless} \in \WOEP_n$, we have
\begin{itemize}
\item ${\WOEPid{\less}} = {\bless}$ $\iff$ for all~$(a,c) \in \Inc{({\less} \ssm {\bless})}$ there exists~$a < b < c$ such that~$a \bmore b \bmore c$,
\item ${\WOEPdd{\less}} = {\bless}$ $\iff$ for all~$(c,a) \in \Dec{({\less} \ssm {\bless})}$ there exists~$a < b < c$ such that~$a \bless b \bless c$.
\end{itemize}
\end{lemma}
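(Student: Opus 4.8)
The plan is to prove the first equivalence; the second follows by the symmetric argument obtained by conjugating with the order-reversing relabelling $i \mapsto n+1-i$, which swaps increasing with decreasing subrelations and hence interchanges $\WOEPid{\cdot}$ with $\WOEPdd{\cdot}$, $\IWOIPid{\cdot}$ with $\DWOIPdd{\cdot}$, and $\maxle{\cdot}$ with $\minle{\cdot}$, while preserving $\WOEP$. So I fix ${\less} \in \IPos_n$ and ${\bless} \in \WOEP_n$, and write $\dashv \eqdef \WOEPid{\less} = \maxle{(\IWOIPid{\less})}$.

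First I would reduce the equality of total orders to an equality of their increasing parts. Since $\maxle{\cdot}$ only adds decreasing relations, $\Inc{\dashv} = \Inc{(\IWOIPid{\less})}$, and since $\dashv$ and $\bless$ are both total orders (elements of $\WOEP_n$), Lemma~\ref{lem:characterizationWOEP} shows they coincide if and only if their increasing subrelations do. Hence ${\WOEPid{\less}} = {\bless}$ is equivalent to $\Inc{(\IWOIPid{\less})} = \Inc{\bless}$. In particular, whenever this holds we get $\Inc{\bless} = \Inc{(\IWOIPid{\less})} \subseteq \Inc{\less}$, a containment I will use repeatedly to convert statements about $\bless$ into statements about $\less$.

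Next I would unfold the increasing deletion as $\Inc{(\IWOIPid{\less})} = \Inc{\less} \ssm D$, where $D$ is the set of \emph{deletable} increasing pairs $(a,c) \in \Inc{\less}$, those admitting a chain $a < b_1 < \dots < b_k < c$ with $a \not\less b_1 \not\less \dots \not\less b_k \not\less c$. Comparing $\Inc{\less}\ssm D$ with $\Inc{\bless}$, the equality $\Inc{(\IWOIPid{\less})} = \Inc{\bless}$ says exactly that every pair of $\Inc{(\less \ssm \bless)} = \Inc{\less}\ssm\Inc{\bless}$ is deletable, together with the reverse containment $\Inc{\bless}\subseteq\Inc{\less}$ holding on non-deletable pairs. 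The heart of the proof is thus to show that, for $(a,c)\in\Inc{\less}$ with $a\bmore c$, being deletable is equivalent to the single-witness condition of the statement: the existence of $a<b<c$ with $a \bmore b \bmore c$.

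The crux, and the step I expect to be the main obstacle, is this translation between the iterated chain condition and a single separating element. The forward direction is clean: given a chain $a \not\less b_1 \not\less \dots \not\less b_k \not\less c$, the containment $\Inc{\bless}\subseteq\Inc{\less}$ converts each $\not\less$ into a strict $\bmore$ in the total order $\bless$, yielding $a \bmore b_1 \bmore \dots \bmore b_k \bmore c$, which collapses by transitivity of $\bless$ to $a \bmore b_1 \bmore c$, so $b=b_1$ is a witness. The delicate direction is the converse: from a single $b$ with $a\bmore b\bmore c$ one must recover an actual $\not\less$-chain in $\less$, and one must simultaneously check that quantifying the witness condition over all of $\Inc{(\less\ssm\bless)}$ pins $\bless$ down \emph{exactly} rather than forcing only one of the two containments. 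I would handle this by induction on $c-a$, using transitivity of $\bless$ and the defining inequalities of $\maxle{(\IWOIPid{\less})}$ to choose the intermediate $b$ so that $(a,b)$ and $(b,c)$ are again separated in $\bless$, then splicing the chains obtained for the two shorter pairs. Once this equivalence is secured in both directions, substituting it into the reformulation of the preceding paragraph yields the stated biconditional, and the second bullet is identical after the increasing/decreasing exchange.
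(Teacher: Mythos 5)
Your reduction to $\Inc{(\IWOIPid{\less})} = \Inc{\bless}$, your forward direction, and your chain-splicing induction are all sound and essentially coincide with the paper's own argument: the paper proves the converse by taking a pair $(a,c) \in \Inc{({\IWOIPid{\less}} \ssm {\bless})}$ with $c-a$ minimal and descending via the witness $b$ and the $\IWOIP$ property of $\IWOIPid{\less}$, which is exactly your induction read as a minimal-counterexample argument. Both versions establish the containment $\Inc{(\IWOIPid{\less})} \subseteq \Inc{\bless}$ from the witness condition.

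The genuine problem is the step you explicitly defer, namely showing that the witness condition also forces the reverse containment $\Inc{\bless} \subseteq \Inc{(\IWOIPid{\less})}$. This cannot be done: the hypothesis only constrains pairs of $\Inc{\less} \ssm \Inc{\bless}$ and says nothing about pairs of $\Inc{\bless} \ssm \Inc{\less}$. Concretely, take $n = 2$, let $\less$ be the antichain and ${\bless} = {\less_{12}}$; then ${\less} \ssm {\bless} = \varnothing$, so the right-hand side holds vacuously, yet ${\WOEPid{\less}} = {\less_{21}} \ne {\bless}$. So the implication ``$\Leftarrow$'' fails as stated, and no completion of your plan can rescue it. You should know that the paper's own proof has the same lacuna: it asserts that ${\WOEPid{\less}} \ne {\bless}$ produces a pair in $\Inc{({\IWOIPid{\less}} \ssm {\bless})}$, which silently presupposes $\Inc{\bless} \subseteq \Inc{(\IWOIPid{\less})}$. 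The statement (and both proofs) can be repaired either by adding the hypothesis $\Inc{\bless} \subseteq \Inc{\less}$, or by replacing ${\less} \ssm {\bless}$ with ${\less} \symdif {\bless}$ in the quantifier: for $(a,c) \in \Inc{({\bless} \ssm {\less})}$ a witness $a \bmore b \bmore c$ would give $a \bmore c$ and contradict $a \bless c$, so the symmetric-difference version does pin down both containments, after which your induction closes the proof.
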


\begin{proof}
We only prove the first statement, the second is symmetric.
Assume~${\WOEPid{\less}} = {\bless}$ and~$(a,c) \in \Inc{({\less} \ssm {\bless})}$.
Since~$(a,c)$ is deleted in~${\WOEPid{\less}} = \maxle{(\IWOIPid{\less})}$, it is already deleted in~$\IWOIPid{\less}$.
Therefore, there exists~$a < b_1 < \dots < b_k < c$ such that~$a \not\less b_1 \not\less \dots \not\less b_k \not\less c$.
By definition of~${\bless} = \maxle{(\IWOIPid{\less})}$, this implies that~$a \bmore b_1 \bmore \dots \bmore b_k \bmore c$.

Conversely, assume that for all~$(a,c) \in \Inc{({\less} \ssm {\bless})}$ there exists~$a < b < c$ such that~$a \bmore b \bmore c$.
Assume by contradiction that~${\WOEPid{\less}} \ne {\bless}$.
By definition of~${\WOEPid{\less}} = \maxle{(\IWOIPid{\less})}$, this implies that there exists~$(a,c) \in \Inc{({\IWOIPid{\less}} \ssm {\bless})}$.
Choose such an~$(a,c)$ with~$c-a$ minimal.
Since~$(a,c) \in \Inc{({\IWOIPid{\less}} \ssm {\bless})} \subseteq \Inc{({\less} \ssm {\bless})}$, there exists~$a < b < c$ such that~$a \bmore b \bmore c$.
Since~$a \IWOIPid{\less} c$, we have~$a \IWOIPid{\less} b$ or~$b \IWOIPid{\less} c$, thus either~$(a,b)$ or~$(b,c)$ belongs to~$\Inc{({\IWOIPid{\less}} \ssm {\bless})}$ contradicting the minimality of~$c-a$.
\end{proof}

We now study the compatibility of the projections~${\less} \mapsto {\WOEPid{\less}}$ and~${\less} \mapsto {\WOEPdd{\less}}$ with poset restrictions and cuts.

\begin{lemma}
\label{lem:WOEPdShuffle}
For any~${\less} \in \IPos_p$ and any~$1 \le q \le r \le p$, we have
\[
\WOEPid{\big( {\less}_{[q,r]} \big)} = \big( {\WOEPid{\less}} \big)_{[q,r]}
\qquad\text{and}\qquad
\WOEPdd{\big( {\less}_{[q,r]} \big)} = \big( {\WOEPdd{\less}} \big)_{[q,r]}.
\]
\end{lemma}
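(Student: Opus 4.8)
The plan is to exploit the factorization $\WOEPid{\less} = \maxle{(\IWOIPid{\less})}$ and to handle the two operations $\IWOIPid{\cdot}$ and $\maxle{\cdot}$ separately. The inner operation already commutes with interval restriction: the ``similar statement'' for $\IWOIPid{\cdot}$ in Lemma~\ref{lem:WOIPdShuffle} gives $\IWOIPid{(\less_{[q,r]})} = (\IWOIPid{\less})_{[q,r]}$, and moreover $\IWOIPid{\less} \in \IWOIP$ as recalled above. Hence, writing $\prec \eqdef \IWOIPid{\less} \in \IWOIP_p$, the whole identity for $\WOEPid{\cdot}$ reduces to showing that $\maxle{\cdot}$ commutes with restriction to $[q,r]$, namely that $\maxle{(\prec_{[q,r]})} = (\maxle{\prec})_{[q,r]}$.

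To prove this last identity I would argue pointwise using the explicit formula $\maxle{\prec} = \prec \cup \set{(b,a)}{a < b \text{ incomparable in } \prec}$ of Proposition~\ref{prop:characterizationIWOIPDWOIP}. Since the standardization of the interval $[q,r]$ is order-preserving, it suffices to compare the two relations on the ground set $X = \{q, \dots, r\}$. Fix $x < y$ in $X$. On the one hand, $(x,y)$ lies in $(\maxle{\prec})_{[q,r]}$ iff $x \maxle{\prec} y$, which for $x<y$ happens iff $x \prec y$, because the pairs $(b,a)$ added to $\prec$ to form $\maxle{\prec}$ all have $b > a$; on the other hand $(x,y) \in \maxle{(\prec_{[q,r]})}$ iff $x \prec_{[q,r]} y$ iff $x \prec y$. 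Symmetrically, $(y,x)$ lies in either relation iff $x \not\prec y$, using that $x$ and $y$ are incomparable in $\prec_{[q,r]}$ exactly when they are incomparable in $\prec$. Thus the two total orders agree, which is the claim.

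Combining the two steps yields $\WOEPid{(\less_{[q,r]})} = \maxle{((\IWOIPid{\less})_{[q,r]})} = (\maxle{(\IWOIPid{\less})})_{[q,r]} = (\WOEPid{\less})_{[q,r]}$, and the statement for $\WOEPdd{\cdot} = \minle{(\DWOIPdd{\cdot})}$ follows by the symmetric argument, exchanging the roles of increasing and decreasing relations. I do not expect a genuine obstacle here: the only point requiring care is that $\maxle{\cdot}$ is the maximal linear extension only on $\IWOIP$ posets, so one should check that $\prec_{[q,r]}$ stays in $\IWOIP$ --- but this is immediate, since the local characterization of $\IWOIP$ in Proposition~\ref{prop:characterizationIWOIPDWOIP} is inherited by any restriction, and in any case the formula-level identity established above requires no such hypothesis.
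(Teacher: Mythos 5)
Your proposal is correct and follows essentially the same route as the paper: the paper's proof also reduces to Lemma~\ref{lem:WOIPdShuffle} for the inner map $\IWOIPid{\cdot}$ and then observes that $\maxle{\cdot}$ commutes with restriction to an interval because the presence of a pair $(a,c)$ in $\maxle{\less}$ only depends on the relations of $\less$ inside $[a,c]$ --- your pointwise check of the formula $\maxle{\prec} = {\prec} \cup \set{(b,a)}{a < b \text{ incomparable in } \prec}$ is just an explicit spelling-out of that locality observation. Your closing remark that the identity holds at the formula level without the $\IWOIP$ hypothesis is also accurate and harmless.
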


\begin{proof}
Same as Lemma~\ref{lem:WOIPdShuffle} since the presence of~$(a,c)$ in both~${\less} \mapsto {\IWOIPid{\less}}$ and~${\less} \mapsto {\maxle{\less}}$ only depends on the relations of~$\less$ in the interval~$[a,c]$.
\end{proof}

In contrast, Lemma~\ref{lem:WOIPdConvolution} does not apply verbatim here since~$\less$ has in general less total cuts than~$\WOEPid{\less}$ and~$\WOEPdd{\less}$.
We adapt this lemma as follows.

\begin{lemma}
\label{lem:WOEPdConvolution}
For any~${\less} \in \IPos_p$, all total cuts of~$\less$ are total cuts of both~${\WOEPid{\less}}$ and~${\WOEPdd{\less}}$.
Moreover, for any~${\less} \in \IPos_p$ and~${\bless} \in \WOEP_p$, and for any total cut~$(X,Y)$ of both~$\less$ and~$\bless$,
\begin{itemize}
\item ${\WOEPid{\less}} = {\bless}$ if and only if~${\WOEPid{(\less_X)}} = {\bless_X}$ and~${\WOEPid{(\less_Y)}} = {\bless_Y}$,
\item ${\WOEPdd{\less}} = {\bless}$ if and only if~${\WOEPdd{(\less_X)}} = {\bless_X}$ and~${\WOEPdd{(\less_Y)}} = {\bless_Y}$,
\end{itemize}
\end{lemma}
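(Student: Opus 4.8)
The plan is to reduce everything to the fiber characterization of Lemma~\ref{lem:fibersWOEPd} together with the already-established behavior of the $\IWOIP$ deletion on cuts (the $\IWOIP$ case of Lemma~\ref{lem:WOIPdConvolution}). I treat only~${\less} \mapsto {\WOEPid{\less}}$, the statements for~${\less} \mapsto {\WOEPdd{\less}}$ being symmetric (replacing $\IWOIPid{\cdot}$, $\maxle{\cdot}$, $\bmore$ by $\DWOIPdd{\cdot}$, $\minle{\cdot}$, $\bless$).

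For the first assertion, recall that~${\WOEPid{\less}} = {\maxle{(\IWOIPid{\less})}}$. First I would invoke the $\IWOIP$ version of Lemma~\ref{lem:WOIPdConvolution}: every total cut~$(X,Y)$ of~$\less$ is a total cut of~$P \eqdef \IWOIPid{\less}$. It then suffices to observe that passing from~$P$ to~$\maxle{P}$ cannot break the cut. Indeed, for~$x \in X$ and~$y \in Y$ we have~$(x,y) \in P$ and~$(y,x) \notin P$, so~$x$ and~$y$ are comparable in~$P$; since~$\maxle{P}$ adds only pairs~$(b,a)$ with~$a<b$ incomparable in~$P$, it adds nothing between~$x$ and~$y$. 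Hence~$(x,y) \in \WOEPid{\less}$ and~$(y,x) \notin \WOEPid{\less}$, so~$(X,Y)$ is a total cut of~$\WOEPid{\less}$.

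For the ``moreover'' part, fix a total cut~$(X,Y)$ of both~$\less$ and~$\bless$ and write~$X = \{x_1 < \dots < x_m\}$. The engine is the structural observation that \emph{no pair of~$\Inc{({\less} \ssm {\bless})}$ crosses the cut}: if~$a < c$ with~$a \in X$ and~$c \in Y$, then~$a \bless c$ (cut of~$\bless$), so~$(a,c) \notin {\less} \ssm {\bless}$; and if~$a \in Y$, $c \in X$, then~$a \not\less c$ (cut of~$\less$), so~$(a,c) \notin {\less}$. Therefore every~$(a,c) \in \Inc{({\less} \ssm {\bless})}$ satisfies~$\{a,c\} \subseteq X$ or~$\{a,c\} \subseteq Y$, and since standardization of~$X$ (resp.~$Y$) preserves the numerical order, these two families are exactly the elements of~$\Inc{({\less}_X \ssm {\bless}_X)}$ and of~$\Inc{({\less}_Y \ssm {\bless}_Y)}$ (both restrictions being again a poset and a total order, so that Lemma~\ref{lem:fibersWOEPd} applies to each).

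The remaining step, and the one I expect to be the main obstacle, is to check that the witnesses of Lemma~\ref{lem:fibersWOEPd} are preserved by restriction, i.e.\ that a witness for a block-$X$ pair automatically lies in~$X$. Suppose~$(a,c) \in \Inc{({\less} \ssm {\bless})}$ with~$\{a,c\} \subseteq X$ and~$a < b < c$ with~$a \bmore b \bmore c$; then~$b \bless a$ together with~$a \in X$ forces~$b \in X$, since~$b \in Y$ would give~$a \bless b$ and~$b \not\bless a$ by the cut of~$\bless$, contradicting~$b \bless a$. As~$a < b < c$, this~$b$ is a witness for the corresponding pair in~$\bless_X$; conversely any witness in~$\bless_X$ is a witness in~$\bless$. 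This localization is precisely where the hypothesis that~$(X,Y)$ is a total cut of~$\bless$ (not merely of~$\less$) is essential. Combining it with the no-crossing observation, the condition of Lemma~\ref{lem:fibersWOEPd} for~$({\less}, {\bless})$ holds if and only if it holds simultaneously for~$({\less}_X, {\bless}_X)$ and~$({\less}_Y, {\bless}_Y)$; applying the ``iff'' of Lemma~\ref{lem:fibersWOEPd} to each of the three pairs yields the claimed equivalence~${\WOEPid{\less}} = {\bless} \iff {\WOEPid{({\less}_X)}} = {\bless_X}$ and~${\WOEPid{({\less}_Y)}} = {\bless_Y}$.
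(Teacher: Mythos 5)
Your proposal is correct and follows essentially the same route as the paper: both parts reduce to the fiber characterization of Lemma~\ref{lem:fibersWOEPd}, via the same two key observations that no pair of~$\Inc{({\less} \ssm {\bless})}$ crosses the cut and that any witness~$b$ with~$a \bmore b \bmore c$ must lie in the same block as~$a$ and~$c$. The only cosmetic difference is in the first assertion, where you reuse the $\IWOIP$ case of Lemma~\ref{lem:WOIPdConvolution} and then check that~$\maxle{\cdot}$ adds nothing across the cut, while the paper reproves the~$\IWOIPid{\cdot}$ step directly; the content is the same.
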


\begin{proof}
We only prove the statement for~${\WOEPid{\less}}$, the proof for~${\WOEPdd{\less}}$ is symmetric.
Consider a total cut~$(X,Y)$ of~$\less$ and let~$x \in X$ and~$y \in Y$.
If~$x > y$, then~$(x,y) \in {\Dec{\less}} \subseteq {\WOEPid{\less}}$.
If~$x < y$, then for any~$x < b < y$, we have either~$b \in X$ and thus~$b \less y$, or~$b \in Y$ and thus~$x \less b$.
Therefore, $(x,y) \in {\IWOIPid{\less}} \subseteq {\WOEPid{\less}}$.
We conclude that~$(X,Y)$ is also a total cut of~${\WOEPid{\less}}$.

Consider now~${\less} \in \IPos_p$ and~${\bless} \in \WOEP_p$, and a total cut~$(X,Y)$ of both~$\less$ and~$\bless$.
Assume first that~${\WOEPid{\less}} = {\bless}$.
Consider~$(a,c) \in \Inc{({\less_X} \ssm {\bless_X})}$.
Since~${\WOEPid{\less}} = {\bless}$ and~$(a,c) \in ({\less} \ssm {\bless})$, Lemma~\ref{lem:fibersWOEPd} ensures that there is~$a < b < c$ such that~$a \bmore b \bmore c$.
Since~$(X,Y)$ is a cut of~$\bless$, we obtain that~$b \in X$.
Therefore, $a \bmore_X b \bmore_X c$.
We conclude by Lemma~\ref{lem:fibersWOEPd} that~${\WOEPid{(\less_X)}} = {\bless_X}$.
The proof is identical for~${\WOEPid{(\less_Y)}} = {\bless_Y}$.

Conversely, assume that~${\WOEPid{(\less_X)}} = {\bless_X}$ and~${\WOEPid{(\less_Y)}} = {\bless_Y}$.
Consider~$(a,c) \in \Inc{({\less} \ssm {\bless})}$.
Since~$(X,Y)$ is a total cut of both~$\less$ and~$\bless$, we know that $a$ and $c$ either both belong to~$X$ or both belong to~$Y$, say for instance~$X$.
Since~${\WOEPid{(\less_X)}} = {\bless_X}$ we now that there exists~$a < b < c$ such that~$a \bmore_X b \bmore_X c$.
We conclude by Lemma~\ref{lem:fibersWOEPd} that~${\WOEPid{\less}} = {\bless}$.
\end{proof}

For~${\bless} \in \WOEP$, consider the elements
\[
\FWOEPsubi_\bless \eqdef \sum_{\substack{{\less} \in \IPos \\ \WOEPid{\less} = {\bless}}} \FPos_\less
\qquad\text{and}\qquad
\FWOEPsubd_\bless \eqdef \sum_{\substack{{\less} \in \IPos \\ \WOEPdd{\less} = {\bless}}} \FPos_\less.
\]
We denote by~$\K\WOEP^\subalgi$ and~$\K\WOEP^\subalgd$ the linear subspaces of~$\K\IPos$ spanned by the elements~$\FWOEPsubi_\bless$ and~$\FWOEPsubd_\bless$ respectively for~${\bless} \in \WOEP$.

\begin{proposition}
\label{prop:WOEPsubHopfAlgebra}
The subspaces~$\K\WOEP^\subalgi$ and~$\K\WOEP^\subalgd$ are stable by the product~$\product$ and the coproduct~$\coproduct$ and thus defines Hopf subalgebras of~$(\K\IPos, \product, \coproduct)$.
\end{proposition}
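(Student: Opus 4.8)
The plan is to follow the computational strategy of Proposition~\ref{prop:WOIPsubHopfAlgebra}, replacing Lemmas~\ref{lem:WOIPdShuffle} and~\ref{lem:WOIPdConvolution} by their $\WOEP$-analogues, Lemmas~\ref{lem:WOEPdShuffle} and~\ref{lem:WOEPdConvolution}. I treat $\K\WOEP^\subalgi$; the argument for $\K\WOEP^\subalgd$ is symmetric, using the $\WOEPdd$ half of each lemma. For stability by the product, I would expand $\FWOEPsubi_\bless \product \FWOEPsubi_{\bless'}$ over the fibers and use the product formula on $\FPos$ to rewrite it as a sum of $\FPos_\less$ over posets $\less \in \IPos_{m+n}$ with $\WOEPid{(\less_{[m]})} = \bless$ and $\WOEPid{(\less_{\overline{[n]}})} = \bless'$. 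Lemma~\ref{lem:WOEPdShuffle} converts these into $(\WOEPid{\less})_{[m]} = \bless$ and $(\WOEPid{\less})_{\overline{[n]}} = \bless'$; grouping the posets $\less$ by the value $\prec \eqdef \WOEPid{\less}$ and invoking Proposition~\ref{prop:characterizationShuffleProduct} identifies the admissible $\prec$ with $(\bless \shiftedShuffle \bless') \cap \WOEP$, giving $\FWOEPsubi_\bless \product \FWOEPsubi_{\bless'} = \sum_{\prec \in (\bless \shiftedShuffle \bless') \cap \WOEP} \FWOEPsubi_\prec \in \K\WOEP^\subalgi$. This step is routine and parallels the $\WOIP$ case almost verbatim.

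The substantive part is coproduct stability, and here lies the one genuine obstacle. Expanding gives $\coproduct(\FWOEPsubi_\bless) = \sum_{\WOEPid{\less} = \bless} \sum_{(X,Y)} \FPos_{\less_X} \otimes \FPos_{\less_Y}$, where $(X,Y)$ runs over the total cuts of $\less$. In the $\WOIP$ proof one could freely interchange the two summations because, by Lemma~\ref{lem:WOIPdConvolution}, the total cuts of $\less$ and of $\WOIPd{\less}$ coincide; this \emph{fails} for $\WOEP$, since $\less$ generally has strictly fewer total cuts than $\bless = \WOEPid{\less}$. To circumvent this, I would set up a bijection between the index pairs $(\less, (X,Y))$ with $\WOEPid{\less} = \bless$ and $(X,Y)$ a total cut of $\less$, and the index triples $((X,Y), \dashv, \dashv')$ with $(X,Y)$ a total cut of $\bless$, $\WOEPid{\dashv} = \bless_X$ and $\WOEPid{\dashv'} = \bless_Y$. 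The forward map sends $(\less, (X,Y))$ to $((X,Y), \less_X, \less_Y)$: the first assertion of Lemma~\ref{lem:WOEPdConvolution} shows $(X,Y)$ is then a total cut of $\bless$, and the ``only if'' direction of its second assertion (valid since $(X,Y)$ is now a total cut of both $\less$ and $\bless$) gives $\WOEPid{\less_X} = \bless_X$ and $\WOEPid{\less_Y} = \bless_Y$. The inverse map reconstructs from $((X,Y), \dashv, \dashv')$ the unique relation $\less$ with $\less_X = \dashv$, $\less_Y = \dashv'$, $X \times Y \subseteq \less$ and $(Y \times X) \cap \less = \varnothing$; this $\less$ lies in $\dashv \convolution \dashv'$ along the cut $(X,Y)$, hence is a poset by Proposition~\ref{prop:algebraPosets}\,(ii), it admits $(X,Y)$ as a total cut by construction, and the ``if'' direction of Lemma~\ref{lem:WOEPdConvolution} guarantees $\WOEPid{\less} = \bless$. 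The two maps are mutually inverse, and matching the contributed tensors $\FPos_{\less_X} \otimes \FPos_{\less_Y} = \FPos_\dashv \otimes \FPos_{\dashv'}$ term by term yields $\coproduct(\FWOEPsubi_\bless) = \sum_{(X,Y) \text{ total cut of } \bless} \FWOEPsubi_{\bless_X} \otimes \FWOEPsubi_{\bless_Y} \in \K\WOEP^\subalgi \otimes \K\WOEP^\subalgi$.

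I expect the main difficulty to be exactly the bookkeeping of this bijection. The biconditional in Lemma~\ref{lem:WOEPdConvolution} only applies when $(X,Y)$ is simultaneously a total cut of both posets, so the forward direction must invoke the first assertion of that lemma \emph{before} the second, and the inverse direction must first verify that the reconstructed $\less$ is genuinely a poset admitting the prescribed cut before the ``if'' direction can be applied. Once the bijection is in place, stability under both $\product$ and $\coproduct$ shows that $\K\WOEP^\subalgi$ (and symmetrically $\K\WOEP^\subalgd$) is a Hopf subalgebra of $(\K\IPos, \product, \coproduct)$.
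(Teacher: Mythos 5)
Your proposal is correct and follows essentially the same route as the paper: the product part is the verbatim adaptation of the $\WOIP$ computation via Lemma~\ref{lem:WOEPdShuffle}, and for the coproduct the paper performs exactly the reindexing you describe, summing first over total cuts $(X,Y)$ of~$\bless$ and then over posets~$\less$ in the fiber that admit $(X,Y)$ as a total cut, before factoring the inner sum using Lemma~\ref{lem:WOEPdConvolution}. Your explicit bijection between the pairs $(\less,(X,Y))$ and the triples $((X,Y),\dashv,\dashv')$ is just a more detailed spelling-out of that same factorization, and you correctly pinpoint the one place where the $\WOIP$ argument cannot be copied verbatim.
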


\begin{proof}
We only make the proof for~$\K\WOEP^\subalgi$, the statement for~$\K\WOEP^\subalgd$ is symmetric.
We first show the stability by product.
Using Lemma~\ref{lem:WOEPdShuffle}, and the exact same computation as in the first part of the proof of Proposition~\ref{prop:WOIPsubHopfAlgebra}, replacing~$\WOIP$ by~$\WOEP$, we obtain that
\[
\FWOEPsubi_{\bless} \product \FWOEPsubi_{\bless'}
=
\sum_{\substack{{\prec} \in {\bless} \shiftedShuffle {\bless'} \\ \quad \cap \WOEP}} \FWOEPsubi_{\prec}.
\]

We now show the stability by coproduct.
For~${\bless} \in \WOEP_p$, we have by Lemma~\ref{lem:WOEPdConvolution}
\begin{align*}
\coproduct(\FWOEPsubi_\bless)
& = \coproduct \bigg( \sum_{\WOEPid{\less} = \bless} \FPos_\less \bigg)
= \!\!\! \sum_{\WOEPid{\less} = \bless} \!\!\! \coproduct(\FPos_\less)
= \!\!\! \sum_{\WOEPid{\less} = \bless} \;\; \sum_{\substack{(X,Y) \text{ total} \\ \text{cut of } \less}} \!\!\! \FPos_{\less_X} \otimes \FPos_{\less_Y} \\
& = \!\!\! \sum_{\substack{(X,Y) \text{ total} \\ \text{cut of } \bless}} \;\; \sum_{\substack{\WOEPid{\less} = \bless \\ \text{with total} \\ \text{cut } (X,Y)}} \!\!\! \FPos_{\less_X} \otimes \FPos_{\less_Y}
= \!\!\! \sum_{\substack{(X,Y) \text{ total} \\ \text{cut of } \bless}} \bigg( \sum_{\WOEPid{\dashv} = {\bless}_X} \!\!\! \FPos_{\dashv} \bigg) \otimes \bigg( \sum_{\WOEPid{\dashv'} = {\bless}_Y} \!\!\! \FPos_{\dashv'} \bigg) \\
& = \!\!\! \sum_{\substack{(X,Y) \text{ total} \\ \text{cut of } \bless}} \!\!\! \FWOEPsubi_{\bless_X} \otimes \FWOEPsubi_{\bless_Y}.
\qedhere
\end{align*}
\end{proof}

\begin{proposition}
The map~$\FWOEPquo_\bless \mapsto \FWOEPsubi_\bless$ (resp.~$\FWOEPquo_\bless \mapsto \FWOEPsubd_\bless$) defines a Hopf algebra isomorphism from the quotient Hopf algebra~$\K\WOEP^\quotient$ to the Hopf subalgebra~$\K\WOEP^\subalgi$ (resp.~$\K\WOEP^\subalgd$). Therefore, $\K\WOEP^\subalgi$ and~$\K\WOEP^\subalgi$ are isomorphic to the Malvenuto--Reutenauer Hopf algebra on permutations~\cite{MalvenutoReutenauer}.
\end{proposition}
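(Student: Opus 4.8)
The plan is to mimic the proof of the corresponding statement for $\WOIP$: I would exhibit the obvious linear bijection between the two bases—both indexed by $\WOEP$—and check that the structure constants for the product and the coproduct agree on the two sides, so that this bijection is automatically a bialgebra, hence a Hopf algebra, isomorphism. I treat $\K\WOEP^\subalgi$ in detail; the case of $\K\WOEP^\subalgd$ is symmetric, using $\less \mapsto \WOEPdd{\less}$ in place of $\less \mapsto \WOEPid{\less}$.

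First I would record the product and coproduct on the quotient basis $(\FWOEPquo_\bless)_{\bless \in \WOEP}$. The product $\FWOEPquo_\bless \product \FWOEPquo_{\bless'}$ is the projection of the poset product $\FPos_\bless \product \FPos_{\bless'}$; deleting every summand outside $\WOEP$ (which is licit by Proposition~\ref{prop:algebraWOEP}) yields $\FWOEPquo_\bless \product \FWOEPquo_{\bless'} = \sum_{\prec \in (\bless \shiftedShuffle \bless') \cap \WOEP} \FWOEPquo_\prec$. Likewise the coproduct is the projection of $\coproduct(\FPos_\bless)$, and by Remark~\ref{rem:convolutionWOEP} no summand is destroyed, so $\coproduct(\FWOEPquo_\bless) = \sum \FWOEPquo_{\bless_X} \otimes \FWOEPquo_{\bless_Y}$, the sum running over all total cuts $(X,Y)$ of $\bless$. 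These are exactly the formulas that the proof of Proposition~\ref{prop:WOEPsubHopfAlgebra} establishes on the subalgebra basis $(\FWOEPsubi_\bless)_{\bless \in \WOEP}$.

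Next I would note that the $\FWOEPsubi_\bless$ indeed form a basis of $\K\WOEP^\subalgi$: they are sums over the fibers of $\less \mapsto \WOEPid{\less}$, which partition $\IPos$, so they have pairwise disjoint supports in the $\FPos$ basis and are therefore linearly independent. Hence $\FWOEPquo_\bless \mapsto \FWOEPsubi_\bless$ is a linear isomorphism, and the two matching displays above show that it intertwines products and coproducts; thus it is a Hopf algebra isomorphism. Finally, composing with the Malvenuto--Reutenauer isomorphism of Proposition~\ref{prop:isomorphismMalvenutoReutenauer} gives that $\K\WOEP^\subalgi$ and $\K\WOEP^\subalgd$ are isomorphic to the Malvenuto--Reutenauer Hopf algebra. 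I expect no genuine obstacle, since the real work—the compatibility of the deletion $\less \mapsto \WOEPid{\less}$ with restrictions and with total cuts—was already carried out in Lemmas~\ref{lem:WOEPdShuffle} and~\ref{lem:WOEPdConvolution} and consumed in Proposition~\ref{prop:WOEPsubHopfAlgebra}; the only point requiring care is confirming that the quotient product is indexed by precisely $(\bless \shiftedShuffle \bless') \cap \WOEP$, neither more nor fewer terms, which is guaranteed by Proposition~\ref{prop:algebraWOEP}.
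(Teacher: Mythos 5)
Your proposal is correct and follows essentially the same route as the paper: both arguments observe that the structure constants for product and coproduct on the bases $(\FWOEPquo_\bless)$ and $(\FWOEPsubi_\bless)$ coincide (the shifted shuffle intersected with $\WOEP$ for the product, the total cuts for the coproduct), so the basis-to-basis map is a Hopf algebra isomorphism. Your additional remarks on linear independence of the fiber sums and on why the quotient product is indexed by exactly $({\bless} \shiftedShuffle {\bless'}) \cap \WOEP$ are sound and merely make explicit what the paper leaves implicit.
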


\begin{proof}
This is immediate since the formulas for the product and coproduct on the bases~$(\FWOEPquo_\bless)$, $(\FWOEPsubd_\bless)$ and~$(\FWOEPsubd_\bless)$ coincide:
\[
\FWOEP_{\bless} \product \FWOEP_{\bless'} = \sum_{\substack{{\prec} \in {\bless} \shiftedShuffle {\bless'} \\ \quad \cap \WOEP}} \FWOEP_{\prec}
\qquad\text{and}\qquad
\coproduct(\FWOEP_\bless) = \sum_{\substack{(X,Y) \text{ total} \\ \text{cut of } \bless}} \FWOEP_{\bless_X} \otimes \FWOEP_{\bless_Y}.
\qedhere
\]
\end{proof}

\section{Binary trees, Tamari intervals, and Schr\"oder trees}
\label{sec:TO}

We now consider three families of specific integer posets corresponding to the elements ($\TOEP$), the intervals ($\TOIP$) and the faces ($\TOFP$) in the Tamari order on binary trees.
We construct Hopf algebras on~$\TOEP$, $\TOIP$ and~$\TOFP$ as subalgebras (Section~\ref{subsec:subalgebrasWO}) of the integer poset Hopf algebra~$(\IPos, \product, \coproduct)$, using surjections from posets to $\TOEP$, $\TOIP$ of~$\TOFP$ whose fibers are stable by product and coproduct.
We obtain Hopf algebras on~$\TOEP$ (resp.~$\TOFP$) isomorphic to the Loday--Ronco Hopf algebra on binary trees~\cite{LodayRonco} (resp.~to the Chapoton Hopf algebra on Schr\"oder trees~\cite{Chapoton}), and we obtain a Hopf algebra on Tamari intervals that was not constructed earlier to the best of our knowledge.

\subsection{Binary trees and the Loday--Ronco algebra}
\label{subsec:LodayRonco}

We always label the vertices of a binary tree~$\tree$ in inorder, meaning that each vertex is labeled after all vertices of its left child and before all vertices of its right child.
This labeling makes~$\tree$ a binary search tree, meaning that the label of each vertex is larger than all labels in its left child and smaller than all labels in its right child.
For a permutation~$\sigma$, we denote by~$\bt(\sigma)$ the tree obtained by \defn{binary search tree insertion} of~$\sigma$: it is obtained by inserting the entries of~$\sigma$ from right to left such that each intermediate step remains a binary search tree.
Said differently, $\bt(\sigma)$ is the unique binary search tree~$\tree$ such that if vertex~$i$ is a descendant of vertex~$j$ in~$\tree$, then~$i$ appears before~$j$ in~$\sigma$.
An example is illustrated in \fref{fig:bstinsertion}.

\begin{figure}[h]
    \centerline{\includegraphics[scale=.7]{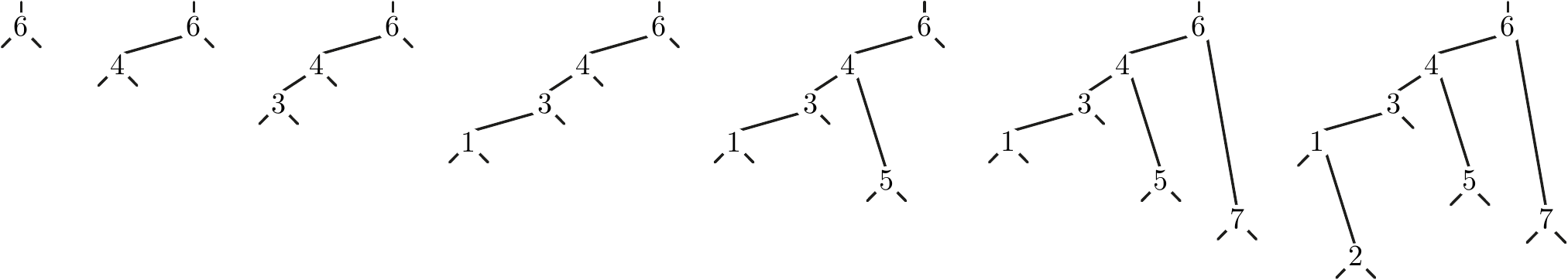}}
    \caption{Binary search tree insertion of the permutation~$\sigma = 2751346$.}
    \label{fig:bstinsertion}
\end{figure}

Recall that the classical \defn{Tamari lattice} is the lattice on binary trees whose cover relations are the right rotations.
It is also the lattice quotient of the weak order under the congruence relation given by the fibers of the binary search tree insertion.

Finally, recall that the Loday--Ronco algebra~\cite{LodayRonco} is the Hopf subalgebra of the Malvenuto--Reutenauer algebra generated by the elements
\[
\F_{\tree} \eqdef \sum_{\substack{\sigma \in \fS \\ \bt(\sigma) = \tree}} \F_\sigma
\]
for all binary trees~$\tree$.

\begin{example}
\label{exm:algebraLodayRonco}
For instance
\[
\begin{array}{@{}c@{${} = {}$}c@{+}c@{+}c@{}}
\F_{\!\!\!\includegraphics[scale=1.5]{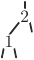}} \product \F_{\!\!\!\includegraphics[scale=1.5]{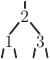}} &
 \multicolumn{3}{l}{\F_{12} \product \big( \F_{132} + \F_{312} \big)}
\\[-.4cm]
& \begin{pmatrix} \quad \F_{12354} + \F_{12534} \\ + \; \F_{15234} + \F_{51234} \end{pmatrix}
& \begin{pmatrix} \quad \F_{13254} + \F_{31254} \\ + \; \F_{13524} + \F_{31524} \\ + \; \F_{15324} + \F_{51324} \\ + \; \F_{35124} + \F_{53124} \end{pmatrix}
& \begin{pmatrix} \quad \F_{13542} + \F_{31542} \\ + \; \F_{35142} + \F_{53142} \\ + \; \F_{15342} + \F_{51342} \\ + \; \F_{35412} + \F_{53412} \end{pmatrix}
\\[.8cm]
& \F_{\!\!\!\!\!\!\includegraphics[scale=1.5]{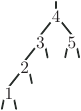}}
& \F_{\!\!\!\!\!\!\includegraphics[scale=1.5]{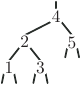}}
& \F_{\!\!\includegraphics[scale=1.5]{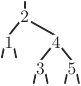}}
\end{array}
\]

and
\[
\begin{array}{@{}c@{${} = {}$}c@{\;+\;}c@{\;+\;}c@{\;+\;}c@{}}
\coproduct(\F_{\!\!\!\includegraphics[scale=1.5]{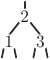}}\!\!) &
 \multicolumn{3}{l}{\coproduct \big( \F_{132} + \F_{312} \big)}
\\[-.2cm]
& (\F_{132} + \F_{312}) \otimes \F_\varnothing
& (\F_{12} + \F_{21}) \otimes \F_1
& \F_1 \otimes (\F_{12} + \F_{21})
& \F_\varnothing \otimes (\F_{132} + \F_{312})
\\[.4cm]
& \F_{\!\!\!\includegraphics[scale=1.5]{example_coproduct1}} \otimes \F_\varnothing
& (\F_{\!\!\!\includegraphics[scale=1.5]{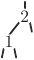}} + \F_{\!\includegraphics[scale=1.5]{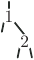}}\!\!) \otimes \F_{\!\includegraphics[scale=1.5]{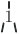}}
& \F_{\!\includegraphics[scale=1.5]{example_coproduct2}} \otimes (\F_{\!\!\!\includegraphics[scale=1.5]{example_coproduct4}} + \F_{\!\includegraphics[scale=1.5]{example_coproduct3}}\!\!)
& \F_\varnothing \otimes \F_{\!\!\!\includegraphics[scale=1.5]{example_coproduct1}} \! \! .
\end{array}
\]

\end{example}

\subsection{Tamari order element, interval and face posets}
\label{subsec:TOEIFP}

\enlargethispage{.5cm}
We now briefly recall how the elements, the intervals and the faces of the Tamari lattice can be interpreted as specific interval posets as developed in~\cite{ChatelPilaudPons}.

\subsubsection{Elements}

We consider the tree~$\tree$ as the \defn{Tamari order element poset}~$\less_{\tree}$ defined by~${i \less_{\tree} j}$ when~$i$ is a descendant of~$j$ in~$\tree$. In other words, the Hasse diagram of~$\less_{\tree}$ is the tree~$\tree$ oriented towards its root. An illustration is provided in \fref{fig:toep}. Note that the increasing (resp.~decreasing) subposet of~$\less_{\tree}$ is given by~$i \Inc{\less_{\tree}} j$ (resp.~$i \Dec{\less_{\tree}} j$) if and only if $i$ belongs to the left (resp.~right) subtree of~$j$ in~$\tree$.

\begin{figure}[h]
    \input{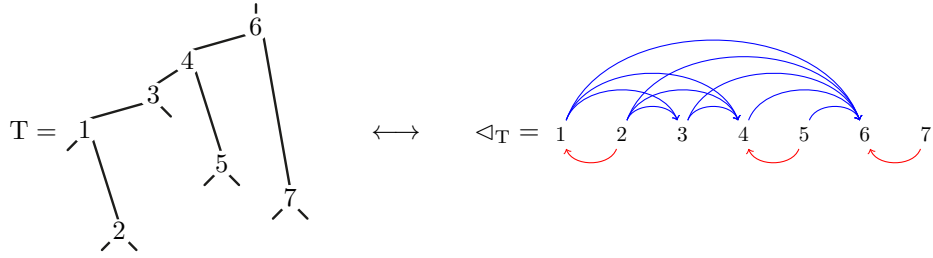}
    \vspace{-.3cm}
    \caption{A Tamari Order Element Poset ($\TOEP$).}
    \label{fig:toep}
    \vspace{-.3cm}
\end{figure}

We define
\[
\TOEP_n \eqdef \bigset{{\less_{\tree}}}{\tree \in \fB_n}
\qquad\text{and}\qquad
\TOEP \eqdef \bigsqcup_{n \in \N} \TOEP_n.
\]

The following statements provide a local characterization of the posets of~$\TOEP_n$ and describe the weak order induced by~$\TOEP_n$.

\begin{proposition}[{\cite[Prop.~39]{ChatelPilaudPons}}]
\label{prop:characterizationTOEP}
A poset~${\less} \in \IPos_n$ is in~$\TOEP_n$ if and only if
\begin{itemize}
\item $\forall \; a < b < c, \; a \less c \implies b \less c$ and $a \more c \implies a \more b$,
\item for all~$a < c$ incomparable in~$\less$, there exists ${a < b < c}$ such that~$a \less b \more c$.
\end{itemize}
\end{proposition}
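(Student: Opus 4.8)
The statement is an equivalence, so the plan is to prove the two implications separately: the forward direction is a direct reading of the tree structure, while the converse is an induction on~$n$ that reconstructs the tree from the poset.

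For the forward direction, I would use that, under the inorder labelling, the subtree rooted at a vertex~$v$ occupies a contiguous interval of labels~$[\ell_v,r_v] \ni v$, with left subtree~$[\ell_v,v-1]$ and right subtree~$[v+1,r_v]$; thus for $i<j$ one has $i \less_{\tree} j$ iff $i \in [\ell_j,j-1]$ and $i \more_{\tree} j$ iff $j \in [i+1,r_i]$. The first condition is then immediate: if $a \less c$ with $a<b<c$ then $\ell_c \le a < b < c$, so $b \in [\ell_c,c-1]$ and $b \less c$, and symmetrically $a \more c \implies a \more b$. For the second condition, given $a<c$ incomparable in~$\less_{\tree}$, I would take $b$ to be their lowest common ancestor in~$\tree$: since the labelling is inorder and $a<c$, the vertex~$a$ lies in the left subtree of~$b$ and~$c$ in its right subtree, whence $a<b<c$ and $a \less b \more c$.

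For the converse I would induct on~$n$, the cases $n \le 1$ being trivial, and the crucial first step is to show that any~${\less} \in \IPos_n$ satisfying the two conditions has a greatest element. If $p<q$ were two distinct maximal elements they would be incomparable, so the second condition would furnish $p<b<q$ with $p \less b$, contradicting the maximality of~$p$; hence~$\less$ has a unique maximal element~$r$, which is its greatest element since in a finite poset every element lies below some maximal one. Next I would check that~$r$ separates~$[n]$ into the intervals~$[1,r-1]$ and~$[r+1,n]$ with no comparabilities across them: for $a<r<c$, the first condition applied with middle element~$r$ turns $a \less c$ into $r \less c$ and $a \more c$ into $a \more r$, both impossible since~$r$ is greatest, so $a$ and $c$ are incomparable.

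Finally, since all three conditions only constrain triples contained in an interval, they pass to the restrictions~$\less_{[1,r-1]}$ and~$\less_{[r+1,n]}$ (the latter relabelled order-isomorphically onto~$[n-r]$); by induction these are~$\less_{\tree_L}$ and~$\less_{\tree_R}$ for binary trees~$\tree_L,\tree_R$, and the tree~$\tree$ with root~$r$, left subtree~$\tree_L$ and right subtree~$\tree_R$ satisfies $\less = \less_{\tree}$, as one verifies by comparing the two relations on each type of pair (both below~$r$, both above~$r$, one equal to~$r$, or one on each side of~$r$). The main obstacle is the existence of the greatest element: it is exactly here that the incomparability condition is indispensable, since the first two conditions only control the increasing and decreasing relations into each fixed vertex and do not by themselves force~$\less$ to be connected or to admit a maximum; everything else is bookkeeping with intervals.
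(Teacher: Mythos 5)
Your proof is correct. Note that the paper does not prove this statement at all: it is imported verbatim as Proposition~39 of the cited reference \cite{ChatelPilaudPons}, so there is no in-paper argument to compare against. Your self-contained proof is the natural one --- the forward direction via the interval structure of inorder labels and the lowest common ancestor as the witness~$b$, and the converse by extracting the greatest element~$r$ (which is exactly where the incomparability condition is needed, as you observe), checking that~$r$ splits~$[n]$ into two intervals with no cross-comparabilities, and recursing --- and all the steps check out, including the fact that both conditions are hereditary under restriction to contiguous intervals, which makes the induction go through.
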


\begin{proposition}[{\cite[Prop.~40 \& 41]{ChatelPilaudPons}}]
\label{prop:weakOrderTOEP}
The map~$\tree \mapsto {\less}_{\tree}$ is a lattice isomorphism from the Tamari lattice on binary trees of~$\fB_n$ to the sublattice of the weak order on~$\IPos_n$ induced by $\TOEP_n$.
\end{proposition}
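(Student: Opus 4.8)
The plan is to prove the statement in three steps: the map is a bijection, it is an order isomorphism onto~$(\TOEP_n, \wole)$, and its image is in fact a sublattice of~$(\IPos_n, \wole)$. The first two steps are clean; the genuine difficulty is concentrated in the third.

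First I would check bijectivity. The map~$\tree \mapsto {\less_\tree}$ is injective because the Hasse diagram of~$\less_\tree$ is exactly the tree~$\tree$ oriented towards its root, so~$\tree$ can be recovered from~$\less_\tree$; and it is surjective onto~$\TOEP_n$ by the very definition of~$\TOEP_n$. Thus it only remains to understand how the two orders match up.

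Second, to prove that the map is an order isomorphism, I would identify each Tamari order element poset with a weak order interval poset. The linear extensions of~$\less_\tree$ are precisely the permutations~$\sigma$ with~$\bt(\sigma) = \tree$, that is, the sylvester class of~$\tree$. Since the Tamari lattice is the quotient of the weak order by the sylvester congruence, this class is a weak order interval, with minimal permutation~$\projDown(\tree)$ and maximal permutation~$\projUp(\tree)$; as every poset is the intersection of its linear extensions, we get~${\less_\tree} = {\less_{[\projDown(\tree), \projUp(\tree)]}} \in \WOIP_n$. Now the Tamari order reads~${\tree \le \tree'}$ if and only if~${\projDown(\tree) \wole \projDown(\tree')}$, equivalently if and only if~${\projUp(\tree) \wole \projUp(\tree')}$ (these two conditions coincide for a quotient by a lattice congruence). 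By Proposition~\ref{prop:weakOrderWOIP}\,(i), the conjunction of these two conditions is exactly~${\less_{[\projDown(\tree), \projUp(\tree)]}} \wole {\less_{[\projDown(\tree'), \projUp(\tree')]}}$, \ie~${\less_\tree} \wole {\less_{\tree'}}$. Hence~${\tree \le \tree'} \iff {\less_\tree} \wole {\less_{\tree'}}$, so the map is an isomorphism from the Tamari lattice onto the subposet~$(\TOEP_n, \wole)$.

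Third, and this is the main obstacle, I must upgrade this order isomorphism to the assertion that~$\TOEP_n$ is a \emph{sublattice} of~$(\IPos_n, \wole)$, \ie that the meet and join of two posets of~$\TOEP_n$ computed in~$\IPos_n$ remain in~$\TOEP_n$ (equivalently, that they coincide with the images of the Tamari meet and join under the isomorphism). The natural route is to verify closure directly from the local characterization of Proposition~\ref{prop:characterizationTOEP}: given~${\less}, {\less'} \in \TOEP_n$, one checks that their~$\IPos_n$-meet and~$\IPos_n$-join still satisfy both conditions there. The first, local condition transfers without trouble, since on increasing relations the meet unions while the join intersects (and symmetrically on decreasing relations), so both implications propagate. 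The delicate point — where the real work lies — is the second, existential condition, requiring for every incomparable pair~${a < c}$ an intermediate~${a < b < c}$ with~${a \less b \more c}$: a witness supplied by~$\less$ need not be a witness for~$\less'$, so the two witnesses cannot be combined naively. Here one must use the first condition together with the precise description of the~$\IPos_n$ operations, which differ from the naive relational union and intersection of Proposition~\ref{prop:reflexiveLattice} exactly by a transitivity/antisymmetry correction, in order to produce a common witness. A more structural alternative would be to exploit that~$\WOEP_n$ is already a sublattice of~$\IPos_n$ (Proposition~\ref{prop:weakOrderWOEP}) and that the sylvester fibers form a lattice congruence, and then to show that the induced projection onto~$\TOEP_n$ is compatible with the ambient meet and join.
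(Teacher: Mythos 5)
The paper does not actually prove this statement: it is quoted verbatim from \cite[Prop.~40 \& 41]{ChatelPilaudPons}, so there is no internal proof to compare yours against. Judged on its own merits, your first two steps are sound: injectivity via recovering~$\tree$ as the Hasse diagram of~$\less_\tree$, the identification ${\less_\tree} = {\less_{[\projDown(\tree), \projUp(\tree)]}}$ through the sylvester class, and the reduction of the order isomorphism to Proposition~\ref{prop:weakOrderWOIP}\,(i) all work, granting the standard facts that the sylvester classes are the fibers of~$\bt$, that they are weak order intervals, and that a lattice congruence quotient is ordered by comparing minimal (equivalently maximal) class representatives.

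The problem is your third step, which you yourself flag as ``the main obstacle'': it is a plan, not a proof. The proposition asserts that~$\TOEP_n$ is a \emph{sublattice} of the weak order on~$\IPos_n$, that is, that the meet and join of~$\less_{\tree}$ and~$\less_{\tree'}$ computed in~$\IPos_n$ again lie in~$\TOEP_n$ (and hence coincide with~$\less_{\tree \meet \tree'}$ and~$\less_{\tree \join \tree'}$). You reduce this to checking the two conditions of Proposition~\ref{prop:characterizationTOEP}, correctly observe that the existential condition (a witness~$a < b < c$ with~$a \less b \more c$ for each incomparable pair~$a < c$) does not transfer because the witnesses supplied by~$\less$ and~$\less'$ need not agree, and then stop at ``here one must produce a common witness.'' That is precisely the content of the claim; without producing that witness nothing is proved. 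Moreover, even your assertion that the universal condition ``transfers without trouble'' is justified only for the naive relational meet and join of Proposition~\ref{prop:reflexiveLattice}; the lattice operations of~$\IPos_n$ involve a further transitivity/antisymmetry correction (as you acknowledge elsewhere), so that part of the argument must also be redone against the actual formulas for the poset meet and join. The alternative structural route you mention (using that~$\WOEP_n$ is a sublattice together with the congruence property of the sylvester fibers) is equally unexecuted. As it stands, the sublattice assertion --- the only genuinely nontrivial part of the proposition --- is not established.
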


\subsubsection{Intervals}

We now present a similar interpretation of the intervals of the Tamari lattice.
For~$\tree \wole \tree' \in \fB_n$, we consider the Tamari order interval~$[\tree, \tree'] \eqdef \set{\tree[S] \in \fB_n}{\tree \wole \tree[S] \wole \tree'}$, and interpret it as the \defn{Tamari order interval poset}
\(
{{\less_{[\tree, \tree']}} \eqdef \bigcap_{\tree \wole \tree[S] \wole \tree'} {\less_{\tree}} = {\less_{\tree}} \cap {\less_{\tree'}} = {\Inc{\less_{\tree'}}} \cap {\Dec{\less_{\tree}}}.}
\)
This poset~${\less_{[\tree, \tree']}}$ was introduced in~\cite{ChatelPons} with the motivation that its linear extensions are precisely the linear extensions of all binary trees in the interval~$[\tree, \tree']$.
See~\fref{fig:toip}.
\begin{figure}[ht]
    \input{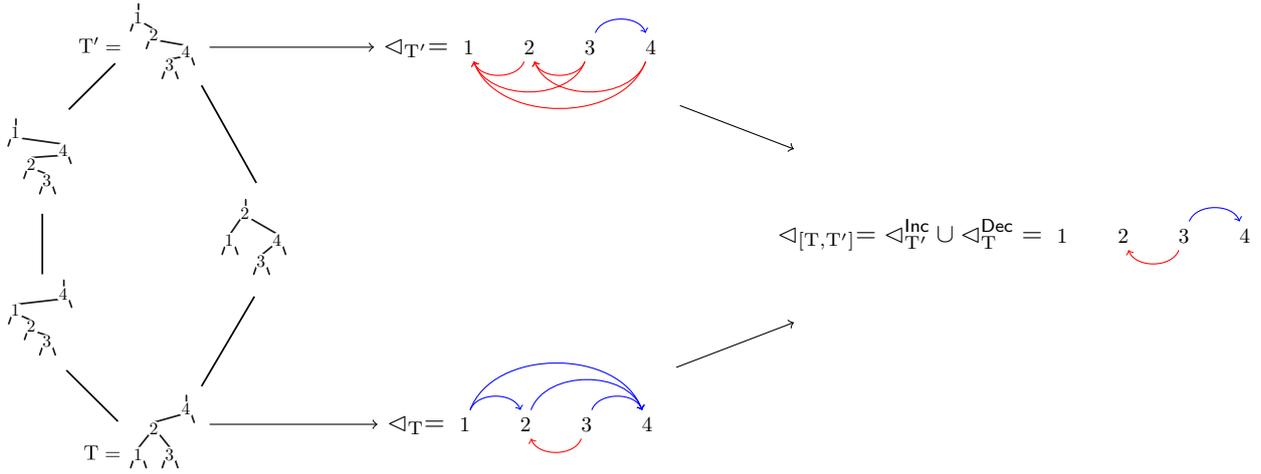}
    \caption{A Tamari Order Interval Poset ($\TOIP$).}
    \label{fig:toip}
    \vspace{-.5cm}
\end{figure}

We define
\[
\TOIP_n \eqdef \bigset{{\less_{[\tree,\tree']}}}{\tree, \tree' \in \fB_n, \tree \wole \tree'}
\qquad\text{and}\qquad
\TOIP \eqdef \bigsqcup_{n \in \N} \TOIP_n.
\]

The following statements provide a local characterization of the posets of~$\TOIP_n$ and describe the weak order induced by~$\TOIP_n$.

\begin{proposition}[{\cite[Thm.~2.8]{ChatelPons}}]
\label{prop:characterizationTOIP}
A poset~${\less} \in \IPos(n)$ is in~$\TOIP(n)$ if and only if  ${a \less c \Rightarrow b \less c}$ and ${a \more c \Rightarrow a \more b}$ for all~$1 \le a < b < c \le n$.
\end{proposition}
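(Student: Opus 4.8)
The plan is to prove the two implications of the equivalence separately. The forward implication is a soft consequence of the $\TOEP$ characterization, while the converse requires reconstructing an explicit Tamari interval from the local data, and is where the real work lies.

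For the direction ${\less} \in \TOIP_n \implies$ the two conditions, I would write ${\less} = {\less_\tree} \cap {\less_{\tree'}}$ for binary trees $\tree \wole \tree'$. Both ${\less_\tree}$ and ${\less_{\tree'}}$ lie in $\TOEP$ and hence satisfy the conditions of Proposition~\ref{prop:characterizationTOEP}. The key observation is that each condition is a \emph{positive} implication whose hypothesis and conclusion are single relations of the same orientation, so it is inherited by the intersection: if $a \less c$ with $a < b < c$, then $a \less_\tree c$ and $a \less_{\tree'} c$, the increasing part of the $\TOEP$ condition yields $b \less_\tree c$ and $b \less_{\tree'} c$, and therefore $b \less c$. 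The decreasing implication $a \more c \implies a \more b$ follows by the symmetric argument.

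For the converse, assume ${\less} \in \IPos_n$ satisfies both conditions, and first reinterpret them combinatorially: the increasing condition says that for each $c$ the set $\set{a}{a < c, \; a \less c}$ is a (possibly empty) integer interval $[\ell_c, c-1]$, while the decreasing condition says that for each $a$ the set $\set{c}{c > a, \; a \more c}$ is an integer interval $[a+1, r_a]$. These are exactly the descendant-interval constraints satisfied by the left and right subtrees of an inorder-labelled binary search tree. Since the two conditions also imply the $\WOIP$ conditions of Proposition~\ref{prop:characterizationWOIP}, the poset $\less$ admits a minimal and a maximal linear extension by Proposition~\ref{prop:characterizationIWOIPDWOIP}; I would set $\tree \eqdef \bt(\minle{\less})$ and $\tree' \eqdef \bt(\maxle{\less})$, so that $\tree \wole \tree'$ in the Tamari order because binary search tree insertion is order preserving from the weak order onto the Tamari lattice.

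It then remains to show ${\less} = {\less_\tree} \cap {\less_{\tree'}}$, equivalently $\Inc{\less} = \Inc{\less_{\tree'}}$ and $\Dec{\less} = \Dec{\less_\tree}$; the relation $\tree \wole \tree'$ is then read off directly from these inclusions via Proposition~\ref{prop:weakOrderTOEP}. I expect this final equality to be the main obstacle: one must show that passing to the intersection $\less_\tree \cap \less_{\tree'}$ neither creates nor destroys a comparability. This is exactly where the $\TOIP$ conditions are used in full strength (being strictly stronger than the $\WOIP$ conditions), since the interval structure of the descendant sets $[\ell_c, c-1]$ and $[a+1, r_a]$ is precisely what pins down the comparabilities of a genuine binary tree rather than those of an arbitrary weak-order interval poset. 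Concretely, I would verify that every increasing pair $a \less c$ survives in $\Inc{\less_{\tree'}}$ (using that $\maxle{\less}$ already realizes the interval $[\ell_c, c-1]$ as a left subtree after insertion) and that no new increasing pair appears, then argue the decreasing relations symmetrically through $\tree$.
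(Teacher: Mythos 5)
The paper does not actually prove this statement: it is imported verbatim from \cite[Thm.~2.8]{ChatelPons}, so there is no internal proof to compare against. Judged on its own merits, your forward direction is complete and correct: both conditions are implications with a single hypothesis and a single conclusion of the same orientation, so they pass from~$\less_{\tree}$ and~$\less_{\tree'}$ (which satisfy them by Proposition~\ref{prop:characterizationTOEP}) to the intersection~$\less_{[\tree,\tree']}$.

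The converse, however, is a plan rather than a proof. You correctly reduce everything to the equalities $\Inc{\less}=\Inc{\less_{\tree'}}$ and $\Dec{\less}=\Dec{\less_{\tree}}$ for ${\tree}=\bt(\minle{\less})$ and ${\tree'}=\bt(\maxle{\less})$, and you correctly flag this as the hard step --- but you then only describe what you would check (each increasing pair ``survives'', no new pair ``appears'') without carrying it out. The interval structure of the sets $[\ell_c,c-1]$ and $[a+1,r_a]$ does not by itself yield these equalities; one must actually track how binary search tree insertion of $\maxle{\less}$ treats the decreasing relations that were adjoined to form the linear extension, and that is precisely the content of the theorem. Note that the paper's own toolkit closes this gap in one line: the two local conditions say exactly that no relation of~$\less$ is deleted by the $\TOIP$ deletion, i.e.\ ${\TOIPd{\less}}={\less}$; since the conditions imply those of Proposition~\ref{prop:characterizationWOIP}, we have ${\less}={\less_{[\minle{\less},\,\maxle{\less}]}}$, and Proposition~\ref{prop:TOIPd/bst} then gives ${\less}={\TOIPd{\less}}={\less_{[\bt(\minle{\less}),\,\bt(\maxle{\less})]}}\in\TOIP$. (Proposition~\ref{prop:TOIPd/bst} is itself cited from~\cite{ChatelPilaudPons}, so this does not yield a self-contained argument either, but it identifies exactly the lemma your sketch is missing.)
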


\enlargethispage{-.5cm}
As in Section~\ref{subsubsec:WOIP}, note that this characterization clearly splits into a condition on the increasing relations and a condition on the decreasing relations of~$\less$. This defines two super-families~$\ITOIP_n$ and~$\DTOIP_n$ of~$\TOIP_n$ with~$\TOIP_n = \ITOIP_n \cap \DTOIP_n$.

\begin{proposition}[{\cite[Prop.~40 \& 41]{ChatelPilaudPons}}]
\label{prop:weakOrderTOIP}
~
\begin{enumerate}[(i)]
\item If~$\tree[S] \wole \tree[S]'$ and~$\tree \wole \tree'$ in~$\fB_n$, then~${\less_{[\tree[S], \tree[S]']}} \wole {\less_{[\tree, \tree']}} \iff \tree[S] \wole \tree$ and~$\tree[S]' \wole \tree'$.
\item The weak order on~$\TOIP_n$ is a lattice whose meet~${\less_{[\tree[S],\tree[S]']}} \meetTOIP {\less_{[\tree,\tree']}} = {\less_{[\tree[S] \meetTO \tree, \tree[S]' \meetTO \tree']}}$ and join~${\less_{[\tree[S],\tree[S]']}} \joinTOIP {\less_{[\tree,\tree']}} = {\less_{[\tree[S] \joinTO \tree, \tree[S]' \joinTO \tree']}}$.
Moreover, the weak order on~$\TOIP_n$ is a sublattice of the weak order on~$\IPos_n$.
\end{enumerate}
\end{proposition}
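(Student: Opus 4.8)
The plan is to reduce everything to the description of a Tamari order interval poset recalled above, namely $\Inc{\less_{[\tree,\tree']}} = \Inc{\less_{\tree'}}$ and $\Dec{\less_{[\tree,\tree']}} = \Dec{\less_\tree}$. For~(i), unfolding Definition~\ref{def:weakOrder} then shows that ${\less_{[\tree[S],\tree[S]']}} \wole {\less_{[\tree,\tree']}}$ holds if and only if $\Inc{\less_{\tree[S]'}} \supseteq \Inc{\less_{\tree'}}$ and $\Dec{\less_{\tree[S]}} \subseteq \Dec{\less_\tree}$. So it suffices to prove the following separation lemma: for binary trees, ${\tree} \wole {\tree'}$ in the Tamari order if and only if $\Inc{\less_\tree} \supseteq \Inc{\less_{\tree'}}$, if and only if $\Dec{\less_\tree} \subseteq \Dec{\less_{\tree'}}$; that is, the increasing relations alone, or the decreasing relations alone, already encode the Tamari comparison. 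Granting it, the first inclusion reads ${\tree[S]'} \wole {\tree'}$ and the second reads ${\tree[S]} \wole {\tree}$, which is exactly the claim of~(i).

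To prove the separation lemma I would start from Proposition~\ref{prop:weakOrderTOEP}, which gives that ${\tree} \wole {\tree'}$ is equivalent to the conjunction of both inclusions; it then remains to show that each inclusion implies the other. Assume $\Inc{\less_\tree} \supseteq \Inc{\less_{\tree'}}$ and let $(c,a) \in \Dec{\less_\tree}$, i.e. $a < c$ and $c \less_\tree a$. If $a \less_{\tree'} c$, then $(a,c) \in \Inc{\less_{\tree'}} \subseteq \Inc{\less_\tree}$ would give $a \less_\tree c$, contradicting antisymmetry. Otherwise $a$ and $c$ are incomparable in~${\tree'}$, and the incomparability-filling condition of Proposition~\ref{prop:characterizationTOEP} provides $a < b < c$ with $a \less_{\tree'} b \more_{\tree'} c$; pulling back $a \less_{\tree'} b$ through the hypothesis yields $a \less_\tree b$, so $a$ lies in the left subtree of $b$ in~${\tree}$, and then $c \less_\tree a$ makes $c$ a descendant of $a$, hence also in the left subtree of $b$, forcing $c < b$ and contradicting $b < c$. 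Hence $c \less_{\tree'} a$, which proves $\Dec{\less_\tree} \subseteq \Dec{\less_{\tree'}}$; the reverse implication is symmetric.

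For the lattice structure in~(ii) I would argue as for weak order intervals (Proposition~\ref{prop:weakOrderWOIP}): by~(i) the map $({\tree[S]},{\tree[S]'}) \mapsto {\less_{[\tree[S],\tree[S]']}}$ is an isomorphism from the poset of Tamari pairs ${\tree[S]} \wole {\tree[S]'}$, ordered componentwise, onto $\TOIP_n$. This poset of pairs is a sublattice of ${\fB_n \times \fB_n}$, since the componentwise meet $({\tree[S]} \meetTO {\tree}, {\tree[S]'} \meetTO {\tree'})$ is again a valid pair by monotonicity of $\meetTO$ (as ${\tree[S]} \wole {\tree[S]'}$ and ${\tree} \wole {\tree'}$ give ${\tree[S]} \meetTO {\tree} \wole {\tree[S]'} \meetTO {\tree'}$), and symmetrically for $\joinTO$. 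Transporting through the isomorphism shows that $\TOIP_n$ is a lattice with the announced meet ${\less_{[{\tree[S]} \meetTO {\tree}, \, {\tree[S]'} \meetTO {\tree'}]}}$ and join ${\less_{[{\tree[S]} \joinTO {\tree}, \, {\tree[S]'} \joinTO {\tree'}]}}$.

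The delicate point is that this lattice is a \emph{sublattice} of the weak order on $\IPos_n$, which is exactly where $\TOIP$ behaves better than $\WOIP$. To get it I would show that the candidate meet ${\less_{[{\tree[S]} \meetTO {\tree}, \, {\tree[S]'} \meetTO {\tree'}]}}$, already in $\TOIP_n$, is the meet \emph{in $\IPos_n$}. Any poset ${\less_3}$ below both interval posets satisfies $\Inc{\less_3} \supseteq \Inc{\less_{\tree[S]'}} \cup \Inc{\less_{\tree'}}$ and $\Dec{\less_3} \subseteq \Dec{\less_{\tree[S]}} \cap \Dec{\less_\tree}$, so it is enough to know the identities $\Inc{\less_{{\tree[S]'} \meetTO {\tree'}}} = \Inc{\less_{\tree[S]'}} \cup \Inc{\less_{\tree'}}$ and $\Dec{\less_{{\tree[S]} \meetTO {\tree}}} = \Dec{\less_{\tree[S]}} \cap \Dec{\less_\tree}$ (with the symmetric ones for $\joinTO$): they yield ${\less_3} \wole {\less_{[{\tree[S]} \meetTO {\tree}, \, {\tree[S]'} \meetTO {\tree'}]}}$, so the $\IPos_n$-meet coincides with the $\TOIP_n$-meet, and likewise for joins, whence $\TOIP_n$ is stable under the meet and join of $\IPos_n$. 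These identities assert precisely that the Tamari meet and join of binary trees are computed by the relational operations $\meetR, \joinR$ of Proposition~\ref{prop:reflexiveLattice}, equivalently that the relational meet and join of two $\TOEP$ posets are again posets. I expect this to be the main obstacle: for posets outside $\TOEP$ the relational meet need not be transitive — for instance the relational meet of ${1 \less 2}$ and ${2 \less 3}$ omits the forced relation ${1 \less 3}$ — whereas for binary trees transitivity does hold, which is a finite case analysis driven by the monotonicity and incomparability-filling conditions of Proposition~\ref{prop:characterizationTOEP}. Alternatively, these identities can be quoted directly from the sublattice statement of Proposition~\ref{prop:weakOrderTOEP} as established in~\cite{ChatelPilaudPons}.
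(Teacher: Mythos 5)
The paper itself does not prove this proposition---it is recalled verbatim from \cite[Prop.~40 \& 41]{ChatelPilaudPons}---so your argument can only be judged on its own terms. Part~(i) is correct: reducing to the ``separation lemma'' (each of the two inclusions $\Inc{\less_{\tree}} \supseteq \Inc{\less_{\tree'}}$ and $\Dec{\less_{\tree}} \subseteq \Dec{\less_{\tree'}}$ alone characterizes $\tree \wole \tree'$) is the right move, and your proof of that lemma via the incomparability-filling condition of Proposition~\ref{prop:characterizationTOEP} and the binary-search-tree labeling is sound. Transporting the componentwise lattice structure on Tamari pairs through the isomorphism of~(i) correctly gives the first half of~(ii).

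The sublattice claim, however, rests on identities that are false. You assert $\Inc{\less_{\tree[S]' \meetTO \tree'}} = \Inc{\less_{\tree[S]'}} \cup \Inc{\less_{\tree'}}$, equivalently that the relational meet $\meetR$ of two $\TOEP$ posets is again a poset. Take in $\fB_3$ the tree $\tree[S]$ with root $2$ (so $1 \less_{\tree[S]} 2 \more_{\tree[S]} 3$ and $\Inc{\less_{\tree[S]}} = \{(1,2)\}$) and the tree $\tree$ with $2 \less_{\tree} 3 \less_{\tree} 1$ (so $\Inc{\less_{\tree}} = \{(2,3)\}$). Their Tamari meet is the minimal tree, the chain $1 \less 2 \less 3$, whose increasing part $\{(1,2),(1,3),(2,3)\}$ strictly contains the union $\{(1,2),(2,3)\}$; and $\less_{\tree[S]} \meetR \less_{\tree}$ is exactly your own non-transitive example, realized here by two genuine binary-tree posets, so your parenthetical claim that ``for binary trees transitivity does hold'' is wrong. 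The correct statements are that $\Inc{\less_{\tree[S]' \meetTO \tree'}}$ is the \emph{transitive closure} of the union (so the argument must also invoke the transitivity of the competitor poset $\less_3$ to conclude $\Inc{\less_3} \supseteq \Inc{\less_{\tree[S]' \meetTO \tree'}}$), together with $\Dec{\less_{\tree[S] \meetTO \tree}} = \Dec{\less_{\tree[S]}} \cap \Dec{\less_{\tree}}$. Neither is established in your text, and neither can be ``quoted directly from the sublattice statement of Proposition~\ref{prop:weakOrderTOEP}'': that statement identifies the Tamari meet with the meet in the lattice $\IPos_n$, which is \emph{not} the relational meet $\meetR$, precisely because of the counterexample above. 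Proving these two identities---essentially the content of \cite[Prop.~41]{ChatelPilaudPons}---is the missing core of the sublattice assertion.
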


\subsubsection{Faces}

Consider now a face of the associahedron, that is, a Schr\"oder tree~$\tree[S]$ (a rooted tree where each internal node has at least two children).
We label the angles between two consecutive children in inorder, meaning that each angle is labeled after the angles in its left child and before the angles in its right child.
We associate to~$\tree[S]$ the poset~$\less_{\tree[S]}$ where~$i \less_{\tree[S]} j$ if and only if the angle labeled~$i$ belongs to the left or to the right child of the angle labeled~$j$.
See \fref{fig:tofp}.
\begin{figure}[h]
    \vspace{-.2cm}
    \input{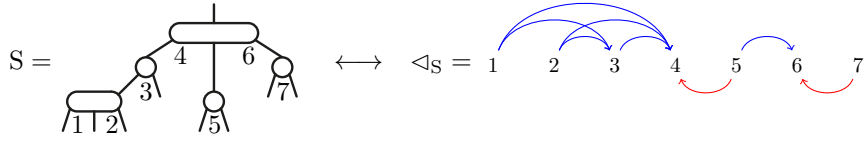}
    \vspace{-1cm}
    \caption{A Tamari Order Face Poset ($\TOFP$).}
    \label{fig:tofp}
    \vspace{-.3cm}
\end{figure}

We define
\[
\TOFP_n \eqdef \set{{\less_{\tree[S]}}}{\tree[S] \text{ Schr\"oder tree on $[n]$}}
\qquad\text{and}\qquad
\TOFP \eqdef \bigsqcup_{n \in \N} \TOFP_n.
\]

The following statements provide a local characterization of the posets of~$\TOFP_n$ and describe the weak order induced by~$\TOFP_n$.

\begin{proposition}[{\cite[Prop.~65]{ChatelPilaudPons}}]
\label{prop:characterizationTOFP}
A poset~${\less}$ is in~$\TOFP_n$ if and only if ${\less} \in \TOIP_n$  and for all~$a < c$ incomparable in~$\less$, either there exists~$a < b < c$ such that~$a \not\more b \not\less c$, or for all~$a < b < c$ we have~$a \more b \less c$.
\end{proposition}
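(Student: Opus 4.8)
The plan is to prove both implications directly from the combinatorics of Schr\"oder trees with the inorder labelling of their angles, in the spirit of the proof of the element characterization (Proposition~\ref{prop:characterizationTOEP}). Two structural facts will drive every verification. First, for a Schr\"oder tree~$\tree[S]$, the relation~${i \less_{\tree[S]} j}$ records that the angle~$i$ lies in one of the two subtrees flanking the angle~$j$, and with the inorder labelling these two subtrees form a contiguous block of labels; consequently the set of elements below~$j$ is an interval~$[l_j,r_j] \ssm \{j\}$ straddling~$j$. Second, this interval shape of the down-sets is already forced by membership in~$\TOIP_n$ (Proposition~\ref{prop:characterizationTOIP}): the condition~$a \less c \implies b \less c$ makes the increasing part of a down-set a final interval~$[l_c,c-1]$, and~$a \more c \implies a \more b$ makes the decreasing part an initial interval~$[a+1,r_a]$. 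The inclusion~$\TOFP_n \subseteq \TOIP_n$ is then immediate from the first fact, so it remains to analyse incomparable pairs. A preliminary computation clarifies the second alternative: the requirement that \emph{every}~$b$ with~$a<b<c$ satisfy~$a \more b$ and~$b \less c$ says exactly that the right subtree of~$a$ and the left subtree of~$c$ coincide with the common block~$[a+1,c-1]$, i.e.\ that~$a$ and~$c$ are the two consecutive angles flanking a single child of one node (possibly with~$c=a+1$, an empty child). This is precisely the configuration that binary trees cannot realize, which is why the element condition of Proposition~\ref{prop:characterizationTOEP} is strictly stronger.

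For the forward direction I would fix incomparable angles~$a<c$ of~$\tree[S]$ and let~$v$ be their least common ancestor. If~$a$ and~$c$ are the two angles of~$v$ immediately surrounding one of its children, the block~$[a+1,c-1]$ lies below both of them and the second alternative holds. In every other position — distinct non-adjacent children of~$v$, or one endpoint an angle of~$v$ separated from the other by a further child — the inorder labelling supplies an angle~$b$ with~$a<b<c$ that sits strictly above the subtrees containing~$a$ and~$c$ (an angle of~$v$, or the apex of an intervening child); such a~$b$ satisfies~$a \not\more b$ and~$b \not\less c$, giving the first alternative. As a sanity check, the element condition ``there is~$b$ with~$a \less b$ and~$c \less b$'' always produces such a witness by antisymmetry, so binary trees indeed fall under the first alternative.

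The backward direction is the crux and the place I expect the genuine work. Given~${\less} \in \TOIP_n$ satisfying the incomparability condition, I would reconstruct~$\tree[S]$ by induction on~$n$ through its root, the guiding principle being that the root angles of a Schr\"oder tree are exactly the maximal elements of~$\less_{\tree[S]}$. The key claim to establish is that the maximal elements of~$\less$ form an antichain~$m_1 < \dots < m_{k-1}$ whose down-sets are the flanking pairs of intervals~$[m_{t-1}+1,m_t-1] \cup [m_t+1,m_{t+1}-1]$ (with~$m_0 = 0$ and~$m_k = n+1$), so that the complementary blocks~$T_t = [m_{t-1}+1,m_t-1]$ partition~$[n] \ssm \{m_1,\dots,m_{k-1}\}$; one then checks that each restriction~${\less}_{|T_t}$ again lies in~$\TOIP$ and satisfies the incomparability condition, applies the induction to build the subtree on~$T_t$, and grafts these subtrees under a common root with separating angles~$m_1,\dots,m_{k-1}$. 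Proving this claim is the main obstacle: the~$\TOIP$ conditions control the shape of individual down-sets, but it is the incomparability condition — specifically the dichotomy between the two alternatives — that forces the maximal elements to be positioned and interrelated like genuine root angles rather than like the maximal elements of an arbitrary Tamari interval poset, thereby ruling out the intervals that are not faces. Once the claim is in hand, verifying that the reconstructed tree recovers~$\less$ and that the construction inverts~${\tree[S] \mapsto {\less}_{\tree[S]}}$ is routine bookkeeping and yields both the characterization and the underlying bijection. An alternative would be to route the argument through a Schr\"oder-tree insertion map from ordered partitions and deduce the conditions from the face characterization of Proposition~\ref{prop:characterizationWOFP}, but the inductive reconstruction keeps the proof self-contained.
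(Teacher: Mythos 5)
First, a remark on the comparison you asked for: the paper does not prove this proposition at all --- it imports it verbatim from \cite[Prop.~65]{ChatelPilaudPons} --- so there is no in-paper argument to measure your proposal against; I can only assess the proposal on its own terms.

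Your forward direction is sound: the down-set of an angle~$j$ is indeed~$[l_j,r_j]\ssm\{j\}$, which gives~$\TOFP_n\subseteq\TOIP_n$, and the case analysis on the least common ancestor (adjacent angles of a common node versus every other configuration) correctly produces either the second alternative or a witness~$b$ lying strictly above both subtrees. The genuine gap is exactly where you flag it: the key claim of the backward direction --- that the maximal elements~$m_1<\dots<m_{k-1}$ of~$\less$ have down-sets~$[m_{t-1}+1,m_t-1]\cup[m_t+1,m_{t+1}-1]$ --- is asserted but not proved, and without it the inductive reconstruction does not get off the ground. A proof plan whose central step is labelled ``the main obstacle'' is not yet a proof.

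That said, the claim does follow from the tools you already have in play, so the approach is completable; here is how to close the gap. By~$\TOIP_n$ every down-set is of the form~$[l_j,r_j]\ssm\{j\}$, and every non-maximal element lies below some maximal element (finiteness, transitivity, antisymmetry). If~$x$ lies strictly between two \emph{consecutive} maximal elements~$m_t<m_{t+1}$ and~$x\less m_s$ for a maximal~$m_s$, then the interval shape of the down-set of~$m_s$ forces~$s\in\{t,t+1\}$ (otherwise~$m_t$ or~$m_{t+1}$ would also lie below~$m_s$, contradicting maximality); so every~$x\in(m_t,m_{t+1})$ is below~$m_t$ or below~$m_{t+1}$. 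Now apply the dichotomy to the incomparable pair~$(m_t,m_{t+1})$: the first alternative would give~$b\in(m_t,m_{t+1})$ with~$m_t\not\more b$ and~$b\not\less m_{t+1}$, i.e.\ a non-maximal~$b$ below neither, which is impossible; hence the second alternative holds and every~$b\in(m_t,m_{t+1})$ is below \emph{both}, giving~$r_{m_t}=m_{t+1}-1$ and~$l_{m_{t+1}}=m_t+1$ (the boundary cases~$[1,m_1-1]$ and~$[m_{k-1}+1,n]$ are handled by the same interval argument). The same interval argument also shows there are no relations between distinct blocks~$T_s$ and~$T_t$ (such a relation would force~$m_{\min(s,t)}$ below something), which is what you need for the grafted tree to reproduce~$\less$ exactly. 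With this paragraph inserted, your induction goes through; the restriction of the hypotheses to each contiguous block~$T_t$ is immediate since all quantifiers range over intermediate values~$a<b<c$, which stay inside the block.
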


\begin{proposition}[{\cite[Sect.~2.2.3]{ChatelPilaudPons}}]
\label{prop:weakOrderTOFP}
~
\begin{enumerate}
\item For any Schr\"oder trees~$\tree[S], \tree[S]'$, we have~${\less}_{\tree[S]} \wole {\less}_{\tree[S]'} \iff \tree[S] \wole \tree[S]'$ in the \defn{facial weak order} on the associahedron~$\Asso[n]$ studied in~\cite{PalaciosRonco, DermenjianHohlwegPilaud}.
This order is a quotient of the facial weak order on the permutahedron by the fibers of the Schr\"oder tree insertion~$\st$.
\item The weak order on~$\TOFP_n$ is a lattice but not a sublatice of the weak order on~$\IPos_n$, nor on~$\WOIP_n$, nor on~$\TOIP_n$.
\end{enumerate}
\end{proposition}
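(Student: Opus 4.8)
The plan is to treat the two parts separately, modelling the face-level statement on the analogous results already available: the realization of the facial weak order on the permutahedron by~$\WOFP$ (Proposition~\ref{prop:weakOrderWOFP}) and the realization of the Tamari lattice by~$\TOEP$ (Proposition~\ref{prop:weakOrderTOEP}).

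For part~(i), I would first check that~${\tree[S] \mapsto \less_{\tree[S]}}$ is a well-defined bijection from Schr\"oder trees on~$[n]$ to~$\TOFP_n$. Well-definedness is guaranteed by the local characterization of Proposition~\ref{prop:characterizationTOFP}, and injectivity follows because the covering relations of~$\less_{\tree[S]}$ recover the nesting of the angles of~$\tree[S]$, hence~$\tree[S]$ itself. The heart of the matter is order preservation, ${\tree[S] \wole \tree[S]' \iff \less_{\tree[S]} \wole \less_{\tree[S]'}}$. Rather than comparing the two facial weak orders directly, I would factor the map through the Schr\"oder tree insertion~$\st$, in exact parallel with the way the binary search tree insertion~$\bt$ factors the element-level map~$\sigma \mapsto \less_{\bt(\sigma)}$. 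Concretely, I would exhibit a projection~${\IPos \to \TOFP}$, a Tamari-type coarsening sending each~$\less_\pi$ to~$\less_{\st(\pi)}$, and show that the square relating $\pi \mapsto \less_\pi$ and $\tree[S] \mapsto \less_{\tree[S]}$ commutes with~$\st$. Granting this, order preservation reduces to Proposition~\ref{prop:weakOrderWOFP} together with the monotonicity of the projection.

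The quotient assertion of part~(i) then rests on the key structural input: the fibers of~$\st$ form a lattice congruence of the facial weak order on the permutahedron. This is the face-level analogue of the classical fact that the fibers of~$\bt$ form a lattice congruence (the sylvester congruence) of the weak order on permutations, and I expect it to be the main obstacle. One clean route is to identify each fiber with an interval of the facial weak order whose bounds are controlled by~$\st$, and to verify the two congruence axioms, namely that the projections onto the minimal and the maximal element of each fiber are order-preserving; here the local description of~$\TOFP$ in Proposition~\ref{prop:characterizationTOFP} is what makes the verifications tractable. Once the congruence is established, the facial weak order on~$\Asso[n]$ is by definition the quotient lattice, so it is a lattice, and the induced order coincides with the weak order on~$\TOFP_n$ transported through~${\tree[S] \mapsto \less_{\tree[S]}}$.

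For part~(ii), being a lattice is immediate once part~(i) is known, since a quotient of a lattice by a lattice congruence is again a lattice, the underlying lattice being the facial weak order on the permutahedron (Proposition~\ref{prop:weakOrderWOFP}). To show that the weak order on~$\TOFP_n$ is not a sublattice of the weak order on~$\IPos_n$, nor on~$\WOIP_n$, nor on~$\TOIP_n$, I would exhibit explicit small counterexamples: in each case a pair of posets in~$\TOFP_n$ whose meet or join, computed in the ambient lattice (for~$\TOIP_n$ and~$\WOIP_n$ via the meet and join formulas of Propositions~\ref{prop:weakOrderTOIP} and~\ref{prop:weakOrderWOIP}, and for~$\IPos_n$ via the weak order lattice on posets), fails the characterization of Proposition~\ref{prop:characterizationTOFP} and hence leaves~$\TOFP_n$. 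The smallest such examples already occur for~$n = 3$ or~$n = 4$, and checking them is a finite verification against Proposition~\ref{prop:characterizationTOFP}.
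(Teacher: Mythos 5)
The paper does not actually prove this proposition: it is recalled verbatim from \cite{ChatelPilaudPons} (Sect.~2.2.3), and the description of the facial weak order on~$\Asso[n]$ as a quotient is itself imported from the cited literature on the facial weak order. So your proposal has to stand on its own, and its overall architecture is the right one --- it is essentially how the result is established in those references: realize permutahedron faces as~$\WOFP$ via Proposition~\ref{prop:weakOrderWOFP}, transport through the Schr\"oder tree insertion using~${\TOIPd{(\less_\pi)}} = {\less_{\st(\pi)}}$ (Proposition~\ref{prop:TOIPd/bst}), and deduce the lattice property of~$\TOFP_n$ from the congruence. Part~(ii) is unproblematic: lattice-ness follows from the quotient description, and the three non-sublattice claims are finite verifications against Proposition~\ref{prop:characterizationTOFP}, using the meet/join formulas of Propositions~\ref{prop:weakOrderWOIP} and~\ref{prop:weakOrderTOIP} for the ambient lattices.

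Two steps are genuinely under-supplied. First, the assertion that the fibers of~$\st$ form a lattice congruence of the facial weak order on the permutahedron is the substantive theorem here; ``verify the two congruence axioms'' correctly names what must be done, but that verification is the content of the facial-weak-order papers \cite{DermenjianHohlwegPilaud} (lifting the sylvester congruence from vertices to faces), not a routine check, and your sketch gives no actual handle on why the up- and down-projections of the fibers are order-preserving. Second, for the equivalence~${\less_{\tree[S]}} \wole {\less_{\tree[S]'}} \iff \tree[S] \wole \tree[S]'$, monotonicity of the projection~${\less} \mapsto {\TOIPd{\less}}$ only yields the implication from comparable representatives to comparable images. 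The converse --- that~${\less_{\tree[S]}} \wole {\less_{\tree[S]'}}$ forces the existence of ordered partitions~$\pi \wole \pi'$ with~$\st(\pi) = \tree[S]$ and~$\st(\pi') = \tree[S]'$ --- is a lifting statement that Proposition~\ref{prop:weakOrderWOFP} alone does not give you; one standard fix is to compare the canonical (say minimal) ordered partitions of the two fibers and show that the comparison of the~$\TOFP$ posets implies the comparison of these representatives. As written, your reduction silently skips this direction.
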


To conclude, let us recall that there is a natural \defn{Schr\"oder tree insertion} map from ordered partitions to Schr\"oder trees, similar to the binary search tree insertion from permutations to binary trees. The Schr\"oder tree~$\st(\pi)$ obtained from an ordered partition~$\pi$ is the unique Schr\"oder tree such that if~$i$ is a descendant of~$j$ (meaning that~$i$ appears in a vertex that is a descendant of the vertex containing~$j$), then $i$ appears before~$j$ in~$\pi$. See~\cite{Chapoton, ChatelPilaud, PilaudPons-permutrees} for details and \fref{fig:stinsertion} for an illustration.

\begin{figure}[h]
    \centerline{\includegraphics[scale=.9]{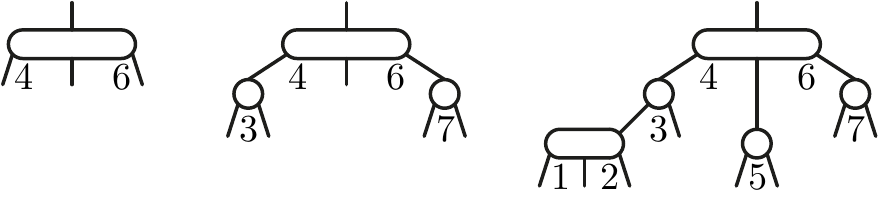}}
    \vspace{-.3cm}
    \caption{Schr\"oder tree insertion of the ordered partition~$\pi = 125 | 37 | 46$.}
    \label{fig:stinsertion}
\end{figure}

\subsection{Quotient algebras}
\label{subsec:quotientAlgebrasTO}

Contrarily to Section~\ref{subsec:quotientAlgebrasWO}, a naive construction of quotient Hopf algebras cannot directly work for~$\TOEP$, $\TOIP$ and~$\TOFP$.
Although they clearly define quotient algebras, they do not define quotient cogebras since the convolution does nos satisfy a property similar to~(ii) of Proposition~\ref{prop:algebraWOEP}: an element not in~$\TOEP$ (resp. $\TOIP$ and~$\TOFP$) can appear in the convolution product of two elements of $\TOEP$ (resp. $\TOIP$ and~$\TOFP$) as in the following example:

\begin{align*}
\begin{aligned}\scalebox{.8}{\input{figures/relations/r3_12_13_32}}\end{aligned} \in \begin{aligned}\scalebox{.8}{}\end{aligned} \convolution
\begin{aligned}\scalebox{.8}{}\end{aligned}
\end{align*}

Indeed, see that the element on the left is neither a $\TOEP$, $\TOIP$ nor a~$\TOFP$ but it belongs to the convolution of the two integer posets on the right, which are both $\TOEP$, $\TOIP$ and~$\TOFP$.

\subsection{Subalgebras}
\label{subsec:subalgebrasTO}

We now construct Hopf algebras on~$\TOEP$, $\TOIP$ and~$\TOFP$ as subalgebras of the Hopf algebras~$\K\WOEP$, $\K\WOIP$ and~$\K\WOFP$ respectively.
For this, we need the \defn{$\TOIP$ deletion} defined in~\cite[Sect.~2.2.4]{ChatelPilaudPons} by
\[
{\TOIPd{\less}} \eqdef {\less} \ssm ( \set{(a,c)}{\exists \; a < b < c, \; b \not\less c} \cup \set{(c,a)}{\exists \; a < b < c, \; a \not\more b} ).
\]
This operation is illustrated on \fref{fig:TOIPd}.

\begin{figure}[h]
    \vspace{-.5cm}
    \input{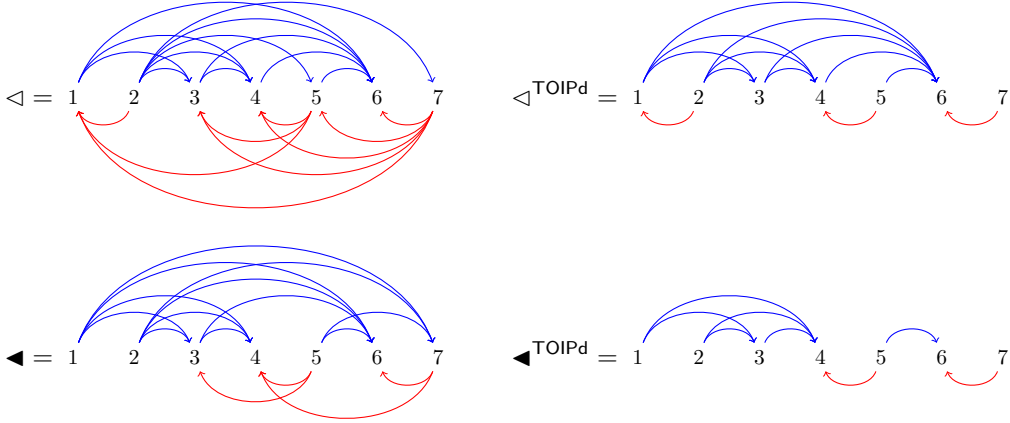}
    \vspace{-1.2cm}
    \caption{The $\TOIP$ deletion.}
    \label{fig:TOIPd}
\end{figure}

This map projects~$\WOEP_n$, $\WOIP_n$ and~$\WOFP_n$ to~$\TOEP_n$, $\TOIP_n$ and~$\TOFP_n$ respectively.
In fact, it is a simple generalization of both the binary tree insertion and the Schr\"oder tree insertion.

\begin{proposition}[{\cite[Prop.~48]{ChatelPilaudPons}}]
\label{prop:TOIPd/bst}
For any permutation~$\sigma$, any weak order interval~$\sigma \wole \sigma'$, and any ordered partition~$\pi$, we have
\[
{\TOIPd{(\less_\sigma)}} = {\less_{\bt(\sigma)}},
\qquad
{\TOIPd{(\less_{[\sigma,\sigma']})}} = {\less_{[\bt(\sigma), \; \bt(\sigma')]}}
\qquad\text{and}\qquad
{\TOIPd{(\less_\pi)}} = {\less_{\st(\pi)}},
\]
where~$\bt(\sigma)$ is the binary tree insertion of the permutation~$\sigma$ and~$\st(\pi)$ is the Schr\"oder tree insertion of the ordered partition~$\pi$.

\end{proposition}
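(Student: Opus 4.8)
The plan is to establish the three identities in turn: I would first reduce the interval identity to the element identity, and then prove the element and face identities by directly analysing which relations survive the $\TOIP$ deletion.

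The key preliminary observation is that the $\TOIP$ deletion acts independently on increasing and decreasing relations. An increasing relation $(a,c)$ is removed precisely when there is some $a<b<c$ with $b \not\less c$, a condition that only refers to the increasing relation $(b,c)$; dually, a decreasing relation $(c,a)$ is removed only according to the decreasing relations $(b,a)$ with $a<b<c$. Consequently the increasing part of $\TOIPd{\less}$ depends only on $\Inc{\less}$, and its decreasing part only on $\Dec{\less}$. Now $\Inc{\less_{[\sigma,\sigma']}} = \Inc{\less_{\sigma'}}$ and $\Dec{\less_{[\sigma,\sigma']}} = \Dec{\less_\sigma}$, and $\bt$ is monotone from the weak order to the Tamari order (Proposition~\ref{prop:weakOrderTOEP}), so $\bt(\sigma) \wole \bt(\sigma')$ and $\less_{[\bt(\sigma),\bt(\sigma')]} = \Inc{\less_{\bt(\sigma')}} \cup \Dec{\less_{\bt(\sigma)}}$. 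Assuming the element identity for $\sigma$ and for $\sigma'$, the increasing part of $\TOIPd{(\less_{[\sigma,\sigma']})}$ equals $\Inc{\less_{\bt(\sigma')}}$ and its decreasing part equals $\Dec{\less_{\bt(\sigma)}}$, whence
\[
\TOIPd{(\less_{[\sigma,\sigma']})} = \Inc{\less_{\bt(\sigma')}} \cup \Dec{\less_{\bt(\sigma)}} = \less_{[\bt(\sigma),\bt(\sigma')]}.
\]

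For the element identity I would argue directly, using that a binary search tree is labelled in inorder, so that the set of descendants of any vertex $c$ is an interval of labels and its left subtree is an interval $[\ell_c, c-1]$. I claim that, for $a<c$ with $a \less_\sigma c$, the relation $(a,c)$ survives the deletion from $\less_\sigma$ if and only if $a$ lies in the left subtree of $c$. On the one hand, if $a$ is a left-descendant of $c$ then so is every $b$ with $a<b<c$ (by the interval property), and every descendant precedes its ancestor in $\sigma$, so no such $b$ satisfies $b \not\less_\sigma c$ and $(a,c)$ is kept. On the other hand, if $a$ is not a descendant of $c$, let $m$ be the least common ancestor of $a$ and $c$; since $a \less_\sigma c$ forces $m \ne a$ and our assumption forces $m \ne c$, the node $m$ is a proper ancestor of both with $a<m<c$, and being an ancestor it appears after $c$ in $\sigma$, so $m \not\less_\sigma c$ witnesses the deletion of $(a,c)$. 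This shows that the increasing relations kept are exactly $\Inc{\less_{\bt(\sigma)}}$; the decreasing relations are handled by the mirror symmetry $i \mapsto n+1-i$, which swaps $\Inc$ with $\Dec$ and left subtrees with right subtrees, yielding $\Dec{\less_{\bt(\sigma)}}$ and hence $\TOIPd{(\less_\sigma)} = \less_{\bt(\sigma)}$.

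The face identity follows the same template for Schröder trees: the angles in the subtree of a given angle again form an interval of labels, a descendant angle lies in an earlier block of $\pi$ than its ancestor, and the same least-common-ancestor analysis matches the deletion condition with left/right subtree membership. I expect the main obstacle to be the treatment of ties, that is, pairs $a<c$ lying in the same block of $\pi$ (incomparable in $\less_\pi$) and angles attached to a common internal node: one must verify that the deletion clause $b \not\less_\pi c$, which is triggered already when $b$ and $c$ share a block, faithfully records the separating-node structure of $\st(\pi)$. This is exactly the point at which the Schröder case is more delicate than the permutation case. Since a permutation is the ordered partition into singleton blocks, with $\less_\sigma = \less_\pi$ and $\bt(\sigma) = \st(\pi)$, the element identity is the tie-free specialization of the face identity, so I would in fact carry out the verification once, for general ordered partitions, and read off the permutation case as the special case of singletons.
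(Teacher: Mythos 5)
The paper itself does not prove this statement: it is imported verbatim from \cite[Prop.~48]{ChatelPilaudPons}, so there is no internal proof to compare your argument against, and it must stand on its own. For the first two identities it does. Your key observation is correct: the clause deleting an increasing pair $(a,c)$ only inspects increasing pairs $(b,c)$, and dually for decreasing pairs, so ${\Inc{(\TOIPd{\less})}}$ depends only on~$\Inc{\less}$ and ${\Dec{(\TOIPd{\less})}}$ only on~$\Dec{\less}$; together with ${\Inc{\less_{[\sigma,\sigma']}}} = {\Inc{\less_{\sigma'}}}$ and ${\Dec{\less_{[\sigma,\sigma']}}} = {\Dec{\less_{\sigma}}}$ this legitimately reduces the interval identity to the element identity (the monotonicity $\sigma \wole \sigma' \implies \bt(\sigma) \wole \bt(\sigma')$ that you invoke follows from the Tamari lattice being the quotient of the weak order by the fibers of~$\bt$, as recalled in Section~\ref{subsec:LodayRonco}, rather than from Proposition~\ref{prop:weakOrderTOEP}). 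The element identity itself is correctly established: your least-common-ancestor argument shows that the surviving increasing pairs $(a,c)$ are exactly those for which every $b$ with $a \le b < c$ precedes $c$ in~$\sigma$, and that these are exactly the pairs with $a$ in the left subtree of $c$ in~$\bt(\sigma)$, which is $\Inc{\less_{\bt(\sigma)}}$; the mirror symmetry then handles the decreasing part.

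The gap is in the third identity, and you flag it yourself. The missing ingredient is the structural fact about the Schr\"oder tree insertion that makes the tie analysis work, namely that all the angles carried by a single internal node of~$\st(\pi)$ lie in a single block of~$\pi$ (together with the interval property of its subtrees and the fact that descendant angles lie in strictly earlier blocks). Granting this, the argument does close: if $a \less_\pi c$ but the angle $a$ is not in the left child of the angle $c$, the least common ancestor node $v$ of the two angles provides an angle $m$ of $v$ with $a < m < c$ which is either in the same block as $c$ (when $c$ is an angle of $v$) or in a strictly later block than $c$ (when $c$ lies in a child of $v$), so that $m \not\less_\pi c$ witnesses the deletion of $(a,c)$; conversely the interval property gives retention. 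But since you propose to obtain the permutation case as the singleton-block specialization of the ordered-partition case, this unproved lemma becomes load-bearing for the entire proposition as you have organized the proof, and it must be derived from the actual definition of~$\st$ before the argument is complete.
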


\begin{example}
Compare Figures~\ref{fig:woep} and~\ref{fig:toep}, Figures~\ref{fig:woip} and~\ref{fig:toip}, and Figures~\ref{fig:wofp} and~\ref{fig:tofp}.
\end{example}

We now use this map~${\less} \mapsto {\TOIPd{\less}}$, mimicking the construction of the Loday--Ronco algebra on binary trees~\cite{LodayRonco, HivertNovelliThibon-algebraBinarySearchTrees} as a Hopf subalgebra of the Malvenuto--Reutenauer algebra on permutations~\cite{MalvenutoReutenauer}.
For~${\bless} \in \TOIP$, consider the element
\[
\FTOIP_\bless \eqdef \sum \FWOIP_\less
\]
where the sum runs over all ${\less} \in \WOIP$ such that~$\TOIPd{\less} = {\bless}$.
We denote by~$\K\TOIP$ the linear subspace of~$\K\IPos$ spanned by the elements~$\FTOIP_\bless$ for~${\bless} \in \TOIP$.
Similarly, we define the linear subspace~$\K\TOEP$ (resp.~$\K\TOFP$) spanned by the elements
\[
\FTOEP_\bless \eqdef \sum \FWOEP_\less
\qquad
\text{(resp. $\FTOEP_\bless \eqdef \sum \FWOEP_\less$)}
\]
for all~${\bless} \in \TOEP$ (resp.~$\TOFP$) where the sum runs over all~${\less} \in \WOEP$ (resp.~$\WOFP$) such that~${\TOIPd{\less}} = {\bless}$.

\begin{remark}
Note that the fiber of a $\TOEP$ (resp.~$\TOFP$) under the map ${\less} \mapsto {\TOIPd{\less}}$ is not in~$\WOEP$ (resp.~$\WOFP$) in general.
For example, $\less_{[132,312]} = \raisebox{.1cm}{\scalebox{0.8}{\input{relations/r3_12_32}}}$ is not in~$\WOEP$, but
\[
{\TOIPd{\less_{[132,312]}}} = {\TOIPd{(\raisebox{.1cm}{\scalebox{0.8}{\input{relations/r3_12_32}}})}} = \raisebox{.1cm}{\scalebox{0.8}{\input{relations/r3_12_32}}} = {\less_{\!\!\!\includegraphics[scale=1.5]{example_product2}}}
\]
is in~$\TOEP$.
The element~$\FTOEP_\bless$ (resp.~$\FTOFP_\bless$) is defined as the sum over the fiber of~$\bless$ in~$\WOEP$ (resp.~in~$\WOFP$).
\end{remark}

\begin{example}
\label{exm:algebraTOEP}
Here is an example of computation of a product and a coproduct of elements~$\FTOEP_\bless$ computed in the algebra~$\K\WOEP$.
In the second line, for each element~$\FWOEP_\less$ of the sum in~$\K\WOEP$, we have bolded the subrelation~$\TOIPd{\less}$.
Observe that the result can again be expressed in the basis~$\FTOEP_\bless$ as will be proven in Proposition~\ref{prop:TOIPsubHopfAlgebra}.

\medskip
\noindent
$\FTOEP_{\scalebox{.5}{}} \! \! \! \! \product \FTOEP_{\scalebox{.5}{\input{figures/relations/r3_12_32}}} \! \! \! \! = \FWOEP_{\scalebox{.5}{}}\! \! \! \! \product \big( \FWOEP_{\scalebox{.5}{\input{figures/relations/r3_12_13_32}}} + \FWOEP_{\scalebox{.5}{\input{figures/relations/r3_12_31_32}}} \big)$ \\
$\begin{array}{@{}c@{${} = {}$}c@{+}c@{+}c@{}}
\;\; & \begin{pmatrix} \quad \FWOEP_{\! \! \! \! \scalebox{.5}{\input{figures/relations/perm_12354}}}\! \! \! \! + \FWOEP_{\! \! \! \!\scalebox{.5}{\input{figures/relations/perm_12534}}} \! \! \! \! \\ \! \! \! \! + \; \FWOEP_{\! \! \! \! \scalebox{.5}{\input{figures/relations/perm_15234}}} \! \! \! \! + \FWOEP_{\! \! \! \! \scalebox{.5}{\input{figures/relations/perm_51234}}} \! \! \! \! \end{pmatrix}
& \begin{pmatrix} \quad \FWOEP_{\! \! \! \! \scalebox{.5}{\input{figures/relations/perm_13254}}} \! \! \! \! + \FWOEP_{\! \! \! \! \scalebox{.5}{\input{figures/relations/perm_31254}}} \! \! \! \! \\ \! \! \! \! + \; \FWOEP_{\! \! \! \! \scalebox{.5}{\input{figures/relations/perm_13524}}}\! \! \! \! + \FWOEP_{\! \! \! \! \scalebox{.5}{\input{figures/relations/perm_31524}}} \! \! \! \! \\ \! \! \! \!  + \; \FWOEP_{\! \! \! \! \scalebox{.5}{\input{figures/relations/perm_15324}}}\! \! \! \! + \FWOEP_{\! \! \! \! \scalebox{.5}{\input{figures/relations/perm_51324}}} \! \! \! \! \\ \! \! \! \! +  \; \FWOEP_{\! \! \! \! \scalebox{.5}{\input{figures/relations/perm_35124}}} \! \! \! \! + \FWOEP_{\! \! \! \! \scalebox{.5}{\input{figures/relations/perm_53124}}} \! \! \! \! \end{pmatrix}
& \begin{pmatrix} \quad \FWOEP_{\! \! \! \! \scalebox{.5}{\input{figures/relations/perm_13542}}} \! \! \! \! + \FWOEP_{\! \! \! \! \scalebox{.5}{\input{figures/relations/perm_31542}}} \! \! \! \! \\ \! \! \! \! + \; \FWOEP_{\! \! \! \! \scalebox{.5}{\input{figures/relations/perm_35142}}} \! \! \! \! + \FWOEP_{\! \! \! \! \scalebox{.5}{\input{figures/relations/perm_53142}}} \! \! \! \! \\ \! \! \! \! + \; \FWOEP_{\! \! \! \! \scalebox{.5}{\input{figures/relations/perm_15342}}} \! \! \! \! + \FWOEP_{\! \! \! \! \scalebox{.5}{\input{figures/relations/perm_51342}}} \! \! \! \! \\  \! \! \! \! + \; \FWOEP_{\! \! \! \! \scalebox{.5}{\input{figures/relations/perm_35412}}} \! \! \! \! + \FWOEP_{\! \! \! \! \scalebox{.5}{\input{figures/relations/perm_53412}}} \! \! \! \! \end{pmatrix}
\\[1.5cm]
& \FTOEP_{\! \! \! \! \scalebox{.5}{\input{figures/relations/r5_tree12354}}}
& \FTOEP_{\! \! \! \! \scalebox{.5}{\input{figures/relations/r5_tree13254}}}
& \FTOEP_{\! \! \! \! \scalebox{.5}{\input{figures/relations/r5_tree13542}}} \! \! .
\end{array}$
\begin{align*}
\coproduct \Big(
\FTOEP_{\! \scalebox{.5}{\input{figures/relations/r3_12_32}}}  \! \!
\Big)
&=
\coproduct \Big(
\FWOEP_{\! \scalebox{.5}{\input{figures/relations/r3_12_13_32}}} \! \! +
\FWOEP_{\! \scalebox{.5}{\input{figures/relations/r3_12_31_32}}}  \! \!
\Big) \\
& = 
\Big( \FWOEP_{\! \scalebox{.5}{\input{figures/relations/r3_12_13_32}}} \! \! \otimes \FWOEP_{\varnothing} +
\FWOEP_{\! \scalebox{.5}{}} \! \! \otimes \FWOEP_{\! \scalebox{.5}{}} \! \! +
\FWOEP_{\! \scalebox{.5}{}} \! \! \otimes \FWOEP_{\! \scalebox{.5}{}} \! \! +
\FWOEP_{\varnothing}  \otimes \FWOEP_{\! \scalebox{.5}{\input{figures/relations/r3_12_13_32}}} \! \! 
\Big) \\
& + \,
\Big( \FWOEP_{\! \scalebox{.5}{\input{figures/relations/r3_12_31_32}}} \! \! \otimes \FWOEP_{\varnothing} +
\FWOEP_{\! \scalebox{.5}{}} \! \! \otimes \FWOEP_{\! \scalebox{.5}{}} \! \! +
\FWOEP_{\! \scalebox{.5}{}} \! \! \otimes \FWOEP_{\! \scalebox{.5}{}} \! \! +
\FWOEP_{\varnothing} \otimes \FWOEP_{\! \scalebox{.5}{\input{figures/relations/r3_12_31_32}}} \! \! 
\Big) \\
& = 
\FTOEP_{\! \scalebox{.5}{\input{figures/relations/r3_12_32}}} \! \! \otimes \FTOEP_{\varnothing} +
\FTOEP_{\! \scalebox{.5}{}} \! \! \otimes \Big( 
\FTOEP_{\! \scalebox{.5}{}} \! \! +
\FTOEP_{\! \scalebox{.5}{}} \! \! \Big) \\
& + \,
\Big( \FTOEP_{\! \scalebox{.5}{}} \! \! +
\FTOEP_{\! \scalebox{.5}{}} \! \! \Big) \otimes
\FTOEP_{\! \scalebox{.5}{}} \! \! +
\FTOEP_{\varnothing} \otimes \FTOEP_{\! \scalebox{.5}{\input{figures/relations/r3_12_32}}} \! \! .
\end{align*}
\end{example}

\begin{example}
\label{exm:algebraTOIP}
Here is an example of computation of a product and a coproduct of elements~$\FTOIP_\bless$ computed in the algebra~$\K\WOIP$.
In the second line, for each element~$\FWOIP_\less$ of the sum in~$\K\WOIP$, we have bolded the subrelation~$\TOIPd{\less}$.
Observe that the result can again be expressed in the basis~$\FTOIP_\bless$ as will be proven in Proposition~\ref{prop:TOIPsubHopfAlgebra}.

\medskip
\noindent
$\FTOIP_{\! \! \scalebox{.5}{}} \! \! \! \! \product \FTOIP_{\! \! \scalebox{.5}{}} \! \! \! \! = \FWOIP_{\! \! \scalebox{.5}{}} \! \! \! \! \product \big(
\FWOIP_{\! \! \scalebox{.5}{}} \! \! \! \! + \FWOIP_{\! \! \scalebox{.5}{\input{figures/relations/r3_31_32}}} \! \big)$ \\
$\begin{array}{l@{}l}
\;\; &=
\big(
\FWOIP_{\! \! \! \! \scalebox{.5}{\input{figures/relations/r5_minexp_toip_123_54}}} \! \! \! \! \! \! + \dots
+  \FWOIP_{\! \! \! \! \scalebox{.5}{\input{figures/relations/r5_mid_toip_123_54}}} \! \! \! \! \! \! + \dots
+  \FWOIP_{\! \! \! \! \scalebox{.5}{\input{figures/relations/r5_maxexp_toip_123_54}}} \! \! \! \! \big) +
\big( \FWOIP_{\! \! \! \! \scalebox{.5}{\input{figures/relations/r5_minexp_toip_12_54}}} \! \! \! \! \! \! + \dots
+  \FWOIP_{\! \! \! \! \scalebox{.5}{\input{figures/relations/r5_mid_toip_12_54}}} \! \! \! \! \! \! + \dots
+  \FWOIP_{\! \! \! \! \scalebox{.5}{\input{figures/relations/r5_maxexp_toip_12_54}}} \! \! \! \!
\big) \\
& + \,
\big(
\FWOIP_{\! \! \! \! \scalebox{.5}{\input{figures/relations/r5_minexp_toip_12_32_54}}} \! \! \! \! \! \! + \dots
+  \FWOIP_{\! \! \! \! \scalebox{.5}{\input{figures/relations/r5_mid_toip_12_32_54}}} \! \! \! \! \! \! + \dots
+  \FWOIP_{\! \! \! \! \scalebox{.5}{\input{figures/relations/r5_maxexp_toip_12_32_54}}} \! \! \! \! \big) +
\big( \FWOIP_{\! \! \! \! \scalebox{.5}{\input{figures/relations/r5_minexp_toip_12_32_542}}} \! \! \! \! \! \! + \dots
+  \FWOIP_{\! \! \! \! \scalebox{.5}{\input{figures/relations/r5_mid_toip_12_32_542}}} \! \! \! \! \! \! + \dots
+  \FWOIP_{\! \! \! \! \scalebox{.5}{\input{figures/relations/r5_maxexp_toip_12_32_542}}} \! \! \! \!
\big) \\
& = 
\FTOIP_{\! \! \! \! \scalebox{.5}{\input{figures/relations/r5_toip_123_54}}} \! \! \! \! \! \! + 
\FTOIP_{\! \! \! \! \scalebox{.5}{\input{figures/relations/r5_toip_12_54}}} \! \! \! \! \! \! +
\FTOIP_{\! \! \! \! \scalebox{.5}{\input{figures/relations/r5_toip_12_32_54}}} \! \! \! \! \! \! +
\FTOIP_{\! \! \! \! \scalebox{.5}{\input{figures/relations/r5_toip_12_32_542}}} \! \! .
\end{array}$
\begin{align*}
\coproduct \Big(
\FTOIP_{\scalebox{.5}{}}  \! \!
\Big)
&=
\coproduct \Big(
\FWOIP_{\scalebox{.5}{}}  \! \! +
\FWOIP_{\scalebox{.5}{\input{figures/relations/r3_31_32}}} \Big) \\
&=
\Big( \FWOIP_{\scalebox{.5}{}}  \! \! +
\FWOIP_{\scalebox{.5}{\input{figures/relations/r3_31_32}}} \! \!
\Big) \otimes \FWOIP_\varnothing
+
\FWOIP_{\scalebox{.5}{}} \! \!
\otimes
\FWOIP_{\scalebox{.5}{}} \! \!
+
 \FTOIP_\varnothing \otimes
 \Big( \FWOIP{\scalebox{.5}{}}  \! \! +
\FWOIP_{\scalebox{.5}{\input{figures/relations/r3_31_32}}} \! \!
\Big) \\
&= 
\FTOIP_{\scalebox{.5}{}}  \! \! 
\otimes \FTOIP_\varnothing
+
\FTOIP_{\scalebox{.5}{}} \! \!
\otimes
\FTOIP{\scalebox{.5}{}} \! \!
+
\FTOIP_\varnothing \otimes 
\FTOIP_{\scalebox{.5}{}} \! \! .
\end{align*}
\end{example}

\begin{example}
\label{exm:algebraTOFP}
Here is an example of computation of a product and a coproduct of elements~$\FTOFP_\bless$ computed in the algebra~$\K\WOFP$.
In the second line, for each element~$\FWOFP_\less$ of the sum in~$\K\WOFP$, we have bolded the subrelation~$\TOIPd{\less}$.
Observe that the result can again be expressed in the basis~$\FTOFP_\bless$ as will be proven in Proposition~\ref{prop:TOIPsubHopfAlgebra}.

\begin{align*}
\FTOFP_{\! \! \scalebox{.5}{\input{figures/relations/r3_23_21}}} \! \! \! \! \product \FTOFP_{\! \! \scalebox{.5}{}} \! \! \! \! &
= 
\FWOFP_{\! \! \scalebox{.5}{\input{figures/relations/r3_23_21}}} \! \! \! \! \product \FWOFP_{\! \! \scalebox{.5}{}} \! \! \! \! \\
& = 
\FWOFP_{\! \! \scalebox{.5}{\input{figures/relations/r5_tofpexp_14_234_21}}} \! \! \! \! +
\FWOFP_{\! \! \scalebox{.5}{\input{figures/relations/r5_tofpexp_23_21}}} \! \! \! \! +
\Big(
\FWOFP_{\! \! \scalebox{.5}{\input{figures/relations/r5_tofpexp1_23_21_43_53}}} \! \! \! \! +
\FWOFP_{\! \! \scalebox{.5}{\input{figures/relations/r5_tofpexp2_23_21_43_53}}} \! \! \! \! +
\FWOFP_{\! \! \scalebox{.5}{\input{figures/relations/r5_tofpexp3_23_21_43_53}}} \! \! \! \!
\Big) \\
&= 
\FTOFP_{\! \! \scalebox{.5}{\input{figures/relations/r5_tof_14_234_21}}} \! \! \! \! +
\FTOFP_{\! \! \scalebox{.5}{\input{figures/relations/r5_tof_23_21}}} \! \! \! \! +
\FTOFP_{\! \! \scalebox{.5}{\input{figures/relations/r5_tof_23_21_43_53}}} \! \! .
\end{align*}
\begin{align*}
\coproduct \Big(
\FTOFP_{\! \scalebox{.5}{\input{figures/relations/r3_12_32}}}  \! \!
\Big)
& =
\coproduct \Big(
\FWOFP_{\! \scalebox{.5}{\input{figures/relations/r3_12_13_32}}} \! \! +
\FWOFP_{\! \scalebox{.5}{\input{figures/relations/r3_12_32}}}  \! \! +
\FWOFP_{\! \scalebox{.5}{\input{figures/relations/r3_12_31_32}}}  \! \!
\Big) \\
& = 
\Big(
\FWOFP_{\! \scalebox{.5}{\input{figures/relations/r3_12_13_32}}} \! \! \otimes \FWOFP_{\varnothing} +
\FWOFP_{\! \scalebox{.5}{}} \! \! \otimes \FWOFP_{\! \scalebox{.5}{}} \! \! +
\FWOFP_{\! \scalebox{.5}{}} \! \! \otimes \FWOFP_{\! \scalebox{.5}{}} \! \! +
\FWOFP_{\varnothing}  \otimes \FWOFP_{\! \scalebox{.5}{\input{figures/relations/r3_12_13_32}}} \! \! 
\Big) \\
& + \,
\Big(
\FWOFP_{\! \scalebox{.5}{\input{figures/relations/r3_12_32}}} \! \! \otimes \FWOFP_{\varnothing} +
\FWOFP_{\! \scalebox{.5}{}} \! \! \otimes \FWOFP_{\! \scalebox{.5}{}} \! \! +
\FWOFP_{\varnothing} \otimes \FWOFP_{\! \scalebox{.5}{\input{figures/relations/r3_12_32}}} \! \! 
\Big) \\
& + \,
\Big(
\FWOFP_{\! \scalebox{.5}{\input{figures/relations/r3_12_31_32}}} \! \! \otimes \FWOFP_{\varnothing} +
\FWOFP_{\! \scalebox{.5}{}} \! \! \otimes \FWOFP_{\! \scalebox{.5}{}} \! \! +
\FWOFP_{\! \scalebox{.5}{}} \! \! \otimes \FWOFP_{\! \scalebox{.5}{}} \! \! +
\FWOFP_{\varnothing} \otimes \FWOFP_{\! \scalebox{.5}{\input{figures/relations/r3_12_31_32}}} \! \! 
\Big) \\
& = 
\FTOFP_{\! \scalebox{.5}{\input{figures/relations/r3_12_32}}} \! \! \otimes \FTOFP_{\varnothing} +
\FTOFP_{\! \scalebox{.5}{}} \! \! \otimes \FTOFP_{\! \scalebox{.5}{}} \! \! +
\FTOFP_{\! \scalebox{.5}{}} \! \! \otimes \FTOFP_{\! \scalebox{.5}{}} \! \! +
\FTOFP_{\! \scalebox{.5}{}} \! \! \otimes \FTOFP_{\! \scalebox{.5}{}} \! \! \\
& + \,
\FTOFP_{\! \scalebox{.5}{}} \! \! \otimes \FTOFP_{\! \scalebox{.5}{}} \! \! +
\FTOFP_{\! \scalebox{.5}{}} \! \! \otimes \FTOFP_{\! \scalebox{.5}{}} \! \! +
\FTOFP_{\varnothing} \otimes \FTOFP_{\! \scalebox{.5}{\input{figures/relations/r3_12_32}}} \! \! .
\end{align*}
\end{example}

\begin{proposition}
\label{prop:TOIPsubHopfAlgebra}
The subspace~$\K\TOIP$ is stable by the product~$\product$ and the coproduct~$\coproduct$ and thus defines a Hopf subalgebra of~$(\K\WOIP, \product, \coproduct)$.
A similar statement holds for~$\TOEP$ and~$\TOFP$.
\end{proposition}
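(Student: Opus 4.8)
The plan is to transport the proof of Proposition~\ref{prop:WOIPsubHopfAlgebra} through the projection~${\less}\mapsto{\TOIPd{\less}}$, treating $\TOEP$, $\TOIP$ and $\TOFP$ uniformly but keeping in mind that the ambient Hopf algebra is now $\K\WOIP$ (resp.~$\K\WOEP$, $\K\WOFP$) rather than $\K\IPos$. I will carry out the argument for $\TOIP$; the cases $\TOEP$ and $\TOFP$ are formally identical after replacing $\WOIP$ by $\WOEP$ or $\WOFP$, and they may moreover be deduced from the classical fact that the Loday--Ronco algebra~\cite{LodayRonco} (resp.~the Chapoton algebra on Schr\"oder trees~\cite{Chapoton}) is a Hopf subalgebra of the Malvenuto--Reutenauer algebra (resp.~of the Chapoton algebra on ordered partitions), using the isomorphisms of Propositions~\ref{prop:isomorphismMalvenutoReutenauer} and~\ref{prop:isomorphismChapoton} together with Proposition~\ref{prop:TOIPd/bst} to identify $\FTOEP_\bless$ and $\FTOFP_\bless$ with the corresponding $\bt$- and $\st$-generators. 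The genuinely new content is therefore the Tamari interval case $\TOIP$.

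Stability under the product is the easy half. The key input is the exact analogue of the first statement of Lemma~\ref{lem:WOIPdShuffle}: for any ${\less}\in\WOIP_p$ and any interval~$[q,r]\subseteq[p]$ one has ${\TOIPd{({\less}_{[q,r]})}} = {({\TOIPd{\less}})_{[q,r]}}$, simply because the condition defining the deletion of a pair~$(a,c)$ only inspects the relations of~$\less$ inside~$[a,c]\subseteq[q,r]$. Applying this to the restrictions to~$[m]$ and~$\overline{[n]}$ and invoking Proposition~\ref{prop:characterizationShuffleProduct}, the same computation as in the first part of the proof of Proposition~\ref{prop:WOIPsubHopfAlgebra} gives $\FTOIP_\bless \product \FTOIP_{\bless'} = \sum_{{\prec}\in({\bless}\shiftedShuffle{\bless'})\cap\TOIP} \FTOIP_\prec$, so that $\K\TOIP$ is stable under~$\product$.

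Stability under the coproduct is the main obstacle, and here the parallel with Proposition~\ref{prop:WOIPsubHopfAlgebra} breaks down: the analogue of Lemma~\ref{lem:WOIPdConvolution} is \emph{false}. Indeed $\TOIPd$ does not commute with restriction to the two sides of a total cut, and a total cut of~${\less}$ need not remain a total cut of~${\TOIPd{\less}}$: for instance the poset~${\less}$ on~$[3]$ with ${\Inc{\less}}=\{(1,2),(1,3)\}$ and ${\Dec{\less}}=\varnothing$ lies in $\WOIP$, admits $(\{1\},\{2,3\})$ as a total cut, yet ${\TOIPd{\less}}$ deletes~$(1,3)$, so $(\{1\},\{2,3\})$ is no longer a total cut of~${\TOIPd{\less}}$. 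Consequently there is no clean ``sum over total cuts of~$\bless$'' formula for $\coproduct(\FTOIP_\bless)$, as already visible in Examples~\ref{exm:algebraTOEP}--\ref{exm:algebraTOFP}, and one must instead show directly that the image still lands in $\K\TOIP\otimes\K\TOIP$. Writing $\coproduct(\FTOIP_\bless) = \sum_{{\TOIPd{\less}}=\bless}\sum_{(X,Y)} \FWOIP_{{\less}_X}\otimes\FWOIP_{{\less}_Y}$, this reduces to a \emph{saturation} statement: the coefficient of $\FWOIP_\dashv\otimes\FWOIP_{\dashv'}$, namely the number of pairs $({\less},(X,Y))$ with ${\TOIPd{\less}}=\bless$, $(X,Y)$ a total cut of~$\less$, ${\less}_X={\dashv}$ and ${\less}_Y={\dashv'}$, must depend on $({\dashv},{\dashv'})$ only through $({\TOIPd{\dashv}},{\TOIPd{\dashv'}})$; regrouping fibers then yields $\coproduct(\FTOIP_\bless)=\sum c_{\prec,\prec'}\,\FTOIP_\prec\otimes\FTOIP_{\prec'}$.

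The hard part is thus this coefficient invariance. I would reformulate it through the convolution: for fixed ${\dashv},{\dashv'}$ the pairs above are exactly the elements of~${\dashv}\convolution{\dashv'}$ (carrying their cut) whose $\TOIPd$ equals~$\bless$, so the claim becomes that the multiset $\{{\TOIPd{\less}} : {\less}\in{\dashv}\convolution{\dashv'}\}$ depends only on $({\TOIPd{\dashv}},{\TOIPd{\dashv'}})$. A case analysis on a cut~$(X,Y)$ shows that the survival under $\TOIPd$ of a pair with both endpoints in~$X$ (resp.~$Y$) is governed solely by~${\TOIPd{\dashv}}$ (resp.~${\TOIPd{\dashv'}}$) together with the underlying subset~$X$, since any element of the opposite side lying between them is automatically a witness for deletion; the genuinely delicate point is the survival of the crossing pairs in $X\times Y$, which \emph{a priori} depends on~${\dashv'}$ beyond~${\TOIPd{\dashv'}}$, and this is exactly where the combinatorics must be controlled — most naturally by producing, whenever ${\TOIPd{\dashv}}={\TOIPd{\eta}}$ and ${\TOIPd{\dashv'}}={\TOIPd{\eta'}}$, a cut-preserving bijection between $\{{\less}\in{\dashv}\convolution{\dashv'} : {\TOIPd{\less}}=\bless\}$ and $\{{\less}\in{\eta}\convolution{\eta'} : {\TOIPd{\less}}=\bless\}$, exploiting the description of the fibers of $\TOIPd$ as products of (sylvester-type) classes furnished by Proposition~\ref{prop:TOIPd/bst}. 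Finally, I note one simplification: since $\K\TOIP$ is a subalgebra and the product and coproduct satisfy the Hopf compatibility, it suffices to verify this coproduct stability on a family of algebra generators, for instance the indecomposable~$\FTOIP_\bless$, which restricts the delicate counting to the indecomposable case.
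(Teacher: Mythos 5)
You follow the paper's strategy: product stability via the commutation of ${\less}\mapsto{\TOIPd{\less}}$ with restriction to intervals (correct, for the reason you give), and coproduct stability by showing that the coefficient of $\FWOIP_{\eta}\otimes\FWOIP_{\eta'}$ in $\coproduct(\FTOIP_{\bless})$ depends only on $({\TOIPd{\eta}},{\TOIPd{\eta'}})$. Your diagnosis is also accurate: the analogue of Lemma~\ref{lem:WOIPdConvolution} genuinely fails (your counterexample is valid), and the whole difficulty concentrates in the claim that, for a fixed partition $[p]=X\sqcup Y$, the image under ${\less}\mapsto{\TOIPd{\less}}$ of the unique poset~$\dashv$ with total cut $(X,Y)$, ${\dashv_X}={\eta}$ and ${\dashv_Y}={\eta'}$ depends on $({\eta},{\eta'})$ only through $({\TOIPd{\eta}},{\TOIPd{\eta'}})$ --- this is exactly the ``either none or all ${\dashv}\in U_{X,Y}$ satisfy ${\TOIPd{\dashv}}={\bless}$'' step of the paper's proof. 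But you stop precisely at that point: for the crossing pairs in $X\times Y$ you only record that ``the combinatorics must be controlled,'' and the route you suggest (a cut-preserving bijection between fibers built from sylvester-type classes via Proposition~\ref{prop:TOIPd/bst}) is neither carried out nor clearly available, since Proposition~\ref{prop:TOIPd/bst} describes fibers over $\WOEP$, $\WOIP$ and $\WOFP$ of special elements but gives no product-of-classes decomposition adapted to an arbitrary cut. As written, the coproduct half is incomplete.

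The missing step has a short direct proof, which is what the paper's assertion implicitly relies on. Consider an increasing crossing pair $(x,y)$ with $x\in X$, $y\in Y$, $x<y$. No $b\in X$ can witness its deletion since $X\times Y\subseteq{\dashv}$, so $(x,y)$ is deleted in ${\TOIPd{\dashv}}$ if and only if some $b\in Y$ with $x<b<y$ satisfies $b\not\dashv y$, that is, $(b,y)\notin{\eta'}$ read on~$Y$. Such a $b$ exists if and only if some $b\in Y$ with $x<b<y$ satisfies $(b,y)\notin{\TOIPd{\eta'}}$: one direction is clear since ${\TOIPd{\eta'}}\subseteq{\eta'}$, and conversely if $(b,y)\in{\eta'}\ssm{\TOIPd{\eta'}}$ then the witness $b'$ of this deletion satisfies $b<b'<y$, $b'\in Y$ and $(b',y)\notin{\eta'}$, so it directly witnesses the deletion of $(x,y)$. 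Decreasing crossing pairs are symmetric, and non-crossing pairs are as you say --- although your justification (``any element of the opposite side lying between them is automatically a witness'') holds only for pairs inside $X$; for pairs inside $Y$ the interleaved elements of $X$ are automatically \emph{non}-witnesses, which yields the same conclusion for the opposite reason. With this verification inserted your argument closes and agrees with the paper's. (Your final reduction to indecomposable generators is both unnecessary and unjustified as stated: the $\FTOIP$ basis is not multiplicative, so the ``indecomposable $\FTOIP_{\bless}$'' do not form a generating family in any obvious sense.)
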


\begin{proof}
We start with the product.
Consider~${\bless} \in \TOIP_m$ and~${\bless'} \in \TOIP_n$ and define
\begin{itemize}
\item $U$ as the set of $\WOIP$'s of the form~${{\less} \sqcup \overline{{\less'}} \sqcup \rel[I] \sqcup \rel[D]}$ for any~${\less} \in \WOIP_m$ with~${\TOIPd{\less}} = {\bless}$, any~${\less'} \in \WOIP_n$ with~${\TOIPd{\less'}} = {\bless'}$, any~$\rel[I] \subseteq [m] \times \overline{[n]}$ and any~$\rel[D] \subseteq \overline{[n]} \times [m]$.
\item $V$ as the set of~$\TOIP$'s of the form~${{\bless} \sqcup {\bless'} \sqcup \rel[I] \sqcup \rel[D]}$ for any~$\rel[I] \subseteq [m] \times \overline{[n]}$ and~$\rel[D] \subseteq \overline{[n]} \times [m]$.
\end{itemize}
Note that~${\dashv} \in U$ if and only if~${\TOIPd{\dashv}} \in V$.
Since the product~$\FTOIP_\bless \product \FTOIP_{\bless'}$ contains exactly all the posets of~$U$, we conclude that a poset appears in~$\FTOIP_\bless \product \FTOIP_{\bless'}$ if and only if all posets in its $\TOIP$ deletion fiber do.
Therefore, the product~$\FTOIP_\bless \product \FTOIP_{\bless'}$ belongs to the subspace~$\K\TOIP$.

We now deal with the coproduct.
Consider~${\bless} \in \TOIP_p$ and a partition~$[p] = X \sqcup Y$. Consider the set~$U_{X,Y}$ of~$\WOIP$'s of the form~${{\less} \cup {\less'} \cup (X \times Y)}$ where~${{\less} \subseteq X^2}$ is such that~${\TOIPd{(\less_X)} = {\bless}_X}$ and~${{\less'} \subseteq Y^2}$ is such that~${\TOIPd{(\less'_Y)} = {\bless}_Y}$.
Note that either none or all~${\dashv} \in U_{X,Y}$ satisfy~${{\TOIPd{\dashv}} = {\bless}}$.
Since the coproduct~$\coproduct(\FTOIP_\bless)$ contains exactly all tensors~${\less}_X \otimes {\less'_Y}$ such that~${{\less} \cup {\less'} \cup (X \times Y) \in U_{X,Y}}$ for all partitions~$[p] = X \sqcup Y$, we conclude that a tensor~${\less}_X \otimes {\less'_Y}$ appears in~$\coproduct(\FTOIP_\bless)$ if and only if all the tensors~${\dashv}_X \otimes {\dashv_Y'}$  with~${\TOIPd{\dashv}} = {\TOIPd{\less}}$ and~${\TOIPd{\dashv'}} = {\TOIPd{\less'}}$ appear in~$\coproduct(\FTOIP_\bless)$ as well.
Therefore, the coproduct~$\coproduct(\FTOIP_\bless)$ belongs to the subspace~$\K\TOIP$.
\end{proof}

\begin{proposition}
For~${\less} \in \TOIP_m$ and~${\bless} \in \TOIP_n$, the product~$\FPos_{\less} \product \FPos_{\bless}$ is the sum of~$\FPos_{\dashv}$, where~$\dashv$ runs over the interval between $\underprod{\less}{\bless}$ and $\overprod{\less}{\bless}$ in the weak order on~$\TOIP_{m+n}$.
A similar statement holds for~$\TOEP$ and~$\TOFP$.
\end{proposition}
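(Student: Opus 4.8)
The plan is to read the product off the formula established inside the proof of Proposition~\ref{prop:TOIPsubHopfAlgebra} and then recognize its index set as a weak order interval through Proposition~\ref{prop:characterizationShuffleProduct}. Working in the subalgebra~$\K\TOIP$ (so that the relevant basis is~$\FTOIP$ rather than~$\FPos$), that proof shows
\[
\FTOIP_{\less} \product \FTOIP_{\bless} = \sum_{\dashv \in (\less \shiftedShuffle \bless) \cap \TOIP} \FTOIP_{\dashv},
\]
the index set being its set~$V$ of $\TOIP$ posets~$\dashv$ with~$\dashv_{[m]} = \less$ and~$\dashv_{\overline{[n]}} = \bless$. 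First I would apply Proposition~\ref{prop:characterizationShuffleProduct}, which identifies~$\less \shiftedShuffle \bless$ with the relations~$\dashv$ satisfying~$\underprod{\less}{\bless} \wole \dashv \wole \overprod{\less}{\bless}$. Intersecting with~$\TOIP$ yields
\[
(\less \shiftedShuffle \bless) \cap \TOIP = \set{\dashv \in \TOIP_{m+n}}{\underprod{\less}{\bless} \wole \dashv \wole \overprod{\less}{\bless}},
\]
which is precisely the interval between~$\underprod{\less}{\bless}$ and~$\overprod{\less}{\bless}$ in the weak order on~$\TOIP_{m+n}$, as claimed. The cases of~$\TOEP$ and~$\TOFP$ are handled word for word, using the corresponding product formulas and bases.

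The point where this argument really differs from its $\WOIP$ analogue, and the main thing to be careful about, is that the bounds~$\underprod{\less}{\bless}$ and~$\overprod{\less}{\bless}$ need not lie in~$\TOIP$ (whereas the $\WOIP$ proof could invoke that they stay in~$\WOIP$). For example, if~$\less$ is the antichain on~$[2]$ and~$\bless$ is the one-element poset, then~$\overprod{\less}{\bless}$ has~$3 \dashv 1$ and~$3 \dashv 2$ while~$1,2$ are incomparable, violating the decreasing condition of Proposition~\ref{prop:characterizationTOIP}. Consequently the phrase ``interval between~$\underprod{\less}{\bless}$ and~$\overprod{\less}{\bless}$ in the weak order on~$\TOIP_{m+n}$'' must be read as the order-interval that these two ambient bounds cut out inside~$\TOIP$; with this reading the second display above \emph{is} the statement, and nothing more is needed at the level of sets.

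Finally, to exhibit this set as a genuine lattice interval with endpoints inside the family, I would use that~$\TOEP$ and~$\TOIP$ are sublattices of the weak order on~$\IPos_{m+n}$ (Propositions~\ref{prop:weakOrderTOEP} and~\ref{prop:weakOrderTOIP}). Then~$(\less \shiftedShuffle \bless) \cap \TOIP$ is the intersection of the ambient interval~$[\underprod{\less}{\bless}, \overprod{\less}{\bless}]$ with a sublattice, hence a finite convex sublattice, hence a genuine interval of~$\TOIP$; checking that~$\TOIPd$ carries the ambient extremities to the extremities of~$V$ identifies its endpoints as~$\TOIPd{(\underprod{\less}{\bless})}$ and~$\TOIPd{(\overprod{\less}{\bless})}$. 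For~$\TOFP$ this last upgrade is unavailable, since~$\TOFP$ is not a sublattice of~$\IPos$ (Proposition~\ref{prop:weakOrderTOFP}); there one keeps only the order-interval description, which is all the statement requires. The only genuinely non-formal step is thus the endpoint identification, and it is the place I would expect to spend the most care.
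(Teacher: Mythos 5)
Your argument is correct, and it departs from the paper's proof for a good reason: the paper's own justification is a one\nobreakdash-liner asserting that for ${\less}, {\bless} \in \TOIP$ the bounds $\underprod{\less}{\bless}$ and $\overprod{\less}{\bless}$ are again in $\TOIP$ (resp.~$\TOEP$, $\TOFP$), in parallel with the $\WOEP$/$\WOIP$/$\WOFP$ cases --- and, as your counterexample shows, that assertion is false. Adjoining the full rectangle $\overline{[n]} \times [m]$ creates relations $a \more c$ with $a \in [m]$ and $c \in \overline{[n]}$ that violate the condition $a \more c \implies a \more b$ of Proposition~\ref{prop:characterizationTOIP} whenever some $a < b \le m$ is not below~$a$; your antichain example does this for $\TOIP$ and $\TOFP$, and for $\TOEP$ one can take ${\less} = {\less_{12}}$ and ${\bless} = {\less_{1}}$, for which $\overprod{\less}{\bless} = {\less_{312}} \notin \TOEP_3$. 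Your route --- reading $\FTOIP_{\less} \product \FTOIP_{\bless} = \sum_{\dashv \in ({\less} \shiftedShuffle {\bless}) \cap \TOIP} \FTOIP_{\dashv}$ off the proof of Proposition~\ref{prop:TOIPsubHopfAlgebra}, identifying ${\less} \shiftedShuffle {\bless}$ with the ambient weak order interval via Proposition~\ref{prop:characterizationShuffleProduct}, and then reading ``interval in the weak order on $\TOIP_{m+n}$'' as the trace of that ambient interval on $\TOIP_{m+n}$ --- is the correct repair, and your further point that for $\TOEP$ and $\TOIP$ the sublattice property (Propositions~\ref{prop:weakOrderTOEP} and~\ref{prop:weakOrderTOIP}) upgrades this trace to a genuine interval with endpoints inside the family is exactly what is needed for the statement to keep its literal meaning in those cases. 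Two small caveats. First, the identification of those endpoints as $\TOIPd{(\underprod{\less}{\bless})}$ and $\TOIPd{(\overprod{\less}{\bless})}$ is asserted rather than checked; it is not needed for the statement, but if you keep it you must justify it, since the $\TOIP$ deletion removes both increasing and decreasing relations and is therefore not monotone for $\wole$ in general. Second, the basis in the statement should be understood as $\FTOIP$ (resp.~$\FTOEP$, $\FTOFP$) rather than $\FPos$, as you implicitly and correctly do.
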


\begin{proof}
It is a direct consequence of Proposition~\ref{prop:characterizationShuffleProduct} and the fact that for any two Tamari order element (resp.~interval, resp.~face)  posets~${{\less}, {\bless} \in \WOIP}$, the relations~$\underprod{\less}{\bless}$ and~$\overprod{\less}{\bless}$ are both Tamari order element (resp.~interval, resp.~face) posets.
\end{proof}

To the best of our knowledge, Proposition~\ref{prop:TOIPsubHopfAlgebra} provides the first Hopf structure on intervals of the Tamari lattice.
Our next example illustrates the product and coproduct in this Hopf algebra on Tamari intervals, recasting Example~\ref{exm:algebraTOIP} in terms of Tamari intervals.

\begin{example}
For instance,
\begin{align*}
\F_{\left[ \begin{aligned}\includegraphics[scale=1.5]{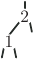}\end{aligned} \; , \; \begin{aligned}\includegraphics[scale=1.5]{example_product_interval1}\end{aligned} \right]}
\product
\F_{\left[ \begin{aligned}\includegraphics[scale=1.5]{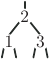}\end{aligned} \; , \; \begin{aligned}\includegraphics[scale=1.5]{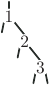}\end{aligned} \right]}
& =
\F_{\left[ \begin{aligned}\includegraphics[scale=1.5]{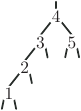}\end{aligned} \; , \; \begin{aligned}\includegraphics[scale=1.5]{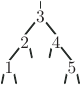}\end{aligned} \right]}
+
\F_{\left[ \begin{aligned}\includegraphics[scale=1.5]{example_product_interval4}\end{aligned} \; , \; \begin{aligned}\includegraphics[scale=1.5]{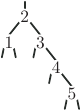}\end{aligned} \right]}
\\
& +
\F_{\left[ \begin{aligned}\includegraphics[scale=1.5]{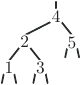}\end{aligned} \; , \; \begin{aligned}\includegraphics[scale=1.5]{example_product_interval6}\end{aligned} \right]}
+
\F_{\left[ \begin{aligned}\includegraphics[scale=1.5]{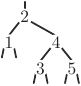}\end{aligned} \; , \; \begin{aligned}\includegraphics[scale=1.5]{example_product_interval6}\end{aligned} \right]}
\end{align*}
and
\[
\coproduct \Big(
\F_{\left[ \begin{aligned}\includegraphics[scale=1.5]{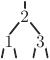}\end{aligned} \; , \; \begin{aligned}\includegraphics[scale=1.5]{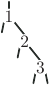}\end{aligned} \right]}
\Big )
=
\F_{\left[ \begin{aligned}\includegraphics[scale=1.5]{example_coproduct_interval1}\end{aligned} \; , \; \begin{aligned}\includegraphics[scale=1.5]{example_coproduct_interval2}\end{aligned} \right]}
\otimes
\F_{\varnothing}
+
\F_{\left[ \begin{aligned}\includegraphics[scale=1.5]{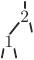}\end{aligned} \; , \; \begin{aligned}\includegraphics[scale=1.5]{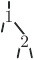}\end{aligned} \right]}
\otimes
\F_{\left[ \begin{aligned}\includegraphics[scale=1.5]{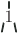}\end{aligned} \; , \; \begin{aligned}\includegraphics[scale=1.5]{example_coproduct_interval5}\end{aligned} \right]}
+
\F_{\varnothing}
\otimes
\F_{\left[ \begin{aligned}\includegraphics[scale=1.5]{example_coproduct_interval1}\end{aligned} \; , \; \begin{aligned}\includegraphics[scale=1.5]{example_coproduct_interval2}\end{aligned} \right]}.
\]
\end{example}

In contrast, for elements and faces, the Hopf structures of Proposition~\ref{prop:TOIPsubHopfAlgebra} already appear in the literature.
We have seen already in Section~\ref{subsec:LodayRonco} that the Loday--Ronco Hopf algebra on binary trees is the Hopf subalgebra of the Malvenuto--Reutenauer Hopf algebra on permutations generated by the sums over the fibers of the binary search tree insertion~$\sigma \mapsto \bt(\sigma)$.
Similarly, F.~Chapoton defined in~\cite{Chapoton} a Hopf algebra on Schr\"oder trees obtained as a Hopf subalgebra of his Hopf algebra on ordered partitions generated by the sums over the fibers of the Schr\"oder tree insertion~$\pi \mapsto \st(\pi)$.
We refer to~\cite{Chapoton} for more details and just provide an example of product and coproduct in this Hopf algebra.

\begin{example}
\label{exm:algebraChapotonSchroderTrees}
For instance,
\begin{align*}
\F_{\!\!\!\includegraphics[scale=.5]{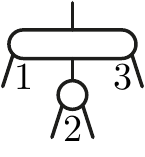}} \product \F_{\!\!\!\includegraphics[scale=.5]{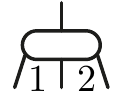}} &
= 
\F_{2|13} \product \F_{12} \\[-.5cm]
& = 
\F_{2|13|45} +
\F_{2|1345} +
\big( \F_{2|45|13} +
\F_{245|13} +
\F_{45|2|13}
\big) \\[.1cm]
&= 
\F_{\!\!\includegraphics[scale=.5]{example_product_Schroder2}} +
\F_{\!\!\!\includegraphics[scale=.5]{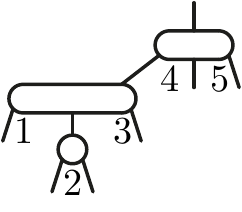}} +
\F_{\!\includegraphics[scale=.5]{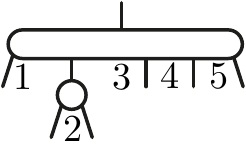}} 
\end{align*}
and
\begin{align*}
\coproduct \Big(
\F_{\!\!\!\includegraphics[scale=.5]{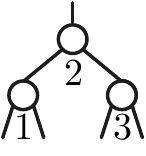}}
\Big)
& =
\coproduct \big(
\F_{1|3|2} +
\F_{13|2} +
\F_{3|1|2} +
\big) \\[-.5cm]
& = 
\big(
\F_{1|3|2} \otimes \F_{\varnothing} +
\F_{1} \otimes \F_{2|1} +
\F_{1|2} \otimes \F_{1} +
\F_{\varnothing}  \otimes \F_{1|3|2}
\big) \\
& +
\big(
\F_{13|2} \otimes \F_{\varnothing} +
\F_{12} \otimes \F_{1} +
\F_{\varnothing}  \otimes \F_{13|2}
\big) \\
& +
\big(
\F_{3|1|2} \otimes \F_{\varnothing} +
\F_{1} \otimes \F_{1|2} +
\F_{2|1} \otimes \F_{1} +
\F_{\varnothing}  \otimes \F_{3|1|2}
\big) \\
& =
\F_{\!\!\!\includegraphics[scale=.5]{example_coproduct_Schroder1}} \otimes \FTOFP_{\varnothing} +
\F_{\!\includegraphics[scale=.5]{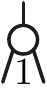}} \otimes \F_{\!\includegraphics[scale=.5]{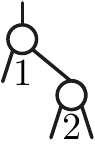}} +
\F_{\!\!\!\includegraphics[scale=.5]{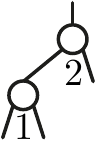}} \otimes \F_{\!\includegraphics[scale=.5]{example_coproduct_Schroder5}} +
\F_{\!\!\includegraphics[scale=.5]{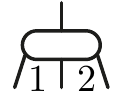}} \otimes \F_{\!\includegraphics[scale=.5]{example_coproduct_Schroder5}} \\
& +
\F_{\!\includegraphics[scale=.5]{example_coproduct_Schroder5}} \otimes \F_{\!\!\!\includegraphics[scale=.5]{example_coproduct_Schroder3}} +
\F_{\!\includegraphics[scale=.5]{example_coproduct_Schroder2}} \otimes \F_{\!\includegraphics[scale=.5]{example_coproduct_Schroder5}} +
\FTOFP_{\varnothing} \otimes \F_{\!\!\!\includegraphics[scale=.5]{example_coproduct_Schroder1}} \! \! .
\end{align*}
\end{example}

\begin{proposition}
\begin{itemize}
\item The map~$\tree \mapsto {\less_{\tree}}$ is a Hopf algebra isomorphism from the Loday--Ronco algebra on binary trees~\cite{LodayRonco, HivertNovelliThibon-algebraBinarySearchTrees} to~$(\K\TOEP, \product, \coproduct)$.
\item The map~$\tree[S] \mapsto {\less_{\tree[S]}}$ is a Hopf algebra isomorphism from the Chapoton algebra on Schr\"oder trees~\cite{Chapoton} to~$(\K\TOFP, \product, \coproduct)$.
\end{itemize}
\end{proposition}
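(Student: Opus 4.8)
The plan is to realize each isomorphism as the restriction of a Hopf algebra isomorphism already established in the paper, using Proposition~\ref{prop:TOIPd/bst} to identify the $\TOIP$ deletion with the relevant tree insertion. I would carry out the binary tree case in detail; the Schr\"oder tree case is entirely parallel.

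First I would invoke the Hopf isomorphism $\phi \colon \F_\sigma \mapsto \FWOEP_{\less_\sigma}$ from the Malvenuto--Reutenauer algebra on permutations to $\K\WOEP$ (Proposition~\ref{prop:isomorphismMalvenutoReutenauer}). Recall that the Loday--Ronco algebra is the Hopf subalgebra of the Malvenuto--Reutenauer algebra spanned by the elements $\F_\tree = \sum_{\bt(\sigma) = \tree} \F_\sigma$. Applying $\phi$ to such a generator gives
\[
\phi(\F_\tree) = \sum_{\bt(\sigma) = \tree} \FWOEP_{\less_\sigma} = \sum_{\substack{{\less} \in \WOEP \\ \TOIPd{\less} = {\less_{\tree}}}} \FWOEP_{\less} = \FTOEP_{\less_{\tree}},
\]
where the second equality reindexes the sum using that $\sigma \mapsto {\less_\sigma}$ is a bijection from $\fS_n$ onto $\WOEP_n$ together with the identity $\TOIPd{(\less_\sigma)} = {\less_{\bt(\sigma)}}$ of Proposition~\ref{prop:TOIPd/bst} and the injectivity of $\tree \mapsto {\less_{\tree}}$ (Proposition~\ref{prop:weakOrderTOEP}), and the last equality is the definition of $\FTOEP_{\less_{\tree}}$.

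This computation shows that $\phi$ carries the distinguished basis $(\F_\tree)_{\tree}$ of the Loday--Ronco algebra to the family $(\FTOEP_{\less_{\tree}})_{\tree}$, which is merely a relabelling of the spanning family $(\FTOEP_{\bless})_{\bless \in \TOEP}$ of $\K\TOEP$, since $\tree \mapsto {\less_{\tree}}$ is a bijection onto $\TOEP$ (Proposition~\ref{prop:weakOrderTOEP}). As $\phi$ is a Hopf isomorphism and the Loday--Ronco algebra is a Hopf subalgebra, its image is a Hopf subalgebra of $\K\WOEP$ equal to $\K\TOEP$, consistent with the independent stability statement of Proposition~\ref{prop:TOIPsubHopfAlgebra}; hence $\phi$ restricts to a Hopf algebra isomorphism from the Loday--Ronco algebra onto $\K\TOEP$. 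Precomposing with the identification $\tree \mapsto \F_\tree$ yields precisely the map $\tree \mapsto {\less_{\tree}}$, which is therefore a Hopf algebra isomorphism.

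For the Schr\"oder tree case I would run the same argument with $\phi$ replaced by the Hopf isomorphism from the Chapoton algebra on ordered partitions to $\K\WOFP$ (Proposition~\ref{prop:isomorphismChapoton}), with $\bt$ replaced by the Schr\"oder tree insertion $\st$, and with the identity $\TOIPd{(\less_\pi)} = {\less_{\st(\pi)}}$ of Proposition~\ref{prop:TOIPd/bst}: the generator $\F_{\tree[S]} = \sum_{\st(\pi) = \tree[S]} \F_\pi$ maps to $\FTOFP_{\less_{\tree[S]}}$, so that the Chapoton algebra on Schr\"oder trees maps isomorphically onto $\K\TOFP$. The only genuine point to verify in either case is the central fiber identification, which is supplied directly by Proposition~\ref{prop:TOIPd/bst} and the injectivity of the poset encoding of trees; everything else is the formal fact that a Hopf isomorphism restricts to a Hopf isomorphism on any Hopf subalgebra whose distinguished basis it carries to a distinguished basis of the target. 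I therefore expect no genuine obstacle beyond careful bookkeeping between the permutation (resp.\ ordered-partition) indexing and the poset indexing.
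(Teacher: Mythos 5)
Your proposal is correct and follows exactly the route of the paper, whose proof is the one-line observation that the statement follows immediately from Propositions~\ref{prop:isomorphismMalvenutoReutenauer}, \ref{prop:isomorphismChapoton} and~\ref{prop:TOIPd/bst}; you have simply spelled out the reindexing of the fiber sums that the paper leaves implicit. No discrepancies.
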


\begin{proof}
This immediately follow from Propositions~\ref{prop:isomorphismMalvenutoReutenauer}, \ref{prop:isomorphismChapoton} and~\ref{prop:TOIPd/bst}.
\end{proof}

\begin{example}
Compare Examples~\ref{exm:algebraLodayRonco} and~\ref{exm:algebraTOEP}, and Examples~\ref{exm:algebraChapotonSchroderTrees} and~\ref{exm:algebraTOFP}.
\end{example}

\begin{remark}
To conclude, let us mention that similar ideas can be used to uniformly construct Hopf algebra structures on permutrees, permutree intervals, and Schr\"oder permutrees as defined in~\cite{PilaudPons-permutrees}.
Following~\cite{ChatelPilaud, PilaudPons-permutrees}, one first defines some decorated versions of the Hopf algebras~$\K\WOEP$, $\K\WOIP$ and~$\K\WOFP$, where each poset on~$[n]$ appears~$4^n$ times with all possible different orientations.
One then constructs Hopf algebras on~$\K\PEP[]$, $\K\PIP[]$ and~$\K\PFP[]$ using the fibers of the surjective map~$({\less}, \orientation) \mapsto {\PIPd{\less}}$ defined in~\cite{ChatelPilaudPons}.
See~\cite{PilaudPons-permutrees} for details.
\end{remark}


\section*{Acknowledgments}

The computation and tests needed along the research were done using the open-source mathematical software \texttt{Sage}~\cite{Sage} and its combinatorics features developed by the \texttt{Sage-combinat} community~\cite{SageCombinat}.


\bibliographystyle{alpha}
\bibliography{integerRelationAlgebra}
\label{sec:biblio}

\end{document}